\setlist[enumerate]{leftmargin=*}
\theoremstyle{plain}
\newtheorem{theorem}{Theorem}[section]
\newtheorem{proposition}[theorem]{Proposition}
\newtheorem{lemma}[theorem]{Lemma}
\newtheorem{corollary}[theorem]{Corollary}
\theoremstyle{definition}
\newtheorem{definition}[theorem]{Definition}
\newtheorem{remark}[theorem]{Remark}
\newtheorem{example}[theorem]{Example}
\theoremstyle{remark}
\newcommand{\eps}{\varepsilon}
\newcommand{\N}{\mathbb{N}}
\newcommand{\R}{\mathbb{R}}
\newcommand{\Z}{\mathbb{Z}}
\newcommand{\cB}{\mathcal{B}}
\newcommand{\cD}{\mathcal{D}}
\newcommand{\cM}{\mathcal{M}}
\newcommand{\cP}{\mathcal{P}}
\newcommand{\cS}{\mathcal{S}}
\newcommand{\bI}{\mathbf{I}}
\newcommand{\bS}{\mathbf{S}}
\newcommand{\fM}{\mathfrak{M}}
\DeclareMathOperator{\bary}{bary}
\DeclareMathOperator{\supp}{supp}
\DeclareMathOperator{\Lip}{Lip}
\newcommand{\as}{\mbox{-a.s.}}
\newcommand{\qs}{\mbox{-q.s.}}
\newcommand{\1}{\mathbf{1}}
\newcommand{\rP}{{\overset{\;{}_{\rightarrow}}{P}}}
\newcommand{\lP}{{\overset{\;{}_{\leftarrow}}{P}}}
\newcommand{\shadow}[2]{S^{#2}(#1)}
\newcommand{\casts}[2]{\left\llbracket #1,#2 \right\rrbracket}
\newcommand{\smallcasts}[2]{\llbracket #1,#2 \rrbracket}
\numberwithin{equation}{section}
\begin{document}
\title{Canonical Supermartingale Couplings}

\author{Marcel Nutz}
\thanks{M.\ Nutz is supported by an Alfred P.\ Sloan Fellowship and NSF Grant DMS-1512900. The authors would like to thank Mathias Beiglb\"ock, Nicolas Juillet, Jan Ob{\l}{\'o}j, Nizar Touzi and an anonymous referee for encouragement and advice.}

\author{Florian Stebegg}

\subjclass[2010]{60G42; 49N05}

\keywords{Coupling; Optimal Transport; Spence--Mirrlees Condition}%

\date{\today}

\begin{abstract}
Two probability distributions $\mu$ and $\nu$ in second stochastic order can be coupled by a supermartingale, and in fact by many. Is there a canonical choice? We construct and investigate two couplings which arise as  optimizers for constrained Monge--Kantorovich optimal transport problems where only supermartingales are allowed as transports. Much like the Hoeffding--Fr\'echet coupling of classical transport and its symmetric counterpart, the antitone coupling, these can be characterized by order-theoretic minimality properties, as simultaneous optimal transports for certain classes of reward (or cost) functions, and through no-crossing conditions on their supports; however, our two couplings have asymmetric geometries. Remarkably, supermartingale optimal transport decomposes into classical and martingale transport in several ways.\end{abstract}

\maketitle

\vspace{-0em}
\section{Introduction}\label{se:intro}

Let $\mu$ and $\nu$ be probability measures on the real line. A measure~$P$ on~$\R^{2}$ whose first and second marginals are $\mu$ and $\nu$, respectively, is called a coupling (or transport) of $\mu$ and $\nu$, and the set of all such measures is denoted by $\Pi(\mu,\nu)$. We shall be interested in couplings that are supermartingales; that is, if $(X,Y)$ denotes the identity on $\R^{2}$, then $E^{P}[Y|X]\leq X$ $P$-a.s. Thus, we assume throughout that $\mu$ and $\nu$ have a finite first moment, and denote by $\cS(\mu,\nu)$ the set of supermartingale couplings. A classical result of Strassen (cf.\ Proposition~\ref{pr:convexOrder}) shows that $\cS(\mu,\nu)$ is nonempty if and only if $\mu$ and $\nu$ are in convex-decreasing (or second stochastic) order, denoted $\mu\leq_{cd}\nu$ and defined by the requirement that $\mu(\phi)\leq\nu(\phi)$ for any convex and decreasing function $\phi$, where $\mu(\phi):=\int \phi\,d\mu$. Given $\mu\leq_{cd}\nu$, there are typically infinitely many supermartingale couplings. Our question: are there some special, ``canonical'' choices? The aim of this paper is to introduce and describe two such couplings, called the \emph{Increasing} and the \emph{Decreasing Supermartingale Transport} and denoted $\rP$ and $\lP$, respectively. They have remarkable properties that are, in several ways, analogous to the Hoeffding--Fr\'echet and antitone couplings which can be considered canonical choices in $\Pi(\mu,\nu)$ but typically are not supermartingales. The study undertaken is also a model problem of optimal transport under inequality constraints. We shall see that the supermartingale transport problem decomposes into unconstrained and equality (martingale) constrained transport, in multiple and sometimes unexpected ways.

\subsection{Synopsis}

The couplings $\rP$ and $\lP$ will be characterized in three different ways: an order-theoretic minimality property, optimality for a specific class of transport reward (or cost) functions, and a geometric property of the support stating that certain paths do or do not intersect.

Let us begin with the order-theoretic characterization. To explain the idea, suppose that~$\mu$ consists of finitely many atoms at $x_{1},\dots,x_{n}\in\R$, then a coupling of $\mu$ and $\nu$ can be defined by specifying a ``destination'' measure for each atom. We know from Strassen's result that the convex-decreasing order plays a special role, so it is natural to rank all possible destination measures for the first atom (as allowed by the given marginal $\nu$ and the supermartingale constraint) according to that order. A minimal element $\shadow{\mu|_{x_{1}}}{\nu}$ called the \emph{shadow}  will be shown to exist; essentially, it maximizes the barycenter of the destination measure and minimizes the variance. The procedure can be iterated after subtracting $\shadow{\mu|_{x_{1}}}{\nu}$ from $\nu$, and that determines a supermartingale coupling of $\mu$ and $\nu$. Depending on the order in which the atoms are processed, the coupling will have a very different structure. Two obvious choices are the increasing and the decreasing order of the $x_{k}$, and that gives rise to $\rP$ and $\lP$ (the arrows representing the order of processing). In the general, continuum version of the construction, we instead specify the destination of $\mu|_{(-\infty,x]}$ and $\mu|_{[x,\infty)}$ for each $x\in\R$. The following is taken from Theorem~\ref{th:canonicalCouplings} in the body of the paper; the precise definition of the shadow can be found in Lemma~\ref{le:ecdSandwich}.

\begin{theorem}\label{th:canonicalCouplingsIntro}
  There exists a unique measure $\rP$ on $\R^{2}$ which transports $\mu|_{(-\infty,x]}$ to its shadow $\shadow{\mu|_{(-\infty,x]}}{\nu}$ for all $x\in\R$.
    Similarly, there exists a unique measure $\lP$ which transports $\mu|_{[x,\infty)}$ to $\shadow{\mu|_{[x,\infty)}}{\nu}$ for all $x\in\R$. Moreover, these two measures are elements of $\cS(\mu,\nu)$. 
\end{theorem}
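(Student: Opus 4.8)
The plan is to construct $\rP$ explicitly from the family of shadows, to obtain uniqueness from a $\pi$-system argument, and to read off the supermartingale property from the barycenter behaviour of the shadow. For $x\in\R$ put $\theta_x:=\shadow{\mu|_{(-\infty,x]}}{\nu}$. First I would check that $\theta_x$ is well defined: by Strassen (Proposition~\ref{pr:convexOrder}) there is some $P\in\cS(\mu,\nu)$, and restricting $P$ to the strip $(-\infty,x]\times\R$ shows $\mu|_{(-\infty,x]}\le_{cd}\nu_x$, where $\nu_x\le\nu$ is the second marginal of that restriction and $\nu_x(\R)=\mu((-\infty,x])$. So the set $\{\beta:\beta\le\nu,\ \mu|_{(-\infty,x]}\le_{cd}\beta,\ \beta(\R)=\mu((-\infty,x])\}$ is nonempty and, by Lemma~\ref{le:ecdSandwich}, has a $\le_{cd}$-least element $\theta_x$, which moreover satisfies $\mu|_{(-\infty,x]}\le_{cd}\theta_x$ and $\theta_x(\R)=\mu((-\infty,x])$. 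I would also establish the concatenation property of the shadow (by the methods of Lemma~\ref{le:ecdSandwich}): whenever $\shadow{\alpha_1+\alpha_2}{\nu}$ is defined, so are $\shadow{\alpha_1}{\nu}$ and $\shadow{\alpha_2}{\,\nu-\shadow{\alpha_1}{\nu}}$, and $\shadow{\alpha_1+\alpha_2}{\nu}=\shadow{\alpha_1}{\nu}+\shadow{\alpha_2}{\,\nu-\shadow{\alpha_1}{\nu}}$. Taking $\alpha_1=\mu|_{(-\infty,x]}$, $\alpha_2=\mu|_{(x,x']}$ gives $\theta_{x'}=\theta_x+\shadow{\mu|_{(x,x']}}{\,\nu-\theta_x}$ for $x\le x'$, so $x\mapsto\theta_x$ is nondecreasing; since $\theta_x(\R)=\mu((-\infty,x])$ it follows that $\theta_x\downarrow0$ as $x\to-\infty$ and $\theta_x\uparrow\nu$ as $x\to+\infty$ (the limit is $\le\nu$ with full mass).

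\emph{Construction and uniqueness.} The family $(\theta_x)$ is right-continuous in total variation: as $x\downarrow x_0$ one has $\mu|_{(-\infty,x]}\downarrow\mu|_{(-\infty,x_0]}$, so the decreasing limit $\theta'$ of $(\theta_x)_{x>x_0}$ obeys $\theta'(\R)=\lim_{x\downarrow x_0}\mu((-\infty,x])=\mu((-\infty,x_0])=\theta_{x_0}(\R)$ and $\theta'\ge\theta_{x_0}$, hence $\theta'=\theta_{x_0}$. Choosing the densities $g_x:=d\theta_x/d\nu\in[0,1]$ right-continuously in $x$ (for $\nu$-a.e.\ $y$, using rationals), the assignment $\rP(dx,dy):=(d_x g_x(y))\,\nu(dy)$ — where $d_x g_x(y)$ is the Lebesgue--Stieltjes measure of the nondecreasing right-continuous function $x\mapsto g_x(y)$ running from $0$ to $1$ — defines a probability measure on $\R^2$ with $\rP((-\infty,x]\times A)=\theta_x(A)$ for all $x\in\R$ and Borel $A$. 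Letting $x\to\pm\infty$ identifies the marginals of $\rP$ as $\mu$ and $\nu$, and by construction the restriction of $\rP$ to $(-\infty,x]\times\R$ couples $\mu|_{(-\infty,x]}$ with $\theta_x$; so $\rP$ transports $\mu|_{(-\infty,x]}$ to its shadow for every $x$. For uniqueness, the sets $(-\infty,x]\times A$ form a $\pi$-system generating $\cB(\R^2)$, and any measure with the stated transport property assigns them the masses $\theta_x(A)$ and has total mass $\theta_{+\infty}(\R)=1$, so it equals $\rP$.

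\emph{Supermartingale property.} It suffices that the finite signed measure $\lambda(B):=E^{\rP}[(Y-X)\1_{\{X\in B\}}]$ on $\R$ be nonpositive, for $\lambda\ll\mu$ with density $E^{\rP}[Y-X\mid X=\cdot]$, and $\lambda\le0$ is exactly $E^{\rP}[Y\mid X]\le X$. Now $F(x):=\lambda((-\infty,x])=\int y\,\theta_x(dy)-\int y\,\mu|_{(-\infty,x]}(dy)$ is finite (finite first moments) and $\lambda((x,x'])=F(x')-F(x)$, so evaluating $\lambda$ on open sets shows $\lambda\le0$ as soon as $F$ is nonincreasing. For $x<x'$, the concatenation identity gives $\theta_{x'}-\theta_x=\shadow{\mu|_{(x,x']}}{\,\nu-\theta_x}$, and the sandwich inequality $\mu|_{(x,x']}\le_{cd}\shadow{\mu|_{(x,x']}}{\,\nu-\theta_x}$ from Lemma~\ref{le:ecdSandwich}, tested on the convex decreasing function $y\mapsto-y$, yields $\int y\,(\theta_{x'}-\theta_x)(dy)\le\int y\,\mu|_{(x,x']}(dy)$, i.e.\ $F(x')\le F(x)$. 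Hence $\rP\in\cS(\mu,\nu)$. The assertions for $\lP$ follow verbatim with $\mu|_{[x,\infty)}$ replacing $\mu|_{(-\infty,x]}$: then $x\mapsto\shadow{\mu|_{[x,\infty)}}{\nu}$ is nonincreasing, the concatenation reads $\shadow{\mu|_{[x,\infty)}}{\nu}-\shadow{\mu|_{[x',\infty)}}{\nu}=\shadow{\mu|_{[x,x')}}{\,\nu-\shadow{\mu|_{[x',\infty)}}{\nu}}$ for $x<x'$, and the same barycenter estimate applies.

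The hard part will be the shadow machinery invoked above rather than this argument: one must establish the concatenation identity $\shadow{\alpha_1+\alpha_2}{\nu}=\shadow{\alpha_1}{\nu}+\shadow{\alpha_2}{\,\nu-\shadow{\alpha_1}{\nu}}$ together with the fact that the relevant ``sandwich'' sets stay nonempty at each stage. Granting that, the only genuinely measure-theoretic point here is that the nondecreasing right-continuous family $(\theta_x)$ assembles into a bona fide measure $\rP$ (the atom-splitting issue being absorbed by the right-continuity of $x\mapsto\mu((-\infty,x])$); uniqueness and the supermartingale inequality are then short.
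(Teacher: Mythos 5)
Your proposal is correct and follows essentially the same route as the paper's proof of Theorem~\ref{th:canonicalCouplings}: assemble the monotone, right-continuous family of shadows $x\mapsto\shadow{\mu|_{(-\infty,x]}}{\nu}$ into a measure (the paper does this via the joint c.d.f.\ $F(x,y)=\shadow{\mu|_{(-\infty,x]}}{\nu}(-\infty,y]$ rather than a disintegration against $\nu$, but the content is the same), and derive the supermartingale inequality by testing the increment $\shadow{\mu|_{(x,x']}}{\nu-\shadow{\mu|_{(-\infty,x]}}{\nu}}$ against $y\mapsto -y$. The concatenation identity you defer is exactly Proposition~\ref{pr:shadowAdditive}, which the paper likewise isolates as the main technical input (proved via Lemma~\ref{le:shadowOfDirac} and approximation by finitely supported measures).
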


While the shadow construction illuminates the local order-theoretic nature of the couplings, it does not reveal the global geometric structure that is apparent in Figures~\ref{fi:simulationIncr} and~\ref{fi:simulationDecr}   (rendered on page~\pageref{fi:simulationDecr}). The figures show simulations of $\rP$ and $\lP$ for piecewise uniform marginals and discrete marginals; the mass is transported from the $x$-axis (top) to the $y$-axis (bottom).

\begin{figure}[p]
\begin{center}
  \emph{Increasing Supermartingale Transport} $\rP$

  \vspace{.5em}

  \input{simulationIncr2_pic}

  \vspace{.5em}

	\resizebox{3.8in}{!}{
	\begin{tikzpicture}[every node/.style={draw,circle,inner sep=0pt,minimum size=3pt}]
	\draw[thick] (-3,0) -- (6,0) node[right, draw = none] {\scriptsize $x$};
	\draw[thick] (-3,-2) -- (6,-2) node[right, draw = none] {\scriptsize $y$};
	\node[fill = white] (x0) at (1.0,0) {};
	\node[fill = white] (x1) at (0.5,0) {};
	\node[fill = white] (x2) at (0.0,0) {};
	\node[fill = white] (x4) at (3.0,0) {};
	\node[fill = white] (x5) at (3.5,0) {};
	\node[fill = white] (x6) at (4.0,0) {};
	\node[fill = white] (x7) at (4.5,0) {};
	\node[fill=black] (y0) at (-1.5,-2) {};
	\node[fill=black] (y1) at (-1.0,-2) {};
	\node[fill=black] (y2) at (-0.5,-2) {};
	\node[fill=black] (y3) at (0.5,-2) {};
	\node[fill=black] (y4) at (1.0,-2) {};
	\node[fill=black] (y5) at (1.5,-2) {};
	\node[fill=black] (y6) at (2.0,-2) {};
	\node[fill=black] (y7) at (2.5,-2) {};
	\node[fill=black] (y9) at (-2,-2) {};
	\node[fill=black] (y10) at (-2.5,-2) {};
	\draw[thick] (x0) -- (y0);
	\draw[thick] (x0) -- (y5);
	\draw[thick] (x1) -- (y1);
	\draw[thick] (x1) -- (y4);
	\draw[thick] (x2) -- (y2);
	\draw[thick] (x2) -- (y3);
	\draw[thick,densely dashed] (x4) -- (y7);
	\draw[thick,densely dashed] (x5) -- (y6);
	\draw[thick,densely dashed] (x6) -- (y9);
	\draw[thick,densely dashed] (x7) -- (y10);
	\end{tikzpicture}}
\end{center}
\vspace{.0em}\caption{Simulations of the Increasing Supermartingale Transport. We observe an interval of Left-Curtain kernels (black/continuous) on the left and an interval of antitone kernels (gray/dashed) on the right. The destinations of the right interval are on both sides of the destinations of the left one. (The definitions of $x^*$ and $M$ are given in Sections~\ref{se:BarriersAndPolarSets} and~\ref{se:monotonicityPrinciple}, respectively.)
}
\label{fi:simulationIncr}
\vspace{3.5em}
\begin{center}
  \emph{Decreasing Supermartingale Transport} $\lP$

  \vspace{0.5em}

  \input{simulationDecr2_pic}

  \vspace{.5em}
	
	\resizebox{3.8in}{!}{
	\begin{tikzpicture}[every node/.style={draw,circle,inner sep=0pt,minimum size=3pt}]
	\draw[thick] (-3,0) -- (6,0) node[right, draw = none] {\scriptsize $x$};
	\draw[thick] (-3,-2) -- (6,-2) node[right, draw = none] {\scriptsize $y$};
	\node[fill = white] (x0) at (-1.0,0) {};
	\node[fill = white] (x1) at (-0.5,0) {};
	\node[fill = white] (x2) at (0.0,0) {};
	\node[fill = white] (x3) at (2.5,0) {};
	\node[fill = white] (x4) at (3.0,0) {};
	\node[fill = white] (x5) at (3.5,0) {};
	\node[fill = white] (x6) at (4.0,0) {};
	\node[fill = white] (x7) at (4.5,0) {};
	\node[fill=black] (y0) at (-1.5,-2) {};
	\node[fill=black] (y1) at (-1.0,-2) {};
	\node[fill=black] (y2) at (-0.5,-2) {};
	\node[fill=black] (y3) at (0.5,-2) {};
	\node[fill=black] (y4) at (1.0,-2) {};
	\node[fill=black] (y5) at (1.5,-2) {};
	\node[fill=black] (y6) at (2.0,-2) {};
	\node[fill=black] (y7) at (2.5,-2) {};
	\node[fill=black] (y8) at (3.0,-2) {};
	\node[fill=black] (y9) at (3.5,-2) {};
	\node[fill=black] (y10) at (4.0,-2) {};
	\node[fill=black] (y11) at (5.0,-2) {};
	\node[fill=black] (y12) at (5.5,-2) {};
	\draw[thick] (x0) -- (y0);
	\draw[thick] (x0) -- (y5);
	\draw[thick] (x1) -- (y1);
	\draw[thick] (x1) -- (y4);
	\draw[thick] (x2) -- (y2);
	\draw[thick] (x2) -- (y3);
	\draw[thick,densely dashed] (x3) -- (y6);
	\draw[thick,densely dashed] (x4) -- (y7);
	\draw[thick,densely dashed] (x5) -- (y8);
	\draw[thick] (x6) -- (y9);
	\draw[thick] (x6) -- (y12);
	\draw[thick] (x7) -- (y10);
	\draw[thick] (x7) -- (y11);
	\end{tikzpicture}}
\end{center}
\vspace{0.0em}\caption{Simulations of the Decreasing Supermartingale Transport. We observe an interval of Right-Curtain kernels on the left, followed by an interval of Hoeffding--Fr\'echet kernels and another interval of Right-Curtain kernels. The destinations of these intervals preserve the original order.
}
\label{fi:simulationDecr}
\end{figure}

The Monge--Kantorovich optimal transport problem is a framework that enables a geometric description for its optimal transports, and thus it is desirable to represent $\rP$ and $\lP$ as corresponding solutions. More precisely, we shall introduce the \emph{supermartingale optimal transport} problem
\begin{equation}\label{eq:optTranspIntro}
  \sup_{P\in\cS(\mu,\nu)} P(f)
\end{equation}
where transports are required to be supermartingales, and then $\rP,\lP$ will be optimizers for reward functions $f$ satisfying certain geometric properties. To make the connection with other texts on optimal transport, notice that $P(f)=E^{P}[f(X,Y)]$ in our notation, and that $f$ can be seen as a cost function by a change of sign. We shall say that $f : \R^2 \to \R$ is \emph{supermartingale Spence--Mirrlees} if
\begin{equation}\label{eq:superMartSpenceIntro}
  f(x_{2},\cdot) - f(x_{1},\cdot)\mbox{ is strictly decreasing and strictly convex for all } x_{1}<x_{2}.
\end{equation}
If $f$ is smooth, this can be expressed through the cross-derivatives conditions $f_{xy}<0$ and $f_{xyy}>0$; the first one is the negative of the classical Spence--Mirrlees condition and the second is the so-called martingale Spence--Mirrlees condition. The following is a slightly simplified statement of Corollary~\ref{co:optTranspCharact}.

\begin{theorem}\label{th:optTranspCharactIntro}
  Let $f : \R^2 \to \R$ be Borel, supermartingale Spence--Mirrlees and suppose that there exist $a\in L^{1}(\mu)$, $b\in L^{1}(\nu)$ such that 
  $$
    |f(x,y)|\leq a(x) + b(y),\quad x,y\in\R.
  $$
  Then, $\rP$ is the unique solution of the supermartingale optimal transport problem~\eqref{eq:optTranspIntro}. Similarly, $\lP$ is the unique solution of $\inf_{P\in\cS(\mu,\nu)} P(f)$, or equivalently of~\eqref{eq:optTranspIntro} if instead  $-f$ is supermartingale Spence--Mirrlees.
\end{theorem}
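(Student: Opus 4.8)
The plan is to prove the theorem via a \emph{layered} integral representation of the transport functional $P\mapsto P(f)$ which reduces everything to the defining minimality of the shadow (Lemma~\ref{le:ecdSandwich}) together with Theorem~\ref{th:canonicalCouplingsIntro}. Given $P\in\cS(\mu,\nu)$, disintegrate $P=\mu(dx)\,P_x(dy)$ with $\int y\,P_x(dy)\le x$ and, for $s\in\R$, write
\[
  \nu^P_s:=\int_{(-\infty,s]}\mu(dx)\,P_x
\]
for the destination of the initial segment $\mu|_{(-\infty,s]}$ under $P$. The supermartingale property gives $\mu|_{(-\infty,s]}\leq_{cd}\nu^P_s$, and trivially $\nu^P_s\leq\nu$, so by the defining minimality of the shadow, $\shadow{\mu|_{(-\infty,s]}}{\nu}\leq_{cd}\nu^P_s$ for all $s$, with $\nu^P_s=\shadow{\mu|_{(-\infty,s]}}{\nu}$ for all $s$ iff $P=\rP$ by Theorem~\ref{th:canonicalCouplingsIntro}. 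For $f$ smooth with the stated growth I would derive, from the fundamental theorem of calculus in $x$ and Fubini,
\[
  P(f)=\nu(C_f)-\int_\R\Big(\int_\R f_x(s,\cdot)\,d\nu^P_s\Big)\,ds,
\]
where $C_f$ depends only on $f$ (a tail/boundary term) and the growth bound guarantees finiteness and the vanishing of the boundary term. The point is that the supermartingale Spence--Mirrlees condition~\eqref{eq:superMartSpenceIntro} says precisely that $y\mapsto f_x(s,y)$ is strictly decreasing and strictly convex, i.e.\ it is a strict test function for $\leq_{cd}$.

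Given this representation, optimality is immediate: since $f_x(s,\cdot)$ is convex and decreasing and $\nu^{\rP}_s=\shadow{\mu|_{(-\infty,s]}}{\nu}\leq_{cd}\nu^P_s$, we have $\int f_x(s,\cdot)\,d\nu^{\rP}_s\le\int f_x(s,\cdot)\,d\nu^P_s$ pointwise in $s$, and integrating yields $P(f)\le\rP(f)$ for every $P\in\cS(\mu,\nu)$; as $\rP\in\cS(\mu,\nu)$, the supremum in~\eqref{eq:optTranspIntro} is attained at $\rP$. For uniqueness I would argue that equality $P(f)=\rP(f)$ forces $\int f_x(s,\cdot)\,d\nu^P_s=\int f_x(s,\cdot)\,d\nu^{\rP}_s$ for Lebesgue-a.e.\ $s$; combined with $\nu^{\rP}_s\leq_{cd}\nu^P_s$ and the \emph{strict} convexity of $f_x(s,\cdot)$ (this is where $f_{xyy}>0$ enters), a double integration by parts — against the nonnegative, continuous, vanishing-at-$-\infty$ potential $u\mapsto\int_{-\infty}^u(\nu^P_s-\nu^{\rP}_s)((-\infty,v])\,dv$ of the mass-zero signed measure $\nu^P_s-\nu^{\rP}_s$ — forces $\nu^P_s=\nu^{\rP}_s$ for a.e.\ $s$, hence for all $s$ by monotonicity and right-continuity of $s\mapsto\nu^P_s$. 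Theorem~\ref{th:canonicalCouplingsIntro} then gives $P=\rP$.

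For $\lP$ I would run the symmetric argument: telescoping $f(x,\cdot)$ from $-\infty$ instead of from $+\infty$ gives $P(f)=\nu(D_f)+\int_\R\big(\int f_x(s,\cdot)\,d\tilde\nu^P_s\big)\,ds$, where $\tilde\nu^P_s$ is the destination of the final segment $\mu|_{[s,\infty)}$; since $\mu|_{[s,\infty)}\leq_{cd}\tilde\nu^P_s\leq\nu$ and $f_x(s,\cdot)$ is convex decreasing, $\tilde\nu^{\lP}_s=\shadow{\mu|_{[s,\infty)}}{\nu}\leq_{cd}\tilde\nu^P_s$ yields $\lP(f)\le P(f)$, with equality forcing $P=\lP$ exactly as above — this is the claimed characterization of $\lP$ as the minimizer of~\eqref{eq:optTranspIntro}. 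The ``equivalently'' clause is then simply the first part applied to $-f$, using $\inf_P P(f)=-\sup_P P(-f)$.

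The step I expect to be the main obstacle is removing the smoothness of $f$, since the hypothesis only furnishes a Borel $f$ obeying~\eqref{eq:superMartSpenceIntro} and the growth bound. I see two routes. One can mollify $f$ in both variables to obtain smooth supermartingale Spence--Mirrlees $f_n\to f$ with a common bound $|f_n|\le\tilde a(x)+\tilde b(y)$, $\tilde a\in L^1(\mu)$, $\tilde b\in L^1(\nu)$, and pass to the limit in $P\mapsto P(f_n)$ using weak compactness of $\cS(\mu,\nu)$ and the uniform integrability provided by the common bound. Alternatively one can first treat finitely atomic $\mu$, where the layered representation collapses to a finite telescoping sum $\sum_k\int\Delta_k\,d\nu^P_{x_k}$ with $\Delta_k=f(x_k,\cdot)-f(x_{k-1},\cdot)$ strictly decreasing and strictly convex, so that shadow minimality can be applied atom by atom (using that the shadow of $\mu|_{(-\infty,x_k]}$ is the sum of the shadows produced in the iterative construction of the Synopsis), and then approximate a general $\mu$ by finitely atomic measures, invoking stability of $\rP$, $\lP$ and of the optimal value under such approximations. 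In either route the real work is controlling the tail and boundary contributions, where the bound $|f(x,y)|\le a(x)+b(y)$ is used, and — when $\nu$ has unbounded support — inserting a truncation in the integration-by-parts step underlying the uniqueness claim.
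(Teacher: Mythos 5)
Your route is genuinely different from the paper's. The paper never differentiates $f$: it first shows that a (merely Borel) Spence--Mirrlees function is continuous for a refined Polish topology that keeps $\cS(\mu,\nu)$ compact, so an optimizer exists; it then invokes the duality-based monotonicity principle to extract a monotone pair $(\Gamma,M)$ carrying the optimizer, and finally identifies any coupling carried by such a pair with $\rP$ (resp.\ $\lP$) via Theorem~\ref{th:geomCharactRP}. Your layered representation, by contrast, compares $\rP$ directly with an arbitrary competitor through the pointwise-in-$s$ inequality $\shadow{\mu|_{(-\infty,s]}}{\nu}\leq_{cd}\nu^P_s$ tested against the convex decreasing function $f_x(s,\cdot)$; for smooth $f$ with controlled tails this is a short, self-contained optimality proof that bypasses the entire duality apparatus (it is close in spirit to the Henry-Labord\`ere--Touzi argument for the martingale case). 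The core inequality is correct: $\nu^P_s\in\casts{\mu|_{(-\infty,s]}}{\nu}$ by the supermartingale property, so shadow minimality plus the sign structure of $f_x(s,\cdot)$ gives $P(f)\leq\rP(f)$, and the symmetric telescoping from $-\infty$ correctly produces $\lP$ as the \emph{minimizer}.

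There are, however, three concrete gaps. First, the representation $P(f)=\nu(C_f)-\int\nu^P_s(f_x(s,\cdot))\,ds$ is not justified under the stated hypotheses: the limit $C_f(y)=\lim_{x\to\infty}f(x,y)$ need not exist, the bound $|f|\leq a(x)+b(y)$ gives no control on $f_x$, and the Fubini interchange requires the iterated integral to converge absolutely --- none of this is automatic even for smooth $f$, and the inequality $\rP(f)-P(f)=\int(\nu^P_s-\nu^{\rP}_s)(f_x(s,\cdot))\,ds\geq0$ only helps once both sides of the representation are known to be finite. Second, and most seriously, the reduction from Borel $f$ to smooth $f$ is the actual content of the theorem and remains a sketch: mollifying $f$ does not obviously preserve an integrable majorant ($a$ is only in $L^1(\mu)$, and translates of $a$ need not be $\mu$-integrable), $f_n\to f$ only Lebesgue-a.e.\ rather than $P$-a.s.\ for every $P$, and the discrete-marginal route needs a stability theory for $\rP,\lP$ that the paper does not provide (it is only available in the martingale case via \cite{Juillet.14}). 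This is precisely the difficulty the paper's topology-refinement argument (Proposition~\ref{pr:spenceMirrlessCont}) is designed to avoid. Third, your uniqueness step presumes that $f_x(s,\cdot)$ is \emph{strictly} convex for each $s$; the hypothesis only makes the increments $f(x_2,\cdot)-f(x_1,\cdot)=\int_{x_1}^{x_2}f_x(s,\cdot)\,ds$ strictly convex and strictly decreasing, so strictness for an individual $s$ can fail. This is repairable --- from $\int p_{\nu^P_s-\nu^{\rP}_s}(t)\,m_s(dt)\,ds=0$ (with $m_s$ the second derivative measure of $f_x(s,\cdot)$) one can use right-continuity of $s\mapsto p_{\nu^P_s}$ and the full support of $\int_{s_0}^{s_0+\delta}m_s\,ds$ to derive a contradiction --- but as written the argument does not close.
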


Since $\rP$ and $\lP$ correspond to the combinations $f_{xy}<0, f_{xyy}>0$ and $f_{xy}>0, f_{xyy}<0$ of known conditions, it is natural to ask for the remaining two combinations, $f_{xy}>0,f_{xyy}>0$ and $f_{xy}<0,f_{xyy}<0$. While the associated optimal transports also have interesting features, they turn out to depend on the function $f$ within that class and hence, \emph{cannot} be called canonical; cf.\ Section~\ref{se:noncanonicalCouplings}.

The third characterization of $\rP$ and $\lP$ is through their supports. A point $(x,y)$ in the support can be thought of as a path that the transport is using, and the conditions are expressed as crossing or no-crossing conditions between the paths of the transport. While this characterization is an incarnation of the \emph{$c$-cyclical monotonicity} of classical transport, the supermartingale constraint requires a novel distinction of the origins $x$ into a set $M$ of ``martingale points'' and their complement. Intuitively, the supermartingale constraint is binding at points of $M$ and absent elsewhere---this will be made precise later on (Corollary~\ref{co:extremalDecomp}). Thus, we work with a Borel set $\Gamma\in\cB(\R^{2})$ that should be thought of as a support, and a second set $M\in\cB(\R)$. 
Consider arbitrary paths $(x_{1},y_{1}),(x_{2},y_{2})\in\Gamma$ with $x_{1}<x_{2}$; then, we call the pair $(\Gamma,M)$
 \begin{enumerate}
 \item  first-order left-monotone if $y_{1}\leq y_{2}$ whenever $x_{2}\notin M$,
 \item  first-order right-monotone if $y_{2}\leq y_{1}$ whenever $x_{1}\notin M$.
 \end{enumerate}
We also need the following properties of $\Gamma$ alone: considering three paths  
$(x,y_{1}),(x,y_{2}),(x',y')\in\Gamma$ with $y_{1}<y_{2}$, the set $\Gamma$ is second-order left-monotone (right-monotone) if $y'\notin (y_{1},y_{2})$ whenever $x<x'$ ($x>x'$). The latter two properties are taken from~\cite{BeiglbockJuillet.12} where they are simply called left- and right-monotonicity, and all four properties are summarized in Figure~\ref{fi:forbiddenConfigs}.

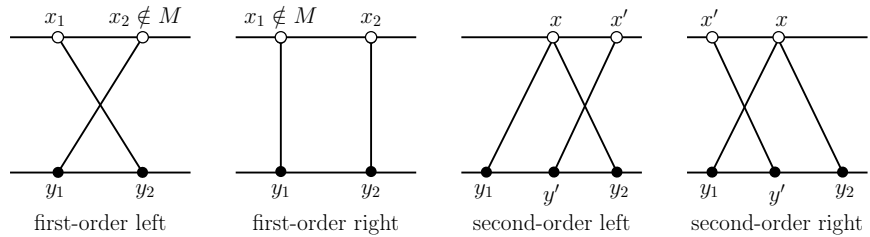
\begin{figure}[ht]
\begin{center}
\scalebox{0.6}{
\begin{tikzpicture}
\draw[very thick] (0,0) -- (4,0) {};
\draw[very thick] (0,3) -- (4,3) {};
\draw[very thick, o-*] (1,3.1) node[above] {\Large $x_1$} -- (3,-0.1) node[below] {\Large $y_2$};
\draw[very thick,o-*] (3,3.1) node[above] {\Large $x_2\notin M$} -- (1,-0.1) node[below] {\Large $y_1$};
\node[draw,thick,circle,fill=white,minimum size=1pt,inner sep=2.5pt] at (1.06,3) {};
\node[draw,thick,circle,fill=white,minimum size=1pt,inner sep=2.5pt] at (2.94,3) {};
\node[draw = none] at (2,-1.1) {\Large first-order left};

\draw[very thick] (5,0) -- (9,0) {};
\draw[very thick] (5,3) -- (9,3) {};
\draw[very thick,o-*] (6,3.1) node[above] {\Large $x_1\notin M$} -- (6,-0.1) node[below] {\Large $y_1$};
\draw[very thick,o-*] (8,3.1) node[above] {\Large $x_2$} -- (8,-0.1) node[below] {\Large $y_2$};
\node[draw,thick,circle,fill=white,minimum size=1pt,inner sep=2.5pt] at (6,3) {};
\node[draw,thick,circle,fill=white,minimum size=1pt,inner sep=2.5pt] at (8,3) {};
\node[draw = none] at (7,-1.15) {\Large first-order right};

\draw[very thick] (10,0) -- (14,0) {};
\draw[very thick] (10,3) -- (14,3) {};
\draw[very thick,o-*] (12.1,3.1) node[above] {\Large $x$} -- (10.5,-0.1) node[below] {\Large $y_1$};
\draw[very thick,-*] (12,3) -- (13.5,-0.1) node[below] {\Large $y_2$};
\draw[very thick,o-*] (13.5,3.1) node[above] {\Large $x'$} -- (12,-0.1) node[below] {\Large $y'$};
\node[draw,thick,circle,fill=white,minimum size=1pt,inner sep=2.5pt] at (12.03,3) {};
\node[draw,thick,circle,fill=white,minimum size=1pt,inner sep=2.5pt] at (13.44,3) {};
\node[draw = none] at (12,-1.1) {\Large second-order left};

\draw[very thick] (15,0) -- (19,0) {};
\draw[very thick] (15,3) -- (19,3) {};
\draw[very thick,o-*] (17.1,3.1) node[above] {\Large $x$} -- (15.5,-0.1) node[below] {\Large $y_1$};
\draw[very thick,-*] (17,3) -- (18.5,-0.1) node[below] {\Large $y_2$};
\draw[very thick,o-*] (15.5,3.1) node[above] {\Large $x'$} -- (17,-0.1) node[below] {\Large $y'$};
\node[draw,thick,circle,fill=white,minimum size=1pt,inner sep=2.5pt] at (17.03,3) {};
\node[draw,thick,circle,fill=white,minimum size=1pt,inner sep=2.5pt] at (15.56,3) {};
\node[draw = none] at (17,-1.15) {\Large second-order right};
\end{tikzpicture}} 
\vspace{-1em}\end{center}
\caption{\emph{Forbidden} configurations in the monotonicity properties}
\label{fi:forbiddenConfigs}
\end{figure}

The following result is the summary of Theorem~\ref{th:geomCharactRP} and Corollary~\ref{co:geomCharactRPConverse} in the body of the paper.

\begin{theorem}\label{th:geomCharactRPIntro}
  There exist nondegenerate\footnote{This is a minor notion detailed in Definition~\ref{de:nondegenerate}.} $(\Gamma,M)\in\cB(\R^{2})\times\cB(\R)$ which are first-order right-monotone and second-order left-monotone such that $\rP$ is concentrated on $\Gamma$ and $\rP|_{M\times\R}$ is a martingale.
  Conversely, if $(\Gamma,M)$ have those properties and $P\in\cS(\mu,\nu)$ is a transport which is concentrated on $\Gamma$ and $P|_{M\times\R}$ is a martingale, then $P=\rP$.
  
  The analogous statement, interchanging left and right, holds for $\lP$.
\end{theorem}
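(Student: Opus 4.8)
\noindent\emph{Proof strategy.} The plan is to treat the two assertions separately, reducing both to the shadow characterization of Theorem~\ref{th:canonicalCouplingsIntro} and the $\leq_{cd}$-minimality of the shadow from Lemma~\ref{le:ecdSandwich}. For the construction, I would disintegrate $\rP(dx,dy)=\mu(dx)\,\kappa_x(dy)$ and set $M:=\{x\in\R:\bary(\kappa_x)=x\}$. Since the barycenter map is Borel, $M\in\cB(\R)$; by construction $\rP|_{M\times\R}$ is a martingale, and $\bary(\kappa_x)<x$ for $\mu$-a.e.\ $x\notin M$ because $\rP\in\cS(\mu,\nu)$. For $\Gamma$ I take a Borel set carrying $\rP$ obtained from a measurable selection of a suitable version of $\supp\kappa_x$, the nondegeneracy clause of Definition~\ref{de:nondegenerate} being exactly what makes this choice well behaved at atoms of $\mu$. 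It then remains to establish the two monotonicity properties, and both follow from one device: a marginal-preserving re-routing of $\rP$ that stays in $\cS(\mu,\nu)$ and, combined with Lemma~\ref{le:ecdSandwich}, yields a contradiction (one could equally test the re-routings against the optimality in Theorem~\ref{th:optTranspCharactIntro}).

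\emph{The two monotonicity properties.} If first-order right-monotonicity fails, the construction of $\Gamma$ produces, for some $\delta>0$, positive $\mu\otimes\mu$-mass of pairs $x_1<x_2$ with $\bary(\kappa_{x_1})\le x_1-\delta$ and levels $y_1<y_2$ carried respectively by $\kappa_{x_1}$ and $\kappa_{x_2}$. Moving an amount $\varepsilon$ of mass from $(x_1,y_1)$ to $(x_1,y_2)$ and from $(x_2,y_2)$ to $(x_2,y_1)$ preserves both marginals; the barycenter at $x_2$ only decreases while the one at $x_1$ stays $\le x_1$ for small $\varepsilon$ thanks to the slack $\delta$, so the result is a competitor $P'\in\cS(\mu,\nu)$. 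For $\bar x\in[x_1,x_2)$ the second marginal of $P'|_{(-\infty,\bar x]\times\R}$ equals $\shadow{\mu|_{(-\infty,\bar x]}}{\nu}-\varepsilon\delta_{y_1}+\varepsilon\delta_{y_2}$, which is $\leq_{cd}\shadow{\mu|_{(-\infty,\bar x]}}{\nu}$ (moving mass from $y_1$ up to $y_2$ cannot raise a convex decreasing integral) and $\geq_{cd}$ it by the minimality in Lemma~\ref{le:ecdSandwich} (being a supermartingale destination of $\mu|_{(-\infty,\bar x]}$ dominated by $\nu$), hence equals the shadow---impossible as $y_1\ne y_2$. If second-order left-monotonicity fails, take positive mass of configurations $(x,y_1),(x,y_2),(x',y')\in\Gamma$ with $y_1<y'<y_2$, $x<x'$, and write $y'=\alpha y_1+(1-\alpha)y_2$. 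The barycenter-preserving ``martingale'' re-routing---shift $\varepsilon$ from $(x',y')$ to $\alpha\varepsilon$ at $(x',y_1)$ and $(1-\alpha)\varepsilon$ at $(x',y_2)$, and simultaneously $\alpha\varepsilon$ from $(x,y_1)$ and $(1-\alpha)\varepsilon$ from $(x,y_2)$ to $\varepsilon$ at $(x,y')$---preserves both marginals and the barycenters at $x$ and at $x'$, so $P'\in\cS(\mu,\nu)$; for $\bar x\in[x,x')$ the second marginal of $P'|_{(-\infty,\bar x]\times\R}$ is $\shadow{\mu|_{(-\infty,\bar x]}}{\nu}+\varepsilon(\delta_{y'}-\alpha\delta_{y_1}-(1-\alpha)\delta_{y_2})$, which is $\leq_{cd}$ the shadow by convexity and $\geq_{cd}$ it by minimality, again forcing an impossible equality. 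The substance of this half is measure-theoretic---extracting the positive-mass ``bad'' set with a uniform slack and realising each re-routing as an honest element of $\cS(\mu,\nu)$ with unchanged marginals---and I expect that bookkeeping, not any conceptual point, to be the chief obstacle.

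\emph{The converse.} Suppose now $(\Gamma,M)$ is nondegenerate, first-order right-monotone and second-order left-monotone and $P\in\cS(\mu,\nu)$ is concentrated on $\Gamma$ with $P|_{M\times\R}$ a martingale. By the uniqueness in Theorem~\ref{th:canonicalCouplingsIntro} it suffices to show that $P$ transports $\mu|_{(-\infty,x]}$ to $\shadow{\mu|_{(-\infty,x]}}{\nu}$ for every $x$. Fix $x$ and let $\theta_x$ be the second marginal of $P|_{(-\infty,x]\times\R}$; restricting the first coordinate leaves the kernel unchanged, so $\mu|_{(-\infty,x]}\leq_{cd}\theta_x\leq\nu$ and hence $\shadow{\mu|_{(-\infty,x]}}{\nu}\leq_{cd}\theta_x$ by minimality. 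For the reverse inequality I would argue that the geometric hypotheses leave $P$ no freedom: were $\theta_x$ strictly $\leq_{cd}$-above the shadow, the explicit ``one-sided capping'' description of $\shadow{\mu|_{(-\infty,x]}}{\nu}$ in Lemma~\ref{le:ecdSandwich} would produce either a level across which $P$ carries some mass of $\mu|_{(-\infty,x]}$ while carrying mass of $\mu|_{(x,\infty)}$ from strictly below it, or two levels $y_1<y_2$ both reached from $(-\infty,x]$ with a point from the right strictly between them---precisely the configurations forbidden by first-order right- and second-order left-monotonicity---unless the offending origin lies in $M$, in which case the martingale property of $P|_{M\times\R}$ already pins its destination. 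Hence $\theta_x=\shadow{\mu|_{(-\infty,x]}}{\nu}$ for every $x$, so $P=\rP$. Converting the pointwise monotonicity properties into this global convex-order identity---essentially running the shadow recursion along $P$ and verifying at each level that the constraints admit no alternative---parallels the identification of the Left-Curtain coupling in \cite{BeiglbockJuillet.12}; the genuinely new point, and where I would expect the real effort, is the accounting for the martingale set $M$. The claim for $\lP$ follows from the mirror-image argument, interchanging left and right and replacing $\mu|_{(-\infty,x]}$ by $\mu|_{[x,\infty)}$ throughout.
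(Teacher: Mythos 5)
Your high-level plan (shadow minimality as the engine for both directions) is reasonable, and the formal skeleton of your perturbation argument is sound when all the relevant points carry atoms: the perturbed destination measure lies in $\casts{\mu|_{(-\infty,\bar x]}}{\nu}$ and hence dominates the shadow, while the explicit Dirac computation shows it is $\leq_{cd}$ the shadow, forcing an impossible equality. However, both halves contain genuine gaps rather than mere bookkeeping.

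For the existence of $(\Gamma,M)$, the step you defer is the crux. A point $(x_1,y_1)$ of a set $\Gamma$ built from measurable selections of $\supp\kappa_x$ need not charge mass: $y_1\in\supp\kappa_{x_1}$ gives $\kappa_{x_1}(V)>0$ for neighborhoods $V$ of $y_1$ but says nothing about $\kappa_x(V)$ for $x$ near $x_1$, so a single bad configuration in $\Gamma$ does not yield a positive-$\mu\otimes\mu$-mass set of bad pairs; and even a positive-mass set with the uniform slack $\bary(\kappa_x)\le x-\delta$ cannot be extracted from the pointwise statement $\bary(\kappa_{x_1})<x_1$, since $x\mapsto\kappa_x$ is only measurable. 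This passage from support points to perturbable mass is precisely the obstruction that the paper avoids: there, $(\Gamma,M)$ comes from the dual optimizer via the monotonicity principle (Theorem~\ref{th:monotonicityPrinciple}), whose competitor comparison is a pointwise statement about finitely supported measures concentrated on $\Gamma$ --- no perturbation of $\rP$ is ever carried out --- and the monotonicity properties then follow from Proposition~\ref{pr:spenceImpliesGammaMonotone} applied to a smooth supermartingale Spence--Mirrlees reward, for which $\rP$ is the unique optimizer (Corollary~\ref{co:optTranspCharact}). Your second-order re-routing has an additional difficulty once smeared over neighborhoods: one must preserve barycenters and keep the smeared analogue of $\delta_{y'}-\alpha\delta_{y_1}-(1-\alpha)\delta_{y_2}\leq_{cd}0$, which requires a construction you have not given.

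For the converse, the decisive step is asserted rather than proved: the claim that $\theta_x\neq\shadow{\mu|_{(-\infty,x]}}{\nu}$ ``would produce'' a forbidden configuration, ``unless the offending origin lies in $M$, in which case the martingale property already pins its destination,'' is a restatement of the conclusion. The paper's proof of this half is its most delicate argument: from the nontrivial signed measure $\nu^{\rP}_x-\nu^{P}_x$ it extracts, via Lemma~\ref{le:signedSupp}, a point $a\in\supp(\sigma^+)$ and the test function $(b-y)^{+}\1_{[a,\infty)}$, and then runs a case analysis on $x\in M$ versus $x\notin M$ and $a\le x$ versus $a>x$ that relies on Remark~\ref{rk:leftClosed} (nondegeneracy plus first-order right-monotonicity force $M$ to be an initial segment of $\Gamma^1$, whence $P|_{(-\infty,x]\times\R}$ is a martingale for $x\in M$), on the additivity of the shadow (Proposition~\ref{pr:shadowAdditive}) to compare the increment $\nu^{P}_x-\nu^{P}_a$ with the corresponding shadow increment, and on Lemma~\ref{le:supportsOrdered}. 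None of these devices appears in your sketch, and without them the ``reverse inequality'' remains unestablished. You correctly locate the difficulty in the interaction with $M$, but that is exactly the part that must be supplied.
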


With some additional work, these theorems will allow us to explain the geometric features apparent in Figures~\ref{fi:simulationIncr} and~\ref{fi:simulationDecr}. To that end, let us first recall two pairs of related couplings.

Our characterizations highlight the analogies between $\rP,\lP$ and the classical Hoeffding--Fr\'echet and antitone couplings $P_{HF}, P_{AT}\in\Pi(\mu,\nu)$; see, e.g., \cite[Section~3.1]{RachevRuschendorf.98a}. Indeed, the latter have a minimality property similar to Theorem~\ref{th:canonicalCouplingsIntro}, but for the first stochastic order instead of the convex-decreasing one. Moreover, they are optimal transports for reward functions satisfying the classical Spence--Mirrlees condition $f_{xy}>0$ and its reverse, and they are characterized by what we called the first-order left- and right-monotonicity of their supports $\Gamma$ (with $M=\R$). 

The second pair of related couplings is given by the Left- and Right-Curtain couplings $P_{LC}, P_{RC}$ introduced in \cite{BeiglbockJuillet.12} where martingale transport is studied; that is, the given marginals are in \emph{convex} order and the transports are martingales. Indeed, these couplings are special cases of $\rP$ and $\lP$ that arise when the marginals $\mu\leq_{cd}\nu$ have the same barycenter---this corresponds to the fact that a supermartingale 
with constant mean is a martingale and vice versa. In that case, the first-order properties turn out to be irrelevant: in the shadow construction, the barycenter is constant and hence only the variance needs to be minimized; the condition for the reward functions is $f_{xyy}>0$ (or $<0$), and the second-order monotonicity property of $\Gamma$ alone describes the support. As we shall see, it is the \emph{interaction} between the first and second-order properties as well as the set $M$ that generates the rich structure of $\rP$ and $\lP$.

Turning to $\rP$ in Figure~\ref{fi:simulationIncr}, the first observation is that there are only two types of transport kernels. On the left, $\rP$ uses martingale kernels of the Left-Curtain type: each point on the $x$-axis is mapped to two points on the $y$-axis, and any two points $x,x'$ satisfy the condition of second-order left-monotonicity. On the right, the transport is of Monge-type (each point $x$ is mapped to a single point~$y$) and has the structure of an antitone coupling: any two paths intersect, which is the first-order right-monotonicity property. On the strength of the same  property, points $x$ in the portion to the right (thus not in $M$) can further be divided into two groups---the left group is mapped to points $y$ to the right of the destinations of the martingale points, and vice versa. These facts about $\rP$ are true not only in our example, but for arbitrary atomless marginals $\mu\leq_{cd}\nu$; see Remark~\ref{rk:propertiesFromIntro}.

Let us now turn to $\lP$ in Figure~\ref{fi:simulationDecr}. Similarly as before, we observe two types of paths; the Right-Curtain and the Hoeffding--Fr\'echet kernels. However, the intervals of martingale and  deterministic transport alternate twice---there is no longer a unique phase transition; in general, there can be countably many transitions. On the other hand, the order of the intervals is now preserved by the transport---this corresponds to the combination of the first- and second-order properties. These two differences show that the geometries of $\lP$ and $\rP$ differ fundamentally and suggest that one cannot obtain one coupling from the other by a transformation of the marginals. By contrast, it is well known that $P_{AT}$ can be constructed from $P_{HF}$ via the transformation $(x,y)\mapsto (x,-y)$, whereas $P_{RC}$ can be obtained from $P_{LC}$ via $(x,y)\mapsto (-x,-y)$.

One common feature of $\rP$ and $\lP$ is that each consists of an optimal martingale transport and an optimal (unconstrained) Monge--Kantorovich transport. That turns out to be a general fact: a result that we call Extremal Decomposition (Corollary~\ref{co:extremalDecomp}) states that given an optimal supermartingale transport~$P$ for an arbitrary reward function $f$, the restriction of $P$ to $M\times\R$ is an optimal martingale transport and the restriction to $M^{c}\times\R$ is an optimal Monge--Kantorovich transport between its own marginals. (These marginals, however, are not easily determined a priori.)

\subsection{Methodology and Literature}

Most of our results are based on the study of the optimal transport problem~\eqref{eq:optTranspIntro}. We analyze this problem for general, Borel-measurable reward functions $f$, formulate a corresponding dual problem and establish strong duality; i.e., absence of a duality gap and existence of dual optimizers. A formal application of Lagrange duality suggests to consider triplets $\varphi\in L^{1}(\mu)$, $\psi\in L^{1}(\mu)$, $h:\R\to\R_{+}$ such that
\begin{equation}\label{eq:formalDualIntro}
  \varphi(x)+\psi(y)+h(x)(y-x)\geq f(x,y),\quad (x,y)\in\R^{2}
\end{equation}
and define the dual value as $\inf \{\mu(\varphi)+\nu(\psi)\}$, where the infimum is taken over all triplets. Indeed, $\varphi$ and $\psi$ are Lagrange multipliers for the constraints~$\mu$ and~$\nu$, whereas $h(x)(y-x)$ with $h\geq0$ represents the supermartingale constraint $E^{P}[Y|X]\leq X$. We refer to \cite[Section~5]{HobsonNeuberger.12} for an intuitive discussion of the Lagrangian approach. While the corresponding duality for standard transport (without $h$) is valid by the celebrated result of \cite{Kellerer.84}, the dual problem for the supermartingale case needs to be relaxed in three ways to avoid a duality gap and ensure dual existence  (Theorem~\ref{th:dualityGlobal}). Namely, the range of $h$ needs to be widened on parts of the state space, the integrability of $\varphi$ and $\psi$ needs to be loosened, and the inequality~\eqref{eq:formalDualIntro} needs to be relaxed on paths $(x,y)$ that are not used by any transport (see Section~\ref{se:counterexDuality} for  pertinent counterexamples). In particular, it is important to classify all obstructions to supermartingale couplings; i.e., ``barriers'' that cannot be crossed (Proposition~\ref{pr:barriers}). Remarkably, there are no barriers beyond a specific point as soon as the barycenters of the marginals are not identical: the state space decomposes into one half-plane behaving as in the martingale case and another half-plane behaving as in classical transport.

For the martingale transport, a related duality theory was provided in~\cite{BeiglbockNutzTouzi.15}. In that case, the barycenters of the marginals agree and the compactness arguments underlying the duality focus on controlling the convexity of certain functions. While we shall greatly benefit from those ideas, the supermartingale case requires us to control simultaneously first and second order properties (slope and convexity) which gives rise to substantial differences on the technical side; in fact, it turns out that controlling the slope necessitates a nontrivial increment between the barycenters of $\mu$ and $\nu$. The above-mentioned decomposition of the state space is instrumental here.

Strong duality results in a monotonicity principle (Theorem~\ref{th:monotonicityPrinciple}) along the lines of the $c$-cyclical monotonicity condition  of classical transport (e.g., \cite[Theorem 2.13]{AmbrosioGigli.13}): a variational result linking the optimality of a transport to the pointwise properties of its support. This principle is our main tool to study the couplings $\rP$ and $\lP$, parallel to the celebrated variational principle for the martingale case in \cite{BeiglbockJuillet.12} which has pioneered the idea that concepts similar to cyclical monotonicity can be developed beyond the classical transport setting. In the supermartingale transport problem, the monotonicity principle has a novel form describing a pair $(\Gamma,M)$ of sets as in Theorem~\ref{th:geomCharactRPIntro} rather than the support $\Gamma$ alone. The set $M$ enters the variational formulation by determining the class of competitors, much like it determines which paths are subject to the first-order monotonicity condition, and turns out to be fundamental in determining the geometries of $\rP$ and $\lP$.

As a variational result, the monotonicity principle necessitates knowing a priori that an optimal transport exists. We show that a supermartingale Spence--Mirrlees function $f$ is automatically continuous (Proposition~\ref{pr:spenceMirrlessCont}) in a tailor-made topology that is coarse enough to preserve weak compactness of $\cS(\mu,\nu)$, and that yields the required existence. This result, together with the purely geometric formulation of the Spence--Mirrlees conditions (Definition~\ref{de:spenceMirrlees}), also improves the literature on martingale transport \cite{BeiglbockJuillet.12, HenryLabordereTouzi.13, Juillet.14} where a range of assumptions is imposed on~$f$ both to ensure existence and to express the Spence--Mirrlees condition in terms of partial derivatives or a specific functional form; cf.\ Corollary~\ref{co:optTranspCharact}. A second generalization is that Theorem~\ref{th:optTranspCharactIntro} remains true if the Spence--Mirrlees condition~\eqref{eq:superMartSpenceIntro} is satisfied in the non-strict sense, except that the optimizer need not be unique.

With the appropriate notions in place, the proofs of Theorems~\ref{th:optTranspCharactIntro} and~\ref{th:geomCharactRPIntro} use the monotonicity principle to analyze the interplay between the geometry of the set $M$ and the first- and second-order monotonicity and Spence--Mirrlees conditions. The construction of~$\rP$ and~$\lP$ with the minimality property of Theorem~\ref{th:canonicalCouplingsIntro} rests on the precise understanding of the shadow of a single atom (Lemma~\ref{le:shadowOfDirac}) and compactness arguments;
a novel phenomenon is that the barycenter of the shadow needs to be found through an optimization rather than being known a priori as in the martingale case.

To the best of our knowledge, supermartingale couplings have not been specifically studied in the extant literature. However, as indicated above, martingale optimal transport has received considerable attention since it was introduced in \cite{BeiglbockHenryLaborderePenkner.11} and \cite{GalichonHenryLabordereTouzi.11}. In particular, \cite{BeiglbockJuillet.12,HenryLabordereTouzi.13, HobsonKlimmek.15,HobsonNeuberger.12,Juillet.14} study optimal martingale transports between two marginals for specific cost functions; the martingale Spence--Mirrlees condition in the form $f_{xyy}>0$ appears for the first time in \cite{HenryLabordereTouzi.13}, generalizing the functional form used in \cite{BeiglbockJuillet.12}. The non-strict condition, as well as the geometric definition, are novelties of this paper. We also remark that some of the technical developments in Sections~\ref{se:spenceMirrleesAndGeom}--\ref{se:geomCharact} provide simplifications with respect to previous works, when specialized to the martingale case.

While martingale (equality) constraint and classical (unconstrained) case can occur as special cases of supermartingale transport, the more surprising discovery is that the latter can be ``built'' from these two ingredients: the supermartingale (inequality) constraint is decomposed into two extremal cases. This forms a common thread in this paper, starting with the analysis of the maximal barrier which splits the plane into half-planes behaving like in these two cases and thus allows us to apply the compactness result of Proposition~\ref{pr:closednessIrred}. The variational principle decomposes the domain into points where the supermartingale constraint is felt as an equality constraint and points where it is not felt at all, and correspondingly, the Extremal Decomposition shows that any optimal supermartingale transport can be split into a martingale-optimal one and an  optimizer of an unconstrained problem. Conversely, our study of optimal supermartingale transports for Spence--Mirrlees reward functions in Sections~\ref{se:spenceMirrleesAndGeom}--\ref{se:geomCharact} shows how the geometric properties of Left-Curtain and Fr\'echet--Hoeffding couplings interact to create the patterns of the canonical supermartingale couplings.

Martingale optimal transport is motivated by considerations of model uncertainty in financial mathematics.
If, in the financial context, dynamic hedging is restricted by a no-shorting constraint, the dual problem  is supermartingale transport. Thus, it can be seen as a special case of the dual problem in \cite{FahimHuang.14} where general portfolio constraints are studied.
For background on Monge--Kantorovich  transport, we refer to \cite{AmbrosioGigli.13,RachevRuschendorf.98a,RachevRuschendorf.98b,Villani.03,Villani.09}. Recently, a rich literature has emerged around martingale  transport and model uncertainty; see \cite{Hobson.11, Obloj.04, Touzi.14} for surveys and, e.g., \cite{AcciaioBeiglbockPenknerSchachermayer.12, BeiglbockNutz.14, BouchardNutz.13, BurzoniFrittelliMaggis.15, CampiLaachirMartini.14, CheriditoKupperTangpi.14, DeMarcoHenryLabordere.15, FahimHuang.14, GhoussoubKimLim.15, GozlanRobertoSamsonTetali.14, Griessler.16, Nutz.13, NutzStebeggTan.17, Zaev.14} for models in discrete time, \cite{BeiglbockCoxHuesmannPerkowskiPromel.15, BiaginiBouchardKardarasNutz.14, CoxHouObloj.14, CoxObloj.11, DolinskyNeufeld.15, 
DolinskySoner.12, DolinskySoner.14, HenryLabordereOblojSpoidaTouzi.12, HenryLabordereTanTouzi.14, Hobson.98, NeufeldNutz.12, Nutz.14, 
Stebegg.14, TanTouzi.11} for continuous time, and \cite{BeiglbockCoxHuesman.14, BeiglbockHenryLabordereTouzi.15, BeiglbockHuesmannStebegg.15, Cox.08, CoxOblojTouzi.15, GuoTanTouzi.15, GuoTanTouzi.15a, GuoTanTouzi.15b, HirschProfetaRoynetteYor.11, Hobson.15, KallbladTanTouzi.15, OblojSpoida.13}
for related Skorokhod embedding and mimicking problems.

The remainder  of this paper is organized as follows. While Section~\ref{se:prelimSecondStochOrder} recalls basic facts related to the convex-decreasing order, Section~\ref{se:BarriersAndPolarSets} contains a complete description of the barriers to supermartingale couplings and more precisely, the structure of $\cS(\mu,\nu)$-polar sets. After these preparations, Section~\ref{se:duality} presents a complete duality theory for Borel reward functions, and Section~\ref{se:monotonicityPrinciple} formulates the resulting monotonicity principle. Section~\ref{se:shadowConstruction} introduces the couplings $\rP$ and $\lP$ via the shadow construction. In Section~\ref{se:spenceMirrleesAndGeom}, we propose the Spence--Mirrlees conditions for reward functions and show via the monotonicity principle that the associated optimal transports are supported on sets $(\Gamma,M)$ satisfying corresponding monotonicity properties. Section~\ref{se:geomCharact} continues the analysis by showing that any coupling supported on such sets must coincide with $\rP$ or $\lP$, respectively. In Section~\ref{se:regularityOfSpenceMirrlees}, we close the circle:  Spence--Mirrlees functions are shown to admit optimal transports and on the strength of the duality theory, that allows us to conclude the existence of suitable sets $(\Gamma,M)$. The main theorems stated in the Introduction then follow. The concluding Section~\ref{se:counterex} collects a number of counterexamples.

\section{Preliminaries}\label{se:prelimSecondStochOrder}

It will be useful to consider finite measures, not necessarily normalized to be probabilities. Let $\mu,\nu$ be finite measures on $\R$ with finite first moment. Extending the notation from the Introduction, we write $\Pi(\mu,\nu)$ for the set of all couplings; i.e., measures $P$ on $\R^{2}$ such that $P\circ X^{-1}=\mu$ and $P\circ Y^{-1}=\nu$, where $(X,Y):\R^{2}\to\R^{2}$ is the identity. Moreover, $\cS(\mu,\nu)$ is the subset of all $P\in\Pi(\mu,\nu)$ which are supermartingales; i.e., $\int Y\1_{A}(X)\,dP \leq \int X\1_{A}(X)\,dP$ for all $A\in\cB(\R)$, and finally $\cM(\mu,\nu)$ consist of all $P\in\Pi(\mu,\nu)$ satisfying this condition with equality.

We say that~$\mu$ and~$\nu$ are in \emph{convex-decreasing order}, or \emph{second stochastic order}, denoted $\mu\leq_{cd}\nu$, if $\mu(\phi)\leq \nu(\phi)$ for any convex, nonincreasing function $\phi: \R\to \R$. It then follows that $\mu$ and $\nu$ have the same total mass; moreover, we shall use repeatedly that it suffices to check the inequality for functions~$\phi$ of linear growth. An alternative characterization of this order refers to the put (price) function, defined by
$$
  p_{\mu}: \R\to\R,\quad   p_{\mu}(t) := \int (t-s)^{+}\, \mu(ds).
$$
Writing $\bary(\mu) := (\int x \,d\mu)/\mu(\R)$ for the barycenter (with $\bary(\mu):=0$ if $\mu=0$) and $\partial^{\pm}p_\mu$ for the right and left derivatives, the following properties are easily verified:

\begin{enumerate} 
  \item $p_\mu$ is nonnegative, increasing\footnote{Throughout this paper, increasing means nondecreasing.},
  convex, and $\partial^{+}p_\mu(t) - \partial^{-}p_\mu(t) = \mu(\{t\})$,  
  \item $\lim_{t \to -\infty} p_\mu(t) = 0$ and $\lim_{t \to \infty} p_\mu(t) = \infty\1_{\mu\neq0}$,
  \item $\lim_{t \to \infty} \{p_\mu(t) - \mu(\R)[t-\bary(\mu)]\} = 0$.
\end{enumerate}

In particular, we may extend $p_\mu$ continuously to $\overline{\R}=[-\infty,\infty]$. The following result is classical; see, e.g., \cite[Theorem~2.58]{FollmerSchied.11}.

\begin{proposition}\label{pr:convexOrder} Let $\mu,\nu$ be finite measures on $\R$ with finite first moment and $\mu(\R)=\nu(\R)$. The following are equivalent:
  \begin{enumerate}
	  \item  $\mu\leq_{cd}\nu$,
	  \item $p_{\mu}\leq p_{\nu}$,
	  \item $\cS(\mu,\nu)\neq \emptyset$,
		\item there exists a stochastic kernel $\kappa(x,dy)$ with finite mean such that $\int y \,\kappa(x,dy)\leq x$ for all $x\in\R$ and $\nu= (\mu\otimes \kappa) \circ Y^{-1}$, where $\mu\otimes \kappa$ denotes the product. 
	\end{enumerate}
\end{proposition}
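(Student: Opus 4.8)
The plan is to prove the cycle (i)~$\Rightarrow$~(ii)~$\Rightarrow$~(iv)~$\Rightarrow$~(iii)~$\Rightarrow$~(i). Three of these are immediate. For (i)~$\Rightarrow$~(ii): for fixed $t$ the function $s\mapsto(t-s)^{+}$ is convex and nonincreasing, so $\mu\leq_{cd}\nu$ applied to it gives $p_{\mu}(t)\leq p_{\nu}(t)$. For (iv)~$\Rightarrow$~(iii): the kernel in (iv) produces $P:=\mu\otimes\kappa\in\cS(\mu,\nu)$, so $\cS(\mu,\nu)\neq\emptyset$. For (iii)~$\Rightarrow$~(i): given $P\in\cS(\mu,\nu)$, disintegrate $P=\mu\otimes\kappa$ (regular conditional distributions exist on the Polish space $\R$, and $\kappa(x,\cdot)$ is a probability kernel), set $\bar\kappa(x):=\int y\,\kappa(x,dy)\leq x$, and for any convex nonincreasing $\phi$ of linear growth combine conditional Jensen with monotonicity of $\phi$:
$$ \nu(\phi)=\int\!\int\phi(y)\,\kappa(x,dy)\,\mu(dx)\;\geq\;\int\phi(\bar\kappa(x))\,\mu(dx)\;\geq\;\int\phi(x)\,\mu(dx)=\mu(\phi). $$
Since it suffices to test $\leq_{cd}$ against linear-growth functions, $\mu\leq_{cd}\nu$ follows.

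The substance is in (ii)~$\Rightarrow$~(iv), and the idea is to split the desired supermartingale kernel into a deterministic downward shift followed by a martingale kernel, thereby reducing to the classical Strassen theorem for the convex order. The asymptotic relation $p_{\mu}(t)=\mu(\R)[t-\bary(\mu)]+o(1)$ (and likewise for $\nu$) shows that $p_{\mu}\leq p_{\nu}$ forces $\bary(\nu)\leq\bary(\mu)$. If equality holds, take $\tilde\mu:=\mu$; otherwise $\bary(\nu)<\bary(\mu)$, and I would pick $t_{0}\in\R$ so that
$$ \tilde\mu:=\mu|_{(-\infty,t_{0})}+\mu\big([t_{0},\infty)\big)\,\delta_{t_{0}} $$
has barycenter $\bary(\nu)$; this is possible because $\bary(\tilde\mu)$ varies continuously with $t_{0}$, ranging over $(-\infty,\bary(\mu))$. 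The map $T(x):=x\wedge t_{0}$ then satisfies $T(x)\leq x$ and $\mu\circ T^{-1}=\tilde\mu$. Using the elementary inequality $p_{\nu}(t)\geq\mu(\R)(t-\bary(\nu))$ together with convexity of $p_{\mu}$ and $\partial^{+}p_{\mu}\leq\mu(\R)$, one checks that $p_{\tilde\mu}=\max\{\,p_{\mu},\ \mu(\R)(\,\cdot\,-\bary(\nu))\,\}$, whence $p_{\tilde\mu}\leq p_{\nu}$; since moreover $\bary(\tilde\mu)=\bary(\nu)$ and $\tilde\mu(\R)=\nu(\R)$, the measures $\tilde\mu$ and $\nu$ are in convex order. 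Strassen's theorem (see, e.g., \cite{FollmerSchied.11}) then provides a martingale kernel $\rho(x,dy)$ with $\int y\,\rho(x,dy)=x$ and $(\tilde\mu\otimes\rho)\circ Y^{-1}=\nu$, and $\kappa(x,dy):=\rho(T(x),dy)$ does the job: $(\mu\otimes\kappa)\circ Y^{-1}=\nu$ because $\mu\circ T^{-1}=\tilde\mu$, and $\int y\,\kappa(x,dy)=T(x)\leq x$, which is exactly~(iv).

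The main obstacle is precisely this reduction, and within it the appeal to the martingale Strassen theorem: the existence of a martingale coupling between measures in convex order is the genuinely nontrivial ingredient, proved classically either by a Hahn--Banach/minimax argument in a space of signed measures or by an explicit limiting construction. As the proposition is stated to be classical, I would cite it rather than reprove it; alternatively one could bypass it and deduce (ii)~$\Rightarrow$~(iii) directly from the Hahn--Banach duality developed later in the paper for general reward functions, but the route above is the shortest for the present statement. The remaining points — the put-function and first-order stochastic-order identities for $\tilde\mu$, existence of the disintegrations, and measurability of the composed kernel $\kappa$ — are routine on the Polish space $\R$.
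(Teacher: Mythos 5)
Your proof is correct. Note that the paper does not prove this proposition at all --- it is stated as classical with a citation to \cite{FollmerSchied.11} --- so what you have written is a genuine supplement rather than a variant of an argument in the text. The three easy implications are handled exactly as one would expect ((iii)~$\Rightarrow$~(i) correctly combines conditional Jensen with monotonicity, and the restriction to linear-growth test functions disposes of integrability). The substantive step (ii)~$\Rightarrow$~(iv) via the factorization ``deterministic downward shift $T(x)=x\wedge t_{0}$, then a martingale kernel'' is a clean reduction to the martingale form of Strassen's theorem, which is the right thing to cite rather than reprove. The one place that genuinely needs checking is the identity $p_{\tilde\mu}=\max\{p_{\mu},\,\mu(\R)(\cdot-\bary(\nu))\}$, and it does hold: a direct computation gives $p_{\tilde\mu}=p_{\mu}$ on $(-\infty,t_{0}]$ and $p_{\tilde\mu}(t)=\mu(\R)(t-\bary(\tilde\mu))=\mu(\R)(t-\bary(\nu))$ on $[t_{0},\infty)$, continuity at $t_{0}$ forces $p_{\mu}(t_{0})=\mu(\R)(t_{0}-\bary(\nu))$, and since $t\mapsto p_{\mu}(t)-\mu(\R)(t-\bary(\nu))$ is convex with nonpositive asymptotic slope (hence nonincreasing), the piecewise formula is indeed the pointwise maximum; combined with $p_{\nu}(t)\geq\nu(\R)(t-\bary(\nu))$ this yields $p_{\tilde\mu}\leq p_{\nu}$, and with $\bary(\tilde\mu)=\bary(\nu)$ and equal masses one gets $\tilde\mu\leq_{c}\nu$ as claimed. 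Two trivial remarks: the degenerate case $\mu=0$ should be set aside at the outset, and in (iii)~$\Rightarrow$~(i) the inequality $\bar\kappa(x)\leq x$ only holds $\mu$-a.e., which is of course enough for the integral chain. Neither affects the validity of the argument.
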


In all that follows, the statement $\mu\leq_{cd}\nu$ implicitly means that $\mu,\nu$ are finite measures on $\R$ with finite first moment. Moreover, such a pair and the corresponding supermartingale optimal transport problem will be called \emph{proper} if the barycenters of $\mu$ and $\nu$ do \emph{not} coincide. In the improper case, the problem degenerates to a martingale optimal transport problem because any supermartingale with constant mean is a martingale. Indeed, let us convene that~$\mu$ and~$\nu$ are in \emph{convex order}, denoted $\mu\leq_{c}\nu$, if $\mu(\phi)\leq \nu(\phi)$ for any convex function $\phi: \R\to \R$, and introduce the symmetric potential function 
$u_{\mu}: \R\to\R$ by $u_{\mu}(t) := \int |t-s|\, \mu(ds)$.
Given $\mu\leq_{cd}\nu$, the following are then equivalent:
(a)~$\bary(\mu)=\bary(\nu)$,
(b)~$\mu\leq_{c}\nu$,
(c)~$u_{\mu}\leq u_{\nu}$,
(d)~$\cM(\mu,\nu)\neq \emptyset$,
(e)~the kernel $\kappa$ in (iv) can be chosen with $\int y \,\kappa(x,dy)= x$ for all $x\in\R$.

\section{Barriers and Polar Sets}\label{se:BarriersAndPolarSets}

We fix $\mu\leq_{cd}\nu$ throughout this section. Our first aim is to characterize all points $x\in\overline\R$ which cannot be crossed by any supermartingale transport $P\in\cS(\mu,\nu)$.

\begin{definition}\label{de:barrier}
  A point $x\in\overline\R$ is called a \emph{barrier} if $Y\leq x$ $P$-a.s.\ on $\{X\leq x\}$ and $Y\geq x$ $P$-a.s.\ on $\{X\geq x\}$, for all $P\in\cS(\mu,\nu)$.
\end{definition}

We may note that $\pm\infty$ are always barriers. The following result not only shows how barriers can be described as points where the put functions touch, but also introduces a particular barrier $x^{*}$ which divides the real line into two parts: To the left of $x^{*}$, the supermartingale transport problem is in fact just a martingale transport problem. To the right of $x^{*}$, we have a proper supermartingale transport problem and there are no non-trivial barriers. For example, in Figure~\ref{fi:simulationIncr}, the point $x^*$ is the left boundary of
the support of $\nu$, whereas in Figure~\ref{fi:simulationDecr} it indeed splits
the supports of $\mu$ and $\nu$ into two parts.
The convention $\sup\emptyset = -\infty$ is used.

\begin{proposition}\label{pr:barriers}
  Define
  $
    x^{*} := \sup \{x\in\R: \, p_{\mu}(x)=p_{\nu}(x) \} \in \overline{\R}.
  $
  Then 
  \begin{enumerate}
  \item $x^{*}$ is a barrier and $p_{\mu}(x^{*})=p_{\nu}(x^{*})$,
  \item a point $x\in [-\infty,x^{*})$ is a barrier if and only if $p_{\mu}(x)=p_{\nu}(x)$,
  \item if $x\in (x^{*},\infty]$ is a barrier then $\mu(x,\infty)=\nu(x,\infty)=0$.
  \end{enumerate}
  Moreover, $x^{*}$ is the maximal barrier $x\in\overline\R$ such that $P|_{\{X< x\}}$ is a martingale transport for some (and then all) $P\in\cS(\mu,\nu)$.
\end{proposition}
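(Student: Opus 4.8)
\textbf{The plan} is to reduce the whole proposition to one auxiliary fact, which I will call the \emph{touching lemma}: if $z\in\R$ satisfies $p_\mu(z)=p_\nu(z)$, then $z$ is a barrier and, for every $P\in\cS(\mu,\nu)$, the restriction $P|_{\{X\le z\}}$ is a martingale. To prove it I would fix a supermartingale kernel $\kappa$ of $P$ (which exists by Proposition~\ref{pr:convexOrder}) and write
$$0=p_\nu(z)-p_\mu(z)=\int_\R\Big[\int_\R(z-y)^+\,\kappa(x,dy)-(z-x)^+\Big]\mu(dx).$$
Jensen's inequality applied to the convex map $t\mapsto(z-t)^+$ together with $\int y\,\kappa(x,dy)\le x$ shows that the bracket is $\ge\big(z-\int y\,\kappa(x,dy)\big)^+-(z-x)^+\ge0$, so it vanishes for $\mu$-a.e.\ $x$. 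Tracking the two equalities forced thereby: for $\mu$-a.e.\ $x\le z$ one must have $\int y\,\kappa(x,dy)=x$ and $y\le z$ $\kappa(x,\cdot)$-a.s., while for $\mu$-a.e.\ $x\ge z$ one gets $\int(z-y)^+\kappa(x,dy)=0$, i.e.\ $y\ge z$ $\kappa(x,\cdot)$-a.s.; integrating these pointwise statements against $\mu$ yields both the barrier property of $z$ and the martingale property of $P|_{\{X\le z\}}$. In parallel I would record the elementary properties of the put functions: $p_\mu\le p_\nu$ by Proposition~\ref{pr:convexOrder}, both are convex, nondecreasing and continuous (and extend to $\overline{\R}$), the set $\{p_\mu=p_\nu\}$ is closed, $p_\nu(t)-p_\mu(t)\to\mu(\R)(\bary(\mu)-\bary(\nu))$ as $t\to\infty$ (so $x^*<\infty$ whenever the problem is proper), and $p_\mu(x^*)=p_\nu(x^*)$ by continuity, reading $p(\pm\infty)$ as the limits.

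Given this, parts (i), the ``$\Leftarrow$'' direction of (ii), and the first half of the final assertion are immediate. For (i): apply the touching lemma with $z=x^*$ when $x^*\in\R$, while $x^*=\pm\infty$ is trivial since $\pm\infty$ are always barriers and there $p_\mu=p_\nu$. For the easy direction of (ii): a point $x\in[-\infty,x^*)$ with $p_\mu(x)=p_\nu(x)$ is a barrier by the lemma ($x=-\infty$ being trivial). The lemma at $z=x^*$ also shows $x^*$ is a barrier with $P|_{\{X<x^*\}}$ a martingale for every $P$, which settles the ``for some (and then all)'' clause.

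For the ``$\Rightarrow$'' direction of (ii), suppose $x\in(-\infty,x^*)$ is a barrier and, for contradiction, $p_\mu(x)<p_\nu(x)$. The barrier property at $x$ forces (up to a routine treatment of a possible atom at $x$) that every $P\in\cS(\mu,\nu)$ splits as a coupling over $\{X<x\}$ with marginals $\mu|_{(-\infty,x)},\nu|_{(-\infty,x)}$ plus one over $\{X>x\}$ with marginals $\mu|_{(x,\infty)},\nu|_{(x,\infty)}$; and the touching lemma at $z=x^*$ makes the first piece a \emph{martingale} coupling, so $\mu':=\mu|_{(-\infty,x^*)}\le_c\nu':=\nu|_{(-\infty,x^*)}$ with equal mass and barycenter. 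Inside this martingale block the put functions of $\mu',\nu'$ agree with those of $\mu,\nu$ on $(-\infty,x^*)$, hence remain strictly separated at $x$; equivalently $x$ lies strictly inside an irreducible component of the martingale transport problem for $(\mu',\nu')$. Invoking the irreducible decomposition of~\cite{BeiglbockJuillet.12} (or constructing one by hand, e.g.\ from the Left-Curtain coupling restricted to that component) there is then a martingale coupling of $(\mu',\nu')$ putting positive mass on $\{X\le x\}\times\{Y>x\}$; gluing it with any element of $\cS(\mu|_{[x^*,\infty)},\nu|_{[x^*,\infty)})$ produces a $P\in\cS(\mu,\nu)$ that crosses $x$, a contradiction. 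I expect this to be the main obstacle: exhibiting a coupling that crosses a non-touching point genuinely uses the fine structure of (martingale) optimal transport rather than the put-function bookkeeping that suffices elsewhere.

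Finally, for (iii) and the maximality statement, let $x\in(x^*,\infty)$ be a barrier ($x=\infty$ being vacuous for finite measures on $\R$); since $x>x^*$ we have $p_\mu(x)<p_\nu(x)$ strictly. If $\mu(x,\infty)>0$ or $\nu(x,\infty)>0$ I would argue as in (ii): a mass comparison first forces $\mu(x,\infty)=\nu(x,\infty)$ (otherwise $P(Y>x)\ne P(X>x)$ already forces mass across $x$), and then the strict gap $p_\mu(x)<p_\nu(x)$ supplies the slack to move a sliver of mass from just above $x$ to below $x$ while keeping the two remainders in convex-decreasing order, so that Proposition~\ref{pr:convexOrder} provides a supermartingale completion, again contradicting the barrier property; hence $\mu(x,\infty)=\nu(x,\infty)=0$, which is (iii). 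For maximality of $x^*$: if some barrier $x>x^*$ existed, (iii) would concentrate $\mu,\nu$ on $(-\infty,x]$, whence $P|_{\{X<x\}}=P$ (modulo an atom at $x$), which in the proper case is not a martingale since $\bary(\nu)<\bary(\mu)$; thus $x^*$ is the largest barrier carrying the martingale property. In the improper case $\bary(\mu)=\bary(\nu)$ the problem degenerates, every $P\in\cS(\mu,\nu)$ is itself a martingale, and (ii)--(iii) reduce to classical facts about martingale transport; this is the sense in which the maximality clause is to be understood.
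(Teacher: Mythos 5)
Your reduction to the ``touching lemma'' is sound, and the lemma itself is correct: the pointwise Jensen argument applied to a disintegration $P=\mu\otimes\kappa$ does yield, for $\mu$-a.e.\ $x<z$, both $\kappa(x)((z,\infty))=0$ and $\bary(\kappa(x))=x$ simultaneously. This is slightly stronger than the paper's Lemma~\ref{le:barrier}, which only extracts the barrier property; as a consequence you obtain directly that the second condition in the paper's auxiliary definition of $x_*$ (the martingale property of $P|_{\{X<x\}}$) is implied by the touching condition, a fact the paper only establishes a posteriori via Corollary~\ref{co:noTouching}. That is a genuine streamlining of part (i) and the easy half of (ii). For the forward direction of (ii), however, you argue by contradiction through the irreducible decomposition of the martingale block and the construction of a crossing martingale coupling; this is workable (the polar-set structure of \cite{BeiglbockJuillet.12} does supply the crossing coupling), but it is much heavier than the paper's direct computation (Lemma~\ref{le:thresholdReverse}): once $P|_{\{X<x\}}$ is a martingale and $\{Y<x\}\subseteq\{X<x\}$ a.s., one gets $p_\mu(x)=E[(x-X)\1_{X<x}]=E[(x-Y)\1_{X<x}]=E[(x-Y)^+\1_{X<x}]\geq p_\nu(x)$, hence equality, with no competitor construction at all. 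The ``main obstacle'' is not where you place it.

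The genuine gap is in part (iii), which is the technical heart of the proposition (the paper's Lemma~\ref{le:noBarrier}, a page of explicit construction in two cases). First, your claim that the barrier property forces $\mu(x,\infty)=\nu(x,\infty)$ is false: it only gives $\nu((x,\infty))\leq\mu((x,\infty))\leq\nu([x,\infty))$, so when $\nu$ has an atom at $x$ one can have $\nu((x,\infty))=0<\mu((x,\infty))$, with all the mass from above $x$ collapsing onto that atom without any crossing; this is exactly the paper's Case~2 and needs a separate, more delicate argument. Second, the ``move a sliver of mass from just above $x$ to below $x$'' step is not balanced as stated: rerouting $\mu$-mass from $(x,\infty)$ to destinations below $x$ leaves some $\nu$-mass above $x$ uncovered, and covering it instead from $\{X<x\}$ raises the barycenter of those kernels, which is admissible only where the supermartingale constraint is slack. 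Identifying that slack --- namely that $P|_{\{X<\bar x\}}$ cannot be a martingale for a barrier $\bar x>x^*$, since otherwise Lemma~\ref{le:thresholdReverse} would force $p_\mu(\bar x)=p_\nu(\bar x)$ --- and then explicitly building the rebalanced kernel and the compensating transport is precisely what the paper's construction does; your sketch does not reach either point.
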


The reverse implication in (iii) is almost true: a point $x$ with $\mu(x,\infty)=\nu(x,\infty)=0$ is not crossed by any transport. However, if $\mu$ has an atom at~$x$, this mass may be transported to $(-\infty,x)$ and then $x$ does not satisfy our definition of a barrier which is chosen so that any mass at the barrier remains invariant.

Before reporting the proof in Section~\ref{se:proofOfBarriers}, we use the above result to characterize the polar sets and the irreducible components.%

\begin{definition}\label{de:irred}
  The pair $\mu\leq_{cd}\nu$ is \emph{irreducible} if the set $I=\{p_{\mu}< p_{\nu}\}$ is connected and $\mu(I)=\mu(\R)$. In this situation, let~$J$ be the union of~$I$ and any endpoints of $I$ that are atoms of $\nu$; then~$(I,J)$ is the \emph{domain} of $(\mu,\nu)$.
\end{definition} 

This definition coincides with the notion of \cite{BeiglbockJuillet.12, BeiglbockNutzTouzi.15} in the context of martingale transport. More precisely, for $x<x^{*}$, we have $p_{\mu}(x)=p_{\nu}(x)$ if and only if $u_{\mu}(x)=u_{\nu}(x)$.

In the general case, the supermartingale transport problem will be decomposed into at most countably many irreducible components. 
We recall that a set is called polar for a family $\cP$ of measures if it is $P$-null for all $P\in\cP$.

\begin{proposition}\label{pr:decomp}
  Let $\mu\leq_{cd}\nu$, let $I_{0}=(x^{*},\infty)$ and let $(I_{k})_{1\leq k \leq N}$ be the (open) components of $\{p_{\mu}<p_{\nu}\}\cap (-\infty,x^{*})$, where $N\in \{0,1,\dots,\infty\}$.
 
 (i) Set $I_{-1}=\R\setminus \cup_{k\geq0} I_{k}$ and $\mu_{k}=\mu|_{I_{k}}$ for $k\geq -1$, so that $\mu=\sum_{k\geq-1} \mu_{k}$.
	  Then, there exists a unique decomposition $\nu=\sum_{k\geq-1} \nu_{k}$ such that
	  $$
	    \mu_{-1} = \nu_{-1} \quad\;\; \mbox{and}\quad\;\; \mu_{0}\leq_{cd} \nu_{0}  \quad\;\; \mbox{and}\quad\;\; \mu_{k}\leq_{c} \nu_{k} \quad \mbox{for all} \quad k\geq1.
	  $$  
	  Moreover, this decomposition satisfies $I_{k}=\{p_{\mu_{k}}<p_{\nu_{k}}\}$ for all $k\geq0$; i.e., each such pair $(\mu_{k},\nu_{k})$ is irreducible.  
	  Finally, any $P\in\cS(\mu,\nu)$ admits a unique decomposition
	  $
	    P=\sum_{k\geq-1} P_{k}
	  $
	  such that $P_{0}\in\cS(\mu_{0},\nu_{0})$ and  $P_{k}\in\cM(\mu_{k},\nu_{k})$ for all $k\neq 0$. %
	  
	 (ii) Let $B\subseteq \R^{2}$ be a Borel set. Then $B$ is $\cS(\mu,\nu)$-polar if and only if there exist a $\mu$-nullset $N_{\mu}$ and a $\nu$-nullset $N_{\nu}$ such that
	  $$
	     B \subseteq (N_{\mu}\times \R) \cup (\R \times N_{\nu}) \cup \bigg(\Delta \cup \bigcup_{k\geq0} I_{k}\times J_{k}\bigg)^{c},
	  $$
	  where $\Delta=\{(x,x)\in\R^{2}:\, x\in\R\}$ is the diagonal and $J_{k}$ is constructed from~$I_{k}$ as in Definition~\ref{de:irred}.
\end{proposition}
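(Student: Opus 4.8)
The plan is to bootstrap everything from Proposition~\ref{pr:barriers}, which confines all non-trivial behaviour to the maximal barrier $x^{*}$: below $x^{*}$ every $P\in\cS(\mu,\nu)$ is a martingale transport, so that part can be handled by the canonical decomposition for the convex order (as in~\cite{BeiglbockJuillet.12,BeiglbockNutzTouzi.15}), while above $x^{*}$ there is a single irreducible \emph{proper} component $I_{0}=(x^{*},\infty)$ to be analysed directly. Throughout I write $g:=p_{\nu}-p_{\mu}$; by Proposition~\ref{pr:convexOrder} and the definition of $x^{*}$ one has $g\geq0$ and $\{g=0\}=(-\infty,x^{*}]\setminus\bigcup_{k\geq1}I_{k}=I_{-1}$, so every point of $I_{-1}$ is a barrier by Proposition~\ref{pr:barriers}(i)--(ii), whereas $g>0$ on $\bigcup_{k\geq0}I_{k}$. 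The identity $p_{\mu}(x^{*})=p_{\nu}(x^{*})$ will be used constantly; since two convex functions touching from below have ordered one-sided derivatives at the touching point, it yields in particular $\nu(-\infty,x^{*})\leq\mu(-\infty,x^{*})$ and $\mu(-\infty,x^{*}]\leq\nu(-\infty,x^{*}]$.

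For part~(i), set $\hat\mu:=\mu|_{(-\infty,x^{*}]}$ and $\hat\nu:=\nu|_{(-\infty,x^{*})}+\bigl(\mu(-\infty,x^{*}]-\nu(-\infty,x^{*})\bigr)\delta_{x^{*}}$. A routine put-function computation using $p_{\mu}(x^{*})=p_{\nu}(x^{*})$ gives $p_{\hat\mu}=p_{\mu}$ and $p_{\hat\nu}=p_{\nu}$ on $(-\infty,x^{*}]$ and $p_{\hat\mu}=p_{\hat\nu}$ on $[x^{*},\infty)$; hence $\hat\mu\leq_{c}\hat\nu$ with $\{p_{\hat\mu}<p_{\hat\nu}\}=\bigcup_{k\geq1}I_{k}$. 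Applying the canonical convex-order decomposition to $(\hat\mu,\hat\nu)$ produces measures $\nu_{k}$ ($k\geq1$) with $\mu_{k}\leq_{c}\nu_{k}$, $\{p_{\mu_{k}}<p_{\nu_{k}}\}=I_{k}$, and leftover $\hat\mu|_{I_{-1}}=\mu|_{I_{-1}}=:\nu_{-1}$. Put $\nu_{0}:=\nu-\nu_{-1}-\sum_{k\geq1}\nu_{k}=\nu-\hat\nu$; positivity $\nu_{0}\geq0$ follows from $\mu(-\infty,x^{*}]\leq\nu(-\infty,x^{*}]$, and from $p_{\nu_{0}}-p_{\mu_{0}}=g-(p_{\hat\nu}-p_{\hat\mu})$ one reads off $p_{\nu_{0}}-p_{\mu_{0}}=g$ on $(x^{*},\infty)$ and $=0$ on $(-\infty,x^{*}]$, so $\mu_{0}\leq_{cd}\nu_{0}$ and $\{p_{\mu_{0}}<p_{\nu_{0}}\}=(x^{*},\infty)=I_{0}$. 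For uniqueness, given any $\nu=\sum_{k}\tilde\nu_{k}$ with the stated order relations, summing $\sum_{k}(p_{\tilde\nu_{k}}-p_{\mu_{k}})=g$ (all summands nonnegative, the $k=-1$ term vanishing) forces each summand to vanish on $I_{-1}$; since $\mu_{k}$ is concentrated on $I_{k}$, the function $p_{\tilde\nu_{k}}$ then vanishes at the left endpoint of $I_{k}$ and meets its asymptotic line at the right endpoint, so by convexity $\tilde\nu_{k}$ is concentrated on $\overline{I_{k}}$; on $\overline{I_{k}}$ every other summand is affine and vanishes at the endpoints, whence $p_{\tilde\nu_{k}}=p_{\mu_{k}}+g$ there, pinning down $\tilde\nu_{k}=\nu_{k}$ — and $\tilde\nu_{0}=\nu_{0}$ by the same argument on $(x^{*},\infty)$.

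The decomposition of a transport $P\in\cS(\mu,\nu)$ is dictated by the barrier $x^{*}$: since $Y=x^{*}$ $P$-a.s.\ on $\{X=x^{*}\}$, $Y\leq x^{*}$ on $\{X<x^{*}\}$, $Y\geq x^{*}$ on $\{X>x^{*}\}$, only $P|_{\{X<x^{*}\}}$ places mass in $(-\infty,x^{*})$ and it has total mass $\mu(-\infty,x^{*})$, so its $Y$-marginal is forced; consequently $P|_{\{X\leq x^{*}\}}$ is a martingale in $\cM(\hat\mu,\hat\nu)$ and $P|_{\{X>x^{*}\}}=:P_{0}$ lies in $\cS(\mu_{0},\nu_{0})$. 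The convex-order decomposition of $P|_{\{X\leq x^{*}\}}$ then splits it into the identity coupling $P_{-1}$ of $\mu|_{I_{-1}}$ and $P_{k}\in\cM(\mu_{k},\nu_{k})$, $k\geq1$; uniqueness is inherited. This establishes~(i), and the ``if'' direction of~(ii) follows: a short computation shows the atom of $\nu_{k}$ at an endpoint of $I_{k}$ is positive only if $\nu$ has an atom there, so $\supp\nu_{k}\subseteq J_{k}$ (and likewise $\supp\nu_{0}\subseteq J_{0}$); thus every $P\in\cS(\mu,\nu)$ is concentrated on $\Delta\cup\bigcup_{k\geq0}I_{k}\times J_{k}$, and any $B$ contained in $(N_{\mu}\times\R)\cup(\R\times N_{\nu})\cup(\Delta\cup\bigcup_{k\geq0}I_{k}\times J_{k})^{c}$ is $\cS(\mu,\nu)$-polar.

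For the converse in~(ii), let $B$ be $\cS(\mu,\nu)$-polar and write $B=(B\cap\Delta)\cup\bigcup_{k\geq0}(B\cap(I_{k}\times J_{k}))\cup R$ with $R\subseteq(\Delta\cup\bigcup_{k\geq0}I_{k}\times J_{k})^{c}$. Testing against $P_{-1}$ combined with arbitrary fixed choices of the remaining components shows $\{x:(x,x)\in B\}\cap I_{-1}$ is $\mu$-null; similarly, combining an arbitrary $P_{k}\in\cS(\mu_{k},\nu_{k})$ (resp.\ $\cM(\mu_{k},\nu_{k})$ for $k\geq1$) with fixed choices of the other components shows each $B\cap(I_{k}\times J_{k})$ is $\cS(\mu_{k},\nu_{k})$-polar. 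The remaining ingredient, and the main obstacle, is the \emph{irreducible case}: if $(\mu_{k},\nu_{k})$ is irreducible with domain $(I_{k},J_{k})$, its supermartingale-polar sets coincide with its $\Pi$-polar sets, i.e.\ are precisely the subsets of $(N\times\R)\cup(\R\times N')$ for marginal null sets $N,N'$. For the martingale components $k\geq1$ this is the polar structure theorem of~\cite{BeiglbockNutzTouzi.15}; for the single proper component $k=0$ it must be proved afresh, by showing that any $\Pi(\mu_{0},\nu_{0})$-non-polar subset of $I_{0}\times J_{0}$ carries mass under some supermartingale transport — here the connectedness of $I_{0}$ together with $\bary(\nu_{0})\leq\bary(\mu_{0})$ provides the room to perform measurable local modifications of a transport that preserve both marginals and the supermartingale inequality. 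Collecting the resulting marginal null sets (truncated to the respective $I_{k},J_{k}$) together with $\{x:(x,x)\in B\}\cap I_{-1}$ into $N_{\mu},N_{\nu}$ and using $\bigcup_{k\geq-1}I_{k}=\R$ yields the asserted inclusion, completing the proof modulo Proposition~\ref{pr:barriers} and the convex-order decomposition.
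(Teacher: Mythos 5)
Your overall architecture matches the paper's: Proposition~\ref{pr:barriers} confines everything below $x^{*}$ to a martingale problem handled by the convex-order decomposition of \cite{BeiglbockJuillet.12}, the barrier property forces $\nu_{0}=\nu|_{(x^*,\infty)}+[\mu(x^*,\infty)-\nu(x^*,\infty)]\delta_{x^{*}}$, and part~(ii) reduces to showing that on each irreducible component the $\cS$-polar sets coincide with the $\Pi$-polar sets, after which \cite{BeiglbockGoldsternMareschSchachermayer.09} finishes the job. Part~(i) of your argument is essentially complete (your uniqueness argument via the telescoping sum of put functions is a nice explicit supplement to what the paper leaves implicit).

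The genuine gap is in part~(ii), and you have located it yourself without filling it: the claim that for the proper irreducible component $(\mu_{0},\nu_{0})$ every $\Pi(\mu_{0},\nu_{0})$-non-polar set charges some supermartingale coupling is precisely Lemma~\ref{le:polarSetsIrred}, and it is the hardest step of the whole proposition. Your one-sentence gesture --- that ``connectedness of $I_{0}$ together with $\bary(\nu_{0})\leq\bary(\mu_{0})$ provides the room to perform measurable local modifications'' --- does not constitute a proof, and the difficulty is not merely technical. Given $\pi\in\Pi(\mu_{0},\nu_{0})$, a local modification must simultaneously (a) restore the supermartingale inequality $\bary(\kappa(x))\leq x$ at every $x$ where $\pi$'s kernel violates it, (b) keep the second marginal dominated by $\nu_{0}$, and (c) leave enough slack to complete the remaining mass $\mu_{0}-\epsilon\pi_{1}'$ to $\nu_{0}-\epsilon\pi_{2}'$ by \emph{some} supermartingale coupling. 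The paper's construction achieves (a) and (b) by first localizing $\pi$ to a compact rectangle $K\times L$ and rerouting a portion $\eps(x)$ of each offending kernel onto a fixed compact set $B\subseteq J_{0}$ of positive $\nu_{0}$-mass located to the \emph{left} of $K$ (chosen near $x^{*}$, where $\nu_{0}$ must charge every neighborhood by irreducibility); and it achieves (c) by a quantitative put-function comparison $p_{\mu_{0}}-\epsilon p_{\pi_{1}'}\leq p_{\nu_{0}}-\epsilon p_{\pi_{2}'}$, which requires $p_{\nu_{0}}-p_{\mu_{0}}$ to be bounded away from zero on the relevant half-line --- and this uses the properness $\bary(\mu_{0})>\bary(\nu_{0})$ in the limit $t\to\infty$, plus a separate derivative argument at $x^{*}$ when $\nu_{0}(\{x^{*}\})>0$. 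None of this is routine, and without it your proof of the ``only if'' direction of~(ii) is incomplete.
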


\subsection{Proofs of Propositions~\ref{pr:barriers} and~\ref{pr:decomp}}\label{se:proofOfBarriers}

We begin with the proof of Proposition~\ref{pr:barriers}, stated through a sequence of lemmas. We may assume that $\mu$ and $\nu$ are probability measures.

\begin{lemma}\label{le:barrier}
 Let $x\in\overline\R$. If $p_{\mu}(x)=p_{\nu}(x)$, then $x$ is a barrier and the equality $E^{P}[X\1_{X<x}] = E^{P}[Y\1_{X<x}]$ holds for all $P\in\cS(\mu,\nu)$.
\end{lemma}

\begin{proof}
  Let $p_{\mu}(x)=p_{\nu}(x)$ and let $E[\,\cdot\,]$ be the expectation associated with an arbitrary $P\in\cS(\mu,\nu)$. Using $E[Y|X] \leq X$ and Jensen's inequality,
  $
    (x-X)^{+} \leq (x-E[Y|X])^{+} \leq E[(x-Y)^{+}|X],
  $
  and since $p_{\mu}(x)=p_{\nu}(x)$ means that $E[(x-X)^{+}]=E[(x-Y)^{+}]$, it follows that 
  $
    (x-X)^{+} = E[(x-Y)^{+}|X].
  $
  
  As a first consequence, 
  $
    E[(x-Y)^{+}\1_{X\geq x}] = E[(x-X)^{+}\1_{X\geq x}]=0
  $
  and hence $Y\geq x$ $P$-a.s.\ on $\{X\geq x\}$.
  A second consequence is
  $
    E[(x-Y)\1_{X\leq x}] \leq   E[(x-Y)^{+}\1_{X\leq x}] = E[(x-X)^{+}\1_{X\leq x}].
  $
  Since $E[Y|X] \leq X$ implies that
  $
    E[(x-Y)\1_{X\leq x}]\geq E[(x-X)\1_{X\leq x}] = E[(x-X)^{+}\1_{X\leq x}],
  $
  it follows that 
  $
    E[(x-Y)\1_{X\leq x}] = E[(x-Y)^{+}\1_{X\leq x}]
  $
  and thus $Y\leq x$ $P$-a.s.\ on $\{X\leq x\}$. This completes the proof of the barrier property.

  The above inequalities also show that $E[(x-Y)\1_{X \leq x}] = E[(x-X)\1_{X \leq x}]$ and hence $E[Y\1_{X \leq x}] = E[X\1_{X \leq x}]$. To infer the second assertion, it remains to note that the barrier property implies that $Y = x$ $P$-a.s.\ on $\{X = x\}$.
\end{proof}

\begin{corollary}\label{co:threshold}
  We have $p_{\mu}(x^{*})=p_{\nu}(x^{*})$ and $E^{P}[X\1_{X<x^{*}}] = E^{P}[Y\1_{X<x^{*}}]$ for all $P\in\cS(\mu,\nu)$.
\end{corollary}

\begin{proof}
  The claim is trivial if $x^{*}=-\infty$. Otherwise, the first claim follows from the fact that $p_{\mu}$ and $p_{\nu}$ are continuous, and then the second claim follows
  from Lemma~\ref{le:barrier}.
\end{proof}

\begin{lemma}\label{le:martRestrict}
  Let $x\in\overline{\R}$ be a barrier. The following are equivalent:
  \begin{enumerate}
    \item $E^{P}[X\1_{X<x}] = E^{P}[Y\1_{X<x}]$ for some (and then all) $P\in\cS(\mu,\nu)$.
    \item $P|_{\{X<x\}}$ is a martingale transport for some (and then all) $P\in\cS(\mu,\nu)$.
    \item[(ii')] $P|_{\{X\leq x\}}$ is a martingale transport for some (and then all) $P\in\cS(\mu,\nu)$.
  \end{enumerate}
\end{lemma}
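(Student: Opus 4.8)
The plan is to show that both (i) and (ii) are in fact equivalent to the equality of put functions $p_\mu(x)=p_\nu(x)$, with the barrier hypothesis providing the leverage needed to pass from a statement about one coupling to a statement about every coupling. The implication (ii)$\Rightarrow$(i) is immediate: if $P|_{\{X<x\}}$ is a martingale transport then $E^P[Y\1_B(X)]=E^P[X\1_B(X)]$ for every Borel $B\subseteq(-\infty,x)$, and $B=(-\infty,x)$ gives (i). So the real content lies in (i)$\Rightarrow$(ii) together with the two ``(and then all)'' clauses.

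The first step I would carry out is to prove that $E^P[(X-Y)\1_{X<x}]$ does not depend on the choice of $P\in\cS(\mu,\nu)$; this is where the barrier property enters. The term $E^P[X\1_{X<x}]=\int_{(-\infty,x)}s\,\mu(ds)$ is manifestly $P$-independent and finite. For the $Y$-term: since $x$ is a barrier, $Y\ge x$ $P$-a.s.\ on $\{X\ge x\}$, hence $(x-Y)^+\1_{X\ge x}=0$ and therefore $E^P[(x-Y)^+\1_{X<x}]=E^P[(x-Y)^+]=p_\nu(x)$; moreover $Y\le x$ $P$-a.s.\ on $\{X<x\}\subseteq\{X\le x\}$, so on that event $(x-Y)^+=x-Y$, which yields $E^P[(x-Y)\1_{X<x}]=p_\nu(x)$ and thus $E^P[Y\1_{X<x}]=x\,\mu((-\infty,x))-p_\nu(x)$, again $P$-independent. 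Combining the two computations, for $x\in\R$ one obtains $E^P[(X-Y)\1_{X<x}]=p_\nu(x)-p_\mu(x)$ for every $P\in\cS(\mu,\nu)$ (the cases $x=\pm\infty$ are either vacuous or follow at once from $E^P[X-Y]=\mu(\R)(\bary(\mu)-\bary(\nu))$). In particular, condition (i)---which reads $E^P[(X-Y)\1_{X<x}]=0$---holds for one $P$ iff it holds for all of them, which settles the ``some (and then all)'' remark for (i) once the equivalence (i)$\iff$(ii) is established for a fixed $P$.

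It then remains to prove (i)$\Rightarrow$(ii) for a fixed $P$. Here I would set $Z:=X-E^P[Y\mid X]$; since $P$ is a supermartingale coupling and $\nu$ has finite first moment, $Z$ is well defined with $Z\ge0$ $P$-a.s. Assumption (i) says $E^P[Z\1_{X<x}]=E^P[(X-Y)\1_{X<x}]=0$, and a nonnegative integrable random variable with vanishing integral vanishes a.s.; hence $Z=0$ $P$-a.s.\ on $\{X<x\}$, i.e.\ $E^P[Y\mid X]=X$ $P$-a.s.\ there. Multiplying by $\1_B(X)$ for Borel $B\subseteq(-\infty,x)$ and integrating shows that $P|_{\{X<x\}}$ is a martingale transport between its marginals, which is (ii) for this $P$; combined with the previous paragraph this also gives (ii) for all $P$. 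I do not anticipate a genuine obstacle; the only points requiring care are to invoke the barrier property (rather than anything about optimality or duality) to obtain the $P$-independence in the middle step, and to keep the positive-mass set $\{X=x\}$ conceptually separate, which is harmless since it is disjoint from $\{X<x\}$.
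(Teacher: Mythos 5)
Your proof is correct, and its skeleton matches the paper's: for a fixed $P$, (i) and (ii) are equivalent because a supermartingale with constant mean is a martingale (your argument via $Z=X-E^{P}[Y\mid X]\geq 0$ and $E^{P}[Z\1_{X<x}]=0$ is exactly this), and the real work is transferring (i) from one coupling to all, which is where the barrier hypothesis enters. The execution of that transfer step is where you diverge. The paper shows that the \emph{entire} second marginal of $P|_{\{X<x\}}$ restricted to $(-\infty,x]$ is the same for every $P\in\cS(\mu,\nu)$: the barrier forces it to equal $\nu$ on $(-\infty,x)$, and the atom at $x$ is then fixed by mass balance. You instead compute only the first moment, showing via the barrier property that $E^{P}[(x-Y)^{+}\1_{X<x}]=E^{P}[(x-Y)^{+}]=p_{\nu}(x)$ and that $(x-Y)^{+}=x-Y$ on $\{X<x\}$, whence $E^{P}[(X-Y)\1_{X<x}]=p_{\nu}(x)-p_{\mu}(x)$ for every $P$. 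Your version is more quantitative and buys something extra: it exhibits (i) as equivalent to $p_{\mu}(x)=p_{\nu}(x)$ for any barrier $x$, which essentially absorbs Lemma~\ref{le:thresholdReverse} and Corollary~\ref{co:thresholdReverse} into the same computation. The paper's version establishes the stronger fact that the restricted target measure itself is $P$-independent, which it reuses implicitly elsewhere. Both are valid and of comparable length; no gaps.
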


\begin{proof}
  If (i) holds for some $P\in\cS(\mu,\nu)$, then (ii) holds for the same $P$ since a supermartingale with constant mean is a martingale, and the converse holds as any martingale has constant mean. We complete the equivalence of~(i) and~(ii) by showing that if (i) holds for one $P\in\cS(\mu,\nu)$, it necessarily holds for all elements of $\cS(\mu,\nu)$. The cases $x=\pm\infty$ are clear, so let $x\in\R$.
  Let $P\in\cS(\mu,\nu)$ and let $\nu'$ be the second marginal of $P':=P|_{\{X<x\}}$. As~$x$ is a barrier, we have $\nu'=\nu$ on $(-\infty,x)$. If $\bar P\in\cS(\mu,\nu)$ is arbitrary and $\bar P',\bar \nu '$  are defined analogously, we have 
 $\bar \nu'=\nu=\nu'$ on $(-\infty,x)$ by the same reasoning.
  But then also $\nu'(\{x\})=\bar\nu'(\{x\})$, since this is the remaining mass transported from $(-\infty,x)$:
  we have $
    \nu'(\{x\}) = \mu(-\infty,x) - \nu'(-\infty,x) = \mu(-\infty,x) - \bar\nu'(-\infty,x) = \bar\nu'(\{x\}).
  $
  As a result, $\bar \nu'=\nu'$ on $(-\infty,x]$, and $\bar P'$ satisfies (i) whenever~$P$ does. Finally, (ii) implies (ii') because $x$ is a barrier, and the reverse is clear.
\end{proof}

\begin{lemma}\label{le:thresholdReverse}
  Let $x\in\overline{\R}$ be a barrier such that $P|_{\{X<x\}}$ is a martingale transport for some $P\in\cS(\mu,\nu)$. Then $p_{\mu}(x)=p_{\nu}(x)$.
\end{lemma}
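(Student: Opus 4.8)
The plan is to compute $p_\mu(x)$ and $p_\nu(x)$ directly under the fixed $P\in\cS(\mu,\nu)$ for which $P|_{\{X<x\}}$ is a martingale transport, using the barrier property to reduce both put functions to integrals over the event $\{X<x\}$. The cases $x=\pm\infty$ are trivial: by property~(iii) of the put function, $p_\mu(-\infty)=p_\nu(-\infty)=0$, while $p_\mu(+\infty)=p_\nu(+\infty)$ since both equal $\infty\1_{\mu\neq0}=\infty\1_{\nu\neq0}$ (recall $\mu(\R)=\nu(\R)$). So assume $x\in\R$ and write $E$ for the expectation under $P$; all random variables appearing below are $P$-integrable because members of $\cS(\mu,\nu)$ have finite first moment.

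First I would use that $x$ is a barrier. On $\{X\geq x\}$ we have $Y\geq x$ $P$-a.s., hence $(x-Y)^{+}=0$ there, and trivially $(x-X)^{+}=0$ on $\{X\geq x\}$ as well. Therefore
$$
  p_\nu(x)=E[(x-Y)^{+}]=E[(x-Y)^{+}\1_{\{X<x\}}],\qquad p_\mu(x)=E[(x-X)^{+}]=E[(x-X)\1_{\{X<x\}}].
$$
On $\{X<x\}$ the barrier property also gives $Y\leq x$ $P$-a.s., so $(x-Y)^{+}=x-Y$ on that event, whence $p_\nu(x)=E[(x-Y)\1_{\{X<x\}}]$.

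Finally I would invoke the hypothesis through Lemma~\ref{le:martRestrict}, according to which $P|_{\{X<x\}}$ being a martingale transport is equivalent to $E[X\1_{\{X<x\}}]=E[Y\1_{\{X<x\}}]$. Combining this with the two previous identities yields
$$
  p_\nu(x)=E[(x-Y)\1_{\{X<x\}}]=x\,P(X<x)-E[Y\1_{\{X<x\}}]=x\,P(X<x)-E[X\1_{\{X<x\}}]=E[(x-X)\1_{\{X<x\}}]=p_\mu(x),
$$
as desired. There is no genuine obstacle in this argument; the only points requiring a little care are the bookkeeping of the mass on $\{X=x\}$ (subsumed into $\{X\geq x\}$, where both $(x-X)^{+}$ and $(x-Y)^{+}$ vanish $P$-a.s.\ by the barrier property) and the integrability of the quantities involved, both of which are covered by the standing finite-first-moment assumption.
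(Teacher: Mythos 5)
Your proof is correct and follows essentially the same route as the paper: both arguments combine the barrier property (which forces $(x-Y)^{+}$ to live on $\{X<x\}$ and equal $x-Y$ there) with the equality $E[X\1_{X<x}]=E[Y\1_{X<x}]$ coming from the martingale hypothesis. The only cosmetic difference is that you compute $p_\nu(x)=p_\mu(x)$ directly, whereas the paper derives $p_\mu(x)\geq p_\nu(x)$ and invokes the standing inequality $p_\mu\leq p_\nu$ for the converse.
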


\begin{proof}
  The cases $x=\pm\infty$ are clear, so let $x\in\R$. The martingale property yields that
  $
    p_{\mu}(x) = E[(x-X)\1_{X<x}] = E[(x-Y)\1_{X<x}].
  $
  Since $Y\leq x$ $P$-a.s.\ on $\{X<x\}$ and $\{Y<x\}\subseteq \{X< x\}$ $P$-a.s.,
  $
    E[(x-Y)\1_{X<x}] = E[(x-Y)^{+}\1_{X<x}] \geq E[(x-Y)^{+}\1_{Y<x}] = p_{\nu}(x).
  $
  Thus, $p_{\mu}(x)\geq p_{\nu}(x)$. As the converse inequality is always true, we deduce that $p_{\mu}(x)= p_{\nu}(x)$.
\end{proof}

The following completes the proof of Proposition~\ref{pr:barriers}(ii). %

\begin{corollary}\label{co:thresholdReverse}
  Let $x\in[-\infty,x^{*}]$ be a barrier. Then $p_{\mu}(x)=p_{\nu}(x)$.
\end{corollary}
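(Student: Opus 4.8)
\emph{Proof plan.} The idea is to derive the corollary directly from the four preceding lemmas, using the threshold $x_*$ as a pivot. First I would dispose of the degenerate case $x=-\infty$, where $p_\mu(-\infty)=0=p_\nu(-\infty)$ by property~(iii) of the put functions recalled in Section~\ref{se:prelimSecondStochOrder}. So from now on assume $x\in(-\infty,x_*]$.

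The key intermediate claim is that $P|_{\{X<x\}}$ is a martingale transport for some (hence, by Lemma~\ref{le:martRestrict}, every) $P\in\cS(\mu,\nu)$. To obtain this I would start from Lemma~\ref{le:threshold}, which supplies both $p_\mu(x_*)=p_\nu(x_*)$ and $E[X\1_{X<x_*}]=E[Y\1_{X<x_*}]$. The first identity, via Lemma~\ref{le:barrier}, shows that $x_*$ is itself a barrier, so Lemma~\ref{le:martRestrict} applies at the point $x_*$; combined with the second identity it yields that $P|_{\{X<x_*\}}$ is a martingale transport for every $P\in\cS(\mu,\nu)$. Fix such a $P$. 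Since $x\le x_*$ we have $\{X<x\}\subseteq\{X<x_*\}$, and restricting a martingale transport to an event of the form $\{X\in A\}$ preserves the martingale property because $E[Y\1_A(X)\mid X]=\1_A(X)\,E[Y\mid X]=\1_A(X)\,X$. Hence $P|_{\{X<x\}}$ is a martingale transport.

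With this in hand I would invoke Lemma~\ref{le:thresholdReverse}: the point $x$ is a barrier by hypothesis and $P|_{\{X<x\}}$ is a martingale transport, whence $p_\mu(x)=p_\nu(x)$, which is exactly the assertion. Note that the case $x=x_*$ is included uniformly (there $\{X<x\}=\{X<x_*\}$ and the conclusion is already contained in Lemma~\ref{le:threshold}), and the degenerate situations $x_*=\pm\infty$ are covered as well.

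There is no genuinely hard step here; the proof is a bookkeeping synthesis of the previous lemmas, and — together with Lemma~\ref{le:thresholdReverse} — it completes the proof of Proposition~\ref{pr:barriers}(ii) modulo the identity $x_*=x^*$. The only point deserving an explicit line is the stability of the martingale property under restriction to $\{X<x\}$, which is what lets us transfer the martingale-transport property from the larger threshold $x_*$ down to $x$; this is immediate from the pull-out property of conditional expectation. It is also worth recording that the particular choice of $P$ is harmless: Lemmas~\ref{le:barrier} and~\ref{le:martRestrict} guarantee that ``being a martingale transport after restriction to $\{X<x_*\}$'' holds for all elements of $\cS(\mu,\nu)$ or for none, so we may fix whichever $P$ is convenient.
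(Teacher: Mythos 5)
Your proof is correct and follows essentially the same route as the paper: use Lemmas~\ref{le:threshold} and~\ref{le:martRestrict} to see that $P|_{\{X<x_*\}}$ is a martingale transport, pass to the smaller event $\{X<x\}$, and conclude with Lemma~\ref{le:thresholdReverse}. The extra details you supply (that $x_*$ is a barrier via Lemma~\ref{le:barrier}, and that the martingale property survives restriction to $\{X<x\}$) are exactly the steps the paper leaves implicit.
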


\begin{proof}
  We may assume that $x\in\R$ which entails that $x^{*}>-\infty$.
  Lemma~\ref{le:barrier}, Corollary~\ref{co:threshold} and Lemma~\ref{le:martRestrict} show that the restriction of any $P\in\cS(\mu,\nu)$ to $\{X<x^{*}\}$ is a martingale transport. As $x\leq x^{*}$, the same holds for the restriction to $\{X<x\}$, and now Lemma~\ref{le:thresholdReverse} applies.
\end{proof}

\begin{lemma}\label{le:noBarrier}
  If $\bar x\in (x^{*},\infty]$ is a barrier, then $\mu(\bar x,\infty)=\nu(\bar x,\infty)=0$. 
\end{lemma}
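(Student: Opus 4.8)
Setup. The plan is a proof by contradiction. The case $\bar x=\infty$ is trivial, and if $\bar x\in\R$ with $\mu(\bar x,\infty)=0$ then every $P\in\cS(\mu,\nu)$ has $X\le\bar x$ $P$-a.s., hence $Y\le\bar x$ $P$-a.s.\ by the barrier property, so $\nu(\bar x,\infty)=0$ as well. It therefore suffices to show that $\bar x\in\R$ cannot be a barrier if $\mu(\bar x,\infty)>0$; assuming it is, I will construct $\tilde P\in\cS(\mu,\nu)$ that transports a positive amount of mass from $\{X>\bar x\}$ into $(-\infty,\bar x)$, which contradicts Definition~\ref{de:barrier}.

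Step 1 (strict slack below $\bar x$). Fix $P\in\cS(\mu,\nu)$ with expectation $E^P[\,\cdot\,]$. Since $\bar x$ is a barrier we have, up to $P$-null sets, $\{Y<\bar x\}\subseteq\{X<\bar x\}$ and $Y\le\bar x$ on $\{X<\bar x\}$; hence $(\bar x-X)^+=(\bar x-X)\1_{\{X<\bar x\}}$ and $(\bar x-Y)^+=(\bar x-Y)\1_{\{X<\bar x\}}$ $P$-a.s., so that $p_\mu(\bar x)=E^P[(\bar x-X)\1_{\{X<\bar x\}}]$ and $p_\nu(\bar x)=E^P[(\bar x-Y)\1_{\{X<\bar x\}}]$, and therefore
\[
  p_\nu(\bar x)-p_\mu(\bar x)=E^P[X\1_{\{X<\bar x\}}]-E^P[Y\1_{\{X<\bar x\}}].
\]
If the right-hand side were $0$, then $\bar x$ would satisfy both conditions in the definition of $x_*$, forcing $\bar x\le x_*$ and contradicting $\bar x>x_*$. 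Hence $\delta:=p_\nu(\bar x)-p_\mu(\bar x)=\int_{(-\infty,\bar x)}\!\big(x-E^P[Y\mid X=x]\big)\,\mu(dx)>0$, with a nonnegative integrand, so one may fix $c>0$ and a \emph{bounded} Borel set $A\subseteq(-\infty,\bar x)$ with $\mu(A)>0$ on which $x-E^P[Y\mid X=x]\ge c$ (intersect $\{x-E^P[Y\mid X=x]\ge c\}$ with a large interval $[-R,\bar x)$). Boundedness of $A$ will be used in Step 3.

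Step 2 (the surgery). We may assume $P$ does not cross $\bar x$, since otherwise $\bar x$ is already not a barrier. For small $\eps>0$ set $\theta_+:=\tfrac{\eps}{\mu(\bar x,\infty)}\,P|_{\{X>\bar x\}}$ and $\theta_-:=\tfrac{\eps}{\mu(A)}\,P|_{A\times\R}$, with first marginals $\mu_\pm$ and second marginals $\rho_\pm$; both have total mass $\eps$. By the barrier property $\rho_+$ is carried by $[\bar x,\infty)$, while $\rho_-$ satisfies $\bary(\rho_-)\le\bary(\mu|_A)-c<\bar x$, so $\rho_-(-\infty,\bar x)>0$. Define
\[
  \tilde P:=P-\theta_+-\theta_-+\tfrac1\eps\,\mu_+\otimes\rho_-+\tfrac1\eps\,\mu_-\otimes\rho_+ ,
\]
the two product measures being normalized to total mass $\eps$. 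For $\eps\le\min\{\mu(\bar x,\infty),\mu(A)\}$ the measure $\tilde P$ is nonnegative (the subtracted parts are mutually singular and dominated by $P$) and has marginals $\mu$ and $\nu$; moreover $\tilde P$ carries mass $\rho_-(-\infty,\bar x)>0$ on $\{X>\bar x\}\cap\{Y<\bar x\}$. Consequently, once we check $\tilde P\in\cS(\mu,\nu)$, it witnesses that $\bar x$ is not a barrier, a contradiction; hence $\mu(\bar x,\infty)=0$, and then $\nu(\bar x,\infty)=0$ as noted at the outset.

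Step 3 (the crux). It remains to verify $\int(X-Y)\1_B(X)\,d\tilde P\ge0$ for all Borel $B$, and this is where the choices of $A$, $c$ and $\eps$ pay off. I would split $B$ into its parts in $(\bar x,\infty)$, $\{\bar x\}$, and $(-\infty,\bar x)$. On $(\bar x,\infty)$ only $\theta_+$ and $\tfrac1\eps\mu_+\otimes\rho_-$ contribute, and since $x>\bar x>\bary(\rho_-)$ there, the added term is nonnegative while $(1-\tfrac{\eps}{\mu(\bar x,\infty)})$ times the nonnegative $P$-integral remains; the part $B=\{\bar x\}$ is unaffected. On $(-\infty,\bar x)$ only $\theta_-$ and $\tfrac1\eps\mu_-\otimes\rho_+$ contribute: the $P$-integral minus the $\theta_-$-integral equals $(1-\tfrac{\eps}{\mu(A)})\int_{B\cap A}(x-E^P[Y\mid X=x])\,\mu(dx)\ge(1-\tfrac{\eps}{\mu(A)})\,c\,\mu(B\cap A)$, whereas the product term is $\tfrac{\eps}{\mu(A)}\int_{B\cap A}(x-\bary(\rho_+))\,\mu(dx)$, which is bounded below by $-\tfrac{\eps}{\mu(A)}\,\kappa\,\mu(B\cap A)$ for a finite constant $\kappa$ independent of $\eps$ (because $A$ is bounded and $\rho_+$ has finite barycenter). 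The sum is thus $\mu(B\cap A)$ times a constant that is strictly positive once $\eps$ is small enough, and choosing such an $\eps$ completes the verification. I expect this balancing of the slack $c$ against the perturbation to be the only genuinely delicate point of the argument.
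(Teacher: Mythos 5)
Your argument is correct, and it reaches the conclusion by a route that differs from the paper's in two notable ways. First, your opening reduction (if $\mu(\bar x,\infty)=0$ then $X\le\bar x$ a.s., hence $Y\le\bar x$ a.s.\ by the barrier property, hence $\nu(\bar x,\infty)=0$) collapses the problem to the single case $\mu(\bar x,\infty)>0$; the paper instead splits according to whether $\nu(\bar x,\infty)>0$ and must treat separately the situation where all the mass above $\bar x$ is absorbed by an atom of $\nu$ at $\bar x$, which your version avoids entirely. Second, the perturbation itself is organized differently: the paper modifies the kernel $\kappa(x)$ pointwise for $x<\bar x$, choosing at each $x$ the maximal admissible mixing weight $\eps(x)$ and then truncating, whereas you extract a bounded set $A\subseteq(-\infty,\bar x)$ of positive $\mu$-measure with \emph{uniform} slack $x-E^P[Y\mid X=x]\ge c$ and perform an explicit marginal swap $\tfrac1\eps\,\mu_+\otimes\rho_-+\tfrac1\eps\,\mu_-\otimes\rho_+$, verifying the supermartingale constraint by an $\eps$-versus-$c$ balance. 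Both proofs hinge on the same key input, namely that $\bar x>x_{*}$ forces $P|_{\{X<\bar x\}}$ to fail the martingale property, which you derive quantitatively via $p_\nu(\bar x)-p_\mu(\bar x)=E[X\1_{X<\bar x}]-E[Y\1_{X<\bar x}]>0$ and the paper obtains through Lemma~\ref{le:thresholdReverse}. Your version buys a cleaner case structure and a fully explicit competitor; the paper's buys a construction that does not require selecting a bounded slack set. Two cosmetic points: in Step~3 the quantity you call the ``$P$-integral minus the $\theta_-$-integral'' equals the stated expression only when $B\subseteq A$; in general it exceeds it by the nonnegative term $\int_{B\setminus A}(x-E^P[Y\mid X=x])\,\mu(dx)$, so you should write ``$\ge$'' there. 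Also, ``we may assume $P$ does not cross $\bar x$'' is automatic, not an assumption, since the whole argument proceeds under the hypothesis that $\bar x$ is a barrier.
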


\begin{proof}
  The case $\bar x = \infty$ is clear. 
  Let $\bar x\in (x^{*},\infty)$ be a barrier and suppose for contradiction that $\mu(\bar x,\infty)>0$ or $\nu(\bar x,\infty)>0$.
  
  \emph{Case 1: $\nu(\bar x,\infty)>0$.}  We contradict the barrier property with an element of $\cS(\mu,\nu)$ transporting mass from $(-\infty,\bar x)$ to $(\bar x,\infty)$, and vice versa.

  Let $P\in\cS(\mu,\nu)$ be arbitrary and let $P=\mu\otimes \kappa$ be a disintegration such that for all $x<\bar x$, we have $\bary(\kappa(x))\leq x$ and $\kappa(x,dy)$ is concentrated on $(-\infty,\bar x]$ but not on $\{\bar x\}$; these choices are possible due to the barrier  and the supermartingale property.
  
  For each $x\in (-\infty,\bar x)$, let $\eps(x)\in[0,1]$ be the largest number such that
  $$
    \kappa'(x) :=  (1-\eps(x))\kappa(x)|_{(-\infty,\bar x)} + \tilde\eps(x) \nu|_{(\bar x,\infty)} + \kappa(x)|_{\{\bar x\}}
  $$  
  satisfies $\bary(\kappa'(x)) \leq x$; here $\tilde\eps(x)$ is the unique constant such that $\kappa'(x)$ is a probability measure. This defines a stochastic kernel with the properties
  $$
    \kappa'(x)\{\bar x\} = \kappa(x)\{\bar x\}\quad \mbox{for all }x,\quad \kappa'(x)[\bar x,\infty) > \kappa(x)[\bar x,\infty)\quad \mbox{if }\eps(x)>0.
  $$
  Moreover, $\eps>0$ on a set of positive $\mu$-measure, as otherwise $P|_{\{X<\bar x\}}$ is a martingale transport which would contradict $\bar x>x^{*}$ (Lemma~\ref{le:thresholdReverse}). Let $\nu_{2}$ be the restriction to $(\bar x,\infty)$ of the second marginal of 
	$
	   \mu|_{(-\infty,\bar x)} \otimes \kappa'.
	$
	By truncating the above function $\eps(\cdot)$ at some positive constant $\bar \eps$, we may assume that $\nu_{2}\leq \nu$ while retaining the other properties. Thus, we can define a measure $\mu_{2}\leq \mu$ by taking the preimage of $\nu_{2}$ under $P$ (obtained by disintegrating $P=\nu(dy)\otimes \hat\kappa(y,dx)$ and taking  $\mu_{2}$ to be the first marginal of $\nu_{2}(dy)\otimes \hat\kappa(y,dx)$). Moreover, let
	$\nu_{1}$ be the restriction to $(-\infty,\bar x)$ of the second marginal of 
	$
	   \mu|_{(-\infty,\bar x)} \otimes \kappa \; - \; \mu|_{(-\infty,\bar x)} \otimes \kappa'.
	$
  Then $c:=\nu_{1}(\R)=\mu_{2}(\R)$ and by construction,
	$$
	 \mu|_{(-\infty,\bar x)} \otimes \kappa' \;\;+\;\;(\mu|_{[\bar x,\infty)} - \mu_{2} ) \otimes \kappa \;\;+\;\; c^{-1}\mu_{2}\otimes \nu_{1}
	$$
	is an element of $\cS(\mu,\nu)$. Since $\mu(-\infty,\bar x)>0$ and $\nu(\bar x,\infty)>0$, it transports mass across $\bar x$, contradicting that $\bar x$ is a barrier.

	\emph{Case 2: $\mu(\bar x,\infty)>0$ and $\nu(\bar x,\infty)=0$.} Note that in this case, $\nu|_{[\bar x,\infty)}$ is concentrated at $\bar x$ and the entire mass $\mu(\bar x,\infty)>0$ is transported to that atom by any $P\in\cS(\mu,\nu)$, in addition to any mass coming from $(-\infty,\bar x]$.   We shall contradict the barrier property by constructing an element of $\cS(\mu,\nu)$ which transports mass from $(\bar x,\infty)$ to $(-\infty,\bar x)$; this will be balanced by moving appropriate mass from $(-\infty,\bar x)$ to $\{\bar x\}$.
	
	Let $P\in\cS(\mu,\nu)$ be arbitrary and let $\kappa$ be as above. For each $x\in (-\infty,\bar x)$, let $\eps(x)\in[0,1]$ be the largest number such that
  $$
    \kappa'(x) :=  (1-\eps(x))\kappa(x)|_{(-\infty,\bar x)} + \tilde\eps(x) \nu|_{\{\bar x\}} 
  $$  
  satisfies $\bary(\kappa'(x))\leq x$; again, $\tilde\eps(x)$ is the unique constant such that $\kappa'(x)$ is a probability measure. This defines a stochastic kernel with 
  $$
    \kappa'(x)\{\bar x\} \geq \kappa(x)\{\bar x\}\quad \mbox{for all }x,\quad \kappa'(x)\{\bar x\} > \kappa(x)\{\bar x\}\quad \mbox{if }\eps(x)>0,
  $$
  and again, $\eps>0$ on a set of positive $\mu$-measure. Let $\nu_{2}$ be the restriction to~$\{\bar x\}$ of the second marginal of 
	$
	   \mu|_{(-\infty,\bar x)} \otimes \kappa' \;-\; \mu|_{(-\infty,\bar x)} \otimes \kappa.
	$
	After truncating $\eps(\cdot)$ we again have $\nu_{2}\leq\nu$; recall that $P$ transports the mass $\mu(\bar x,\infty)>0$ to $\bar x$. Continuing the construction as above, the latter property shows that $\mu_{2}(\bar x,\infty)>0$, and the barrier property is again contradicted.
\end{proof}

\begin{proof}[Proof of Proposition~\ref{pr:barriers}]
   Proposition~\ref{pr:barriers} is now a consequence of Lemma~\ref{le:barrier}, Corollary~\ref{co:threshold}, Corollary~\ref{co:thresholdReverse} and Lemma~\ref{le:noBarrier}.
\end{proof}

\begin{proof}[Proof of Proposition~\ref{pr:decomp}(i)]
  According to Proposition~\ref{pr:barriers}, we face a pure martingale transport problem on $(-\infty,x^{*}]$; in particular, we may apply the decomposition result of \cite[Theorem~8.4]{BeiglbockJuillet.12} on this part of the state space to obtain $\nu_{k}$ and $P_{k}$ for $k\geq1$. Since $x^{*}$ is itself a barrier by Proposition~\ref{pr:barriers}, the only possible choice for $\nu_{0}$ is
  $
    \nu_{0}=\nu|_{(x^*,\infty)} + [\mu(x^*,\infty)-\nu(x^*,\infty)]\delta_{x^{*}},
  $
  and this measure satisfies $\mu_{0}\leq_{cd}\nu_{0}$.
\end{proof}

We proceed towards the proof of the second part of Proposition~\ref{pr:decomp}.

\begin{lemma}\label{le:polarSetsIrred}
  If $\mu\leq_{cd}\nu$ is irreducible, the $\Pi(\mu,\nu)$-polar sets and the $\cS(\mu,\nu)$-polar sets coincide.
\end{lemma}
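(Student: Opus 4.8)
Since $\cS(\mu,\nu)\subseteq\Pi(\mu,\nu)$, every $\Pi(\mu,\nu)$-polar set is automatically $\cS(\mu,\nu)$-polar, so only the converse needs an argument. I would prove it by contraposition: assuming $Q(B)>0$ for some $Q\in\Pi(\mu,\nu)$, I would construct $P\in\cS(\mu,\nu)$ with $P(B)>0$. It is natural to split according to whether the irreducible pair is proper. If $\bary(\mu)=\bary(\nu)$, then every $P\in\cS(\mu,\nu)$ has constant mean and is therefore a martingale, so $\cS(\mu,\nu)=\cM(\mu,\nu)$; moreover $\mu\leq_{c}\nu$, and by the remark following Definition~\ref{de:irred} one has $\{p_{\mu}<p_{\nu}\}=\{u_{\mu}<u_{\nu}\}$, so the pair is irreducible in the martingale sense and the claim reduces to the known coincidence of $\cM(\mu,\nu)$- and $\Pi(\mu,\nu)$-polar sets from \cite{BeiglbockJuillet.12}.

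The substantive case is the proper one, $\bary(\mu)>\bary(\nu)$. Then the domain is $I=(x^{*},\infty)$ and, by Proposition~\ref{pr:barriers}, there are no nontrivial barriers in $I$; equivalently $p_{\mu}<p_{\nu}$ throughout $I$, with a gap bounded away from $0$ on compact subsets of $I$ and converging to $\mu(\R)(\bary(\mu)-\bary(\nu))>0$ as $t\to\infty$. Using inner regularity I would pass from $B$ to a compact $K\subseteq B$ with $Q(K)>0$, and set $\mu_{K}=(Q|_{K})\circ X^{-1}$, $\nu_{K}=(Q|_{K})\circ Y^{-1}$ (finite measures of common mass $m=Q(K)>0$, concentrated on compact subsets of $I$ and $J$ respectively). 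Fixing a reference coupling $P^{*}\in\cS(\mu,\nu)$ (Proposition~\ref{pr:convexOrder}), the plan is to delete a small, $\lambda$-scaled portion of $P^{*}$ with $X$-marginal $\lambda\mu_{K}$, insert $\lambda\,Q|_{K}$ in its place, and repair the resulting imbalance by an auxiliary supermartingale transport supported in $I\times J$. Since $\lambda\,Q|_{K}$ charges $K$, so does the output $P$. In practice I would phrase this as a patching: find $\mu_{a}\leq\mu$, $\nu_{a}\leq\nu$ with $\mu_{a}\leq_{cd}\nu_{a}$ and $\mu-\mu_{a}\leq_{cd}\nu-\nu_{a}$ together with a transport in $\cS(\mu_{a},\nu_{a})$ charging $K$, and then take $P=P_{a}+P_{b}$ with $P_{b}\in\cS(\mu-\mu_{a},\nu-\nu_{a})$.

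The main obstacle is exactly the repair step: inserting $\lambda\,Q|_{K}$ both unbalances the second marginal and may violate $E^{P}[Y\mid X]\leq X$ at the origin points of $K$, where $Q$ is free to route mass upward. One must show that this ``defect'' can be absorbed within $I\times J$, and this is where properness is essential. The imbalance is compensated by routing a matching amount of mass downward within $I$ (there is room for this because $\nu$ charges $[x^{*},a)$, with $a$ the left endpoint of $\supp\mu_{K}$, since $x^{*}$ is a barrier), after which one verifies Strassen's criterion $p_{\mu-\mu_{a}}\leq p_{\nu-\nu_{a}}$ (Proposition~\ref{pr:convexOrder}) for the residual pair: on compacts inside $I$ the fixed positive gap $p_{\nu}-p_{\mu}$ dominates the $O(\lambda)$ perturbation; near $x^{*}$ one arranges that $\mu_{K}$, $\nu_{K}$ and the compensating chunk are supported strictly inside $I$, so the perturbation vanishes in a neighbourhood of $x^{*}$ where $p_{\mu}$ and $p_{\nu}$ touch; and near $+\infty$ the limiting gap $\mu(\R)(\bary(\mu)-\bary(\nu))>0$ dominates the linear-growth term of $p_{\nu_{a}}-p_{\mu_{a}}$ once $\lambda m$ is taken small. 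For $\lambda$ sufficiently small all these estimates hold at once, producing $P\in\cS(\mu,\nu)$ with $P(B)\geq P(K)>0$, contradicting the $\cS(\mu,\nu)$-polarity of $B$.
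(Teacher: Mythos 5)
Your overall strategy coincides with the paper's: reduce to the proper irreducible case (the improper case being the known martingale result — the paper cites \cite[Corollary~3.4]{BeiglbockNutzTouzi.15} for the polar-set coincidence rather than \cite{BeiglbockJuillet.12}), restrict the offending coupling to a compact piece strictly inside $I\times J$, restore the supermartingale property by mixing in a small amount of $\nu$-mass located near the left endpoint of $J$, and then verify Strassen's criterion for the residual pair by comparing put functions in the three regimes (compacts in $I$, near $x^{*}$, near $+\infty$), with properness supplying the positive gap at infinity. The paper implements the ``patching'' exactly as in your reformulated version: it builds $\pi'=\nu(B)\,\pi_{1}\otimes\kappa'$ with $\pi'\gg\pi$, $\pi'_{1}\le\mu$, $\pi'_{2}\le\nu$, finds $\epsilon>0$ with $p_{\mu-\epsilon\pi'_{1}}\le p_{\nu-\epsilon\pi'_{2}}$, and sets $P=P'+\epsilon\pi'_{1}(\R)^{-1}\pi'$.

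There is, however, one step that fails as stated: your claim that ``near $x^{*}$ one arranges that $\mu_{K}$, $\nu_{K}$ and the compensating chunk are supported strictly inside $I$, so the perturbation vanishes in a neighbourhood of $x^{*}$.'' When $\nu(\{x^{*}\})>0$ and $\nu$ puts no mass on $(x^{*},a)$ (e.g.\ $\mu=\delta_{3/2}$, $\nu=\tfrac12\delta_{0}+\tfrac12\delta_{2}$, so $x^{*}=0$, $J=[0,\infty)$), the only $\nu$-mass available for the downward compensation is the atom at $x^{*}\notin I$. The compensating chunk then cannot be supported strictly inside $I$, the residual $\nu_{a}$ charges $\{x^{*}\}$, and the perturbation of $p_{\nu}-p_{\mu}$ does \emph{not} vanish near $x^{*}$, where the two put functions touch. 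The conclusion is still true, but it requires a different local argument: compare the right derivatives at $x^{*}$, using $\partial^{+}p_{\mu}(x^{*})=\mu(\{x^{*}\})=0$ (since $\mu(I)=\mu(\R)$) and $\partial^{+}(p_{\nu}-p_{\mu})(x^{*})=\nu(\{x^{*}\})>0\ge$ the corresponding derivative of the perturbation, so that $p_{\nu}-p_{\mu}$ still dominates $\epsilon(p_{\pi'_{2}}-p_{\pi'_{1}})$ on a right neighbourhood of $x^{*}$ for small $\epsilon$. This is precisely the separate case~(ii) in the paper's proof; you should add it. (A second, trivial point: your single compact $K\subseteq B$ may project onto a set accumulating at $x^{*}$; either shrink $K$ away from $x^{*}$ while keeping $Q(K)>0$, or decompose into countably many compacta as the paper does.)
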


\begin{proof}
  If $\mu$ and $\nu$ have the same barycenter, then $\cS(\mu,\nu)=\cM(\mu,\nu)$ and this is the result of \cite[Corollary~3.4]{BeiglbockNutzTouzi.15}. Thus, we may assume that $(\mu,\nu)$ is proper. By Proposition~\ref{pr:barriers}, the associated domain $(I,J)$ satisfies $I=(x^{*},\infty)$ for some $x^{*}\in[-\infty,\infty)$, while $J=I$ if $\nu(\{x^{*}\})=0$ (including the case $x^{*}=-\infty$)  and $J=[x^{*},\infty)$ if $\nu(\{x^{*}\})>0$. 
  
  Since $\cS(\mu,\nu)\subseteq \Pi(\mu,\nu)$, it suffices to show that for any $\pi\in\Pi(\mu,\nu)$ there exists $P\in\cS(\mu,\nu)$ such that $P\gg\pi$. Let us show more generally that 
  \begin{center}for any measure $\pi$ on $\R^{2}$ with marginals $\pi_{1}\leq \mu$ and $\pi_{2}\leq \nu$\\there exists $P\in\cS(\mu,\nu)$ such that $P\gg\pi$.
  \end{center} 
  While $\pi$ is necessarily supported by $I\times J$, we prove the claim under the additional condition that $\pi$ is concentrated on a compact rectangle $K\times L \subset I\times J$. This entails no loss of generality: a general $\pi$ may be decomposed into a sum $\pi=\sum_{n}\pi^{n}$ of measures satisfying this condition, and if $P^{n}$ are the corresponding supermartingales, $P=\sum_{n} 2^{-n} P^{n}$ satisfies the claim.
  
  The definition of $(I,J)$ implies that $\nu$ assigns positive mass to any neighborhood of the lower endpoint $x^{*}$ of $J$. More precisely, we can find a compact set $B\subset J$, located entirely to the left of $K\subset I$, such that $\nu(B)>0$. (If $\nu(\{x^{*}\})>0$ we can simply take $B=\{x^{*}\}$.) Consider a disintegration $\pi=\pi_{1}\otimes \kappa$ where $\kappa(x,dy)$ is concentrated on $L$ for all $x\in K$. We introduce another stochastic kernel $\kappa'$ of the form
  $$
    \kappa'(x,dy) = \frac{\kappa(x,dy) + \eps(x) \nu(dy)|_{B} }{ c(x)}.
  $$
  Here $c(x)\geq1$ is the normalizing constant such that $\kappa'(x,dy)$ is a stochastic kernel. Moreover, $\eps(x):=0$ for $x$ such that $\bary(\kappa (x))\leq x$, whereas for $x$ with $\bary(\kappa (x))> x$ we let $\eps(x)$ be the unique positive number such that $\bary(\kappa' (x))= x$---this number exists by the intermediate value theorem; note that $B$ is located to the left of $x\in K$. 
  By construction, 
  $$
    \pi':= \nu(B) \pi_{1}\otimes \kappa'
  $$
  is a supermartingale with $\pi'\gg \pi$ and its marginals satisfy $\pi'_{1}\leq \pi_{1}\leq\mu$ as well as $\pi'_{2}\leq \nu$; the latter is due to $\pi_{1}(\R)\leq\mu(\R)=1$ and
  $
  \kappa'(x) \leq \nu(B)^{-1}\, \nu|_{B} + \kappa(x)
  $
  and $\kappa(x)$ being concentrated on $B^{c}$. We also note that 
  \begin{equation}\label{eq:piPrimeConcentrated}
  \mbox{$\pi'$  is concentrated on a quadrant of the form $[k,\infty)^{2}$}
  \end{equation}
  with $[k,\infty)\subseteq J$; here $k\in\R$ is determined by the lower bound of the set $B$.
  We shall complete the proof by constructing $P\in\cS(\mu,\nu)$ such that $P\gg\pi'$. 
  
  (i) We first consider the case where $\nu(\{x^{*}\})=0$ and hence $I=J=(x^{*},\infty)$ and $k>x^{*}$. Since $p_{\nu}-p_{\mu}$ is continuous, strictly positive on $I$ and  
  $
    \lim_{t\to\infty} p_{\nu}(t)-p_{\mu}(t) =  \mu(\R)[\bary(\nu)-\bary(\mu)]>0,
  $
  we see that $p_{\nu}-p_{\mu}$ is uniformly bounded away from zero on $[k,\infty)$. On the other hand, $p_{\pi'_{2}}-p_{\pi'_{1}}$ is uniformly bounded on $[k,\infty)$ since
  $$
    \lim_{t\to\infty} p_{\pi'_{2}}(t)-p_{\pi'_{1}}(t) =  \pi'_{1}(\R)[\bary(\pi'_{1})-\bary(\pi'_{2})] < \infty.
  $$
  As a result, there exists $\epsilon >0$ such that 
  $
    p_{\mu} - \epsilon p_{\pi'_{1}} \leq p_{\nu} - \epsilon p_{\pi'_{2}}
  $
  on $[k,\infty)$, but then also on $\R$ because $p_{\pi'_{1}}=p_{\pi'_{2}}=0$ outside of $[k,\infty)$ due to~\eqref{eq:piPrimeConcentrated}. Noting that this inequality may also be stated as
  $
    p_{\mu - \epsilon \pi'_{1}} \leq  p_{\nu - \epsilon \pi'_{2}},
  $
  Proposition~\ref{pr:convexOrder} shows that there exists some $P'\in\cS(\mu - \epsilon \pi'_{1},\nu - \epsilon \pi'_{2})$, and we complete the proof by setting $P:=P'+ \epsilon\pi'_{1}(\R)^{-1} \pi'$.
  
  (ii) In the case $\nu(\{x^{*}\})>0$ we need to argue differently that there exists $\epsilon >0$ such that 
  $
    p_{\mu} - \epsilon p_{\pi'_{1}} \leq p_{\nu} - \epsilon p_{\pi'_{2}}
  $
  on $[k,\infty)$. By enlarging $[k,\infty)$, we may assume that $k=x^{*}$ is the left endpoint of $J$. As $\mu(I)=\mu(\R)=\nu(J)$, we have
  $
    \partial^{+}p_\mu(x^{*}) = \partial^{+}p_\mu(x^{*}) - \partial^{-}p_\mu(x^{*}) = \mu(\{x^{*}\}) =0
  $
  and similarly
  $$
    \partial^{+}p_{\pi'_{1}}(x^{*})=0,\quad \partial^{+}p_{\pi'_{2}}(x^{*})=\pi'_{2}(\{x^{*}\}), \quad \partial^{+}p_\nu(x^{*}) = \nu(\{x^{*}\})>0.
  $$  
  Since $\nu(\{x^{*}\}) \geq \pi'_{2}(\{x^{*}\})$, it then follows that
  $
   0\neq \partial^{+}(p_{\nu}-p_{\mu})(x^{*}) \geq \partial^{+}(p_{\pi'_{2}}-p_{\pi'_{1}})(x^{*}).
  $
  The existence of the desired $\epsilon >0$ follows and the rest of the argument is as in (i).
\end{proof}

\begin{proof}[Proof of Proposition~\ref{pr:decomp}(ii)]
  By the decomposition in Proposition~\ref{pr:decomp}(i) and Lemma~\ref{le:polarSetsIrred}, a Borel set $B\subseteq \R^{2}$ is $\cS(\mu,\nu)$-polar if and only if $B\cap (I_{k}\times J_{k})$ is $\Pi(\mu_{k},\nu_{k})$-polar for all $k\geq0$ and $B\cap \Delta$ is $P_{-1}$-null. It remains to apply the result of \cite[Proposition~2.1]{BeiglbockGoldsternMareschSchachermayer.09} for each $k\geq0$: a Borel set $B_{k}$ is $\Pi(\mu_{k},\nu_{k})$-polar if and only if $B_{k} \subseteq (N_{\mu_{k}}\times \R) \cup (\R \times N_{\nu_{k}})$ for nullsets $N_{\mu_{k}}$ and $N_{\nu_{k}}$.
\end{proof}

\section{Duality Theory}\label{se:duality}

In this section, we introduce and analyze a dual problem for supermartingale optimal transport. We shall prove that this problem admits an optimizer and that there is no duality gap.

\subsection{Integration on a Proper Irreducible Component}\label{se:generalizedIntegral}

We first introduce the notion of integrability that will be used for the dual elements. Let $\mu\leq_{cd}\nu$ be proper and irreducible with domain $(I,J)$, and let $\chi: J\to \R$ be a concave increasing function. Since $\chi^{+}$ has linear growth, $\mu(\chi)$ and $\nu(\chi)$ are well defined in $[-\infty,\infty)$. In what follows, we give a meaningful definition of the difference $\mu(\chi)-\nu(\chi)$ in cases where both terms are infinite. We write $\chi'$ for the \emph{left} derivative of $\chi$, with the convention that $\chi'(\infty) := \lim_{t \to \infty} \chi'(t) = \inf_{t\in I} \chi'(t)$, and $-\chi''$ for the second derivative measure of the convex function $-\chi$ on $I$. Finally, recall that $I=(x^{*},\infty)$. If $\nu$ has an atom at $x^{*}$, then $\chi$ may have a jump at $x^{*}$ and we denote its magnitude by 
$
  \Delta\chi(x^{*}) := \chi(x^{*}+) - \chi(x^{*}) \in \R_{+}.
$

\begin{lemma}\label{le:intConcaveWelldef}
  Let $\mu\leq_{cd}\nu$ be proper and irreducible with domain $(I,J)$, let $\chi: J\to \R$ be a concave increasing function, and let $P=\mu \otimes \kappa$ be an arbitrary element of $\cS(\mu,\nu)$. Then
  \begin{align*}
    (\mu-\nu)(\chi) 
    &:=
    \int_I \left[\chi(x) - \int_J \chi(y) \kappa(x,dy) \right]\, \mu(dx) \\\
    &= \chi'(\infty) [\bary(\mu)-\bary(\nu)]  + \! \int_I (p_\mu-p_\nu)\, d\chi'' +\Delta\chi(x^{*})\nu(\{x^{*}\}).
  \end{align*}
  In particular, the definition of $(\mu-\nu)(\chi)\in [0,\infty]$ does not depend on $P$.
\end{lemma}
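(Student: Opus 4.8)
The plan is to fix an arbitrary disintegration $P=\mu\otimes\kappa\in\cS(\mu,\nu)$ — so that for $\mu$-a.e.\ $x$ the measure $\kappa(x,\cdot)$ is a probability measure concentrated on $[x^{*},\infty)$ with finite barycenter $\bary(\kappa(x,\cdot))\le x$ — and to evaluate the defining integral explicitly, obtaining a value that manifestly does not involve $\kappa$. Write $m:=\chi'(\infty)\ge 0$ and let $\sigma$ denote the nonnegative measure $-\chi''$ on $I=(x^{*},\infty)$; it satisfies $\sigma((s,\infty))=\chi'(s)-m<\infty$ for every $s>x^{*}$, though $\sigma$ may be infinite near $x^{*}$.

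First I would record a representation of $\chi$. Fixing a reference point $x_{0}\in I$, the fundamental theorem of calculus for the locally Lipschitz concave function $\chi$ together with Fubini gives, for $y\in I$,
\[
  \chi(y)=\chi(x_{0})+m(y-x_{0})+\int_{I}\big[(y\wedge t)-(x_{0}\wedge t)\big]\,\sigma(dt).
\]
I would deliberately keep the integrand in the bounded form $(y\wedge t)-(x_{0}\wedge t)$ — which is at most $|y-x_{0}|$ and vanishes for $t\le x^{*}\vee(x_{0}\wedge y)$, hence is $\sigma$-integrable — rather than in the "put'' form $(t-x_{0})^{+}-(t-y)^{+}$, since the individual put integrals against $\sigma$ may be infinite. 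If $\nu(\{x^{*}\})>0$, then $\chi(x^{*})$, and hence $\chi(x^{*}+)$, is finite, and letting $y\downarrow x^{*}$ (monotone convergence under the integral) shows the formula persists at $y=x^{*}$ after subtracting $\Delta\chi(x^{*})$; if $\nu(\{x^{*}\})=0$, the value $\chi(x^{*})$ is irrelevant because then $\kappa(x,\{x^{*}\})=0$ for $\mu$-a.e.\ $x$.

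Substituting this representation into $\chi(x)-\int_{J}\chi(y)\,\kappa(x,dy)$ and using that $\kappa(x,\cdot)$ is a probability measure, the $\chi(x_{0})$- and $(x_{0}\wedge t)$-terms cancel, leaving
\[
  \chi(x)-\int_{J}\chi(y)\,\kappa(x,dy)=m\big[x-\bary(\kappa(x,\cdot))\big]+\int_{I}\Big[(x\wedge t)-\int_{J}(y\wedge t)\,\kappa(x,dy)\Big]\sigma(dt)+\Delta\chi(x^{*})\,\kappa(x,\{x^{*}\}).
\]
Each summand is nonnegative: the first since $\bary(\kappa(x,\cdot))\le x$; the second since $u\mapsto u\wedge t$ is concave and nondecreasing, whence by Jensen $\int(y\wedge t)\,\kappa(x,dy)\le\bary(\kappa(x,\cdot))\wedge t\le x\wedge t$; the third trivially. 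I may therefore integrate in $x$ against $\mu$ term by term (Tonelli; $\mu$ is finite, $\sigma$ is $\sigma$-finite). The first term gives $m[\bary(\mu)-\bary(\nu)]$ and the last $\Delta\chi(x^{*})\nu(\{x^{*}\})$, both because $P$ has marginals $\mu$ and $\nu$. For the middle term, swapping the order of integration and using $x\wedge t=t-(t-x)^{+}$ together with $\int_{I}\!\int_{J}(t-y)^{+}\kappa(x,dy)\,\mu(dx)=p_{\nu}(t)$ and $\int_{I}(t-x)^{+}\mu(dx)=p_{\mu}(t)$ (both finite, so the difference is legitimate) yields $\int_{I}(p_{\nu}-p_{\mu})\,d\sigma=\int_{I}(p_{\mu}-p_{\nu})\,d\chi''$. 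Summing gives the asserted identity, whose right-hand side is independent of $P$ and lies in $[0,\infty]$ because $m\ge 0$, $\bary(\mu)\ge\bary(\nu)$ (from $\mu\leq_{cd}\nu$), $p_{\mu}\le p_{\nu}$ with $-\chi''\ge 0$, and $\Delta\chi(x^{*})\ge 0$.

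The only real difficulty is integrability bookkeeping: because $\int_{I}(t-y)^{+}\sigma(dt)$ can be infinite, the representation and every cancellation must be carried out in the bounded form $(y\wedge t)-(x_{0}\wedge t)$ and kept nonnegative throughout, so that Tonelli applies cleanly; and the endpoint $x^{*}$ needs the separate treatment above — the jump term $\Delta\chi(x^{*})$ and the observation that $\chi(x^{*}+)$ is finite precisely in the case where it is used. Everything else, namely the Fubini/Tonelli exchanges and the identification of the iterated put-integrals with $p_{\mu}$ and $p_{\nu}$, is routine.
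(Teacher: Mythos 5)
Your proof is correct, and it takes a genuinely different route through the integrability issues than the paper does, even though the core computation is the same (expand $\chi$ via its second-derivative measure, apply Fubini, recognize the put functions). The paper first splits off the jump to reduce to continuous $\chi$, proves the identity for $\chi\in L^{1}(\mu)\cap L^{1}(\nu)$ by integrating a representation of $\chi$ (anchored at a point $a$ and split into put/call pieces on either side of $a$) directly against the signed measure $\mu-\nu$, and only then reaches general $\chi$ by a monotone approximation ($\chi_{n}:=\chi$ on $[x^{*}+1/n,\infty)$, extended affinely) together with monotone convergence on both sides of the identity. You instead substitute the representation into the disintegrated expression $\chi(x)-\int\chi(y)\,\kappa(x,dy)$ from the outset; after cancellation only three manifestly nonnegative summands remain (the drift term, the Jensen gap of $u\mapsto u\wedge t$, and the jump term), so a single application of Tonelli in $x$ finishes the proof with no $L^{1}$ reduction and no approximation step. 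What this buys is a one-pass argument that also makes the independence from $P$ completely explicit; what it costs is that the cancellation of the $(x_{0}\wedge t)$-terms must be justified even when $\int_{J}\chi\,d\kappa(x)=-\infty$. That step does go through — the positive part of your kernel $(y\wedge t)-(x_{0}\wedge t)$ has iterated integral bounded by $(y-x_{0})^{+}\bigl(\chi'(x_{0})-\chi'(\infty)\bigr)$ integrated against $\kappa(x,dy)$, hence finite, so every splitting and recombination is legitimate with values in $(-\infty,\infty]$ — but this is the one place where your write-up is more compressed than it should be. Your handling of the endpoint ($\chi(x^{*}+)$ finite exactly when it is needed, $\chi(x^{*})$ irrelevant when $\nu(\{x^{*}\})=0$) is sound and matches the role the jump term plays in the paper.
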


The proof follows the lines of \cite[Lemma~4.1]{BeiglbockNutzTouzi.15} and is omitted.
Our next aim is to define expressions of the form $\mu(\varphi)+\nu(\psi)$ in a situation where the individual integrals are not necessarily finite. We continue to assume that $\mu\leq_{cd}\nu$ is proper and irreducible with domain $(I,J)$.

\begin{definition}\label{de:ModConcaveIntegral}
  Let $\varphi: I\to \overline \R$ and $\psi: J\to \overline \R$ be Borel functions. If there exists a concave increasing function $\chi: J\to \R$ such that $\varphi-\chi \in L^{1}(\mu)$ and  $\psi+\chi \in L^{1}(\nu)$, we say that $\chi$ is a \emph{moderator} for $(\varphi,\psi)$ and set
  $$
  \mu(\varphi)+\nu(\psi) := \mu(\varphi-\chi)+\nu(\psi+\chi) + (\mu-\nu)(\chi) \,\in (-\infty,\infty].
  $$
  (This value is independent of the choice of $\chi$ by an argument similar to~\cite[Remark~4.8]{BeiglbockNutzTouzi.15}.) 
  We denote by $L^{ci}(\mu,\nu)$ the space of all pairs $(\varphi,\psi)$ which admit a moderator $\chi$ such that $(\mu-\nu)(\chi)<\infty$.
\end{definition}

\subsection{Closedness on a Proper Irreducible Component}\label{se:closednessOnIrred}

In this section, we introduce the dual problem for a proper and irreducible pair $\mu\leq_{cd}\nu$ with domain $(I,J)$. It will be convenient to work with a nonnegative reward function $f$ and alleviate this restriction  later on (Remark~\ref{rk:lowerBoundDualityGlobal}).

\begin{definition}\label{de:dualDomainPointw}
  Let $f: I\times J \to [0,\infty]$. We denote by $\cD^{ci,pw}_{\mu,\nu}(f)$ the set of all Borel functions $(\varphi,\psi,h): \R\to\overline\R \times \overline\R \times \R_{+}$ with $(\varphi,\psi)\in L^{ci}(\mu,\nu)$ and 
  $$
    \varphi(x) + \psi(y) + h(x)(y-x) \geq f(x,y),\quad (x,y)\in I\times J.
  $$
\end{definition}

We emphasize that the above inequality is stated in the pointwise (``pw'') sense. 
The following is the key result of this section. We stress that its assertion fails if $(\mu,\nu)$ is not proper; cf.\ Section~\ref{se:counterexDuality} for a counterexample.

\begin{proposition}\label{pr:closednessIrred}
   Given $f,f_{n}: I\times J \to [0,\infty]$ such that $f_{n}\to f$ pointwise and $(\varphi_{n},\psi_{n},h_{n})\in \cD^{ci,pw}_{\mu,\nu}(f_{n})$ satisfying $\sup_{n} \mu(\varphi_{n})+\nu(\psi_{n})<\infty$, there exist
   $$
     (\varphi,\psi,h)\in \cD^{ci,pw}_{\mu,\nu}(f) \quad\mbox{such that}\quad \mu(\varphi)+\nu(\psi)\leq \liminf_{ n}\mu(\varphi_{n})+\nu(\psi_{n}).
   $$
\end{proposition}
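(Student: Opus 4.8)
The strategy is a compactness argument modeled on the martingale-transport closedness result of \cite{BeiglbockNutzTouzi.15}, but with the extra twist that we must control both the slope (first order) and the convexity (second order) of suitable concave moderators. First I would reduce to a convenient normalization: given $(\varphi_n,\psi_n,h_n)\in\cD^{ci,pw}_{\mu,\nu}(f_n)$, choose for each $n$ a moderator $\chi_n$ with $(\mu-\nu)(\chi_n)<\infty$, so that $\varphi_n-\chi_n\in L^1(\mu)$, $\psi_n+\chi_n\in L^1(\nu)$, and $\mu(\varphi_n)+\nu(\psi_n)=\mu(\varphi_n-\chi_n)+\nu(\psi_n+\chi_n)+(\mu-\nu)(\chi_n)$. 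Since each of the three summands is controlled — the first two by $L^1$ integrals, the third being nonnegative by Lemma~\ref{le:intConcaveWelldef} — and their sum is bounded above along $n$, each summand is bounded. In particular $\sup_n (\mu-\nu)(\chi_n)<\infty$, and by Lemma~\ref{le:intConcaveWelldef} this means $\chi_n'(\infty)[\bary(\mu)-\bary(\nu)] + \int_I(p_\mu-p_\nu)\,d\chi_n'' + \Delta\chi_n(x^*)\nu(\{x^*\})$ is bounded. Because the pair is \emph{proper}, $\bary(\mu)-\bary(\nu)\neq 0$; since $p_\mu-p_\nu<0$ strictly on $I$ (Proposition~\ref{pr:barriers}) and $\chi_n'$ is decreasing with $\chi_n'(\infty)\ge 0$, the three terms are each nonnegative after subtracting a common affine normalization, so we get: $\chi_n'(\infty)$ is bounded, the total mass $\chi_n''(K)$ is bounded on every compact $K\subset I$, and $\Delta\chi_n(x^*)$ is bounded. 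This is precisely where properness is essential — it is the substitute for the ``convexity control'' available in the martingale case, and controlling the slope at infinity is the genuinely new ingredient.

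Next, after shifting each $\chi_n$ by an affine function (which is absorbed into $\varphi_n,\psi_n$ via the marginal constraints — adjust $\varphi_n-\chi_n$ and $\psi_n+\chi_n$ accordingly, keeping their $\mu$- resp.\ $\nu$-integrals unchanged up to the fixed quantities $\bary(\mu),\bary(\nu)$), I would normalize so that $\chi_n(a)=0$ and $\chi_n'(a)$ is bounded, at a fixed interior point $a\in I$. The bounds on $\chi_n'(a)$, $\chi_n'(\infty)$ and on the second-derivative measures on compacts give, by a Helly / local-compactness argument, a subsequence along which $\chi_n\to\chi$ pointwise on $I$ (uniformly on compacts) with $\chi$ concave increasing, $\chi_n'\to\chi'$ at continuity points, and $\chi_n'(\infty)\to\chi'(\infty)$ (using that the slopes at infinity are bounded and that mass of $\chi_n''$ does not escape to $+\infty$ — here again $\int_I(p_\mu-p_\nu)\,d\chi_n''$ being bounded while $p_\mu-p_\nu$ is bounded away from $0$ on $[k,\infty)$ prevents escape of $\chi_n''$-mass to infinity). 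One also passes $\Delta\chi_n(x^*)\to\Delta\chi(x^*)$ along the subsequence. By Fatou/monotone-type arguments on the formula of Lemma~\ref{le:intConcaveWelldef}, $(\mu-\nu)(\chi)\le\liminf_n(\mu-\nu)(\chi_n)<\infty$.

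It then remains to extract limits of $\varphi_n-\chi_n$ and $\psi_n+\chi_n$. Here I would follow the standard recipe: these are $L^1$ functions with bounded integrals, but not bounded pointwise, so one cannot take naive limits. Instead, using the pointwise dual inequality $(\varphi_n-\chi_n)(x) + (\psi_n+\chi_n)(y) + [h_n(x)-\chi_n'(x)](y-x)\ge f_n(x,y)-[\chi_n(y)-\chi_n(x)-\chi_n'(x)(y-x)]$ — wait, more cleanly: rewrite the inequality $\varphi_n(x)+\psi_n(y)+h_n(x)(y-x)\ge f_n(x,y)$ and absorb $\chi_n$ by setting $\tilde\varphi_n:=\varphi_n-\chi_n$, $\tilde\psi_n:=\psi_n+\chi_n$, and $\tilde h_n := h_n - \chi_n'$ (which may be negative, but is bounded below on compacts by concavity of $\chi_n$ and the control above), to get $\tilde\varphi_n(x)+\tilde\psi_n(y)+\tilde h_n(x)(y-x)\ge f_n(x,y) - (\chi_n(y)-\chi_n(x)-\chi_n'(x)(y-x))$. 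Then apply a Komlós-type / Cesàro-averaging argument (as in \cite{BeiglbockNutzTouzi.15, Kellerer.84}): pass to convex combinations to obtain a.e.-convergent subsequences of $\tilde\varphi_n\to\tilde\varphi$ in $L^1(\mu)$-weak or a.s.\ sense, $\tilde\psi_n\to\tilde\psi$, with lower-semicontinuity of integrals giving $\mu(\tilde\varphi)+\nu(\tilde\psi)\le\liminf(\mu(\tilde\varphi_n)+\nu(\tilde\psi_n))$; pick a pointwise limit $\tilde h$ of $\tilde h_n$ (pass to a subsequence where $\tilde h_n(x)$ converges for $x$ in a countable dense set, then take a monotone envelope, or simply use that the dual inequality forces a bound on $\tilde h_n$ in terms of difference quotients of $\tilde\varphi_n,\tilde\psi_n$). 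Taking liminf in the dual inequality and using $f_n\to f$ pointwise yields $\tilde\varphi(x)+\tilde\psi(y)+\tilde h(x)(y-x)\ge f(x,y)-(\chi(y)-\chi(x)-\chi'(x)(y-x))$ for all $(x,y)$; setting $\varphi:=\tilde\varphi+\chi$, $\psi:=\tilde\psi-\chi$, $h:=\tilde h+\chi'$ (so $h\ge 0$ because $\tilde h\ge -\chi'$ in the limit) gives $(\varphi,\psi,h)\in\cD^{ci,pw}_{\mu,\nu}(f)$ with $\chi$ a moderator and $\mu(\varphi)+\nu(\psi)=\mu(\tilde\varphi)+\nu(\tilde\psi)+(\mu-\nu)(\chi)\le\liminf_n\bigl(\mu(\varphi_n)+\nu(\psi_n)\bigr)$.

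The main obstacle I anticipate is the delicate point that $\tilde h_n=h_n-\chi_n'$ is no longer sign-definite, so one must carefully combine the concavity bound on $\chi_n'$ (lower bound) with an upper bound coming from the dual inequality evaluated at two points, and ensure $h=\tilde h+\chi'\ge 0$ survives in the limit — the nonnegativity of the original $h_n$ is what rescues this, since $h_n\ge 0$ forces $\tilde h_n\ge-\chi_n'$ and $\chi_n'\ge\chi_n'(\infty)\ge0$ keeps things from degenerating near $+\infty$. A secondary technical point is justifying that no second-derivative mass of $\chi_n$ escapes to the endpoints of $I$; this is handled exactly by the quantitative bound $\int_I(p_\mu-p_\nu)\,d\chi_n''$ finite together with $p_\mu-p_\nu$ being bounded away from zero away from $x^*$ (Corollary~\ref{co:noTouching}) and linear decay control near $x^*$ from $\partial^+p_\mu(x^*)=\partial^+p_\nu(x^*)\cdot 0$-type estimates in the irreducible setting, as in the proof of Lemma~\ref{le:polarSetsIrred}.
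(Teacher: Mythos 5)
Your overall architecture (moderator $\to$ concave compactness via properness $\to$ Komlós for $\varphi_n,\psi_n$ $\to$ reconstruct $h$) is the paper's, and your discussion of why properness controls $\chi_n'(\infty)$ and why $\int_I(p_\mu-p_\nu)\,d\chi_n''$ prevents escape of second-derivative mass is correct. But there are two genuine gaps. First, your opening step ``each of the three summands is controlled \dots and their sum is bounded above along $n$, [hence] each summand is bounded'' is false for a \emph{generic} moderator: $\mu(\varphi_n-\chi_n)$ and $\nu(\psi_n+\chi_n)$ are finite for each $n$ but need not be bounded below uniformly in $n$, so boundedness of the sum does not give $\sup_n(\mu-\nu)(\chi_n)<\infty$. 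The paper's Lemma~\ref{le:passageToChi} exists precisely to repair this: one must take the \emph{specific} moderator $\chi_n(y):=\inf_{x\in I}[\varphi_n(x)+h_n(x)(y-x)]$, which satisfies $\chi_n\le\varphi_n$ on $I$ and $-\chi_n\le\psi_n$ on $J$ (using $f_n\ge0$ and $h_n\ge0$), so that all three summands are nonnegative and the bound follows. (Verifying that this $\chi_n$ is finite on $J$ and that the iterated integrals can be computed term by term is itself nontrivial and occupies most of that lemma.)

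Second, your construction of the limiting $h$ does not work as stated. There is no pointwise compactness for $h_n$: for fixed $x$ the dual inequality bounds $h_n(x)$ only through $\varphi_n(x)$ and $\psi_n(y)$ at some $y<x$, and after Komlós these converge only $\mu$-a.s.\ resp.\ $\nu$-a.s., with $\varphi=\limsup\varphi_n$ possibly $+\infty$ on a $\mu$-nullset; moreover a subsequential limit on a countable dense set cannot be extended to all of $I$ since $h$ has no continuity, and a ``monotone envelope'' is unavailable because $h_n$ has no monotonicity in $x$. The paper sidesteps limits of $h_n$ entirely: since $y\mapsto\varphi_n(x)+h_n(x)(y-x)$ is affine and increasing, it dominates the concave-increasing envelope $[f_n(x,\cdot)-\psi_n]^{ci}$, whence $\varphi(x)\ge\hat\varphi(x,y):=[f(x,\cdot)-\psi]^{ci}(y)$ for $y\le x$ in the limit (using $\liminf(g_n^{ci})\ge(\liminf g_n)^{ci}$); one then \emph{defines} $h(x):=\partial^-\hat\varphi(x,\cdot)(x)$, which is automatically nonnegative because $\hat\varphi(x,\cdot)$ is increasing, and sets $h=0$ on $\{\varphi=\infty\}$ where the inequality is trivial. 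This also removes the delicate bookkeeping around $\tilde h_n=h_n-\chi_n'$ and the sign of $h=\tilde h+\chi'$ that you flag as the main obstacle; I would adopt the envelope construction rather than try to patch the pointwise-limit route.
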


For the course of the proof, we abbreviate 
$
  \cD^{ci}(f):=\cD^{ci,pw}_{\mu,\nu}(f).
$

\begin{lemma}\label{le:passageToChi}
  Let $(\varphi, \psi,h)\in \cD^{ci}(0)$. There exists a moderator $\chi: J\to \R$ for $(\varphi, \psi)$ such that $\chi\leq\varphi$ on $I$ and $-\chi \leq \psi$ on $J$. In particular, we have $(\mu-\nu)(\chi)\leq \mu(\varphi)+\nu(\psi)$.
\end{lemma}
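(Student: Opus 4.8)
The goal is to pass from the triplet $(\varphi,\psi,h)\in\cD^{ci}(0)$ to a moderator $\chi$ that is simultaneously dominated by $\varphi$ on $I$ and by $-\psi$ from above on $J$ (equivalently $-\chi\leq\psi$); the final integral bound will then follow from the very definition of $\mu(\varphi)+\nu(\psi)$ together with the nonnegativity in Lemma~\ref{le:intConcaveWelldef}. Since $(\varphi,\psi,h)\in\cD^{ci}(0)$, we have the pointwise inequality
$$
  \varphi(x)+\psi(y)+h(x)(y-x)\geq 0,\quad (x,y)\in I\times J,
$$
which we rewrite as $\psi(y)\geq \sup_{x\in I}\bigl(-\varphi(x)-h(x)(y-x)\bigr)=:\chi(y)$ for $y\in J$. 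The natural candidate is exactly this $\chi$. It is a supremum of affine functions of $y$ (with slopes $-h(x)\leq 0$), hence convex; to see it is concave and increasing I would instead argue that it is the \emph{lower} envelope structure that matters — wait, a supremum of affine functions is convex, not concave, so the correct move is to take $\chi(y):=\inf_{x\in I}\bigl(\varphi(x)+h(x)(x-y)\bigr)$, an infimum of affine functions with slopes $-h(x)\le 0$; this is concave and increasing (since $-h(x)\le 0$ means each affine piece $x\mapsto \varphi(x)+h(x)(x-y)$ is nonincreasing in $-y$, i.e.\ nondecreasing in $y$). With this sign-corrected definition, $\chi\le\varphi$ on $I$ is immediate (take the index $x$ equal to the argument), and $-\chi\le\psi$ on $J$ follows directly from the $\cD^{ci}(0)$ inequality after rearranging.

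\textbf{Key steps in order.} First I would fix that $(\varphi,\psi)$ admits \emph{some} moderator $\chi_0$ (by the definition of $L^{ci}(\mu,\nu)$, which is built into $\cD^{ci}(0)$), so that $\varphi$ is $\mu$-a.e.\ finite and $\psi$ is $\nu$-a.e.\ finite, and $\varphi\ge\chi_0$ off a $\mu$-null set, etc.; this gives enough regularity that the infimum defining $\chi$ is not identically $-\infty$ and that $\chi\ge -\psi$ is meaningful. Second, define $\chi(y):=\inf_{x\in I}\bigl(\varphi(x)+h(x)(x-y)\bigr)$ and verify the three structural properties: concavity and monotonicity (envelope of nonincreasing-slope affines), and the two domination inequalities $\chi\le\varphi$ on $I$ and $-\chi\le\psi$ on $J$. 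Third, check that $\chi$ is a \emph{moderator}: we need $\varphi-\chi\in L^1(\mu)$ and $\psi+\chi\in L^1(\nu)$. For $L^1(\mu)$: $0\le \varphi-\chi$; and $\varphi-\chi\le \varphi-\chi_0\in L^1(\mu)$ since $\chi\ge\chi_0$? That inequality is not automatic, so I would instead bound $\varphi-\chi$ above using that $\chi(y)\ge -\psi(y)$, which after a compactness/linear-growth argument on $I$ (using that $\chi$ and $\chi_0$ are both concave increasing and hence differ by something of linear growth once we know they are finite) gives integrability; more cleanly, one compares $\chi$ with $\chi_0$ on $J$ and shows $\chi_0\le\chi$ is \emph{not} needed — rather, both $\chi-\chi_0$ and $\chi_0-\chi$ are controlled because $\chi_0\le\varphi$ can be arranged (replace $\chi_0$ by $\min(\chi_0,\text{an affine minorant})$ if necessary) and $\chi\le\varphi$, while $-\chi_0\le\psi$ and $-\chi\le\psi$; so $|\chi-\chi_0|\le (\varphi-(-\psi))$ which is $(\varphi-\chi_0)+(\psi+\chi_0)\in L^1(\mu)+L^1(\nu)$, and since $\chi-\chi_0$ is a difference of concave increasing functions it has linear growth and hence lies in $L^1(\mu)\cap L^1(\nu)$ by itself. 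Fourth, with $\chi$ a moderator, apply Definition~\ref{de:ModConcaveIntegral}:
$$
  \mu(\varphi)+\nu(\psi)=\mu(\varphi-\chi)+\nu(\psi+\chi)+(\mu-\nu)(\chi),
$$
and since $\varphi-\chi\ge 0$ $\mu$-a.s., $\psi+\chi\ge 0$ $\nu$-a.s., and $(\mu-\nu)(\chi)\ge 0$ by Lemma~\ref{le:intConcaveWelldef}, dropping the two marginal terms yields $(\mu-\nu)(\chi)\le\mu(\varphi)+\nu(\psi)$.

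\textbf{Main obstacle.} The delicate point is the moderator property — verifying $\varphi-\chi\in L^1(\mu)$ and $\psi+\chi\in L^1(\nu)$ — because $\chi$ is defined as an infimum and could in principle be badly behaved (e.g.\ $-\infty$ near the left endpoint $x^*$, or growing faster than linearly). The resolution I anticipate is to exploit that $\chi$ is squeezed between the two $L^1$-controlled functions $-\psi$ (from below) and $\varphi$ (from above), and that $\chi$ is automatically concave and increasing, so that its difference with the already-given moderator $\chi_0$ (which we may normalize, subtracting an affine function, to also satisfy $-\chi_0\le\psi$ and $\chi_0\le\varphi$) is a difference of two concave increasing functions of linear growth — hence integrable against both $\mu$ and $\nu$, which have finite first moments. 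Once $\chi$ is confirmed to be an honest moderator, the rest is just unwinding Definition~\ref{de:ModConcaveIntegral} and using the sign of $(\mu-\nu)(\chi)$.
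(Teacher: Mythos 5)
Your overall strategy is the same as the paper's (take the concave increasing envelope generated by $\varphi$ and $h$, prove the two dominations, check it is a moderator, then unwind the definition of $\mu(\varphi)+\nu(\psi)$), but two steps are genuinely broken. First, a sign error in the definition of $\chi$: the correct object is $\chi(y):=\inf_{x\in I}\,[\varphi(x)+h(x)(y-x)]$, i.e.\ the negation of your first (convex) candidate. Your ``corrected'' formula $\inf_{x\in I}\bigl(\varphi(x)+h(x)(x-y)\bigr)$ flips only the sign of the $h$-term; since $h\geq 0$, each affine piece then has slope $-h(x)\leq 0$ in $y$, so your $\chi$ is concave \emph{decreasing} (hence not an admissible moderator), and $-\chi\leq\psi$ no longer follows: the constraint gives $\psi(y)\geq -\varphi(x)-h(x)(y-x)$, whereas your definition would need $\psi(y)\geq -\varphi(x)+h(x)(y-x)$.

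Second, and more substantively, your verification of the moderator property does not go through. The estimate ``$|\chi-\chi_0|\leq(\varphi-\chi_0)+(\psi+\chi_0)\in L^1(\mu)+L^1(\nu)$'' is not a usable pointwise bound ($\varphi$ is controlled only on $I$, $\psi$ only on $J$, and a sum of an $L^1(\mu)$ function and an $L^1(\nu)$ function lies in neither space), and the claim that a difference of concave increasing functions has linear growth is false near the left endpoint $x^{*}$ of $I$: such functions may tend to $-\infty$ arbitrarily fast there (e.g.\ $-1/(y-x^{*})$), which is exactly the degeneracy the moderator formalism is designed to accommodate. You also never actually establish $\chi>-\infty$ on $J$; the paper does this using that $\{\psi<\infty\}$ is dense in $\supp(\nu)$ together with concavity and monotonicity. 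The paper's resolution of the integrability issue is different from yours and is the real content of the lemma: fix $P=\mu\otimes\kappa\in\cS(\mu,\nu)$, first show $\int h(x)(\bary(\kappa(x))-x)\,\mu(dx)$ is finite using the given moderator $\tilde\chi$, then integrate $\bar\varphi(x)+\bar\psi(y)+[\chi(x)-\chi(y)]+h(x)(y-x)$ term by term against $\kappa(x,dy)$ and then $\mu(dx)$; since the total is finite, the last term integrable, and the first three nonnegative, each of $\mu(\bar\varphi)$, $\nu(\bar\psi)$ and $(\mu-\nu)(\chi)$ is finite. Without an argument of this kind your proof has a gap precisely at the point you yourself identified as the main obstacle.
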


\begin{proof}
  Let  $P = \mu \otimes \kappa$ be a disintegration of some $P\in\cS(\mu,\nu)$ and let $(\varphi, \psi,h)\in \cD^{ci}(0)$. With a careful application of Fubini's theorem, one can verify that
  \begin{equation}\label{eq:hFiniteIteratedIntegral}
     \!\iint h(x)(y-x) \,\kappa(x,dy)\,\mu(dx) =\!  \int  h(x)(\bary(\kappa(x))-x) \,\mu(dx) > -\infty.
  \end{equation}
  (This is quite different from the property that $h(X)(Y-X)\in L^{1}(P)$ which may fail.)
   A second application of Fubini's theorem then yields that 
  \begin{align}\label{eq:PintegralIsIteratedIntegral}
    P[\varphi&(X)+\psi(Y)+h(X)(Y-X)]\nonumber\\
    &=\mu(\varphi) + \nu(\psi)+ \iint h(x)(y-x) \,\kappa(x,dy)\,\mu(dx)\in\R.
  \end{align}
  The concave and increasing function 
  $$
    \chi(y) := \inf_{x\in I} \,[\varphi(x) + h(x)(y-x)],\quad y\in J
  $$
  satisfies $\chi\leq\varphi$ on $I$ and $-\chi \leq \psi$ on $J$,  
  and one can check that $\chi$ is finite-valued as a consequence of $(\varphi,\psi)\in L^{ci}(\mu,\nu)$.
  Set $\bar\varphi:=\varphi - \chi \geq 0$ and $\bar\psi:=\psi+\chi \geq 0$. By the first part of the proof, the iterated integral of 
  $\varphi(x) + \psi(y) + h(x)(y-x)$ with respect to $\kappa$ and $\mu$ 
  is finite. The function
  \begin{equation}\label{eq:exprForTermByTermInt}
    \bar\varphi(x) + \bar\psi(y) + [\chi(x)-\chi(y)] + h(x)(y-x)
  \end{equation}
  is identical to the former; therefore, the iterated integral of~\eqref{eq:exprForTermByTermInt} is again finite.  For fixed $x\in I$, all four terms in~\eqref{eq:exprForTermByTermInt} are bounded from below by linearly growing functions. It follows that for $\mu$-a.e.\ $x\in I$, the integral with respect to $\kappa(x,dy)$ can be computed term-by-term, which yields 
  $$
    \bar\varphi(x) + \int \bar\psi(y) \,\kappa(x,dy) + \int [\chi(x)- \chi(y)] \,\kappa(x,dy) + h(x)(\bary(\kappa(x))-x).
  $$
  The first three terms are nonnegative, and the last term is known to be $\mu$-integrable by the first part of the proof. Thus, we may again integrate term-by-term with respect to $\mu$. In conclusion, the iterated integral of~\eqref{eq:exprForTermByTermInt}, which was already determined to be finite, may also be computed term-by-term. In particular, we deduce that $\mu(\bar\varphi)<\infty$, $\nu(\bar\psi) < \infty$ and $(\mu-\nu)(\chi)<\infty$, 
  showing that $(\bar\varphi,\bar\psi)\in L^{ci}(\bar\mu,\bar\nu)$ with concave moderator $\chi$, and 
  $
    \mu(\varphi)+\nu(\psi) = \mu(\bar\varphi) + \nu(\bar\psi)+ (\mu - \nu)(\chi) \geq (\mu - \nu)(\chi)
  $
  as desired.
\end{proof}

Our last tool for the proof of Proposition~\ref{pr:closednessIrred} is a compactness principle for concave increasing  functions. We mention that the conclusion fails if the pair $\mu\leq_{cd}\nu$ is not proper (see also Section~\ref{se:counterexDuality}): a nontrivial difference between the barycenters is crucial to control the first derivatives.

\begin{proposition}\label{pr:concaveCompactness}
  Let $a=\bary(\mu)$ and let $\chi_{n}: J\to \R$ be concave increasing functions such that 
  $ \chi_{n}(a)=0$ and $\sup_{n\geq 1} (\mu-\nu)(\chi_{n})<\infty$.
  There exists a subsequence $\chi_{n_{k}}$ which converges pointwise on $J$ to a concave increasing function $\chi: J\to \R$ such that $(\mu-\nu)(\chi) \leq \liminf_{k} (\mu-\nu)(\chi_{n_{k}})$.
\end{proposition}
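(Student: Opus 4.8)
The plan is to follow the standard strategy for compactness of concave functions: use the normalization $\chi_n(a)=0$ together with uniform control of the first derivatives to get local uniform boundedness, extract a pointwise convergent subsequence by a diagonal/Helly argument, and then verify the lower-semicontinuity of $(\mu-\nu)(\cdot)$ along the sequence using the integral representation from Lemma~\ref{le:intConcaveWelldef}. The key structural input is that the representation
$$
  (\mu-\nu)(\chi) = \chi'(\infty)[\bary(\mu)-\bary(\nu)] + \int_I (p_\mu-p_\nu)\,d\chi'' + \Delta\chi(x^*)\nu(\{x^*\})
$$
splits the functional into three nonnegative pieces, each of which behaves well under pointwise limits of concave functions.

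First I would establish the a priori bounds. Since $\mu\leq_{cd}\nu$ is proper, $B:=\bary(\nu)-\bary(\mu)>0$, and from the representation each term is nonnegative, so $\sup_n(\mu-\nu)(\chi_n)<\infty$ forces $\sup_n \chi_n'(\infty)<\infty$; call this bound $C$. Because $\chi_n$ is concave and increasing, $\chi_n'$ is nonincreasing in $t$ with infimum $\chi_n'(\infty)$, and one also needs an upper bound on $\chi_n'$ near the left endpoint $x^*$ — this comes from the $\int_I(p_\mu-p_\nu)\,d\chi_n''$ term: since $p_\nu-p_\mu$ is strictly positive and continuous on any compact subinterval $[c,d]\subset I$, it is bounded below by some $\eta>0$ there, so $\int_{[c,d]}d(-\chi_n')=\chi_n'(c)-\chi_n'(d)\leq \eta^{-1}\sup_n(\mu-\nu)(\chi_n)<\infty$; combined with $\chi_n'(d)\geq \chi_n'(\infty)\geq 0$ this bounds $\chi_n'$ on $[c,d]$ uniformly. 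Together with $\chi_n(a)=0$, this gives a uniform local Lipschitz bound for $\chi_n$ on compact subsets of $\Int J$, hence local uniform boundedness of $\{\chi_n\}$ there; one treats the (at most one) boundary point $x^*$ of $J$ separately, bounding $\chi_n(x^*)$ from above by $\chi_n(x^*+)\leq 0$-shifted values and from below by $\chi_n(x^*+)-\Delta\chi_n(x^*)$, where $\Delta\chi_n(x^*)\nu(\{x^*\})$ is uniformly bounded by the hypothesis.

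Next I would extract the subsequence. By the uniform local bounds and the Arzelà–Ascoli / Helly selection theorem for monotone (here concave) functions, pass to a subsequence $\chi_{n_k}$ converging pointwise on $\Int J$, and then on all of $J$ (handling $x^*$ by a further extraction using the bound on $\chi_{n_k}(x^*)$), to a limit $\chi$ that is automatically concave and increasing, with $\chi(a)=0$. The derivatives $\chi_{n_k}'$ then converge to $\chi'$ at every continuity point of $\chi'$, the measures $\chi_{n_k}''$ converge vaguely on $I$ to $\chi''$, and $\chi_{n_k}'(\infty)$ satisfies $\liminf_k \chi_{n_k}'(\infty)\geq \chi'(\infty)$ (concave limits can only lose slope at infinity, never gain it — this is a consequence of pointwise convergence plus monotonicity of the slopes). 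Finally, for the lower-semicontinuity: the first term is handled by $\liminf_k \chi_{n_k}'(\infty)B \geq \chi'(\infty)B$ since $B>0$; the second term $\int_I(p_\mu-p_\nu)\,d\chi_{n_k}''$ is lower semicontinuous under vague convergence of the nonnegative measures $-\chi_{n_k}''$ against the continuous nonpositive function $p_\mu-p_\nu$ (apply Portmanteau, using that $p_\mu-p_\nu$ is continuous and that exhausting $I$ by compacts only loses nonnegative mass); and the third term passes through because $\Delta\chi_{n_k}(x^*)\to\Delta\chi(x^*)$ follows from pointwise convergence at $x^*$ and at points of $I$ approaching $x^*$ (again only losing nonnegative mass in the limit). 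Summing the three $\liminf$ inequalities gives $(\mu-\nu)(\chi)\leq \liminf_k(\mu-\nu)(\chi_{n_k})$, as desired.

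The main obstacle I anticipate is the careful bookkeeping at the left endpoint $x^*$ of $J$ when $\nu(\{x^*\})>0$: there $\chi$ and the $\chi_n$ may genuinely jump, all three terms of the representation can interact, and one must ensure the selection of a subsequence along which both $\chi_{n_k}(x^*)$ and $\chi_{n_k}(x^*+)$ converge and the jump $\Delta\chi_{n_k}(x^*)$ behaves lower-semicontinuously in the right direction. The interior analysis is the routine Helly-type argument; the endpoint and the "slope at infinity" term are where the properness hypothesis ($B>0$) is genuinely used and where the estimates need to be done with care.
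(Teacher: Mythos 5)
Your overall architecture---a priori bounds extracted from the three-term representation of Lemma~\ref{le:intConcaveWelldef}, a Helly/Arzel\`a--Ascoli extraction, and then semicontinuity of $(\mu-\nu)(\cdot)$---is the paper's approach. But the final step contains a genuine error: the claim $\liminf_k \chi'_{n_k}(\infty)\geq \chi'(\infty)$ is false, and the heuristic you offer for it (``concave limits can only lose slope at infinity, never gain it'') is exactly backwards. Take $\chi_n(y)=\min(y,n)-a$ on $J$, for $n>a$: then $\chi_n(a)=0$, $\chi'_n(\infty)=0$, $-\chi_n''=\delta_n$, so $(\mu-\nu)(\chi_n)=(p_\nu-p_\mu)(n)$ is bounded (it converges to $\mu(\R)[\bary(\mu)-\bary(\nu)]$), yet $\chi_n\to\chi$ with $\chi(y)=y-a$ and $\chi'(\infty)=1>0=\liminf_n\chi'_n(\infty)$. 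Concavity in fact yields the reverse inequality $\chi'(\infty)\geq\limsup_k\chi'_{n_k}(\infty)$: curvature of $\chi_{n_k}$ escaping to $+\infty$ reappears as slope at infinity of the limit. So the first term of the representation is not lower semicontinuous on its own, and your three liminf inequalities cannot simply be summed. The same phenomenon occurs at the left endpoint: mass of $-\chi''_{n_k}$ concentrating at $x^{*}$ (which the bound $\int_I(p_\mu-p_\nu)\,d\chi''_{n}\leq C$ does not exclude, since $p_\nu-p_\mu$ vanishes at $x^{*}$) can create a jump $\Delta\chi(x^{*})$ strictly larger than $\liminf_k\Delta\chi_{n_k}(x^{*})$, contradicting your third claim as well.

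What saves the statement is an exact compensation that your Portmanteau argument for the middle term discards: the mass of $-\chi''_{n_k}$ escaping to $+\infty$ is weighted there by $p_\nu-p_\mu$, whose limit at $+\infty$ is precisely $\mu(\R)[\bary(\mu)-\bary(\nu)]$, the coefficient of $\chi'(\infty)$; likewise the mass escaping to $x^{*}$ is weighted by $(p_\nu-p_\mu)(x^{*}+\epsilon)\geq\nu(\{x^{*}\})\,\epsilon$, matching the coefficient $\nu(\{x^{*}\})$ of the jump. You must either quantify the escaping mass and verify these cancellations explicitly, or---more cleanly---bypass the term-by-term comparison and apply Fatou's lemma to the first representation $(\mu-\nu)(\chi)=\int_I\bigl[\chi(x)-\int_J\chi(y)\,\kappa(x,dy)\bigr]\mu(dx)$, whose integrand is nonnegative; the inner integrals pass to the limit by reverse Fatou using the uniform linear domination $\chi_n(y)\leq L\,(y-a)^{+}$ that follows from your Lipschitz estimate near $a$. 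Finally, a small sign slip: in the proper supermartingale case it is $\bary(\mu)-\bary(\nu)$ that is strictly positive, not $B=\bary(\nu)-\bary(\mu)$.
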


\begin{proof}
  By our assumption, $(\mu-\nu)(\chi_{n})$ is bounded uniformly in $n$. Since $\bary(\mu)>\bary(\nu)$, Lemma~\ref{le:intConcaveWelldef} yields a constant $C$ such that 
  $0\leq \chi'_{n}(\infty)\leq C$ and $0\leq \int_{I} (p_{\mu}-p_{\nu})\, d\chi''_{n} \leq C$, as well as $0\leq \Delta\chi_{n}(x^{*})\leq C$
  in the case where $\nu(\{x^{*}\})>0$.
  For a suitable subsequence $\chi_{n_{k}}$, we have
  \begin{align}
  \lim_{k} \chi'_{n_{k}}(\infty) &= \liminf_{n} \chi'_{n}(\infty),\label{eq:firstDerivLimit}\\
  \mbox{and similarly} \quad \lim_{k} \Delta\chi_{n_{k}}(x^{*}) &= \liminf_{n} \Delta\chi_{n}(x^{*}) \quad \mbox{if }\nu(\{x^{*}\})>0.\label{eq:zeroDerivLimit}
  \end{align}

  Without loss of generality we assume that $n_{k}=k$. Given $y_{0}\in I$, we recall from the proof of Lemma~\ref{le:polarSetsIrred} that $p_{\mu}-p_{\nu}$ is strictly negative and uniformly bounded away from zero on $[y_{0},\infty)\subseteq (x^{*},\infty)=I$, and deduce that 
  $
   0\leq -\chi''_{n}[y_{0},\infty) \leq C'
  $
  for a constant $C'$. Since the (left) derivative $\chi'_{n}$ is decreasing, it follows that 
  $
    \chi'_{n}(y)  = -\chi''_{n}[y,\infty) + \chi'_{n}(\infty) \leq C' + C$ for all $y\in [y_{0},\infty).
  $
  Thus, the Lipschitz constant of $\chi_{n}$ is bounded on compact subsets of $I$, uniformly in $n$. Recalling that $\chi_{n}(a)=0$, the Arzela--Ascoli theorem then yields a function $\chi: I\to\R$ such that $\chi_{n}\to\chi$ locally uniformly, after passing to a subsequence. Clearly $\chi$ is concave and increasing, and integration by parts shows that $-\chi''_{n}$ converges to the second derivative measure $-\chi''$ associated with $\chi$, in the sense of weak convergence relative to the compactly supported continuous functions on $I$. Approximating $p_{\mu}-p_{\nu}$ from above with compactly supported continuous functions $g_{n}$, we then see that
  $
    \int_{I} (p_{\mu}-p_{\nu})\, d\chi'' = \lim_{m} \lim_{n}\int_{I} g_{m}\, d\chi_{n}''  \leq \liminf_{n\to\infty} \int_{I} (p_{\mu}-p_{\nu})\, d\chi''_{n}.
  $
  Using also~\eqref{eq:firstDerivLimit}, \eqref{eq:zeroDerivLimit} and the representation in Lemma~\ref{le:intConcaveWelldef}, we obtain  $(\mu-\nu)(\chi) \leq  \liminf_{n\to\infty} (\mu-\nu)(\chi_{n})$ as desired.
\end{proof}

\begin{proof}[Proof of Proposition~\ref{pr:closednessIrred}]
 We may assume that $\liminf_{ n}\mu(\varphi_{n})+\nu(\psi_{n})=\limsup_{ n}\mu(\varphi_{n})+\nu(\psi_{n})$, by passing to a subsequence. 
 Since $(\varphi_{n},\psi_{n},h_{n})\in \cD^{ci}(f_{n})$ and $f_{n}\geq0$, we can introduce the associated moderators $\chi_{n}$ as in Lemma~\ref{le:passageToChi}. We may assume that $\chi_{n}(a)=0$, where $a:=\bary(\mu)\in I$, by translating $\varphi_{n}$ and $\psi_{n}$ appropriately. After passing to a subsequence, Proposition~\ref{pr:concaveCompactness} then yields a pointwise limit $\chi: J\to\R$. Now, $(\varphi,\psi,h)\in \cD^{ci}(f)$ can be constructed using Komlos' lemma and concave-increasing envelopes, following the ideas in  the proof of \cite[Proposition~5.2]{BeiglbockNutzTouzi.15}.
\end{proof}

\subsection{Duality on a Proper Irreducible Component}\label{se:dualityOnIrred}

Recall that the pair $\mu\leq_{cd}\nu$ is proper and irreducible. Next, we define the primal and dual values.

\begin{definition}\label{de:primalAndDualOnIrred}
  Let $f: \R^{2}\to [0,\infty]$ and write $P(f)$ for the outer integral. The \emph{primal} and \emph{dual} problems are respectively given by 
  \begin{align*}
    \bS_{\mu,\nu}(f) :=\sup_{P\in\cS(\mu,\nu)} P(f),\quad %
    \bI^{pw}_{\mu,\nu}(f) :=\inf_{(\varphi,\psi,h)\in \cD^{ci,pw}_{\mu,\nu}(f)}  \mu(\varphi) + \nu(\psi).%
  \end{align*}
\end{definition}

A function $f: \R^{2}\to [0,\infty]$ is \emph{upper semianalytic} if the sets $\{f\geq c\}$ are analytic for all $c\in\R$, where a subset of $\R^{2}$ is called analytic if it is the (forward) image of a Borel subset of a Polish space under a Borel mapping. Any Borel function is upper semianalytic; we refer to \cite{BertsekasShreve.78} for further background.

\begin{proposition}\label{pr:dualityIrred}
  Let $\mu\leq_{cd}\nu$ be proper and irreducible, $f: \R^{2}\to [0,\infty]$.
  \begin{enumerate}
  \item If $f$ is upper semianalytic, then $\bS_{\mu,\nu}(f)=\bI^{pw}_{\mu,\nu}(f) \in [0,\infty]$.
  \item If\, $\bI^{pw}_{\mu,\nu}(f)<\infty$, there exists a dual optimizer $(\varphi,\psi,h)\in \cD^{ci,pw}_{\mu,\nu}(f)$.
  \end{enumerate}
\end{proposition}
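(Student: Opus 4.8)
First I would dispatch the inequality $\bS_{\mu,\nu}(f)\le\bI^{pw}_{\mu,\nu}(f)$, valid for any $f\colon\R^{2}\to[0,\infty]$. Fix $P=\mu\otimes\kappa\in\cS(\mu,\nu)$ and $(\varphi,\psi,h)\in\cD^{ci,pw}_{\mu,\nu}(f)$. Since $f\ge0$, the triplet also lies in $\cD^{ci,pw}_{\mu,\nu}(0)$, so the bookkeeping in the proof of Lemma~\ref{le:passageToChi}---specifically \eqref{eq:hFiniteIteratedIntegral} and \eqref{eq:PintegralIsIteratedIntegral}---shows that the $P$-integral of $\varphi(X)+\psi(Y)+h(X)(Y-X)$ is finite and equals $\mu(\varphi)+\nu(\psi)+\int_{I}h(x)(\bary(\kappa(x))-x)\,\mu(dx)$; in particular $\mu(\varphi)+\nu(\psi)<\infty$. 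Because $h\ge0$ and $\bary(\kappa(x))\le x$ for $\mu$-a.e.\ $x$, the last integral is nonpositive, and applying the pointwise inequality to the Borel majorant $\varphi\oplus\psi+h(x)(y-x)\ge f$ (using the outer integral when $f$ is merely upper semianalytic) gives $P(f)\le\mu(\varphi)+\nu(\psi)$. Taking the supremum over $P$ and the infimum over $(\varphi,\psi,h)$ yields weak duality; in particular $\bS_{\mu,\nu}(f)=\infty$ forces $\bI^{pw}_{\mu,\nu}(f)=\infty$, and $\bI^{pw}_{\mu,\nu}(f)<\infty$ forces $\bS_{\mu,\nu}(f)<\infty$.

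\textbf{The core case.} The reverse inequality is the substantive part, and I would first establish it for bounded upper semicontinuous $f$. A measure $P\in\Pi(\mu,\nu)$ is a supermartingale precisely when $P(h(X)(Y-X))\le0$ for every bounded continuous $h\ge0$, so
\[
  \bS_{\mu,\nu}(f)=\sup_{P\in\Pi(\mu,\nu)}\ \inf_{h\ge0}\ P\big(f-h(X)(Y-X)\big).
\]
Here $\Pi(\mu,\nu)$ is convex and compact in the topology generated by the continuous functions of linear growth (in which $\cS(\mu,\nu)$ is closed), the integrand is affine and upper semicontinuous in $P$ and affine and continuous in $h$, so a minimax theorem applies and lets us interchange the two extrema:
\[
  \bS_{\mu,\nu}(f)=\inf_{h\ge0}\ \bT_{\mu,\nu}\big(f-h(X)(Y-X)\big),\qquad \bT_{\mu,\nu}(g):=\sup_{\pi\in\Pi(\mu,\nu)}\pi(g).
\]
For bounded $f$ and bounded continuous $h\ge0$ the function $f-h(X)(Y-X)$ is dominated by an additive $L^{1}(\mu)\oplus L^{1}(\nu)$ function (the marginals have finite first moment), so Kellerer's classical duality \cite{Kellerer.84} produces $\varphi\in L^{1}(\mu)$, $\psi\in L^{1}(\nu)$ with $\varphi\oplus\psi+h(x)(y-x)\ge f$; trivially $(\varphi,\psi)\in L^{ci}(\mu,\nu)$ with moderator $\chi\equiv0$, so $(\varphi,\psi,h)\in\cD^{ci,pw}_{\mu,\nu}(f)$. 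Chaining the two displays gives $\bS_{\mu,\nu}(f)\ge\bI^{pw}_{\mu,\nu}(f)$, hence equality with weak duality.

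\textbf{Extension and dual existence.} To pass from bounded upper semicontinuous to arbitrary upper semianalytic $f\ge0$ I would argue in two steps. First, a capacitability argument in the spirit of \cite{Kellerer.84,BeiglbockNutzTouzi.15}: $A\mapsto\bS_{\mu,\nu}(\1_{A})$ is monotone, sequentially continuous from below on increasing sequences of arbitrary sets (interchange the suprema $\sup_{P}\sup_{n}$ after monotone convergence) and from above on decreasing sequences of compacts (weak compactness of $\cS(\mu,\nu)$ together with upper semicontinuity of $P\mapsto P(K)$), which upgrades the identity $\bS_{\mu,\nu}=\bI^{pw}_{\mu,\nu}$ to all bounded upper semianalytic $f$. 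Second, for general upper semianalytic $f\ge0$ apply the bounded case to $f\wedge n\uparrow f$: by monotone continuity from below $\bS_{\mu,\nu}(f\wedge n)\uparrow\bS_{\mu,\nu}(f)$, so if $\bS_{\mu,\nu}(f)<\infty$ one may pick $(\varphi_{n},\psi_{n},h_{n})\in\cD^{ci,pw}_{\mu,\nu}(f\wedge n)$ with $\mu(\varphi_{n})+\nu(\psi_{n})\le\bS_{\mu,\nu}(f\wedge n)+1/n\le\bS_{\mu,\nu}(f)+1$, and Proposition~\ref{pr:closednessIrred} (applied with $f_{n}=f\wedge n\to f$) delivers $(\varphi,\psi,h)\in\cD^{ci,pw}_{\mu,\nu}(f)$ with $\mu(\varphi)+\nu(\psi)\le\liminf_{n}(\mu(\varphi_{n})+\nu(\psi_{n}))\le\bS_{\mu,\nu}(f)$. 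This simultaneously proves $\bI^{pw}_{\mu,\nu}(f)\le\bS_{\mu,\nu}(f)$, hence (i), and exhibits a dual optimizer, hence (ii), the hypothesis $\bI^{pw}_{\mu,\nu}(f)<\infty$ in (ii) guaranteeing $\bS_{\mu,\nu}(f)<\infty$ via weak duality. The case $\bS_{\mu,\nu}(f)=\infty$ needs nothing beyond weak duality.

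\textbf{Main obstacle.} The genuine difficulty, and the point of departure from classical and martingale transport, is the relaxed integrability encoded in $L^{ci}(\mu,\nu)$ and the concave moderators: on the unbounded half-line $I=(x^{*},\infty)$ the supermartingale \emph{inequality} forces the optimal multiplier $h$ to be potentially unbounded, so $\varphi$ and $\psi$ need not be individually integrable and only the regularized sum $\mu(\varphi)+\nu(\psi)$ (defined through Lemma~\ref{le:intConcaveWelldef}) is meaningful. Keeping this consistent across weak duality, the minimax/Kellerer step, and above all in the compactness input---Proposition~\ref{pr:concaveCompactness}, whose proof crucially exploits $\bary(\mu)\neq\bary(\nu)$ to bound the first derivatives of the moderators---and in the closedness result Proposition~\ref{pr:closednessIrred}, is where the argument is delicate; without properness the relevant compactness, hence the whole scheme, breaks down.
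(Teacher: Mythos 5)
Your overall architecture coincides with the paper's: weak duality via the integration identities \eqref{eq:hFiniteIteratedIntegral}--\eqref{eq:PintegralIsIteratedIntegral}, the reverse inequality first for regular $f$, then a capacitability/Choquet extension to upper semianalytic $f$, and dual existence from Proposition~\ref{pr:closednessIrred} applied to a (near-)optimizing sequence. The one genuinely different ingredient is the regular case: the paper uses a Hahn--Banach separation on a suitable space of continuous functions (again exploiting the closedness result), whereas you linearize the supermartingale constraint, apply a minimax theorem on the compact convex set $\Pi(\mu,\nu)$, and then invoke Kellerer's duality for the unconstrained problem with reward $f-h(x)(y-x)$. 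That route is legitimate and arguably cleaner for bounded upper semicontinuous $f$, since it avoids the closedness result at this stage; its price is that it only produces dual elements with $\chi\equiv0$ and integrable $\varphi,\psi$, so all of the delicate $L^{ci}$-bookkeeping is deferred to the limiting steps --- which, as you correctly identify, is where Propositions~\ref{pr:concaveCompactness} and~\ref{pr:closednessIrred} and the properness hypothesis do the real work.

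There is, however, one step where your sketch does not close as written: the passage from bounded upper semicontinuous to bounded upper semianalytic $f$. You verify only that $\bS_{\mu,\nu}$ is a capacity and assert that this ``upgrades the identity.'' But Choquet's theorem applied to $\bS_{\mu,\nu}$ alone yields
$\bS_{\mu,\nu}(f)=\sup\{\bS_{\mu,\nu}(g):g\leq f,\ g\ \mbox{bounded u.s.c.}\}=\sup\{\bI^{pw}_{\mu,\nu}(g):g\leq f\}\leq \bI^{pw}_{\mu,\nu}(f)$,
which is just weak duality again. To obtain the nontrivial inequality $\bI^{pw}_{\mu,\nu}(f)\leq\bS_{\mu,\nu}(f)$ one must apply the capacitability theorem to $\bI^{pw}_{\mu,\nu}$ as well, and the only nontrivial axiom there --- sequential continuity from below along increasing sequences of functions --- is precisely another invocation of Proposition~\ref{pr:closednessIrred}: take near-optimal dual triplets for $f_{n}\uparrow f$ and pass to the limit. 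This is exactly the mechanism you already deploy for $f\wedge n\uparrow f$ in your final step, so the repair is immediate; but it must be stated, and it is why the paper insists that \emph{both} $\bS_{\mu,\nu}$ and $\bI^{pw}_{\mu,\nu}$ are capacities with respect to the lattice of bounded nonnegative upper semicontinuous functions.
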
 

\begin{proof}
   The inequality ``$\leq$'' in (i) follows from~\eqref{eq:hFiniteIteratedIntegral} and~\eqref{eq:PintegralIsIteratedIntegral}. The converse inequality as well as (ii) follow from Proposition~\ref{pr:closednessIrred}, using the Hahn--Banach and Choquet theorems along the lines of \cite[Theorem~6.2]{BeiglbockNutzTouzi.15}.
\end{proof}

\subsection{Global Duality}\label{se:globalDuality}

In this section, we formulate a global duality result. We shall be brief since it is little more than the combination of the preceding results for the proper irreducible case and the known martingale case; however, it requires some notation.

Let $\mu\leq_{cd}\nu$ be probability measures and let $f: \R^{2}\to [0,\infty]$ be Borel. As in the irreducible case, the primal problem is
$
    \bS_{\mu,\nu}(f) :=\sup_{P\in\cS(\mu,\nu)} P(f).
$
For the dual problem, we first recall from Proposition~\ref{pr:decomp} the decompositions $\mu=\sum_{k\geq-1} \mu_{k}$ and $\nu=\sum_{k\geq -1} \nu_{k}$, where $\mu_{k}\leq_{cd}\nu_{k}$ is irreducible with domain $(I_{k},J_{k})$ for $k\geq0$ and $\mu_{-1}=\nu_{-1}$; moreover, $P_{-1}$ is the unique element of $\cS(\mu_{-1},\mu_{-1})$. So far, we have focused on a proper pair $(\mu_{0},\nu_{0})$ and its dual problem. The pairs $(\mu_{k},\nu_{k})$ for $k\geq1$ are in convex order ($\mu_{k}$ and $\nu_{k}$ have the same barycenter) and the corresponding martingale optimal transport has an analogous duality theory. While the arguments are  different, the preceding results hold true if ``convex-increasing'' is replaced by ``convex'' and the function $h$ is allowed to take values in $\R$ instead of $\R_{+}$; we refer to \cite{BeiglbockNutzTouzi.15} for the proofs. The spaces corresponding to $L^{ci}(\mu,\nu)$ and $\cD^{ci}_{\mu,\nu}(f)$ are denoted $L^{c}(\mu,\nu)$ and $\cD^{c}_{\mu,\nu}(f)$, respectively.

Let $(\varphi,\psi,h): \R\to\overline\R \times \overline\R \times \R$ be Borel. Since $P_{-1}$ is concentrated on the diagonal $\Delta$, the  dual problem associated to $(\mu_{-1},\nu_{-1})$ is trivially solved, for instance, by setting $\varphi(x)=f(x,x)$ and $\psi=h=0$. To simplify the notation below, we set 
$
  L^{c}_{\mu_{-1},\nu_{-1}} := \{(\varphi,\psi): \, \varphi+\psi \in L^{1}(\mu_{-1}) \} 
$
and $\mu_{-1}(\varphi)+\nu_{-1}(\psi) := \mu_{-1}(\varphi+\psi)$ for $(\varphi,\psi)\in L^{c}_{\mu_{-1},\nu_{-1}}$. Moreover,  $\cD^{c,pw}_{\mu_{-1},\nu_{-1}}(f)$ is the set of all $(\varphi,\psi,h)$ with $(\varphi,\psi)\in L^{c}_{\mu_{-1},\nu_{-1}}$ and 
$
  \varphi(x)+\psi(x)\geq f(x,x)$ for all $x\in I_{-1}.
$
Finally, we set 
$
  \bS_{\mu_{-1},\nu_{-1}}(f) := P_{-1}(f) \equiv \mu_{-1}(f(X,X)).
$

We can now introduce the domain for the global dual problem which will be stated in the quasi-sure sense. A property is said to hold $\cS(\mu,\nu)$-quasi surely, or $\cS(\mu,\nu)$-q.s.\ for short, if it holds $P$-a.s.\ for all $P\in\cS(\mu,\nu)$.%

\begin{definition}\label{de:globalIntegrability}
  Let $L(\mu,\nu)$ be the set of all Borel functions $\varphi,\psi: \R\to\overline\R$ such that $(\varphi,\psi)\in L^{ci}(\mu_{0},\nu_{0})$ and $(\varphi,\psi)\in L^{c}(\mu_{k},\nu_{k})$ for all $k\neq0$ and 
  $
    \sum_{k\geq-1} |\mu_{k}(\varphi)+\nu_{k}(\psi)| <\infty.
  $
  For $(\varphi,\psi)\in L(\mu,\nu)$, we define
  $$
    \mu(\varphi)+\nu(\psi) := \sum_{k\geq-1} \mu_{k}(\varphi)+\nu_{k}(\psi) <\infty,
  $$
  and $\cD_{\mu,\nu}(f)$ is the set of all Borel functions $(\varphi,\psi,h): \R\to\overline\R \times \overline\R \times \R$   
  such that $(\varphi,\psi)\in L(\mu,\nu)$, $h=0$ on $I_{-1}$, $h\geq0$ on $I_{0}$ and 
  $$
    \varphi(X)+\psi(Y) + h(X)(Y-X)\geq f(X,Y)\quad \cS(\mu,\nu)\qs
  $$
  Finally, 
  $
  \bI_{\mu,\nu}(f) :=\inf_{(\varphi,\psi,h)\in \cD_{\mu,\nu}(f)}  \mu(\varphi) + \nu(\psi) \;\in [0,\infty].
  $
\end{definition}

We emphasize that $h$ is required to be nonnegative on $I_{0}$ but can take arbitrary real values outside of $I_{0}$ and $I_{-1}$. It is shown in Section~\ref{se:counterexDuality} that nonnegativity cannot be enforced everywhere.

Before making precise the correspondence between this quasi-sure formulation and the individual components, recall that the intervals $J_{k}$ may overlap at their endpoints, so we have to avoid counting certain things twice. Indeed, let $(\varphi_{k},\psi_{k},h_{k})\in \cD^{ci,pw}_{\mu_{k},\nu_{k}}(f)$ for $k=0$ and $(\varphi_{k},\psi_{k},h_{k})\in \cD^{c,pw}_{\mu_{k},\nu_{k}}(f)$ for $k\geq1$; we claim that $\psi_{k}$ can be normalized such that
\begin{equation}\label{eq:psiNormalization}
    \psi_{k}=0 \quad \mbox{on}\quad J_{k} \setminus I_{k}.
\end{equation}
Indeed, if $J_{k}$ contains one of its endpoints, it is an atom of~$\nu$ and hence~$\psi_{k}$ is finite on~$J_{k}\setminus I_{k}$. If $k\geq1$, we can translate $\psi_{k}$ by an affine function and translate $\varphi_{k}$ and $h_{k}$ accordingly. In the supermartingale case $k=0$, we recall from Proposition~\ref{pr:decomp} that $I_{0}=(x^{*},\infty)$, so that $J_{0}$ can have at most one endpoint. As a result, we may obtain the normalization by translating $\psi_{0}$ with a constant, which can be compensated by translating $\varphi_{0}$ alone and thus respecting the requirement that $h_{0}\geq0$ on $I_{0}$.

\begin{lemma}\label{le:reductionToComponentsDual}
  Let $f: \R^{2}\to [0,\infty]$ be Borel, let $\mu\leq_{cd}\nu$ and let $\mu_{k},\nu_{k}$ be as in Proposition~\ref{pr:decomp}.

  (i) Let $(\varphi_{0},\psi_{0},h_{0})\in \cD^{ci,pw}_{\mu_{0},\nu_{0}}(f)$ and  $(\varphi_{k},\psi_{k},h_{k})\in \cD^{c,pw}_{\mu_{k},\nu_{k}}(f)$ for \mbox{$k\geq 1$} be normalized as in~\eqref{eq:psiNormalization}, and let $\varphi_{-1}(x)=f(x,x)$ and $\psi_{-1}=0$. If $\sum_{k\geq-1} \mu(\varphi_{k})+ \nu(\psi_{k})<\infty$, then
  $$
    \varphi:=\sum_{k\geq-1} \varphi_{k} \1_{I_{k}} ,\quad \psi:=\sum_{k\geq0} \psi_{k} \1_{J_{k}}, \quad h:=\sum_{k\geq0} h_{k} \1_{I_{k}}
  $$
  satisfies $(\varphi,\psi,h)\in \cD_{\mu,\nu}(f)$ and 
  $
    \mu(\varphi)+ \nu(\psi) = \sum_{k\geq-1} \mu_{k}(\varphi_{k})+ \nu_{k}(\psi_{k}).
  $

  (ii) Conversely, let $(\varphi,\psi,h)\in \cD_{\mu,\nu}(f)$. After changing $\varphi$ on a $\mu$-nullset and $\psi$ on a $\nu$-nullset, we have $(\varphi,\psi,h)\in \cD^{ci,pw}_{\mu_{0},\nu_{0}}(f)$ and $(\varphi,\psi,h)\in \cD^{c,pw}_{\mu_{k},\nu_{k}}(f)$ for $k\neq 0$, and 
  $
    \sum_{k\geq-1} \mu_{k}(\varphi)+ \nu_{k}(\psi) = \mu(\varphi)+ \nu(\psi) <\infty.
  $
\end{lemma}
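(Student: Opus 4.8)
The plan is to treat the statement as pure bookkeeping about the decomposition of Proposition~\ref{pr:decomp}; the only genuine inputs are the description of $\cS(\mu,\nu)$-polar sets in Proposition~\ref{pr:decomp}(ii), the normalization~\eqref{eq:psiNormalization}, and the integration identities~\eqref{eq:hFiniteIteratedIntegral}--\eqref{eq:PintegralIsIteratedIntegral}.

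For part (i), I would first observe that $(I_k)_{k\geq-1}$ is a Borel partition of $\R$, so that $\varphi=\sum_k\varphi_k\1_{I_k}$ and $h=\sum_{k\geq0}h_k\1_{I_k}$ are well-defined Borel functions with $\varphi|_{I_k}=\varphi_k$ and $h|_{I_0}=h_0\geq0$. The delicate point is $\psi$: the intervals $J_k$ may overlap, but only at points that are endpoints of some $I_j$ (hence atoms of $\nu$) and lie in no $I_j$, so that the normalization~\eqref{eq:psiNormalization}, forcing $\psi_k=0$ on $J_k\setminus I_k$, yields $\psi:=\sum_{k\geq0}\psi_k\1_{J_k}=\psi_k$ on all of $J_k$ for each $k$; nothing is double-counted. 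Since $\mu_k=\mu|_{I_k}$ and $\nu_k$ is carried by $J_k$, this gives $\mu_k(\varphi)=\mu_k(\varphi_k)$ and $\nu_k(\psi)=\nu_k(\psi_k)$. Each $\mu_k(\varphi_k)+\nu_k(\psi_k)$ is moreover nonnegative --- for $k\geq0$ by integrating the superhedging inequality of $\cD^{c(i),pw}_{\mu_k,\nu_k}(f)$ against an element of $\cS(\mu_k,\nu_k)$ resp.\ $\cM(\mu_k,\nu_k)$ by~\eqref{eq:hFiniteIteratedIntegral}--\eqref{eq:PintegralIsIteratedIntegral} (and their martingale analogues), and for $k=-1$ because $f\geq0$ --- so the hypothesis $\sum_k\mu(\varphi_k)+\nu(\psi_k)<\infty$ is in fact absolute summability, and the component moderators $\chi_k$ serve as moderators for $(\varphi,\psi)$ on each component; hence $(\varphi,\psi)\in L(\mu,\nu)$ with $\mu(\varphi)+\nu(\psi)=\sum_k\mu_k(\varphi_k)+\nu_k(\psi_k)$. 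Finally, to check $\varphi(X)+\psi(Y)+h(X)(Y-X)\geq f(X,Y)$ $\cS(\mu,\nu)$-q.s., I would invoke Proposition~\ref{pr:decomp}(ii) with $N_\mu=N_\nu=\emptyset$: the complement of $\Delta\cup\bigcup_{k\geq0}I_k\times J_k$ is polar, so it suffices to verify the inequality on each $I_k\times J_k$, where the left side is $\varphi_k(x)+\psi_k(y)+h_k(x)(y-x)\geq f(x,y)$, and on $\Delta$, where a point $(x,x)$ with $x\in I_k$, $k\geq0$, already lies in $I_k\times J_k$ (as $I_k\subseteq J_k$) while for $x\in I_{-1}$ one has $\varphi(x)=\varphi_{-1}(x)=f(x,x)$ and $\psi(x)=0$. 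Together with $h\geq0$ on $I_0$, this gives $(\varphi,\psi,h)\in\cD_{\mu,\nu}(f)$.

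For part (ii), the memberships $(\varphi,\psi)\in L^{ci}(\mu_0,\nu_0)$ and $(\varphi,\psi)\in L^c(\mu_k,\nu_k)$ for $k\neq0$, the identity $\sum_k\mu_k(\varphi)+\nu_k(\psi)=\mu(\varphi)+\nu(\psi)<\infty$, and $h\geq0$ on $I_0$ are all immediate from the definitions of $L(\mu,\nu)$ and $\cD_{\mu,\nu}(f)$ --- note that only $h|_{I_k}$ enters the component problem $k$, so the codomain requirements are met componentwise. The substantive step is to replace the quasi-sure superhedging inequality by a genuine pointwise one on each $I_k\times J_k$ and on $\Delta$. I would argue that the Borel set $B:=\{\varphi(X)+\psi(Y)+h(X)(Y-X)<f(X,Y)\}$ is $\cS(\mu,\nu)$-polar, so Proposition~\ref{pr:decomp}(ii) supplies a $\mu$-nullset $N_\mu$ and a $\nu$-nullset $N_\nu$ with $B\subseteq(N_\mu\times\R)\cup(\R\times N_\nu)\cup(\Delta\cup\bigcup_{k\geq0}I_k\times J_k)^c$; since each $I_k\times J_k$ and $\Delta$ lie inside $\Delta\cup\bigcup_j I_j\times J_j$, the last set contributes nothing there, so $B$ meets them only inside $(N_\mu\times\R)\cup(\R\times N_\nu)$. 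Setting $\varphi:=+\infty$ on $N_\mu$ and $\psi:=+\infty$ on $N_\nu$ --- exactly the changes on a $\mu$- resp.\ $\nu$-nullset allowed by the statement, which preserve all $L^{ci}/L^c$ memberships and all integral values --- makes the left-hand side $+\infty\geq f$ wherever the original inequality could fail on $I_k\times J_k$ or on $\Delta$, so the pointwise inequalities defining $\cD^{ci,pw}_{\mu_0,\nu_0}(f)$ and $\cD^{c,pw}_{\mu_k,\nu_k}(f)$ for $k\neq0$ now hold.

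I expect the only places needing real care to be the endpoint overlaps of the $J_k$ in part (i), which the normalization~\eqref{eq:psiNormalization} is designed to absorb, and the quasi-sure-to-pointwise passage in part (ii), which is what gives content to the phrase ``after changing $\varphi$ and $\psi$ on nullsets'' and which relies squarely on the precise shape of the $\cS(\mu,\nu)$-polar sets; neither is an analytic obstacle, only careful accounting.
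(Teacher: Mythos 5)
Your argument is correct and is precisely the bookkeeping the paper has in mind when it declares the lemma ``a direct consequence of Proposition~\ref{pr:decomp}'' and defers the details to the martingale-case reference: the overlap of the $J_{k}$ is absorbed by~\eqref{eq:psiNormalization}, the quasi-sure-to-pointwise passage runs through the polar-set description of Proposition~\ref{pr:decomp}(ii), and the nonnegativity of each $\mu_{k}(\varphi_{k})+\nu_{k}(\psi_{k})$ turns the hypothesis into the absolute summability required by $L(\mu,\nu)$. The only cosmetic point is that when you set $\varphi:=+\infty$ on $N_{\mu}$ and $\psi:=+\infty$ on $N_{\nu}$ you should also enlarge these nullsets to contain $\{\varphi=-\infty\}$ and $\{\psi=-\infty\}$ so that no $\infty-\infty$ arises on the left-hand side.
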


This follows from Proposition~\ref{pr:decomp}; the details of the proof are analogous to  \cite[Lemma~7.2]{BeiglbockNutzTouzi.15}.
We can now state the global duality result.

\begin{theorem}\label{th:dualityGlobal}
  Let $f: \R^{2}\to [0,\infty]$ be Borel and let $\mu\leq_{cd}\nu$. Then
  $$
    \bS_{\mu,\nu}(f) = \bI_{\mu,\nu}(f) \in [0,\infty].
  $$
  If\, $\bI_{\mu,\nu}(f)<\infty$, there exists an optimizer $(\varphi,\psi,h)\in \cD_{\mu,\nu}(f)$ for $\bI_{\mu,\nu}(f)$.
\end{theorem}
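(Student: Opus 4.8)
The plan is to reduce the global statement to the component-wise results already established. By Proposition~\ref{pr:decomp}(i), we have the decompositions $\mu=\sum_{k\geq-1}\mu_k$, $\nu=\sum_{k\geq-1}\nu_k$, where $(\mu_0,\nu_0)$ is proper irreducible, $(\mu_k,\nu_k)$ for $k\geq1$ is irreducible in convex order, and $\mu_{-1}=\nu_{-1}$; moreover every $P\in\cS(\mu,\nu)$ decomposes uniquely as $P=\sum_{k\geq-1}P_k$ with $P_0\in\cS(\mu_0,\nu_0)$, $P_k\in\cM(\mu_k,\nu_k)$, and $P_{-1}$ the unique element of $\cS(\mu_{-1},\mu_{-1})$. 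Hence $\bS_{\mu,\nu}(f)=\sum_{k\geq-1}\bS_{\mu_k,\nu_k}(f)$: the ``$\leq$'' direction is immediate since $P(f)=\sum_k P_k(f)$, and the ``$\geq$'' direction follows by choosing near-optimal $P_k$ in each component and summing (the components live on disjoint strips $I_k\times J_k$ up to the diagonal, so the glued measure is again in $\cS(\mu,\nu)$).

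Next I would treat the dual side via Lemma~\ref{le:reductionToComponentsDual}, which gives a bijective-in-value correspondence between $\cD_{\mu,\nu}(f)$ and tuples of component dual elements $(\varphi_k,\psi_k,h_k)\in\cD^{ci,pw}_{\mu_0,\nu_0}(f)$ resp.\ $\cD^{c,pw}_{\mu_k,\nu_k}(f)$ (with the normalization~\eqref{eq:psiNormalization} to avoid double-counting overlapping endpoints), together with the trivial component $k=-1$. Part~(i) of that lemma shows that gluing component duals whose values sum finitely produces an element of $\cD_{\mu,\nu}(f)$ with the sum of the values; part~(ii) shows the converse, that any element of $\cD_{\mu,\nu}(f)$ restricts (after modification on null sets) to component duals with values summing to $\mu(\varphi)+\nu(\psi)$. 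Taking infima in both directions yields $\bI_{\mu,\nu}(f)=\sum_{k\geq-1}\bI^{(\cdot),pw}_{\mu_k,\nu_k}(f)$, where the $k=0$ term is the proper-irreducible dual value, the $k\geq1$ terms are the martingale dual values, and $\bI_{\mu_{-1},\nu_{-1}}(f)=\mu_{-1}(f(X,X))=\bS_{\mu_{-1},\nu_{-1}}(f)$.

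Now the theorem follows by combining the two decompositions with the component-wise strong duality. For $k=0$, Proposition~\ref{pr:dualityIrred}(i) gives $\bS_{\mu_0,\nu_0}(f)=\bI^{pw}_{\mu_0,\nu_0}(f)$ since a Borel $f$ is upper semianalytic. For $k\geq1$, the analogous statement in the martingale (convex-order) case is the result of~\cite{BeiglbockNutzTouzi.15}, which we are entitled to invoke. For $k=-1$ the identity is the trivial one noted above. Summing over $k$ gives $\bS_{\mu,\nu}(f)=\bI_{\mu,\nu}(f)\in[0,\infty]$. For dual attainment, assume $\bI_{\mu,\nu}(f)<\infty$; then each $\bI_{\mu_k,\nu_k}(f)<\infty$ (the summands are nonnegative), so Proposition~\ref{pr:dualityIrred}(ii) furnishes an optimizer in the component $k=0$, the corresponding martingale result furnishes optimizers for $k\geq1$, and $\varphi_{-1}(x)=f(x,x),\psi_{-1}=h_{-1}=0$ is optimal for $k=-1$; gluing these via Lemma~\ref{le:reductionToComponentsDual}(i) produces the desired $(\varphi,\psi,h)\in\cD_{\mu,\nu}(f)$ attaining $\bI_{\mu,\nu}(f)$.

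The one point requiring genuine care is the interchange of the infimum with the (possibly infinite) sum over components when $N=\infty$: one must check that an optimizing sequence for $\bI_{\mu,\nu}(f)$ can be taken with uniformly summable component values so that a diagonal extraction combined with the component-wise lower-semicontinuity of Proposition~\ref{pr:closednessIrred} (and its martingale analogue) produces a limiting dual element without losing mass ``at infinity in $k$''; this is exactly the content packaged into the absolute-summability condition in Definition~\ref{de:globalIntegrability} and handled by Lemma~\ref{le:reductionToComponentsDual}, so no new estimate is needed beyond bookkeeping. I expect this bookkeeping — rather than any substantive analytic difficulty — to be the main obstacle, and the argument parallels~\cite[Lemma~7.2 and Theorem~7.1]{BeiglbockNutzTouzi.15} closely enough that the details can be summarized rather than reproduced.
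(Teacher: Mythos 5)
Your proof is correct and follows exactly the route the paper takes: decompose into irreducible components via Proposition~\ref{pr:decomp}, apply Proposition~\ref{pr:dualityIrred} on the proper component and the martingale duality of \cite{BeiglbockNutzTouzi.15} on the others, and glue the primal and dual objects with Lemma~\ref{le:reductionToComponentsDual}. The paper itself gives only this sketch, deferring to \cite[Theorem~7.4]{BeiglbockNutzTouzi.15} for precisely the bookkeeping over infinitely many components that you flag in your final paragraph.
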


This is a consequence of Proposition~\ref{pr:dualityIrred} and the corresponding result in the martingale case; the arguments are as in \cite[Theorem~7.4]{BeiglbockNutzTouzi.15}.

\begin{remark}\label{rk:lowerBoundDualityGlobal}
  The lower bound on $f$ in Theorem~\ref{th:dualityGlobal} can easily be relaxed. Indeed, let $f: \R^{2}\to \overline\R$ be Borel and suppose that there exist $a\in L^{1}(\mu)$, $b\in L^{1}(\nu)$ such that 
  $f(x,y)\geq a(x) + b(y)$ for all $x,y\in\R$.
  Then, we may apply Theorem~\ref{th:dualityGlobal} to
  $
    \bar f := [f(X,Y) - a(X)-b(Y)]^{+}
  $
  and deduce the duality result for $f$ as well.
\end{remark}

\section{Monotonicity Principle}\label{se:monotonicityPrinciple}

An important consequence of the duality theorem is a monotonicity principle describing the support of optimal transports; it can be seen as a substitute for the cyclical monotonicity from classical transport theory. The following notion will be useful for our study of the canonical couplings.

\begin{definition}\label{de:competitor}
  Let $\pi$ be a finite measure on $\R^{2}$ with finite first moment and let $M_{0},M_{1}\subseteq \R$ be Borel. Denote by $\pi_{1}$ its first marginal and by $\pi=\pi_{1}\otimes \kappa$ a disintegration. A measure $\pi'$ is an \emph{$(M_{0},M_{1})$-competitor} of $\pi$ if it has the same marginals and if its disintegration $\pi'=\pi_{1}\otimes \kappa'$ satisfies
  \begin{align*}
    \bary(\kappa'(x)) &\leq \bary(\kappa(x))\quad \mbox{for $\pi_{1}$-a.e.\ $x\in M_{0}$,}\\
    \bary(\kappa'(x)) &= \bary(\kappa(x))\quad \mbox{for $\pi_{1}$-a.e.\ $x\in M_{1}$.}
  \end{align*}
\end{definition}

This definition extends a concept of \cite{BeiglbockJuillet.12} where the barycenters are required to be equal on the whole real line. In our context, we need to distinguish three regimes for the applications in the subsequent sections: equality, inequality, and no constraint on the barycenters. One consequence of the inequality is that the notion of competitors is no longer symmetric.

Given $\mu\leq_{cd}\nu$, we recall from Proposition~\ref{pr:decomp} the sets $I_{k},J_{k}$, where the labels $k\geq1$ correspond to the martingale components, $k=0$ is the supermartingale component, and $k=-1$ is the complement (where any transport from $\mu$ to $\nu$ is the identity). Moreover, any element of $\cS(\mu,\nu)$ is necessarily supported by the set 
\begin{equation}\label{eq:defSigma}
    \Sigma:=\Delta \cup \bigcup_{k\geq0} I_{k}\times J_{k}.
\end{equation}

\begin{theorem}[Monotonicity Principle]\label{th:monotonicityPrinciple}
  Let $f: \R^{2}\to [0,\infty]$ be Borel, let $\mu\leq_{cd}\nu$ be probability measures and suppose that $\bS_{\mu,\nu}(f)<\infty$. There exist Borel sets $\Gamma\subseteq \R^{2}$ and $M\subseteq \R$ with the following properties.
  \begin{enumerate}
  
  \item A measure $P\in\cS(\mu,\nu)$ is optimal for $\bS_{\mu,\nu}(f)$ if and only if it is concentrated on $\Gamma$ and $P|_{M\times\R}$ is a martingale.
  
  \item Let $\bar\mu\leq_{cd}\bar\nu$ be  probabilities on $\R$. If $\bar P\in\cS(\bar\mu,\bar\nu)$ is concentrated on $\Gamma$  and $\bar P|_{M\times\R}$ is a martingale, then $\bar P$ is optimal for $\bS_{\bar\mu, \bar\nu}(f)$. %
  
  \item Let $M_{0}=M\cap I_{0}$ and $M_{1}=M\setminus M_{0}$, and let 
  $\pi$ be a finitely supported probability on $\R^{2}$ which is concentrated on~$\Gamma$. Then $\pi(f)\geq \pi'(f)$ for any $(M_{0},M_{1})$-competitor $\pi'$ of $\pi$ that is concentrated on $\Sigma$.
  \end{enumerate}
  If $(\varphi,\psi,h)\in \cD_{\mu,\nu}(f)$ is a suitable\footnote{chosen as in Lemma~\ref{le:reductionToComponentsDual}\,(ii)} version of the optimizer from Theorem~\ref{th:dualityGlobal}, then we can take 
  \begin{align*}
    M  &:= (I_{0}\cap\{h>0\}) \cup (\cup_{k\neq0} I_{k}),\\
   \Gamma &:=\big\{(x,y)\in \R^{2}:\, \varphi(x)+\psi(y)+h(x)(y-x) = f(x,y)\big\} \cap \Sigma.
  \end{align*}
  Moreover, the assertion in (iii) remains true if $\pi$ is not finitely supported, as long as $(\varphi,\psi)\in L(\pi_{1},\pi_{2})$, where $\pi_{1}$ and $\pi_{2}$ are the marginals of $\pi$.
\end{theorem}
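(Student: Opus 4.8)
\emph{The strategy} is to read off the sets $\Gamma,M_{0},M_{1}$ from a dual optimizer and then turn each of (i)--(iii) into an instance of the elementary bound ``primal value $\le$ dual value''. \emph{Step 1 (a pointwise dual optimizer).} By Theorem~\ref{th:dualityGlobal} there is an optimizer $(\varphi,\psi,h)\in\cD_{\mu,\nu}(f)$ for $\bI_{\mu,\nu}(f)=\bS_{\mu,\nu}(f)<\infty$. Using Lemma~\ref{le:reductionToComponentsDual}\,(ii) I pass to a version which, restricted to the irreducible components, lies in $\cD^{ci,pw}_{\mu_{0},\nu_{0}}(f)$, resp.\ $\cD^{c,pw}_{\mu_{k},\nu_{k}}(f)$ for $k\neq0$; these pointwise inequalities together cover $\Sigma=\Delta\cup\bigcup_{k\ge0}I_{k}\times J_{k}$ (recall $I_{k}\subseteq J_{k}$, and $h(x)(x-x)=0$ on the diagonal), so that
\begin{equation}\label{eq:pwHedgeMP}
  \varphi(x)+\psi(y)+h(x)(y-x)\ \ge\ f(x,y),\qquad (x,y)\in\Sigma,
\end{equation}
with $h\ge0$ on $I_{0}$ and with $\mu(\varphi)+\nu(\psi)=\bS_{\mu,\nu}(f)$ unchanged. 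I then set $M_{0}:=I_{0}\cap\{h>0\}$, $M_{1}:=\bigcup_{k\ne0}I_{k}$, $M:=M_{0}\cup M_{1}$ and $\Gamma:=\{(x,y):\varphi(x)+\psi(y)+h(x)(y-x)=f(x,y)\}\cap\Sigma$; these are Borel, $M_{0}\cap M_{1}=\emptyset$, and $\Gamma\subseteq\Sigma$.

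\emph{Step 2 (the fundamental chain, and (i)).} Fix $P=\mu\otimes\kappa\in\cS(\mu,\nu)$. Then $P$ is concentrated on $\Sigma$ by~\eqref{eq:defSigma}, and on the components with $k\neq0$ it restricts to a martingale, whence $\bary(\kappa(x))\le x$ for $\mu$-a.e.\ $x\in I_{0}$ while $\bary(\kappa(x))=x$ for $\mu$-a.e.\ $x\notin I_{0}$. Combining~\eqref{eq:pwHedgeMP} with the Fubini-type identities~\eqref{eq:hFiniteIteratedIntegral} and~\eqref{eq:PintegralIsIteratedIntegral} (established on the proper component in the proof of Lemma~\ref{le:passageToChi} and, summed over the remaining components, globally) gives
\begin{align}\label{eq:MPchain}
  P(f)\ &\le\ P\bigl[\varphi(X)+\psi(Y)+h(X)(Y-X)\bigr]\nonumber\\
  &=\ \mu(\varphi)+\nu(\psi)+\int h(x)\bigl(\bary(\kappa(x))-x\bigr)\,\mu(dx)\ \le\ \mu(\varphi)+\nu(\psi)\ =\ \bS_{\mu,\nu}(f),
\end{align}
because the last integrand vanishes off $I_{0}$ and on $I_{0}\cap\{h=0\}$ and is $\le0$ on $M_{0}$. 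Hence $P$ is optimal iff both inequalities in~\eqref{eq:MPchain} are equalities: the first holds iff $P$ is concentrated on $\Gamma$ (using that $P$ lives on $\Sigma$ and avoids the $f=\infty$ slices, as $P(f)<\infty$), and the second iff $h(x)(\bary(\kappa(x))-x)=0$ $\mu$-a.e., i.e.\ iff $\bary(\kappa(x))=x$ for $\mu$-a.e.\ $x\in M$ (automatic on $M_{1}$, forced on $M_{0}$ since $h>0$ there), i.e.\ iff $P|_{M\times\R}$ is a martingale. This proves~(i).

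\emph{Step 3 ((iii)).} Let $\pi=\pi_{1}\otimes\kappa$ be concentrated on $\Gamma$, finitely supported (or, for the ``moreover'', with $(\varphi,\psi)\in L(\pi_{1},\pi_{2})$); if $\pi(f)=\infty$ there is nothing to prove, so assume $\pi(f)<\infty$, whence $\varphi,\psi$ are finite on $\supp\pi_{1},\supp\pi_{2}$. Since~\eqref{eq:pwHedgeMP} is an equality on $\Gamma$, the term-by-term integration underlying~\eqref{eq:MPchain} gives $\pi(f)=\pi_{1}(\varphi)+\pi_{2}(\psi)+\int h(x)(\bary(\kappa(x))-x)\,\pi_{1}(dx)$. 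If $\pi'=\pi_{1}\otimes\kappa'$ is an $(M_{0},M_{1})$-competitor concentrated on $\Sigma$, it has the same marginals, so~\eqref{eq:pwHedgeMP} holds $\pi'$-a.s.\ and yields $\pi'(f)\le\pi_{1}(\varphi)+\pi_{2}(\psi)+\int h(x)(\bary(\kappa'(x))-x)\,\pi_{1}(dx)$. The claim $\pi'(f)\le\pi(f)$ follows since $h(x)(\bary(\kappa'(x))-x)\le h(x)(\bary(\kappa(x))-x)$ for $\pi_{1}$-a.e.\ $x$: on $M_{0}$ we have $h(x)>0$ and $\bary(\kappa'(x))\le\bary(\kappa(x))$; on $M_{1}$ and on the diagonal part of $\Sigma$ we have $\bary(\kappa'(x))=\bary(\kappa(x))$; and on $I_{0}\cap\{h=0\}$ both sides vanish.

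\emph{Step 4 ((ii), and the main obstacle).} The difficulty is that~\eqref{eq:pwHedgeMP} controls $f$ only on $\Sigma=\Sigma_{\mu,\nu}$, while an arbitrary $Q\in\cS(\bar\mu,\bar\nu)$ need not be concentrated on $\Sigma$. The plan is to show that it nonetheless is: since $\bar P$ is concentrated on $\Gamma\subseteq\Sigma$, the measures $\bar\mu,\bar\nu$ are carried by $\bigcup_{k}I_{k}$ and by the $J_{k}$ respectively, and the martingale hypothesis on $\bar P|_{M\times\R}$ forces the points separating the $I_{k}$, and $x^{*}$ in particular, to satisfy $p_{\bar\mu}=p_{\bar\nu}$, hence to be barriers for $(\bar\mu,\bar\nu)$ by Lemma~\ref{le:barrier}; Proposition~\ref{pr:barriers} applied to $(\bar\mu,\bar\nu)$ then shows that every $Q\in\cS(\bar\mu,\bar\nu)$ is concentrated on $\Sigma$ and that $Q|_{M_{1}\times\R}$ is automatically a martingale. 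Consequently the chain~\eqref{eq:MPchain} applies verbatim with $(\mu,\nu)$ replaced by $(\bar\mu,\bar\nu)$, giving $Q(f)\le\bar\mu(\varphi)+\bar\nu(\psi)$ for all such $Q$; for $Q=\bar P$ both inequalities become equalities (the first since $\bar P$ is concentrated on $\Gamma$, the second since $\bar P|_{M\times\R}$ is a martingale), the required integrability of $\varphi,\psi$ against $\bar\mu,\bar\nu$ being automatic when $\bar P(f)<\infty$ because the three nonnegative pieces $\bar\mu(\varphi-\chi),\bar\nu(\psi+\chi),(\bar\mu-\bar\nu)(\chi)$ into which $\bar P(f)$ decomposes along a moderator $\chi$ as in Lemma~\ref{le:passageToChi} then sum to $\bar P(f)<\infty$, while the case $\bar P(f)=\infty$ is immediate. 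Thus $\bar P(f)=\bar\mu(\varphi)+\bar\nu(\psi)\ge Q(f)$, i.e.\ $\bar P$ is optimal. I expect the barrier bookkeeping here --- verifying that the component-separating points of $(\mu,\nu)$ remain barriers for $(\bar\mu,\bar\nu)$, with the customary care about atoms at endpoints of the $J_{k}$ --- to be the main technical obstacle, the remainder being routine manipulation of the generalized integrals of Section~\ref{se:generalizedIntegral}.
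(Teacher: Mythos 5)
Your proposal is correct and follows essentially the same route as the paper: the same dual optimizer and complementary-slackness chain for (i), the same competitor computation for (iii), and for (ii) the same two ingredients (propagating $p_{\mu}=p_{\nu}\Rightarrow p_{\bar\mu}=p_{\bar\nu}$ at the separating points using that $\bar P$ lives on $\Sigma$ and is a martingale on $M$, plus the moderator decomposition of $\bar P(f)$ into three nonnegative terms to get $(\varphi,\psi)\in L(\bar\mu,\bar\nu)$). The only cosmetic difference is that you phrase step (ii)(a) via barriers and Proposition~\ref{pr:barriers} for $(\bar\mu,\bar\nu)$, whereas the paper states it as an inclusion of irreducible domains; these are the same mechanism.
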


Before proving the theorem, let us draw a corollary stating that the supermartingale optimal transport can be decomposed as follows. On $M$, an optimizer $P\in\cS(\mu,\nu)$ is also an optimizer of a martingale optimal transport problem. Thus, we think of $M$ as the set where the supermartingale constraint is ``binding,'' and in fact it acts like the seemingly stronger martingale constraint (thus $M$ as in \emph{martingale}). Whereas on $N:=\R\setminus M$, the measure~$P$ is also an optimizer of a (Monge--Kantorovich) optimal transport problem with no constraint at all on the dynamics ($N$ as in \emph{no} constraint).

\begin{corollary}[Extremal Decomposition]\label{co:extremalDecomp}
  Let $f: \R^{2}\to [0,\infty]$ be Borel and let $\mu\leq_{cd}\nu$ be probability measures such that $\bS_{\mu,\nu}(f)<\infty$. There exists a Borel set $M\subseteq \R$ with the following property.
  
  Given an optimizer $P\in\cS(\mu,\nu)$ for $\bS_{\mu,\nu}(f)$, let $\mu_{M}=\mu|_{M}$ and let $\nu_{M}$ be the image\footnote{
  If $P=\mu\otimes\kappa$, the image of $\mu_{M}$ under $P$ is defined as the second marginal of $\mu_{M}\otimes\kappa$.
  }
   of $\mu_{M}$ under $P$. Moreover, let $\mu_{N}=\mu|_{\R\setminus M}$ and let $\nu_{N}$ be the image of $\mu_{N}$ under $P$. Then
   for the same reward function $f$,
  \begin{enumerate}
  \item $P|_{M\times\R}$ is an optimal \emph{martingale} transport from $\mu_{M}$ to $\nu_{M}$,
  \item $P|_{N\times\R}$ is an optimal \emph{Monge--Kantorovich} transport from $\mu_{N}$ to $\nu_{N}$.
  \end{enumerate}
\end{corollary}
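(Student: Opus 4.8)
The plan is to extract $M$ directly from the Monotonicity Principle (Theorem~\ref{th:monotonicityPrinciple}) and then exploit its characterization of \emph{all} optimizers. Concretely, I would fix the dual optimizer $(\varphi,\psi,h)\in\cD_{\mu,\nu}(f)$ from Theorem~\ref{th:dualityGlobal}, chosen as in Lemma~\ref{le:reductionToComponentsDual}(ii), and take $\Gamma$, $M_0=I_0\cap\{h>0\}$, $M_1=\cup_{k\neq0}I_k$ and $M=M_0\cup M_1$ as produced there. Since the $I_k$ partition $\R$ one gets $N:=\R\setminus M=I_0\cap\{h\le0\}$, and because $h\ge0$ on $I_0$ this means $h\equiv0$ on $N$ --- a fact that will carry Part~(ii). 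For any optimizer $P\in\cS(\mu,\nu)$, Theorem~\ref{th:monotonicityPrinciple}(i) tells us $P$ is concentrated on $\Gamma$ and $P_M:=P|_{M\times\R}$ is a martingale; with $P_N:=P|_{N\times\R}$ we then have $P=P_M+P_N$, first marginals $\mu_M,\mu_N$, second marginals $\nu_M,\nu_N$, $\nu_M+\nu_N=\nu$ (as $\nu_M,\nu_N$ are the images of $\mu|_M,\mu|_N$ under $P$), and $P_M(f)+P_N(f)=P(f)=\bS_{\mu,\nu}(f)<\infty$, so both pieces have finite value; also $P_M\in\cM(\mu_M,\nu_M)$ (hence $\mu_M\le_c\nu_M$) and $P_N\in\Pi(\mu_N,\nu_N)$ (hence $\mu_N\le_{cd}\nu_N$).

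For Part~(i) I would argue by completion. Given any $Q\in\cM(\mu_M,\nu_M)$, the measure $Q+P_N$ has marginals $\mu$ and $\nu$ and is a supermartingale: for $A\in\cB(\R)$, $E^{Q+P_N}[Y\1_A(X)]=E^Q[X\1_A(X)]+E^{P_N}[Y\1_A(X)]\le E^{Q+P_N}[X\1_A(X)]$, using that $Q$ is a martingale and that $P_N$ inherits the supermartingale property of $P$. Hence $Q+P_N\in\cS(\mu,\nu)$, so $Q(f)+P_N(f)=(Q+P_N)(f)\le\bS_{\mu,\nu}(f)=P_M(f)+P_N(f)$; cancelling the finite quantity $P_N(f)$ gives $Q(f)\le P_M(f)$, and since $P_M\in\cM(\mu_M,\nu_M)$ this shows $P_M$ is an optimal martingale transport from $\mu_M$ to $\nu_M$.

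For Part~(ii) the completion trick is unavailable (a generic Monge--Kantorovich competitor does not complete $P_M$ to a supermartingale), so I would run a direct duality argument using the \emph{same} dual functions with $h$ dropped. Let $\chi$ be a moderator for $(\varphi,\psi)$ on $J_0$ with $\chi\le\varphi$ on $I_0$ and $-\chi\le\psi$ on $J_0$ (Lemma~\ref{le:passageToChi}) and set $\tilde\varphi:=\varphi-\chi\ge0$, $\tilde\psi:=\psi+\chi\ge0$, which lie in $L^1(\mu_N)$ and $L^1(\nu_N)$ because $\mu_N\le\mu_0$, $\nu_N\le\nu_0$. From the pointwise inequality $\varphi(x)+\psi(y)+h(x)(y-x)\ge f(x,y)$ on $I_0\times J_0$ and $h\equiv0$ on $N\subseteq I_0$ one gets $\tilde\varphi(x)+\tilde\psi(y)\ge f(x,y)$ on $N\times J_0$. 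Since $P$ lives on $\Gamma\subseteq\Sigma=\Delta\cup\bigcup_{k\ge0}I_k\times J_k$ and $N\subseteq I_0$ forces any $\Sigma$-point with first coordinate in $N$ to have second coordinate in $J_0$, the measure $P_N$ --- and therefore $\nu_N$, and therefore every $Q\in\Pi(\mu_N,\nu_N)$ --- is concentrated on $N\times J_0$. Consequently $Q(f)\le Q[\tilde\varphi(X)+\tilde\psi(Y)]=\mu_N(\tilde\varphi)+\nu_N(\tilde\psi)$ for all such $Q$, while on $\Gamma$ with first coordinate in $N$ we have $f(x,y)=\tilde\varphi(x)+\tilde\psi(y)+\chi(x)-\chi(y)$, so term-by-term integration (each summand nonnegative along a disintegration of $P_N$ by Jensen's inequality for the concave $\chi$) gives $P_N(f)=\mu_N(\tilde\varphi)+\nu_N(\tilde\psi)+(\mu_N-\nu_N)(\chi)$ with $0\le(\mu_N-\nu_N)(\chi)\le(\mu_0-\nu_0)(\chi)<\infty$. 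The chain $\mu_N(\tilde\varphi)+\nu_N(\tilde\psi)\le P_N(f)\le\sup_{Q\in\Pi(\mu_N,\nu_N)}Q(f)\le\mu_N(\tilde\varphi)+\nu_N(\tilde\psi)$ then forces equality everywhere, so $P_N$ is an optimal Monge--Kantorovich transport from $\mu_N$ to $\nu_N$.

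The hard part is the integrability bookkeeping in Part~(ii): making $(\mu_N-\nu_N)(\chi)$ and $\mu_N(\tilde\varphi)+\nu_N(\tilde\psi)$ into well-defined finite quantities (independent of the disintegration) and justifying the term-by-term integration identifying $P_N(f)$ with their sum. This is precisely the generalized-concave-integral machinery of Section~\ref{se:generalizedIntegral}, and the estimate $(\mu_N-\nu_N)(\chi)\le(\mu_0-\nu_0)(\chi)$ is the one new observation needed, valid because $N\subseteq I_0$ and $\chi(x)-\int\chi(y)\,\kappa(x,dy)\ge0$. The remaining verifications --- that $\nu_M+\nu_N=\nu$, that the completion measures in Part~(i) genuinely lie in $\cS(\mu,\nu)$, and that $\Sigma\cap(N\times\R)\subseteq N\times J_0$ --- are routine.
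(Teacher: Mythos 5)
Your choice of $M$ (from the dual optimizer of Theorem~\ref{th:dualityGlobal}, normalized as in Lemma~\ref{le:reductionToComponentsDual}\,(ii)) and your Part~(i) coincide with the paper's proof: completing a martingale competitor $Q\in\cM(\mu_M,\nu_M)$ by $P_N$ yields an element of $\cS(\mu,\nu)$, and cancelling $P_N(f)$ gives optimality of $P_M$. The problem is Part~(ii).

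The pointwise inequality you assert, $\tilde\varphi(x)+\tilde\psi(y)\geq f(x,y)$ on $N\times J_0$, is false in general. Since $\tilde\varphi=\varphi-\chi$ and $\tilde\psi=\psi+\chi$, one has $\tilde\varphi(x)+\tilde\psi(y)=\varphi(x)+\psi(y)-[\chi(x)-\chi(y)]$, and what follows from $h\equiv 0$ on $N$ is only $\varphi(x)+\psi(y)\geq f(x,y)$; the correction $\chi(x)-\chi(y)$ is \emph{positive} whenever $y<x$ (as $\chi$ is increasing), so $(\tilde\varphi,\tilde\psi)$ need not dominate $f$ on the part of $N\times J_0$ below the diagonal---exactly where a downward Monge--Kantorovich transport lives. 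Your own identity exposes the inconsistency: you correctly derive $P_N(f)=\mu_N(\tilde\varphi)+\nu_N(\tilde\psi)+(\mu_N-\nu_N)(\chi)$ with $(\mu_N-\nu_N)(\chi)\geq 0$, and since $P_N\in\Pi(\mu_N,\nu_N)$ this already gives $\sup_Q Q(f)\geq \mu_N(\tilde\varphi)+\nu_N(\tilde\psi)+(\mu_N-\nu_N)(\chi)$, contradicting your claimed bound $\sup_Q Q(f)\leq \mu_N(\tilde\varphi)+\nu_N(\tilde\psi)$ whenever $(\mu_N-\nu_N)(\chi)>0$ (e.g.\ a strictly decreasing deterministic kernel on $N$ with $\chi$ strictly increasing, which is the typical picture on the non-martingale part). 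The correct certificate on $N\times J_0$ is $(\varphi,\psi)$ itself with the generalized value $\mu_N(\tilde\varphi)+\nu_N(\tilde\psi)+(\mu_N-\nu_N)(\chi)$; but then one must show $Q[\varphi(X)+\psi(Y)]\leq\mu_N(\varphi)+\nu_N(\psi)$ for an \emph{arbitrary} coupling $Q$, i.e.\ control $Q[\chi(X)-\chi(Y)]$ when $\chi\notin L^1(\nu_N)$, and the machinery of Section~\ref{se:generalizedIntegral} only provides this for supermartingale kernels via Jensen. The paper avoids the issue entirely by invoking the extension of Theorem~\ref{th:monotonicityPrinciple}\,(iii) with $\pi=P$ and competitor $\pi'=P_M+\pi'_N$: since $\pi$ and $\pi'$ have identical marginals, the $\varphi$, $\psi$ and $\chi$ contributions cancel in the comparison and only the $h$-term (zero on $N$, controlled on $M$) survives. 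You should either route through that statement or compare $Q$ directly to $P_N$ rather than to an absolute dual bound.
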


A word of caution is in order: while the set $M$ is defined without reference to $P$, the second marginals $\nu_{M},\nu_{N}$ in the extremal  problems do depend on~$P$. In that sense, the decomposition is non-unique---which, however, is quite natural given that the optimizer $P$ is non-unique as well, for general $f$.

\begin{remark}\label{rk:lowerBoundDualityMonPrinciple}
  The lower bound on $f$ in Theorem~\ref{th:monotonicityPrinciple} and Corollary~\ref{co:extremalDecomp} can be relaxed as follows. Instead of $f$ being nonnegative, suppose that there exist real functions  $a\in L^{1}(\mu)$, $b\in L^{1}(\nu)$ such that 
  $f(x,y)\geq a(x) + b(y)$ for all $x,y\in\R$.
  Then, Theorem~\ref{th:monotonicityPrinciple}\,(i),\,(iii) as well as Corollary~\ref{co:extremalDecomp} hold as above, using Remark~\ref{rk:lowerBoundDualityGlobal} but otherwise the same proofs. Moreover, Theorem~\ref{th:monotonicityPrinciple}\,(ii) as well as the last statement in Theorem~\ref{th:monotonicityPrinciple} hold under the condition that $a,b$ are integrable for $\bar\mu,\bar\nu$ and $\pi_{1},\pi_{2}$, respectively.
\end{remark}

\begin{example}\label{ex:decompositionNoncanonicalCase}
  In the context of Corollary~\ref{co:extremalDecomp}, suppose that $\mu$ has no atoms and that $f$ is smooth, of linear growth, and satisfies the Spence--Mirrlees condition $f_{xy}>0$ and the martingale Spence--Mirrlees condition $f_{xyy}>0$ (this is \emph{not} one of the canonical cases studied later). Then an optimizer $P$ exists and the corollary implies that $P|_{M\times\R}$ is the Left-Curtain coupling~\cite{BeiglbockJuillet.12} between its marginals and $P|_{N\times\R}$ is the Hoeffding--Fr\'echet coupling~\cite[Section~3.1]{RachevRuschendorf.98a} between its marginals. In particular, writing $P=\mu\otimes \kappa$ and using the results of the indicated references, we immediately deduce the possible forms of the kernel:  at almost every $x$, $\kappa(x)$ is either deterministic (the Hoeffding--Fr\'echet kernel) or a martingale kernel concentrated at two points (the Left--Curtain kernel). 
  In particular, $\kappa(x)$ is never what might seem to be the typical case---a truly random process with downward drift.

  We mention that the coupling $P$ is nevertheless not canonical in the sense of the Introduction: even if uniqueness holds, the optimal coupling may change substantially if we replace $f$ by a different function satisfying the same Spence--Mirrlees conditions; cf.\ Section~\ref{se:noncanonicalCouplings} for a counterexample.
\end{example}

\begin{proof}[Proof of Corollary~\ref{co:extremalDecomp}]
  Let $M$ and $(\varphi,\psi,h)$ be as in Theorem~\ref{th:monotonicityPrinciple}, and note that $P(f)<\infty$.
  
  (i) We have $P_{M}:=P|_{M\times\R}\in\cM(\mu_{M},\nu_{M})$ by (i) of the theorem. Moreover, setting $P_{N}:=P|_{N\times\R}$, any $\bar P_{M}\in\cM(\mu_{M},\nu_{M})$ induces an element of $\cS(\mu,\nu)$ via $\bar P:=\bar P_{M} +P_{N}$. Thus, $\bar P_{M}(f)\leq P_{M}(f)$ by the optimality of~$P$.
  
  (ii) This part is less direct because elements of $\Pi(\mu_{N},\nu_{N})$ are not supermartingales in general; we shall invoke Theorem~\ref{th:monotonicityPrinciple}(iii) with $\pi:=P$. By~(i) of the theorem, $\pi$ is concentrated on $\Gamma$, and of course $(\varphi,\psi)\in L(\mu,\nu)=L(\pi_{1},\pi_{2})$. Moreover, 
  for any $\pi'_{N}\in \Pi(\mu_{N},\nu_{N})$, the measure $\pi'=P_{M}+\pi'_{N}$ is an $(M_{0},M_{1})$-competitor of $\pi=P_{M}+P_{N}$ which is concentrated on $\Sigma$ as $P_N$ is concentrated on $I_{0}\times J_{0}\subseteq \Sigma$; note that $N\subseteq I_{0}$. Now the extension of~(iii) at the end of the theorem yields $\pi(f)\geq \pi'(f)$ and hence $P_{N}(f)\geq \pi'_{N}(f)$.
\end{proof}

\begin{proof}[Proof of Theorem~\ref{th:monotonicityPrinciple}]
  As $\bI_{\mu,\nu}(f)=\bS_{\mu,\nu}(f)<\infty$, Theorem~\ref{th:dualityGlobal} yields a dual optimizer $(\varphi,\psi,h)\in \cD_{\mu,\nu}(f)$ and we can define $\Gamma$ and $M$ as stated. 
  
  (i) Let $P\in\cS(\mu,\nu)$ and let $P=\mu\otimes \kappa$ be a disintegration. Recalling~\eqref{eq:hFiniteIteratedIntegral} and~\eqref{eq:PintegralIsIteratedIntegral} and the analogous facts for the martingale case~\cite{BeiglbockNutzTouzi.15}, we have
  \begin{align*}
    P(f)
    \leq P[\varphi(X) + \psi(Y) + h(X)(Y-X)]
    \leq \mu(\varphi)+\nu(\psi).
  \end{align*}%
  Since $\bS_{\mu,\nu}(f) = \mu(\varphi)+\nu(\psi)$, $P$ is optimal if and only if both inequalities are equalities. As $P(f)<\infty$, the first inequality is an equality if and only if $P$ is concentrated on $\Gamma$. Moreover, the second inequality is an equality if and only if $\int (y-x) \,\kappa(x,dy)=0$ $\mu$-a.e.\ on $\{h>0\}$; note that the condition on $\kappa$ holds automatically on the martingale components $I_{k}$, $k\geq1$. In particular, this is equivalent to $P|_{M\times\R}$ being a martingale. 
  
  (ii) We choose a version of $(\varphi,\psi,h)\in \cD_{\mu,\nu}(f)$ as in Lemma~\ref{le:reductionToComponentsDual}\,(ii); moreover, we may assume that $\bar P(f)<\infty$. We need to show that $(\varphi,\psi,h)\in \cD_{\bar\mu,\bar\nu}(f)$; once this is established, optimality can be argued as in~(i) above.
  
  (a) On the one hand, we need to show that
  \begin{equation}\label{eq:monPrinciplePolarSets}
    \varphi(X) + \psi(Y) + h(X)(Y-X) \geq f(X,Y) \quad \cS(\bar\mu,\bar\nu)\qs
  \end{equation}   
  For this, it suffices to prove that the domains of the irreducible components of $\bar\mu\leq_{cd}\bar\nu$ are subsets of the ones of $\mu\leq_{cd}\nu$; i.e., that $p_{\mu}(x)=p_{\nu}(x)$ implies $p_{\bar\mu}(x)=p_{\bar\nu}(x)$, for any $x\in\R$. Indeed, let $p_{\mu}(x)=p_{\nu}(x)$. Since $\bar P$ is concentrated on $\Gamma \subseteq \Sigma$, we know that $Y\leq x$ $\bar P$-a.s.\ on $\{X\leq x\}$ and $Y\geq x$ $\bar P$-a.s.\ on $\{X\geq x\}$. Writing $E[\,\cdot\,]$ for the expectation under $\bar P$, it follows that
  $
    p_{\bar\nu}(x) = E[(x-Y)^{+}]=E[(x-Y)\1_{X\leq x}].
  $
  Note that $p_{\mu}(x)=p_{\nu}(x)$ implies $x\leq x^{*}$, cf.\ Proposition~\ref{pr:barriers}. Recalling that $(-\infty,x^{*})\subseteq M$, our assumption on $\bar P$ then yields that $\bar P|_{\{X<x\}}$ is a martingale. Thus,
  $
    E[(x-Y)\1_{X\leq x}] = E[(x-X)\1_{X\leq x}]=E[(x-X)^{+}]=p_{\bar\mu}(x)
  $
  and part~(a) is complete.
  
  (b) On the other hand, we need to show that $(\varphi,\psi)\in L(\bar\mu,\bar\nu)$. By reducing to the components, we may assume without loss of generality that~$(\bar\mu,\bar\nu)$ is irreducible with domain $(I,J)$. Moreover, the argument for the martingale case is contained in the proof of \cite[Corollary~7.8]{BeiglbockNutzTouzi.15}, so we shall assume that~$(\bar\mu,\bar\nu)$ is proper.
  Let
  $
    \chi(y) := \inf_{x\in I} \,[\varphi(x) + h(x)(y-x)].%
  $
  As $(\varphi,\psi,h)\in \cD^{ci,pw}_{\mu,\nu}(f)$, the arguments below~\eqref{eq:PintegralIsIteratedIntegral} yield that
  $\chi: J\to \R$ is concave and increasing, that $\bar\varphi:=\varphi - \chi \geq 0$ and $\bar\psi:=\psi+\chi \geq 0$, and that the expectation $\bar P[ \varphi(X) +  \psi(Y)  + h(X)(Y-X)]$ can be computed as the $\bar\mu(dx)$-integral of
  $$
    \bar\varphi(x) + \int\bar\psi(y)\,\kappa(x,dy)+ \left[\chi(x) - \int \chi(y) \kappa(x,dy) \right] + h(x)(\bary(\kappa(x))-x),
  $$
  where $\bar P = \bar\mu\otimes \kappa$ for some kernel $\kappa$ (not necessarily the same as in~(i)). By the assumption that $\bar P|_{M\times \R}$ is a martingale and $\R\setminus M \,=\, \{h\leq 0\} \cap I_{0} \,\subseteq \, \{h=0\}$, either $h(x)=0$ or $\bary(\kappa(x))=x$, for $\bar\mu$-a.e.\ $x\in\R$. Using also that $\bar P$ is concentrated on $\Gamma$, we deduce that
  $
    \bar P(f)=\bar P[ \varphi(X) +  \psi(Y)  + h(X)(Y-X)] 
     = \bar\mu(\bar\varphi) + \bar\nu(\bar\psi)+ (\bar\mu - \bar\nu)(\chi),
  $
  where the last step is justified by the nonnegativity of the integrands. As $\bar P(f)<\infty$, we conclude that the three (nonnegative) terms on the right-hand side are finite; that is, $(\varphi,\psi)\in L(\bar\mu,\bar\nu)$ with moderator $\chi$.
  
  (iii) Again, we may assume that $\pi(f)<\infty$. Let $\pi'$ be an $(M_{0},M_{1})$-competitor of $\pi$, let $\bar\mu,\bar\nu$ be the common first and second marginals of $\pi,\pi'$ and let $\pi = \bar\mu\otimes \kappa$,  $\pi' = \bar\mu\otimes \kappa'$. If $(\varphi,\psi)\in L(\bar\mu,\bar\nu)$, using $h\geq0$ on $M_{0}\subseteq I_{0}$ and $\R\setminus M \subseteq \{h=0\}$ and the definition of the competitor yields
$
    \pi(f) 
    =\bar\mu(\varphi)+\bar\nu(\psi) + \int_{M} h(x) (\bary(\kappa(x)) - x) \, \bar\mu(dx)
    \geq \bar\mu(\varphi)+\bar\nu(\psi) + \int_{M} h(x) (\bary(\kappa'(x))-x) \, \bar\mu(dx) \geq \pi'(f).
$  
 Of course, $(\varphi,\psi)\in L(\bar\mu,\bar\nu)$ holds in particular if $\pi$ is finitely supported.
\end{proof}

\section{Shadow Construction}\label{se:shadowConstruction}

In this section, we introduce the Increasing and Decreasing Supermartingale Transports via an order-theoretic construction. 
Let $\fM^{1}(\R)$ be the set of all finite measures on $(\R,\cB(\R))$ which have a finite first moment, endowed with the weak convergence induced by the continuous functions of linear growth. We shall mainly use the restriction of this topology to subsets of measures of equal mass, and then it is equivalent to the Kantorovich or 1-Wasserstein distance $W(\nu,\nu')=\sup_{f} (\nu-\nu')(f)$, where $f$ ranges over all 1-Lipschitz functions.

\begin{definition}\label{de:extendedOrder}
  Let $\mu,\nu\in\fM^{1}(\R)$. We say that $\mu,\nu$ are in \emph{positive-convex-decreasing} order, denoted $\mu\leq_{pcd}\nu$, if
  $\mu(\phi)\leq\nu(\phi)$ for all nonnegative, convex, decreasing functions $\phi:\R\to\R$.
\end{definition}

We note that $\mu\leq_{pcd}\nu$ necessarily satisfy $\mu(\R)\leq\nu(\R)$. In fact, the case of strict inequality is the one of interest: if $\mu(\R)=\nu(\R)$, then $\mu\leq_{pcd}\nu$ is equivalent to $\mu\leq_{cd}\nu$.

\begin{lemma}\label{le:ecdSandwich}
  Let $\mu,\nu\in\fM^{1}(\R)$ satisfy $\mu\leq_{pcd}\nu$. Then the set%
  \footnote{We think of the elements of $\casts{\mu}{\nu}$ as lying between $\mu$ and $\nu$, as the notation suggests. However, we caution the reader that $\mu,\nu\notin \casts{\mu}{\nu}$ in general.}
  $$
    \casts{\mu}{\nu} := \{\theta\in\fM^{1}(\R):\, \mu\leq_{cd}\theta\leq\nu\}
  $$
  is nonempty and contains a unique least element $\shadow{\mu}{\nu}$ for the convex-decreasing order:
  $\shadow{\mu}{\nu} \leq_{cd} \theta$ for all $\theta \in \casts{\mu}{\nu}$.
  The measure $\shadow{\mu}{\nu}$ is called the \emph{shadow} of $\mu$ in $\nu$.
\end{lemma}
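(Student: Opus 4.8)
The plan is to follow the strategy of \cite{BeiglbockJuillet.12} for the convex order, adapting it to the convex-decreasing order where the barycenter is no longer fixed. First I would treat the case $\mu(\R)=\nu(\R)$, where $\mu\leq_{pcd}\nu$ means $\mu\leq_{cd}\nu$; the remaining case reduces to this one by a limiting argument in which one enlarges $\mu$, since $\mu\leq_{pcd}\nu$ guarantees one can find $\mu'\geq_{cd}\mu$ of the same mass as $\nu$ with $\mu'\leq_{cd}\nu$ (for instance, add an atom far to the right so as to absorb the missing mass while respecting the put-function inequality, using Proposition~\ref{pr:convexOrder}). So assume $\mu(\R)=\nu(\R)$. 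Nonemptiness of $\casts{\mu}{\nu}$ is immediate: $\nu$ itself lies in it, since $\mu\leq_{cd}\nu\leq\nu$.

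The core of the argument is existence and uniqueness of a $\leq_{cd}$-least element. For uniqueness I would first establish a lattice property: if $\theta_{1},\theta_{2}\in\casts{\mu}{\nu}$ then there exists $\theta\in\casts{\mu}{\nu}$ with $\theta\leq_{cd}\theta_{1}$ and $\theta\leq_{cd}\theta_{2}$. At the level of put functions this is natural — one wants $p_{\theta}=\max(p_{\theta_1},p_{\theta_2})$ suitably convexified — but the delicate point is that $\max(p_{\theta_1},p_{\theta_2})$ need not be convex, so $\theta$ should correspond to a function $q$ that is the smallest convex function dominating both $p_{\theta_1}$ and $p_{\theta_2}$ while still lying below $p_\nu$ and above $p_\mu$; one checks $p_\mu\le q$ (clear, since $p_\mu\le p_{\theta_i}$) and $q\le p_\nu$ (because $p_\nu$ is itself convex and dominates both $p_{\theta_i}$), and that $q$ has the right asymptotics to be $p_\theta$ for a measure $\theta$ of the correct mass. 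Then $\theta\le_{cd}\theta_i$. With the lattice property in hand, uniqueness of a least element follows formally: if $\theta,\theta'$ are both least, each is $\le_{cd}$ the other, hence $p_\theta=p_{\theta'}$, hence $\theta=\theta'$ by Proposition~\ref{pr:convexOrder}(ii).

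For existence I would use a compactness/minimizing-sequence argument. Fix a strictly convex decreasing test function of linear growth, say $\phi_0(t)=e^{-t}\wedge(1-t)$ or simply work with $\int u_\mu$-type functionals; more concretely, consider minimizing $\theta\mapsto\int \phi_0\,d\theta$ (or, to pin down the barycenter first, maximize $\bary(\theta)$) over $\casts{\mu}{\nu}$. The set $\casts{\mu}{\nu}$ is $W$-bounded (all elements have mass $\nu(\R)$ and are dominated in the order by $\nu$, so their first moments are controlled) and closed under weak convergence of measures of linear growth — both $\mu\leq_{cd}\theta$ and $\theta\leq\nu$ pass to the limit — hence it is $W$-compact. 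Take a minimizing sequence $\theta_n$; by compactness extract a limit $\theta^\ast\in\casts{\mu}{\nu}$. Using the lattice property I can replace $\theta_n$ by the successive infima $\theta_1\wedge_{cd}\cdots\wedge_{cd}\theta_n$, obtaining a $\leq_{cd}$-decreasing minimizing sequence; its limit $\theta^\ast$ then satisfies $\theta^\ast\leq_{cd}\theta_n$ for all $n$. Finally, for an arbitrary $\theta\in\casts{\mu}{\nu}$, form $\theta^\ast\wedge_{cd}\theta\in\casts{\mu}{\nu}$: it is $\leq_{cd}\theta^\ast$ and has $\int\phi_0\,d(\theta^\ast\wedge_{cd}\theta)\leq\int\phi_0\,d\theta^\ast$, which by minimality of $\theta^\ast$ and strict convexity of $\phi_0$ forces $\theta^\ast\wedge_{cd}\theta=\theta^\ast$, i.e. $\theta^\ast\leq_{cd}\theta$. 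Thus $\theta^\ast=\shadow{\mu}{\nu}$.

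The main obstacle is the lattice property, and specifically verifying that the convexified maximum $q$ of two put functions genuinely arises as the put function of a finite measure in $\casts{\mu}{\nu}$ — one must check the four characterizing properties of put functions (nonnegative, increasing, convex, correct limits at $\pm\infty$) for $q$, the subtle one being property (iv), that $q(t)-\nu(\R)[t-\bary]$ tends to $0$, which requires the convexification not to destroy the asymptotic slope; here one uses that $q$ is sandwiched between $p_\mu$ and $p_\nu$, both of which have the same leading behaviour $\nu(\R)t$ up to the (different) barycenter corrections, so $q$ inherits slope $\nu(\R)$ at $+\infty$ and its affine asymptote defines $\bary(\theta)\in[\bary(\nu),\bary(\mu)]$. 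A secondary technical point is the reduction from $\mu(\R)<\nu(\R)$ to the equal-mass case, which I would handle by the put-function enlargement described above; once that is done, the shadow of the enlarged $\mu'$ lies in $\casts{\mu}{\nu}$ and is checked to be least there as well.
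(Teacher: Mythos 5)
There are two genuine gaps, both in steps you identify as the crux. First, the reduction to the equal-mass case is vacuous and the case that actually matters is never treated. If $\mu(\R)=\nu(\R)$, then any $\theta\in\casts{\mu}{\nu}$ satisfies $\theta\leq\nu$ and $\theta(\R)=\mu(\R)=\nu(\R)$, which forces $\theta=\nu$; so $\casts{\mu}{\nu}=\{\nu\}$ and the lemma is trivial there. All the content lies in the case $\mu(\R)<\nu(\R)$, and your reduction cannot recover it: $\mu'\geq_{cd}\mu$ is impossible when $\mu'(\R)\neq\mu(\R)$ (test against constant functions, which are convex and nonincreasing), and even granting some enlargement $\mu'$ of mass $\nu(\R)$, its shadow in $\nu$ is a measure of mass $\nu(\R)$ --- namely $\nu$ itself --- which cannot lie in $\casts{\mu}{\nu}$, whose elements have mass $\mu(\R)$. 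In particular you never prove nonemptiness in the relevant case; the paper does this by an explicit construction, taking the ``left-most'' piece of $\nu$ of mass $\mu(\R)$ via the quantile function of $\nu$.

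Second, the lattice (directedness) construction is backwards. By Proposition~\ref{pr:convexOrder}(ii), $\theta\leq_{cd}\theta_{i}$ is equivalent to $p_{\theta}\leq p_{\theta_{i}}$, so a common $\leq_{cd}$-lower bound must have put function below $\min(p_{\theta_{1}},p_{\theta_{2}})$; the correct candidate is the \emph{convex hull of the minimum}, and it is the minimum of convex functions that fails to be convex --- the maximum is automatically convex, so the ``delicate point'' you describe does not exist, and your convexified maximum yields an \emph{upper} bound $\theta_{i}\leq_{cd}\theta$, which is useless for directedness. Moreover, membership in $\casts{\mu}{\nu}$ requires $\theta\leq\nu$ as measures, i.e., that $p_{\nu}-p_{\theta}$ be nonnegative, increasing and \emph{convex}; this is strictly stronger than the pointwise bound $p_{\theta}\leq p_{\nu}$ that you verify, and it is precisely the point for which the paper invokes the argument from \cite[Lemma~4.6]{BeiglbockJuillet.12}. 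Your compactness and minimizing-sequence scheme is reasonable in outline and close to the paper's (which instead runs a finite-intersection argument over all convex decreasing test functions, thereby avoiding the claim that equality of a single strictly convex integral together with $\leq_{cd}$ forces equality of measures), but it cannot be executed until the directedness step is repaired.
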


\begin{proof}
  Without loss of generality, $\nu$ is a probability measure.
  
  (i) We first show that $\casts{\mu}{\nu}$ contains some element $\theta$. Let $\lambda$ be the Lebesgue measure on $\R$ and let $G_{\nu}$ be the quantile function of~$\nu$; that is, the left-continuous inverse of the c.d.f.\ of $\nu$. We define
  $$
    \theta := \lambda|_{[0,k]} \circ G_{\nu}^{-1}\quad\mbox{where}\quad k:=\mu(\R)\in[0,1]. 
  $$
  This implies that $\theta\in\fM^{1}(\R)$, that $\theta(\R)=k$, and that $\theta\leq \nu$. Intuitively speaking, $\theta$ is the ``left-most'' measure $\theta\leq \nu$ of mass $k$ on $\R$; in particular, if~$\nu$ admits a density $f_{\nu}$, the density of $\theta$ is $f_{\theta}=f_{\nu}\1_{(-\infty,G_{\nu}(k)]}$.
  
  Let $\phi$ be a convex, decreasing function; we need to show that $\mu(\phi)\leq \theta(\phi)$. To this end, we may assume that $\phi(G_{\nu}(k))=0$ by translating $\phi$, and then 
  $
    \mu(\phi) \leq \mu(\phi^{+}) \leq \nu(\phi^{+}) = \theta(\phi^{+})=\theta(\phi)
  $
  since $\phi^{+}=\phi$ on $G_{\nu}([0,k])$. As a result, $\theta\in\casts{\mu}{\nu}\neq\emptyset$.
  
  (ii) Next, we show that $\casts{\mu}{\nu}$ is directed; i.e., given $\theta_{i}\in \casts{\mu}{\nu},$ $i=1,2$ there exists $\theta\in \casts{\mu}{\nu}$ such that $\theta\leq_{cd}\theta_{i}$. Indeed, let $p:\R\to\R$ be defined as the convex hull of the minimum of $p_{\theta_{1}}$ and $p_{\theta_{2}}$. Then $p$ is convex, and $p$ is increasing like $p_{\theta_{i}}$. Since the asymptotic slope of the functions $p_{\theta_{i}}$ is given by $\theta_{i}(\R)=\mu(\R)$, the same is true for $p$, and finally, $p_{\theta_{i}}\geq p_{\mu}$ yields $p\geq p_{\mu}$. These facts imply that $p$ is the put function associated with a measure $\theta$ satisfying $\mu\leq_{cd}\theta\leq_{cd}\theta_{i}$. It remains to show that $\theta\leq \nu$, which is equivalent to $p_{\nu}-p$ being convex. 
  Indeed, the fact that $p_{\nu}-p_{\theta_{i}}$ is convex for $i=1,2$ implies this property; cf.\ the proof of \cite[Lemma~4.6]{BeiglbockJuillet.12} for a detailed argument.
  
  (iii) The set $\casts{\mu}{\nu}\subseteq \fM^{1}(\R)$ consists of measures with common total mass $\mu(\R)$; we show that it is compact. Indeed, closedness is readily established.
Moreover, any $\theta\in\casts{\mu}{\nu}$ satisfies $\theta\leq\nu$. By Prokhorov's theorem, this immediately yields tightness in the weak topology induced by bounded continuous functions, and then using $\int |x|\, \nu(dx)<\infty$ yields relative compactness. %
  
  (iv) It follows from (iii) that for any convex, decreasing function $\phi$ of linear growth, the continuous functional $\theta\mapsto\theta(\phi)$ has a nonempty compact set $\Theta_{\phi}\subseteq\casts{\mu}{\nu}$ of minimizers. The directedness of $\casts{\mu}{\nu}$ from (ii) implies that a finite intersection $\Theta_{\phi_{1}} \cap\cdots\cap \Theta_{\phi_{n}}$ is still nonempty, and then compactness shows that $\theta\mapsto\theta(\phi)$ has a common minimizer $\shadow{\mu}{\nu}$ for all $\phi$. Uniqueness of the minimizer holds since $\theta_{1}\leq_{cd}\theta_{2}$ and $\theta_{2}\leq_{cd}\theta_{1}$ imply $\theta_{1}=\theta_{2}$.
\end{proof}

\begin{lemma}\label{le:shadowOfDirac}
  Let $\mu,\nu\in\fM^{1}(\R)$ satisfy $\mu\leq_{pcd}\nu$ and suppose that $\mu$ is concentrated at a single point $x\in\R$. Then, 
  the shadow $\shadow{\mu}{\nu}$ is of the form 
  $$
    \shadow{\mu}{\nu}=\nu|_{(a,b)}+ k_{a}\delta_{a}+k_{b}\delta_{b}.
  $$
  Among all measures $\theta\leq\nu$ with mass $\mu(\R)$ of this form, $\shadow{\mu}{\nu}$ is determined by maximizing $\bary(\theta)$ subject to the constraint $\bary(\theta)\leq x$. Moreover, $\shadow{\mu}{\nu}$ has minimal variance among all measures $\theta\leq\nu$ with mass $\mu(\R)$ and $\bary(\theta)=\bary(\shadow{\mu}{\nu})$.
  Finally, $a$ and $b$ can be chosen such that $a\leq x\leq b$.
  
  The map $\nu\mapsto \shadow{\mu}{\nu}$ is continuous when restricted to a set of measures $\nu\in\fM^{1}(\R)$ of equal total mass satisfying $\mu\leq_{pcd}\nu$.
\end{lemma}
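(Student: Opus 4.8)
The plan is to prove the four assertions in the order stated, using Lemma~\ref{le:ecdSandwich} for the existence and uniqueness of the shadow. Write $k:=\mu(\R)$, so $\mu=k\delta_{x}$. First I would reduce $\casts{\mu}{\nu}$ to a barycenter constraint: testing $\leq_{cd}$ against constant functions and against $\phi(s)=-s$ (both convex, non-increasing), and using Jensen's inequality for the converse, one gets that $\mu\leq_{cd}\theta$ for $\theta\in\fM^{1}(\R)$ iff $\theta(\R)=k$ and $\bary(\theta)\leq x$, hence
$$
  \casts{\mu}{\nu}=\{\theta\in\fM^{1}(\R):\ \theta\leq\nu,\ \theta(\R)=k,\ \bary(\theta)\leq x\},
$$
which is nonempty by Lemma~\ref{le:ecdSandwich}. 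For $r\in[0,\nu(\R)-k]$ let $\theta_{r}:=(G_{\nu})_{\ast}(\lambda|_{[r,r+k]})$ be the \emph{interval measure} obtained from $\nu$ by deleting its leftmost $r$ and rightmost $\nu(\R)-k-r$ units of mass (splitting atoms as needed), where $G_{\nu}$ is the quantile function and $\lambda$ Lebesgue measure; each $\theta_{r}$ has the form $\nu|_{(a,b)}+k_{a}\delta_{a}+k_{b}\delta_{b}$ with $k_{a}\leq\nu(\{a\})$, $k_{b}\leq\nu(\{b\})$. The family $(\theta_{r})$ is stochastically increasing, so $r\mapsto\bary(\theta_{r})$ is continuous and non-decreasing, and $\theta_{R}:=\theta_{\nu(\R)-k}$ has the largest barycenter among all sub-measures of $\nu$ of mass $k$. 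Since $\casts{\mu}{\nu}\neq\emptyset$ forces $\bary(\theta_{0})\leq x$, the intermediate value theorem produces $r^{*}$ with $\bary(\theta_{r^{*}})=\min\{x,\bary(\theta_{R})\}$; set $\Theta:=\theta_{r^{*}}$.

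The second step is to show $\Theta=\shadow{\mu}{\nu}$. Since $\Theta\in\casts{\mu}{\nu}$, by Lemma~\ref{le:ecdSandwich} it suffices to prove $p_{\Theta}\leq p_{\theta}$ (equivalently $\Theta\leq_{cd}\theta$) for every $\theta\in\casts{\mu}{\nu}$. Writing $\Theta=\nu|_{(a,b)}+k_{a}\delta_{a}+k_{b}\delta_{b}$, $F_{\rho}(s):=\rho((-\infty,s])$, and using $p_{\theta}(t)-p_{\Theta}(t)=\int_{-\infty}^{t}(F_{\theta}-F_{\Theta})(s)\,ds=:H(t)$, I would analyze the sign of $H'=F_{\theta}-F_{\Theta}$: on $(-\infty,a]$ it equals $F_{\theta}\geq0$, so $H$ increases from $H(-\infty)=0$; on $(a,b)$ it equals $[\theta((-\infty,a])-k_{a}]-(\nu-\theta)((a,\cdot\,])$, which is non-increasing because $\nu-\theta\geq0$, so $H$ is concave there; on $[b,\infty)$ it equals $F_{\theta}-k\leq0$, so $H$ is non-increasing. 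As $\theta\in\casts{\mu}{\nu}$ gives $\bary(\theta)\leq\min\{x,\bary(\theta_{R})\}=\bary(\Theta)$, one has $H(\infty)=k(\bary(\Theta)-\bary(\theta))\geq0$ (property (iv) of put functions). Therefore $H\geq0$ on $(-\infty,a]$, then $H(b)\geq H(\infty)\geq0$ and $H\geq0$ on $[b,\infty)$, and finally $H\geq0$ on $[a,b]$ since a concave function lies above the chord through its nonnegative endpoint values. Hence $p_{\Theta}\leq p_{\theta}$, so $\Theta=\shadow{\mu}{\nu}$, which in particular has the claimed form (the case $b=+\infty$ is included, with the obvious modifications to the $[b,\infty)$ analysis).

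For the variational characterization: any interval measure $\theta_{r}\leq\nu$ of mass $k$ with $\bary(\theta_{r})\leq x$ satisfies $\bary(\theta_{r})\leq\bary(\theta_{r^{*}})$, so $\shadow{\mu}{\nu}$ is the interval measure below $\nu$ of mass $k$ with maximal barycenter subject to $\bary\leq x$. This maximizer is unique: the interval measures below $\nu$ of mass $k$ are exactly $\{\theta_{r}\}$, they are totally ordered stochastically, and stochastic order with equal mean forces equality of the measures. For the position, $a=\inf\supp\Theta\leq\bary(\Theta)\leq x$, and either $\bary(\Theta)=x\leq\sup\supp\Theta=b$, or $\bary(\Theta)=\bary(\theta_{R})<x$ and then $r^{*}=\nu(\R)-k$, so $\Theta=\theta_{R}$ has $b=+\infty\geq x$; in both cases $a\leq x\leq b$.

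For continuity, take $\nu_{n}\to\nu$ in $\fM^{1}(\R)$ with all $\nu_{n}(\R)=\nu(\R)$ and $\mu\leq_{pcd}\nu_{n}$, and $\theta_{n}:=\shadow{\mu}{\nu_{n}}$. Since $\theta_{n}\leq\nu_{n}$ the family $(\theta_{n})$ is relatively compact; let $\theta_{\infty}$ be a subsequential limit. The maps $\rho\mapsto p_{\rho}(t)$ are $W$-continuous, so $p_{\nu}-p_{\theta_{\infty}}$ and $p_{\theta_{\infty}}-p_{\mu}$ are pointwise limits of nonnegative convex functions, whence $\theta_{\infty}\in\casts{\mu}{\nu}$. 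By the previous step $\bary(\theta_{n})=\min\{x,\bary(\theta_{R}(\nu_{n}))\}$, and $\nu\mapsto\theta_{R}(\nu)$ is $W$-continuous (it is a fixed sub-interval of Lebesgue measure pushed forward by the quantile function), so $\bary(\theta_{\infty})=\min\{x,\bary(\theta_{R}(\nu))\}=\bary(\shadow{\mu}{\nu})$. Writing $\theta_{n}=\nu_{n}|_{(a_{n},b_{n})}+k_{a_{n}}\delta_{a_{n}}+k_{b_{n}}\delta_{b_{n}}$ and passing to $a_{n}\to a$, $b_{n}\to b$ in $\overline{\R}$, one checks, using $\theta_{n}\leq\nu_{n}$ to control the mass of $\theta_{n}$ near $a$ and $b$, that $\theta_{\infty}$ is supported on $[a,b]$, equals $\nu$ on $(a,b)$, and has atoms $\leq\nu(\{a\}),\nu(\{b\})$ at the endpoints; thus $\theta_{\infty}$ is an interval measure below $\nu$ of mass $k$, so by the uniqueness just proved $\theta_{\infty}=\shadow{\mu}{\nu}$, and the whole sequence converges. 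I expect this last step—showing the interval-measure structure with the correct boundary atoms survives the limit and dovetails with the uniqueness statement—to be the main obstacle; the reduction and the variational reformulation are routine, and the identification of $\Theta$, while the technical heart, is a self-contained convex-analysis computation once $H$ is introduced.
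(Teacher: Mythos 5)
Your proof is correct, and its skeleton --- the quantile-interval family $\theta_r=(G_\nu)_*\lambda|_{[r,r+k]}$, the intermediate value theorem to locate $r^*$, and the reduction of $\mu\leq_{cd}\theta$ for $\mu=k\delta_x$ to ``$\theta(\R)=k$ and $\bary(\theta)\leq x$'' --- is exactly the paper's. You diverge in two places. First, to show $\theta_{r^*}$ is the least element of $\casts{\mu}{\nu}$, the paper subtracts the common part $\theta\wedge\theta_{r^*}$ and invokes a separate lemma (Lemma~\ref{le:orderWhenOutOfInterval}) about a measure concentrated on an interval versus one concentrated on its complement; your direct analysis of $H=p_\theta-p_{\theta_{r^*}}$ (nondecreasing before $a$, concave on $(a,b)$, nonincreasing after $b$, with $H(-\infty)=0$ and $H(+\infty)=k[\bary(\theta_{r^*})-\bary(\theta)]\geq0$) is a self-contained re-proof of that lemma in this special case and is sound, including the $b=\infty$ variant. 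Second, for continuity the paper gives a quantitative estimate, $W(\shadow{\mu}{\nu},\shadow{\mu}{\nu'})\leq k\,|\bary(\shadow{\mu}{\nu})-\bary(\shadow{\mu}{\nu'})|+2W(\nu,\nu')$, obtained by comparing $\theta_{s^*}$, $\theta_{r^*}$ and $\theta'_{r^*}$ through the identity $W(\nu,\nu')=\int|G_\nu-G_{\nu'}|\,d\lambda$; this sidesteps entirely the step you rightly flag as the main obstacle, namely showing that the interval-measure structure survives the weak limit. Your compactness-plus-uniqueness route does work --- $\theta_\infty\leq\nu$ and $\mu\leq_{cd}\theta_\infty$ come from the put functions, the support containment and the identity $\theta_\infty=\nu$ on $(a,b)$ from portmanteau after passing to further subsequences of $a_n,b_n$ in $\overline{\R}$ --- but it is the thinnest part of your write-up, and the paper's argument is both shorter and yields an explicit modulus of continuity. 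One small slip: when $\bary(\theta_{r^*})<x$ you assert $b=+\infty$; what is actually true is that then $\nu(b,\infty)=0$ and $\theta_{r^*}(\{b\})=\nu(\{b\})$, so the (non-unique) representation can be rewritten with any larger right endpoint --- the paper redefines $b:=x$ --- which is all you need for $x\leq b$.
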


\begin{proof}
  We may assume that $\nu(\R)$=1. Then, $\mu=k\delta_{x}$ for some $k\in[0,1]$, and we may focus on $k\in(0,1)$. Consider the family
  $$
    \theta_{s} = \lambda|_{[s,s+k]} \circ G_{\nu}^{-1},\quad s\in [0,1-k].
  $$
  Similarly as in the proof of Lemma~\ref{le:ecdSandwich}, we have $\theta_{s}\leq \nu$ for all $s$, whereas $\mu=k\delta_{x}\leq_{cd}\theta_{s}$ if and only if $\bary(\theta_{s})\leq x$. 
  As $\mu\leq_{pcd}\nu$, this inequality holds true in particular for $s=0$. The function 
  \begin{equation}\label{eq:barycenterFormula}
    s\mapsto\bary(\theta_{s}) = \frac{1}{k}\int_{0}^{k} G_{\nu}(s+t)\,\lambda(dt) = \frac{1}{k}\int_{0}^{k} G_{\nu}(s+t+)\,\lambda(dt)
  \end{equation}
  is increasing and continuous: $G_\nu(s)$ and its right limit $G_\nu(s+)$ differ only on a Lebesgue nullset, the fist representation shows left-continuity and the second shows right-continuity. Thus, we may define $s^{*}$ as the largest value in $[0,1-k]$ for which $\bary(\theta_{s})\leq x$, and then
  $\theta^{*}:=\theta_{s^{*}}$ is in $\casts{\mu}{\nu}$. We claim that $\theta^{*}$ is the least element in $\casts{\mu}{\nu}$.
  
  To show this, let $(a,b)=(G_{\nu}(s^{*}),G_{\nu}(s^{*}+k))$; then $\theta^{*}|_{(a,b)}=\nu|_{(a,b)}$ and $\theta^{*}$ is concentrated on $[a,b]$. Now let $\theta\in\casts{\mu}{\nu}$ be arbitrary. As $\theta\leq\nu$, we see that $\theta-(\theta^{*}\wedge\theta)$ is concentrated on $(a,b)^{c}$, whereas $\theta^{*}-(\theta^{*}\wedge\theta)$ is concentrated on $[a,b]$. Moreover, we must have $\bary(\theta)\leq \bary(\theta^{*})$. Indeed, this is clear if $\bary(\theta^{*})=x$. If not, the definition of $s^{*}$ implies that $\nu(b,\infty)=0$ and then $\theta^{*}$ clearly has the largest barycenter among all measures $\theta\leq\nu$ with mass~$\mu(\R)$. Thus, Lemma~\ref{le:orderWhenOutOfInterval} below implies that $\theta^{*}\leq_{cd}\theta$ and as a result, $\theta^{*}$ is the least element in $\casts{\mu}{\nu}$; i.e., $\shadow{\mu}{\nu}=\theta^{*}$.  
  
  As $\bary(\theta^{*})\leq x$, it is clear that $a\leq x$. With the above choice of $b$, it may happen that $b < x$ . However, by the definition of $s^{*}$, this is possible only if $\theta^{*}(\{b\})=\nu(\{b\})$ and $\nu(b,\infty)=0$. In that case, we may redefine $b:=x$ without invalidating the other assertions of the lemma, and then we have $a\leq x\leq b$ as required.
  
  Finally, the continuity of $\nu\mapsto \shadow{\mu}{\nu}$ can be shown by using~\eqref{eq:barycenterFormula} 
  and the property $W(\nu,\nu') = \int_{0}^{1} |G_{\nu}(t)-G_{\nu'}(t)|\,\lambda(dt)$ of the 1-Wasserstein distance; we omit the details.
\end{proof}

\begin{lemma}\label{le:orderWhenOutOfInterval}
  Let $\mu,\nu\in\fM^{1}(\R)$ satisfy $\mu(\R)=\nu(\R)$ and $\bary(\mu)\geq\bary(\nu)$. If there exists an interval $I=(a,b)$ such that $\mu$ is concentrated on $\bar{I}:=[a,b]\cap\R$ and $\nu$ is concentrated on $I^{c}$, then $\mu\leq_{cd}\nu$. The same is true if there exists an interval~$I$ such that $\mu - (\mu\wedge\nu)$ is concentrated on~$\bar{I}$ and $\nu - (\mu\wedge\nu)$ is concentrated on~$I^{c}$.
\end{lemma}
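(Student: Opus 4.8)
The plan is to deduce both statements from the put-function criterion of Proposition~\ref{pr:convexOrder}: since $\mu(\R)=\nu(\R)$, it suffices to verify $p_{\mu}\leq p_{\nu}$ pointwise on $\R$. One may normalize so that $\mu(\R)=\nu(\R)=1$ (the total-mass-zero case being trivial), and one may reduce to $a,b\in\R$: if $a=-\infty$ then $\mu$ lives on $(-\infty,b]$ and $\nu$ on $[b,\infty)$, so $\bary(\mu)\leq b\leq\bary(\nu)$, which together with $\bary(\mu)\geq\bary(\nu)$ forces $\mu=\nu=\delta_{b}$ and there is nothing to prove; the endpoint $b=+\infty$ will be absorbed into the argument below.

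First I would split $\R$ according to $a$ and $b$. For $t\leq a$ the measure $\mu$ is concentrated on $[a,b]\subseteq[t,\infty)$, hence $p_{\mu}(t)=0\leq p_{\nu}(t)$. For $t\geq b$ (when $b<\infty$), $\mu$ is concentrated on $(-\infty,t]$, so $p_{\mu}(t)=\int(t-s)\,\mu(ds)=\mu(\R)[t-\bary(\mu)]$, while $(t-s)^{+}\geq t-s$ gives $p_{\nu}(t)\geq\nu(\R)[t-\bary(\nu)]$; using $\mu(\R)=\nu(\R)$ and $\bary(\mu)\geq\bary(\nu)$, this yields $p_{\nu}(t)\geq p_{\mu}(t)$.

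The decisive region is $[a,b]$ (or $[a,\infty)$ if $b=\infty$). Here $\nu$ puts no mass on $(a,b)$, so the second-derivative measure of $p_{\nu}$ vanishes there and $p_{\nu}$ is affine on $[a,b]$; consequently $h:=p_{\mu}-p_{\nu}$ is \emph{convex} on this interval. We already have $h(a)=p_{\mu}(a)-p_{\nu}(a)\leq 0$, and $h(b)\leq 0$ by the previous paragraph when $b<\infty$; when $b=\infty$, the asymptotics $p_{\mu}(t)-p_{\nu}(t)\to\mu(\R)[\bary(\nu)-\bary(\mu)]\leq 0$ give $\limsup_{t\to\infty}h(t)\leq 0$. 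A convex function on an interval that is $\leq 0$ at both endpoints (in the limiting sense at $+\infty$) is $\leq 0$ throughout, so $h\leq 0$ on $[a,b]$. Combining the three regions gives $p_{\mu}\leq p_{\nu}$ on all of $\R$, and Proposition~\ref{pr:convexOrder} yields $\mu\leq_{cd}\nu$.

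For the second statement I would pass to $\rho:=\mu\wedge\nu$, $\mu':=\mu-\rho$, $\nu':=\nu-\rho$. Then $\mu'(\R)=\nu'(\R)$, the measure $\mu'$ is concentrated on $\bar I$ and $\nu'$ on $I^{c}$, and subtracting $\int s\,\rho(ds)$ from the inequality $\int s\,\mu(ds)\geq\int s\,\nu(ds)$ shows $\bary(\mu')\geq\bary(\nu')$. The first part applies to $(\mu',\nu')$, giving $\mu'\leq_{cd}\nu'$, and since for every convex decreasing $\phi$ of linear growth $\mu(\phi)=\mu'(\phi)+\rho(\phi)\leq\nu'(\phi)+\rho(\phi)=\nu(\phi)$ (with $\rho(\phi)$ finite because $\rho$ has a finite first moment), we conclude $\mu\leq_{cd}\nu$. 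The heart of the argument is the observation that $p_{\nu}$ is affine on $[a,b]$; the only points requiring mild care are the reduction to $a,b\in\R$ and the treatment of the endpoint $b=\infty$ via the asymptotic slope, and I expect no serious obstacle.
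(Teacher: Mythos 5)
Your proof is correct, but it takes a genuinely different route from the paper's. The paper argues directly on test functions: it normalizes a convex decreasing $\phi$ so that $\phi(a)=0$, subtracts the secant $\frac{\phi(b)}{b-a}(x-a)$ to produce $\psi$ with $\psi\leq 0$ on $\bar I$ and $\psi\geq 0$ on $I^{c}$, and then concludes in one chain of inequalities using $\mu(\psi^{+})=0\leq\nu(\psi^{+})$ together with $\phi(b)/(b-a)\leq 0$ and $\bary(\mu)\geq\bary(\nu)$. You instead pass to the equivalent put-function criterion $p_{\mu}\leq p_{\nu}$ of Proposition~\ref{pr:convexOrder} and verify it on three regions; your key observation that $p_{\nu}$ is affine on $[a,b]$ (because $\nu$ charges no mass in $(a,b)$), so that $p_{\mu}-p_{\nu}$ is convex there and controlled by its endpoint values, is precisely the dual of the paper's affine correction of $\phi$. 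Both treatments handle the infinite endpoints correctly (you via the asymptotic slope $p_{\mu}(t)-p_{\nu}(t)\to\mu(\R)[\bary(\nu)-\bary(\mu)]$, the paper by taking $\psi=\phi$ when $b=\infty$), and both reduce the second claim to the first by cancelling $\mu\wedge\nu$. The paper's argument is shorter and self-contained at the level of test functions; yours is a bit longer but arguably more geometric, at the cost of invoking the Strassen-type equivalence, which is of course available. No gaps.
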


\begin{proof}
  The first claim implies the second, so we may focus on the former. We need to show that $\mu(\phi)\leq\nu(\phi)$ for any convex decreasing function $\phi$. To this end, we may assume that the left endpoint $a$ of the interval is finite and strictly smaller than the right endpoint $b$, as otherwise we must have $\mu=\nu=0$; moreover, we may reduce to the case $\phi(a)=0$. If $b$ is finite as well, we define 
  $
    \psi(x) := \phi(x) - \frac{\phi(b)}{b-a} (x-a)$ for $x\in\R,
  $
  whereas $\psi:=\phi$ if $b=\infty$. Then $\psi\leq0$ on $\bar{I}$ and $\psi\geq0$ on $I^{c}$, which yields
  $
    \mu(\phi)\leq \mu(\psi^{+}) + \frac{\phi(b)}{b-a} [\bary(\mu)-a]\leq \nu(\psi^{+}) + \frac{\phi(b)}{b-a} [\bary(\nu)-a] = \nu(\phi)
  $
  as desired.
\end{proof}

The following result is important to apply the shadow in an iterative fashion. The first assertion intuitively follows from the minimality of the shadow: if we transport part of a measure $\mu\leq_{pcd}\nu$ to its shadow in $\nu$, the remaining part $\mu_{2}$ of $\mu$ is still dominated by the remaining part of $\nu$. Moreover, if we then transport $\mu_{2}$ to its shadow in the remainder, the cumulative result is the same as the shadow of $\mu$ in $\nu$. 

\begin{proposition}\label{pr:shadowAdditive}
  Let $\mu_{1},\mu_{2}, \nu\in\fM^{1}(\R)$ satisfy $\mu_{1}+\mu_{2}\leq_{pcd}\nu$. Then $\mu_{2}\leq_{pcd}\nu - \shadow{\mu_{1}}{\nu}$ and 
  $
    \shadow{\mu_{1}+\mu_{2}}{\nu} = \shadow{\mu_{1}}{\nu} + \shadow{\mu_{2}}{\nu-\shadow{\mu_{1}}{\nu}}.
  $
\end{proposition}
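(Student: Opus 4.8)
The plan is to follow the proof of the corresponding additivity theorem for the shadow in the convex order (\cite[Theorem~4.8]{BeiglbockJuillet.12}), adapting the convexity arguments to the $\leq_{pcd}$-order and accommodating the feature, absent in the martingale case, that the barycenter of the shadow is not pinned down a priori. Throughout, write $S_{1}:=\shadow{\mu_{1}}{\nu}$ and, once it is defined, $S_{2}:=\shadow{\mu_{2}}{\nu-S_{1}}$ and $S:=\shadow{\mu_{1}+\mu_{2}}{\nu}$; by normalizing we may assume $\nu(\R)=1$. The starting point is an analytic reformulation of $\leq_{pcd}$: writing a nonnegative convex decreasing $\phi$ as $\phi(x)=\phi(\infty)+\int(t-x)^{+}\,\phi''(dt)$ shows that $\alpha\leq_{pcd}\beta$ \emph{iff} $\alpha(\R)\le\beta(\R)$ and $p_{\alpha}\le p_{\beta}$ pointwise. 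Since put functions are linear in the measure and determine it, all order relations below become statements about the convex functions $p_{\mu_{i}},p_{S_{i}},p_{S},p_{\nu}$.

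Two structural facts about the shadow carry the weight, and these are the main obstacle; everything else is a short formal manipulation with Lemma~\ref{le:ecdSandwich}. (a) (``peeling'') If $\alpha+\beta\leq_{pcd}\rho$, then $\beta\leq_{pcd}\rho-\shadow{\alpha}{\rho}$; equivalently $p_{\beta}+p_{\shadow{\alpha}{\rho}}\le p_{\rho}$. (b) (monotonicity in the first argument) If $\alpha\le\alpha'$ as measures and $\alpha'\leq_{pcd}\rho$, then $\shadow{\alpha}{\rho}\le\shadow{\alpha'}{\rho}$ as measures. I would prove both exactly as the analogous statements in \cite{BeiglbockJuillet.12}, via the description of $p_{\shadow{\alpha}{\rho}}$ as the pointwise smallest convex function $q$ with $p_{\alpha}\le q\le p_{\rho}$, with asymptotic slope $\alpha(\R)$, and with $p_{\rho}-q$ convex: on the ``active region'' where $q>p_{\alpha}$ one has $q$ affine or $q''=\rho''$, so for (a) the desired inequality collapses there to $p_{\alpha}+p_{\beta}\le p_{\rho}$ (and off that region $q=p_{\alpha}$, so the same inequality applies), while for (b) the two active regions are nested. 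The one genuinely new point over the martingale setting is that one must also control the asymptotic-slope/barycenter term appearing in the representation of Lemma~\ref{le:intConcaveWelldef}; this is where properness-type considerations enter, but the remaining convexity bookkeeping is parallel to \cite{BeiglbockJuillet.12}.

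Granting (a), apply it with $\rho=\nu$, $\alpha=\mu_{1}$, $\beta=\mu_{2}$ to obtain $\mu_{2}\leq_{pcd}\nu-S_{1}$, so that $S_{2}=\shadow{\mu_{2}}{\nu-S_{1}}$ is well defined (note $\nu-S_{1}\in\fM^{1}(\R)$ since $S_{1}\le\nu$). The easy half of the identity is then immediate: $S_{1}+S_{2}\le\nu$ because $S_{2}\le\nu-S_{1}$, and $\mu_{1}+\mu_{2}\leq_{cd}S_{1}+S_{2}$ because $\mu_{i}\leq_{cd}S_{i}$ and $\leq_{cd}$ is additive; hence $S_{1}+S_{2}\in\casts{\mu_{1}+\mu_{2}}{\nu}$, and the minimality in Lemma~\ref{le:ecdSandwich} yields $S\leq_{cd}S_{1}+S_{2}$.

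For the reverse inequality $S_{1}+S_{2}\leq_{cd}S$ I would ``peel $S_{1}$ off $S$''. By (b), with $\alpha=\mu_{1}\le\mu_{1}+\mu_{2}$ and $\rho=\nu$, we get $S_{1}\le S$ as measures; hence $S_{1}\in\casts{\mu_{1}}{S}$, and since $\casts{\mu_{1}}{S}\subseteq\casts{\mu_{1}}{\nu}$ (as $S\le\nu$) the minimality in Lemma~\ref{le:ecdSandwich} forces $\shadow{\mu_{1}}{S}=S_{1}$. Now $\mu_{1}+\mu_{2}\leq_{cd}S$, so (a) applied with $\rho=S$ gives $\mu_{2}\leq_{pcd}S-\shadow{\mu_{1}}{S}=S-S_{1}$, and since these measures have equal mass, $\mu_{2}\leq_{cd}S-S_{1}$. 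Because $S\le\nu$ we have $S-S_{1}\le\nu-S_{1}$, so $S-S_{1}\in\casts{\mu_{2}}{\nu-S_{1}}$ and therefore $S_{2}\leq_{cd}S-S_{1}$ by minimality. Adding $S_{1}$ gives $S_{1}+S_{2}\leq_{cd}S$, and together with the previous paragraph and antisymmetry of $\leq_{cd}$ on measures of equal mass we conclude $S=S_{1}+S_{2}$. The hard part, to reiterate, is precisely facts (a) and (b): both are put-function statements that hinge on the fine structure of the shadow, and in the supermartingale case their proofs must additionally track the non-fixed barycenter increment, which is the new ingredient over \cite{BeiglbockJuillet.12}.
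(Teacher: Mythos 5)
Your global architecture---deduce the identity from (a) a general ``peeling'' lemma and (b) monotonicity of $\alpha\mapsto\shadow{\alpha}{\rho}$ in the pointwise order of measures---is sound, and the formal manipulations in your last two paragraphs are correct. The problem is that essentially all of the content of the proposition has been relocated into (a) and (b), and the sketch you give for them does not go through. Note first that (a) in the generality you invoke \emph{is} the first assertion of the proposition, so it cannot simply be ``granted''; the paper proves it directly only when $\mu_{1}$ is a single atom, where Lemma~\ref{le:shadowOfDirac} gives the explicit structure $\shadow{\mu_{1}}{\nu}=\nu|_{(a,b)}+k_{a}\delta_{a}+k_{b}\delta_{b}$ and the affine-majorant argument applies. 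Your put-function route for (a) has a concrete hole: even granting the (unproven, in this setting) obstacle-problem description of the minimal $q=p_{\shadow{\alpha}{\rho}}$ as ``affine or $q''=\rho''$ on the active region $\{q>p_{\alpha}\}$'', the inequality $q+p_{\beta}\leq p_{\rho}$ does not ``collapse'' there: on a $\rho$-following piece $p_{\rho}-q-p_{\beta}$ is concave and controlled by its endpoint values, but on an affine piece its second derivative measure is $\rho-\beta$, which has no sign. Moreover, in the supermartingale case the active region typically extends to $+\infty$ whenever the barycenter constraint is slack (already for an atom with $\bary(\shadow{\mu_{1}}{\nu})<x$), so there is no right endpoint to anchor the comparison and one must carry out the mass/barycenter asymptotics---exactly the bookkeeping you defer. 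For (b), ``nested active regions'' does not yield $\shadow{\alpha}{\rho}\leq\shadow{\alpha'}{\rho}$ as measures: at a point where the smaller shadow follows $\rho$ while the larger one is on an affine piece, the comparison of second-derivative measures goes the wrong way, so that configuration must be excluded, which again requires the fine structure. Finally, the natural proof of (b) is \emph{via} the additivity identity itself ($\shadow{\alpha'}{\rho}=\shadow{\alpha}{\rho}+\shadow{\alpha'-\alpha}{\rho-\shadow{\alpha}{\rho}}\geq\shadow{\alpha}{\rho}$), so you are at risk of circularity unless a genuinely independent proof is supplied.

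The paper's proof avoids these issues by a different reduction: it establishes both assertions when $\mu_{1}$ is a Dirac mass (using Lemma~\ref{le:shadowOfDirac} and Lemma~\ref{le:orderWhenOutOfInterval}, including the comparison $\theta-\shadow{\mu_{1}}{\theta}\leq\nu-\shadow{\mu_{1}}{\nu}$ which plays the role of your (b) but is elementary for atoms), iterates to finitely supported $\mu_{1}$, and then passes to general $\mu_{1}$ and $\mu_{2}$ by monotone approximation combined with the continuity statements of Lemma~\ref{le:shadowOfDirac} and Lemma~\ref{le:continuityOfShadow}. If you wish to keep your analytic approach, you must supply complete proofs of (a) and (b) at the level of put functions, including control of the asymptotic slope and intercept (i.e.\ the non-fixed barycenter), which is precisely the ingredient with no analogue in \cite{BeiglbockJuillet.12}; as written, those are missing.
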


\begin{proof}
  Using the result of Lemma~\ref{le:shadowOfDirac}, one can first establish the claim when $\mu_{1}$ is a single atom. Then, one can extend to the general case along the lines of \cite[Theorem~4.8]{BeiglbockJuillet.12}; we omit further details.
\end{proof}

Next, we shall use the shadow mapping to construct specific supermartingale transports. Let $\mu\leq_{cd}\nu$ and suppose first that $\mu=\sum_{i=1}^{n} k_{i} \delta_{x_{i}}$ is finitely supported. We may transport~$\mu$ to~$\nu$ by first mapping $k_{1}\delta_{x_{1}}$ to its shadow in~$\nu$, continue by mapping $k_{2}\delta_{x_{2}}$ to its shadow in the ``remainder'' $\nu-\shadow{k_{1}\delta_{x_{1}}}{\nu}$ of $\nu$, and so on. Proceeding until $i=n$, this constructs the kernel $\kappa$ corresponding to a supermartingale transport $\mu\otimes\kappa\in\cS(\mu,\nu)$. In fact, this recipe leads to a whole family of transports---the labeling of the atoms was arbitrary, and a different order in their processing will typically give rise to a different transport. There are two choices that seem canonical: left-to-right (increasing) and right-to-left (decreasing). We shall show in the subsequent sections that the corresponding transports~$\rP$ and~$\lP$ are indeed canonical in several ways.

\begin{theorem}\label{th:canonicalCouplings}
  Let $\mu\leq_{cd}\nu$. 
  \begin{enumerate}
    \item There exists a unique measure $\rP$ on $\R\times\R$ which transports $\mu|_{(-\infty,x]}$ to its shadow $\shadow{\mu|_{(-\infty,x]}}{\nu}$ for all $x\in\R$; that is, the first marginal of~$\rP$ equals~$\mu$ and  
      $\rP((-\infty,x] \times A) = \shadow{\mu|_{(-\infty,x]}}{\nu}(A)$ for $A\in\cB(\R)$.

	  \item Similarly, there exists a unique measure $\lP$ on $\R\times\R$ which transports $\mu|_{[x,\infty)}$ to its shadow $\shadow{\mu|_{[x,\infty)}}{\nu}$ for all $x\in\R$.
	  \end{enumerate}
	  Moreover, those two measures are elements of $\cS(\mu,\nu)$. We call $\rP$ and $\lP$ the Increasing and the Decreasing  Supermartingale Transport, respectively.
\end{theorem}

\begin{proof}
  The function
  $
    F(x,y):=\shadow{\mu|_{(-\infty,x]}}{\nu}(-\infty,y]
  $
  is clearly increasing and right-continuous in $y$. Moreover, 
  Proposition~\ref{pr:shadowAdditive} implies that 
  $$
   \shadow{\mu|_{(-\infty,x_{2}]}}{\nu} - \shadow{\mu|_{(-\infty,x_{1}]}}{\nu} = \shadow{\mu|_{(x_{1},x_{2}]}}{\nu-\shadow{\mu|_{(-\infty,x_{1}]}}{\nu}}\geq0, \quad x_{1}\leq x_{2}
  $$
  which yields the same properties for the variable $x$; note that the total mass of the right-hand side equals $\mu(x_{1},x_{2}]$. Noting also that $F$ has the proper normalization for a c.d.f., we conclude that $F$ induces a unique measure $\rP$ on $\cB(\R\times\R)$. It is clear that $\mu$ is the first marginal of $\rP$. The second marginal is $S^{\nu}(\mu)\leq \nu$, and this is in fact an equality because both measures have the same mass. To conclude that $\rP\in\cS(\mu,\nu)$, it suffices to show that $\rP[Y\phi(X)] \leq \rP[X\phi(X)]$ for all $\phi=\1_{(x_{1},x_{2}]}$ with $x_{1}<x_{2}$. Indeed, Proposition~\ref{pr:shadowAdditive} implies that
  $
    \rP[Y\phi(X)] 
     = \int y \, [\shadow{\mu|_{(-\infty,x_{2}]}}{\nu}-\shadow{\mu|_{(-\infty,x_{1}]}}{\nu}](dy) 
     = \bary(\shadow{\mu|_{(x_{1},x_{2}]}}{\nu-\shadow{\mu|_{(-\infty,x_{1}]}}{\nu}}) 
     \leq \bary(\mu|_{(x_{1},x_{2}]}) 
    = \rP[X\phi(X)]$.
  The arguments for (ii) are analogous.
\end{proof}

A different construction of $\rP$ and $\lP$ could proceed through an approximation of the marginals by discrete measures, for which the couplings can be defined explicitly by iterating Lemma~\ref{le:shadowOfDirac}, and a subsequent passage to the limit. We refer to \cite[Remark~2.18]{Juillet.14} for a sketch of such a construction in the martingale case.

\section{Spence--Mirrlees Functions and Geometry of their Optimal Transports}\label{se:spenceMirrleesAndGeom}

In this section, we relate monotonicity properties of the reward function $f$ to the geometry of the supports of the corresponding optimal supermartingale transports, where the support will be described by a pair $(\Gamma,M)$ as in Theorem~\ref{th:monotonicityPrinciple}. We first introduce the relevant properties of $f$.

\begin{definition}\label{de:spenceMirrlees}
  A function $f : \R^2 \to \R$ is \emph{first-order Spence--Mirrlees} if
  $$
    f(x_{2},\cdot) - f(x_{1},\cdot)\quad\mbox{is strictly increasing for all } x_{1}<x_{2}.
  $$
  Moreover, $f$ is \emph{second-order Spence--Mirrlees} if
  $$
    f(x_{2},\cdot) - f(x_{1},\cdot)\quad\mbox{is strictly convex for all } x_{1}<x_{2},
  $$
  and $f$ is \emph{supermartingale Spence--Mirrlees} if $f$ is second-order Spence--Mirrlees and $-f$ is first-order Spence--Mirrlees.
\end{definition}  

We note that if $f$ is smooth, the first and second order Spence--Mirrlees properties are equivalent to the  classical cross-derivative conditions $f_{xy}>0$ and $f_{xyy}>0$, respectively. The latter is also called martingale Spence--Mirrlees condition in the literature on martingale optimal transport---the above terminology will be more convenient in what follows.

\begin{remark}\label{rk:smoothSMfunctions}
   There exist smooth, linearly growing supermartingale Spence--Mirrlees functions on $\R^{2}$.
   Indeed, let $\varphi$ be a smooth, bounded, strictly increasing function on $\R$; e.g., $\varphi(x)=\tanh(x)$. Let $\psi$ be a smooth,  linearly growing, strictly decreasing, strictly convex function on $\R$; e.g., $\psi(y)=(1+y^{2})^{1/2}-y$. Then,
  $
    g(x,y):=\varphi(x)\psi(y)
  $
  satisfies $g_{xy}<0$ and $g_{xyy}>0$, while $|g(x,y)| \leq C(1+|y|)$ for some $C>0$.
\end{remark}

Next, we introduce the relevant geometric properties of the support.

\begin{definition}\label{de:firstOrderMonotone}
 Let $(\Gamma,M)\subseteq \R^{2}\times\R$ and consider $(x_{1},y_{1}),(x_{2},y_{2})\in\Gamma$ with $x_{1}<x_{2}$. The pair $(\Gamma,M)$ is
 \begin{enumerate}
 \item  \emph{first-order left-monotone} if $y_{1}\leq y_{2}$ whenever $x_{2}\notin M$,
 \item  \emph{first-order right-monotone} if $y_{2}\leq y_{1}$ whenever $x_{1}\notin M$.
 \end{enumerate}
\end{definition}

We will also need the following properties of $\Gamma$; they are taken from~\cite{BeiglbockJuillet.12} where they are simply called left- and right-monotonicity.

\begin{definition}\label{de:secondOrderMonotone}
 Let $\Gamma\subseteq \R^{2}$ and consider $(x,y_{1}),(x,y_{2}),(x',y')\in\Gamma$ with $y_{1}<y_{2}$. Then $\Gamma$ is
 \begin{enumerate}
 \item  \emph{second-order left-monotone} if $y'\notin (y_{1},y_{2})$ whenever $x<x'$,
 \item  \emph{second-order right-monotone} if $y'\notin (y_{1},y_{2})$ whenever $x'<x$.
 \end{enumerate}
\end{definition}

For convenience, we shall use the same terminology for a pair $(\Gamma,M)$ even though only $\Gamma$ is relevant for the second-order properties.
Yet another notion will be useful; we write
$
  \Gamma^{1}=\{x\in\R:\, (x,y)\in\Gamma~\mbox{for some }y\in\R\}
$
for the projection of $\Gamma$ onto the first coordinate.

\begin{definition}\label{de:nondegenerate}
 A pair $(\Gamma,M)\subseteq \R^{2}\times\R$ is \emph{nondegenerate} if
 \begin{enumerate}
 \item for all $x\in\Gamma^{1}$ such that $(x,y)\in\Gamma$ for some $y>x$, there exists $y'<x$ such that $(x,y')\in\Gamma$,
 \item for all $x\in\Gamma^{1}\cap M$ such that $(x,y)\in\Gamma$ for some $y<x$, there exists $y'> x$ such that $(x,y')\in\Gamma$.
 \end{enumerate}
  These two conditions imply that
\begin{enumerate}
 \item[(i')] for all $x\in\Gamma^{1}$ there exists $y\leq x$ such that $(x,y)\in\Gamma$,
 \item[(ii')] for all $x\in\Gamma^{1}\cap M$ there exists $y\geq x$ such that $(x,y)\in\Gamma$.
 \end{enumerate}
\end{definition}

Essentially, nondegeneracy postulates that there is a down-path at every $x\in\Gamma^{1}$, and also an up-path if $x\in M$. Thus, it is a natural requirement if we intend to consider supermartingales supported by~$\Gamma$ which are martingales on $M\times\R$. For later use, let us record that nondegeneracy can be assumed without loss of generality in our context.

\begin{remark}\label{rk:nondegWlog}
  Let $(\Gamma,M)\in\cB(\R^{2})\times\cB(\R)$, let $\mu\leq_{cd}\nu$ be probability measures and suppose there is $P\in\cS(\mu,\nu)$ with $P(\Gamma)=1$ such that $P|_{M\times\R}$ is a martingale. Then, there exists a Borel subset $\Gamma'\subseteq \Gamma$ with $P(\Gamma')=1$ such that $(\Gamma',M)$ is nondegenerate.
\end{remark}

\begin{proof}
  Let $N'_{1}$ be the set of all $x\in\Gamma^{1}$ such that Definition~\ref{de:nondegenerate}\,(i) fails. Then $N'_{1}$ is universally measurable and thus we can find a Borel set  $N_{1}\supseteq N'_{1}$ such that $N_{1}\setminus N'_{1}$ is $\mu$-null. The fact that $P$ is a supermartingale implies that $\Gamma_{1}:=\Gamma \cap \{Y>X\}\cap (N_{1}\times\R)$ is $P$-null. After defining similarly a set $N_{2}$ for Definition~\ref{de:nondegenerate}\,(ii), the martingale property of $P$ on $M\times\R$ shows that $\Gamma_{2}:=\Gamma \cap \{Y<X\}\cap (N_{2}\times\R)$ is $P$-null as well, and then we can set $\Gamma':=\Gamma\setminus (\Gamma_{1}\cup\Gamma_{2})$.
\end{proof}

The first-order properties turn out to be highly asymmetric when combined with nondegeneracy. The following observation will have far-reaching consequences regarding the geometry of the coupling $\rP$ and has no analogue in the left-monotone case.

\begin{remark}\label{rk:leftClosed}
  Let $(\Gamma,M)$ be first-order right-monotone and nondegenerate. Then, $M$ is a half-line unbounded to the left within $\Gamma^{1}$; that is,
  $$
    \mbox{if $x_{1},x_{2}\in\Gamma^{1}$ satisfy $x_{1}<x_{2}$ and $x_{2}\in M$, then $x_{1}\in M$.}
  $$
  Indeed, let $x_{1},x_{2}$ be as stated; then nondegeneracy yields $y_{1},y_{2}$ such that $y_{1}\leq x_{1}<x_{2}\leq y_{2}$ and $(x_{i},y_{i})\in\Gamma$. If we had $x_{1}\notin M$, this would contradict first-order right-monotonicity.
\end{remark}

With these definitions in place, we can use the monotonicity principle of Theorem~\ref{th:monotonicityPrinciple} to infer the geometry of $(\Gamma,M)$ from the properties of $f$. 

\begin{proposition}\label{pr:spenceImpliesGammaMonotone}
  Let\footnote{In fact, this result merely uses the general shape of $\Sigma$, not the specific marginals.} $\mu\leq_{cd}\nu$ and recall the corresponding intervals $I_{k},J_{k}$ of Proposition~\ref{pr:decomp} and the set $\Sigma$ of~\eqref{eq:defSigma}. Let $(\Gamma,M)\in\cB(\R^{2})\times\cB(\R)$ be nondegenerate, where $\Gamma\subseteq \Sigma$ and $M=M_{0}\cup M_{1}$ with Borel sets $M_{0}\subseteq I_{0}$ and $M_{1}= \cup_{k\neq0} I_{k}$, and let $f : \R^2 \to \R$. Suppose that the assertion of Theorem~\ref{th:monotonicityPrinciple}\,(iii) holds; that is, if $\pi$ is a finitely supported probability which is concentrated on~$\Gamma$, then $\pi(f)\geq \pi'(f)$ for any $(M_{0},M_{1})$-competitor $\pi'$ of $\pi$ that is concentrated on $\Sigma$.
  \begin{enumerate}
  \item \hspace{-.3em}If $f$ is first-order Spence--Mirrlees, \!$(\Gamma,M)$\! is first-order left-monotone.
  \item \hspace{-.3em}If $-\!f$ is first-order Spence--Mirrlees, \!$(\Gamma,M)$\! is first-order right-monotone.
  \item \hspace{-.3em} If $f$ is second-order Spence--Mirrlees, \!$\Gamma$\! is second-order left-monotone.\hspace{-3em}
  \item \hspace{-.3em}If $-\!f$ is second-order Spence--Mirrlees, \!$\Gamma$\! is second-order right-monotone.
  \end{enumerate}
\end{proposition}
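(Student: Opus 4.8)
The plan is to prove each of the four assertions by contradiction, constructing for a hypothetical ``forbidden configuration'' a finitely supported competitor $\pi'$ that strictly improves the reward, thereby contradicting the assumed monotonicity principle. The four statements are handled in parallel, and by symmetry (replacing $f$ with $-f$, which swaps ``increasing/convex'' with ``decreasing/concave'' but not the structure of the argument) it suffices to carry out (i) and (iii) carefully; (ii) and (iv) follow verbatim after the reflection. Throughout, the key mechanism is that an $(M_0,M_1)$-competitor of a measure $\pi = \sum_i a_i \delta_{(x_i,y_i)}$ may redistribute the $y$-mass sitting over each $x_i$ as long as barycenters are respected in the prescribed way on $M_0$ (inequality) and $M_1$ (equality), and as long as $\pi'$ stays concentrated on $\Sigma$.

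For (i), first-order left-monotonicity: suppose it fails, so there are $(x_1,y_1),(x_2,y_2)\in\Gamma$ with $x_1<x_2$, $x_2\notin M$, yet $y_1>y_2$. Set $\pi := \tfrac12\delta_{(x_1,y_1)} + \tfrac12\delta_{(x_2,y_2)}$, a finitely supported probability concentrated on $\Gamma$. The natural competitor is the ``swap'' $\pi' := \tfrac12\delta_{(x_1,y_2)} + \tfrac12\delta_{(x_2,y_1)}$, which has the same marginals. Since $x_1$ may lie in $M$ we must check the competitor conditions: the barycenter of the $x_1$-fiber changes from $y_1$ to $y_2 < y_1$, which is \emph{decreased}, consistent with $x_1\in M_0$; it is not equality-preserving, so we need $x_1\notin M_1$, which holds because $M_1 = \cup_{k\neq 0} I_k$ consists of martingale components and a swap there is illegal — this is exactly the subtlety the set $M$ was designed to handle. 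The clean way around it: if $x_1\in M_1$ then $x_1$ lies in a martingale component $I_k$, which forces (by the barrier structure of Proposition~\ref{pr:barriers} and $\Gamma\subseteq\Sigma$) that $x_2\in I_k$ as well, whence $x_2\in M_1\subseteq M$, contradicting $x_2\notin M$. So $x_1\notin M_1$, and since $x_2\notin M$ its fiber may be moved freely. Also $\pi'$ is concentrated on $\Sigma$: the only nontrivial point is $(x_1,y_2)$ and $(x_2,y_1)$, but both $x_i$ and $y_j$ lie in the same interval $I_k\times J_k$ or else in $I_0\times J_0$ with no barrier between them (again using $x_2\notin M$, so $x_2> x^*$ when relevant). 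Finally, the strict improvement: $\pi'(f)-\pi(f) = \tfrac12\big[(f(x_2,y_1)-f(x_1,y_1)) - (f(x_2,y_2)-f(x_1,y_2))\big]$, and since $y_2<y_1$ and $f(x_2,\cdot)-f(x_1,\cdot)$ is strictly increasing (first-order Spence--Mirrlees), this bracket is strictly positive, so $\pi'(f)>\pi(f)$, the desired contradiction.

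For (iii), second-order left-monotonicity, suppose there are $(x,y_1),(x,y_2),(x',y')\in\Gamma$ with $y_1<y_2$, $x<x'$, and $y'\in(y_1,y_2)$. Now I put $\pi := \alpha\delta_{(x,y_1)} + \beta\delta_{(x,y_2)} + \gamma\delta_{(x',y')}$ for suitable weights and construct a competitor that moves a little mass of the fiber over $x'$ towards $\{y_1,y_2\}$ and compensates over $x$, exactly as in the martingale case of \cite{BeiglbockJuillet.12}. Concretely, since $y'$ lies strictly between $y_1$ and $y_2$, write $y' = \lambda y_1 + (1-\lambda)y_2$ with $\lambda\in(0,1)$, and define $\pi'$ by replacing a small mass $\varepsilon$ at $(x',y')$ with $\lambda\varepsilon$ at $(x',y_1)$ and $(1-\lambda)\varepsilon$ at $(x',y_2)$, while over $x$ replacing $\lambda\varepsilon$ at $(x,y_1)$ and $(1-\lambda)\varepsilon$ at $(x,y_2)$ by $\varepsilon$ at $(x,y')$. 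By the choice of $\lambda$ the $x$-fiber and the $x'$-fiber each keep their barycenter \emph{exactly}, so $\pi'$ is an $(M_0,M_1)$-competitor regardless of whether $x,x'$ lie in $M$; and it stays on $\Sigma$ since we only used $y$-values already present in $\Gamma$ over nearby $x$'s (here one uses $\Gamma\subseteq\Sigma$ plus the fact that moving mass over $x$ to $y'$ and over $x'$ to $y_1,y_2$ stays within the relevant $I_k\times J_k$, because all of $x,x',y_1,y_2,y'$ sit in the closure of one component of $\Sigma$; if a barrier separated them, $\Gamma\subseteq\Sigma$ would already forbid one of the three points). The reward change is, up to the factor $\varepsilon$, $\big[\lambda f(x',y_1)+(1-\lambda)f(x',y_2)-f(x',y')\big] - \big[\lambda f(x,y_1)+(1-\lambda)f(x,y_2)-f(x,y')\big]$, i.e.\ $\big(g(x')-g(x)\big)$ where $g := \lambda f(\cdot,y_1)+(1-\lambda)f(\cdot,y_2)-f(\cdot,y')$. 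But $g(x')-g(x) = \big(f(x',\cdot)-f(x,\cdot)\big)$ evaluated as $\lambda\,\delta_{y_1}+(1-\lambda)\,\delta_{y_2}$ minus $\delta_{y'}$, and since $y'$ is the barycenter of $\lambda\delta_{y_1}+(1-\lambda)\delta_{y_2}$ and $f(x',\cdot)-f(x,\cdot)$ is strictly convex, this is strictly positive by Jensen. Hence $\pi'(f)>\pi(f)$, contradiction.

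The main obstacle I expect is the bookkeeping of the two side conditions ``$\pi'$ is concentrated on $\Sigma$'' and ``$\pi'$ is a genuine $(M_0,M_1)$-competitor,'' i.e.\ verifying that the swaps and tilts above never illegally move mass across a barrier or violate a barycenter constraint on a martingale component $I_k$, $k\neq 0$. The resolution is uniformly the same: whenever a constructed move would be illegal on $M_1 = \cup_{k\neq 0}I_k$, the structure of $\Sigma$ (components separated by the barrier $x^*$, Proposition~\ref{pr:barriers}) together with $\Gamma\subseteq\Sigma$ forces all the points involved in that move to lie in a single component, so that a barycenter-preserving move is available there too — and for the first-order cases the hypothesis $x_2\notin M$ (resp.\ $x_1\notin M$) precisely rules out the martingale-component case for the fiber being freely moved. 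Once these measurability-and-location checks are dispatched, the strict-inequality computations are the elementary one-line Jensen/monotonicity arguments sketched above, and the proposition follows.
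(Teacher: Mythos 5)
Your overall strategy is exactly the paper's: the two-point swap for the first-order claims, the three-point barycenter-preserving redistribution with Jensen's inequality for the second-order claims. Parts (iii) and (iv) are fine. However, there is a concrete error in your treatment of part (i), in the case $x_1\in M_1$. You claim that $x_1\in I_k$ for some $k\neq 0$ ``forces (by the barrier structure and $\Gamma\subseteq\Sigma$) that $x_2\in I_k$ as well.'' This implication is false: nothing prevents $x_1$ from lying in a martingale component $I_k\subseteq(-\infty,x^*]$ while $x_2>x_1$ lies in the proper component $I_0=(x^*,\infty)$ with $x_2\notin M$. So you cannot derive a contradiction with $x_2\notin M$ this way, and your swap is indeed illegal in this case (it would change the barycenter over a point of $M_1$). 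The correct resolution is different in character: when $x_1\in M_1$ one does not need any competitor at all, because $\Gamma\subseteq\Sigma$ forces $y_1\in J_k\subseteq(-\infty,x^*]$ while $x_2\notin M$ forces $y_2\in J_0\subseteq[x^*,\infty)$, so $y_1\leq y_2$ holds automatically and the forbidden configuration simply cannot occur. The gap is fixable, but the reason you give would not survive scrutiny.

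A second, smaller issue: (ii) does not follow ``verbatim by reflection'' from (i). The competitor conditions (inequality on $M_0$, equality on $M_1$, no constraint elsewhere) and the geometry of $\Sigma$ (with $I_0$ the unbounded right-most component and $M_1$ confined to $(-\infty,x^*]$) are not symmetric under $f\mapsto -f$ or under spatial reflection, and the membership checks genuinely differ. As it happens, (ii) is \emph{easier}: the hypothesis $x_1\notin M$ places $x_1$ in $I_0$, hence $x_2>x_1$ is in $I_0$ as well, so $x_2\notin M_1$ and the swap (which raises the barycenter over $x_1$ and lowers it over $x_2$) is legal and stays on $\Sigma$. You should spell this out rather than appeal to a symmetry that is not actually present.
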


\begin{proof}
  (i) Consider $(x_{1},y_{1}),(x_{2},y_{2})\in\Gamma$ with $x_{1}<x_{2}$ and suppose for contradiction that $y_{2}<y_{1}$.
  The measures
  $$
    \pi := \tfrac{1}{2}\delta_{(x_1,y_1)} + \tfrac{1}{2}\delta_{(x_2,y_2)},
\quad \pi' := \tfrac{1}{2}\delta_{(x_1,y_2)} + \tfrac{1}{2}\delta_{(x_2,y_1)}
  $$
  have the same first marginal $\pi_{1}=\tfrac{1}{2}\delta_{x_1} + \tfrac{1}{2}\delta_{x_2}$. Let $\pi=\pi_{1}\otimes\kappa$ and $\pi'=\pi_{1}\otimes\kappa'$, then 
  $
    \bary(\kappa'(x_{1})) < \bary(\kappa(x_{1}))$ and $\bary(\kappa'(x_{2})) > \bary(\kappa(x_{2})).
  $
  Suppose that $x_{1}\notin M_{1}$ and $x_{2}\notin M$. Then, $\pi'$ is an $(M_{0},M_{1})$-competitor of $\pi$. Moreover, $x_{i}\notin M_{1}$ implies that $x_{i}\in I_{0}$ and thus $y_{i}\in J_{0}$, $i=1,2$ which shows that $\pi'$ is supported on $\Sigma$. Thus, we must have $\pi(f) \geq \pi'(f)$.  However,
  $
  2(\pi(f) - \pi'(f)) = (f(x_2,y_2) - f(x_1,y_2)) - (f(x_2,y_1) - f(x_1,y_1)) < 0
  $
  as $f$ is first-order Spence--Mirrlees, so we have reached the desired contradiction.
  
  Let $x_{1}\in M_{1}$ and $x_{2}\notin M$. Recalling that $M_{1}= \cup_{k\neq0} I_{k}=(-\infty,x^{*}]$, we have $y_{1}\in J_{k}$ for some $k\neq0$, whereas $x_{2}\notin M$ implies $y_{2}\in J_{0}$. Since $J_{0}$ is located to the right of $J_{k}$ for $k\neq 0$, we must have $y_{1}\leq y_{2}$.

  (ii) Consider $(x_{1},y_{1}),(x_{2},y_{2})\in\Gamma$ with $x_{1}<x_{2}$ and suppose for contradiction that $y_{1}<y_{2}$. We define $\pi,\pi'$ as in~(i); then
  $
    \bary(\kappa'(x_{1})) > \bary(\kappa(x_{1}))$ and $\bary(\kappa'(x_{2})) < \bary(\kappa(x_{2})).
  $
  Let $x_{1}\notin M$. Then, $x_{1}\in I_{0}=(x^{*},\infty)$ and thus $x_{2}>x_{1}$ is in $I_{0}$ as well. In particular, $x_{2}\notin M_{1}$ and $y_{1},y_{2}\in J_{0}$. Thus $\pi'$ is an $(M_{0},M_{1})$-competitor of $\pi$ that is concentrated on~$\Sigma$ and we reach a contradiction to $-f$ being first-order Spence--Mirrlees, similarly as in~(i).

  (iii) Let $(x,y_{1}),(x,y_{2}),(x',y')\in\Gamma$ satisfy $x<x'$ and assume for contradiction that $y_{1}<y'<y_{2}$.
  Define $\lambda = \frac{y' - y_1}{y_2 - y_1}$ and
	\begin{align*}
	\pi = \tfrac{\lambda}{2}\delta_{(x,y_1)} + \tfrac{1-\lambda}{2}\delta_{(x,y_2)} + \tfrac{1}{2}\delta_{(x',y')}, \quad
	\pi' = \tfrac{\lambda}{2}\delta_{(x',y_1)} + \tfrac{1-\lambda}{2}\delta_{(x',y_2)} + \tfrac{1}{2}\delta_{(x,y')}.
	\end{align*}
	Then, $\pi$ and $\pi'$ have the same first marginal $\pi_{1}$ and if $\pi=\pi_{1}\otimes\kappa$ and $\pi'=\pi_{1}\otimes\kappa'$, then $\kappa(x)$, $\kappa'(x)$, $\kappa(x')$, $\kappa'(x')$ all have barycenter $y'$. Hence, $\pi'$ is an $(M_{0},M_{1})$-competitor of $\pi$, and since the shape of $\Gamma\subseteq\Sigma$ shows that~$\pi'$ is concentrated on $\Sigma$, we deduce that $\pi(f)\geq \pi'(f)$. However, $f$ being second-order Spence--Mirrlees implies that $\pi(f)< \pi'(f)$.
	
	(iv) The proof is symmetric to (iii).
\end{proof}

\section{Geometric Characterization of the Canonical Supermartingale Transports}\label{se:geomCharact}

In this section, we consider fixed probability measures $\mu\leq_{cd}\nu$ and show that the associated Increasing and Decreasing Supermartingale Transports $\rP,\lP$ (cf.\ Theorem~\ref{th:canonicalCouplings}) are characterized by geometric properties of their supports.

\begin{theorem}\label{th:geomCharactRP}
  Let $(\Gamma,M)\in\cB(\R^{2})\times\cB(\R)$ be nondegenerate and let $P\in\cS(\mu,\nu)$ be such that $P$ is concentrated on $\Gamma$ and $P|_{M\times\R}$ is a martingale.
  \begin{enumerate}
  \item If $(\Gamma,M)$ is first-order right-monotone and second-order left-monotone, then $P$ is the Increasing Supermartingale Transport $\rP$.
  \item If $(\Gamma,M)$ is first-order left-monotone and second-order right-monotone, then $P$ is the Decreasing Supermartingale Transport $\lP$.\end{enumerate}
\end{theorem}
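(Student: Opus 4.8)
The plan is to prove uniqueness of the coupling supported on such a pair $(\Gamma, M)$ by showing directly that any $P\in\cS(\mu,\nu)$ with the stated support properties must satisfy the defining identity $P((-\infty,x]\times A)=\shadow{\mu|_{(-\infty,x]}}{\nu}(A)$ for all $x\in\R$ and $A\in\cB(\R)$, which by Theorem~\ref{th:canonicalCouplings} forces $P=\rP$. By Remark~\ref{rk:nondegWlog} we may assume $(\Gamma,M)$ nondegenerate without loss. The key structural input is Remark~\ref{rk:leftClosed}: first-order right-monotonicity plus nondegeneracy forces $M$ (intersected with $\Gamma^1$) to be a left-unbounded half-line, so there is a threshold $m\in\overline\R$ with $M=(-\infty,m)$ essentially (modulo $\mu$-null sets and boundary points). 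This splits the analysis into the ``martingale part'' $(-\infty,m)$ and the ``pure supermartingale part'' $[m,\infty)$, and I would treat these with the two second-order monotonicity properties respectively.

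First I would fix $x\in\R$ and let $\mu_x:=\mu|_{(-\infty,x]}$, with $\nu_x$ the image of $\mu_x$ under $P$ (so $P|_{(-\infty,x]\times\R}\in\cS(\mu_x,\nu_x)$). The goal is to show $\nu_x=\shadow{\mu_x}{\nu}$. The inequality $\mu_x\leq_{cd}\nu_x\leq\nu$ is automatic, so $\nu_x\in\casts{\mu_x}{\nu}$ and hence $\shadow{\mu_x}{\nu}\leq_{cd}\nu_x$; the content is the reverse, i.e.\ that $\nu_x$ is \emph{itself} the least element, equivalently that $\nu_x$ is minimal in convex-decreasing order among measures $\leq\nu$ dominating $\mu_x$. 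For this I would use the monotonicity principle in the form of Theorem~\ref{th:monotonicityPrinciple}\,(iii) applied to a supermartingale Spence--Mirrlees reward function: by Proposition~\ref{pr:spenceImpliesGammaMonotone}, such an $f$ has exactly the two monotonicity properties we are assuming for $(\Gamma,M)$. The strategy is then an exchange/variational argument: if $\nu_x\neq\shadow{\mu_x}{\nu}$, one can find a ``better'' rearrangement of a small finitely-supported piece of $P$ — moving mass of $\mu_x$ to destinations further left (decreasing barycenter where allowed) and/or decreasing variance — producing an $(M_0,M_1)$-competitor $\pi'$ concentrated on $\Sigma$ that strictly beats $\pi$ for a supermartingale Spence--Mirrlees $f$, contradicting (iii). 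Concretely: second-order left-monotonicity governs the martingale kernels on $(-\infty,m)$ and forces them to be of Left-Curtain type (by the argument of \cite{BeiglbockJuillet.12}), while first-order right-monotonicity forces the non-martingale part on $[m,\infty)$ to be Monge-type and antitone; together with the ``left-closedness'' of $M$ this pins down $\nu_x$ uniquely as the shadow, because the shadow of $\mu_x$ is built by greedily packing mass as far left as possible subject to the barycenter constraint $\bary\leq x$ (Lemma~\ref{le:shadowOfDirac} and its additivity, Proposition~\ref{pr:shadowAdditive}).

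A cleaner route, which I would prefer to present, bypasses the need to identify kernel shapes: use the existence of smooth linearly-growing supermartingale Spence--Mirrlees $f$ (Remark~\ref{rk:smoothSMfunctions}) together with Theorem~\ref{th:geomCharactRPIntro}'s forward direction (Proposition~\ref{pr:spenceImpliesGammaMonotone} plus the existence results of Section~\ref{se:regularityOfSpenceMirrlees}, which I am entitled to assume as ``stated earlier'' only insofar as they appear — here I would instead argue self-containedly). The self-contained argument: let $P$ be as hypothesized and let $Q:=\rP$. Both $P$ and $Q$ are concentrated on their respective supports; I claim $Q$ is also concentrated on a pair $(\Gamma_Q,M_Q)$ with the \emph{same} monotonicity properties (this is part (i) of the theorem's companion existence statement, which I would establish via Theorem~\ref{th:monotonicityPrinciple}\,(iii) applied to $f=g$ from Remark~\ref{rk:smoothSMfunctions}, since $\rP$ optimizes $\bS_{\mu,\nu}(g)$). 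Then consider $\bar P:=(P+Q)/2\in\cS(\mu,\nu)$. Its kernel is the average of the two kernels; if $P\neq Q$ the averaged kernel violates one of the monotonicity/no-crossing properties on a set of positive measure — this is the standard ``midpoint destroys monotonicity'' trick, exactly as in Example~\ref{ex:decompositionNoncanonicalCase}. More precisely, I would run the variational argument on $\bar P$ directly: $\bar P$ cannot be concentrated on any nondegenerate $(\Gamma,M)$ that is simultaneously first-order right-monotone and second-order left-monotone unless $P=Q$, because averaging two distinct kernels with a common barycenter structure produces a kernel with strictly larger spread or mismatched first-order order at some $x$.

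The main obstacle, and where I would spend most of the effort, is the measure-theoretic bookkeeping of the threshold $m$ and the boundary behaviour of $M$ at $x^*$ and at the common barycenter — in particular reconciling ``$M$ is a left half-line in $\Gamma^1$'' with the fact that $M_1=\cup_{k\neq 0}I_k=(-\infty,x^*]$ is forced to be entirely martingale, so that on $(x^*,m)$ we have genuine martingale kernels of Left-Curtain type inside the proper supermartingale component, and on $(m,\infty)$ the antitone Monge behaviour, while the barycenter constraint in the shadow construction must match the martingale-then-drift dichotomy exactly. Verifying that the greedy leftmost-packing of the shadow (Lemma~\ref{le:shadowOfDirac}) coincides kernel-by-kernel with what the two monotonicity conditions enforce, uniformly in $x$ and compatibly under the additivity of Proposition~\ref{pr:shadowAdditive}, is the delicate part; everything else (existence of $P$, the $\leq_{cd}$ inequalities, the competitor construction) is routine given the results already in the excerpt. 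For part (ii), $\lP$, the argument is entirely symmetric under interchanging left and right, first- and second-order, using first-order left-monotonicity and second-order right-monotonicity and the defining identity for $\mu|_{[x,\infty)}$; the only genuine asymmetry (that $\lP$ can have countably many martingale/deterministic phase transitions rather than a single one) does not affect the uniqueness proof, which is purely local in the sense of testing against finitely-supported competitors.
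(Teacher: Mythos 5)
Both routes you sketch contain genuine gaps, and the hard part of the proof is exactly the part you defer.

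Your first route invokes Theorem~\ref{th:monotonicityPrinciple}\,(iii) as if it were available for the given pair $(\Gamma,M)$. It is not: the hypothesis of the theorem is purely geometric (concentration on a first/second-order monotone pair, martingale on $M\times\R$), with no reward function and no optimality in sight. Assertion~(iii) is a property of the \emph{specific} sets produced by the dual optimizer for a \emph{given} $f$; Proposition~\ref{pr:spenceImpliesGammaMonotone} only gives the implication ``(iii) holds $\Rightarrow$ $(\Gamma,M)$ monotone,'' and you would need the converse, which is neither stated nor true in any obvious sense. Your second route is circular: the statement that $\rP$ optimizes $\bS_{\mu,\nu}(g)$ for a supermartingale Spence--Mirrlees $g$ is Corollary~\ref{co:optTranspCharact}, whose proof \emph{uses} the present theorem to identify the (abstract) optimizer with $\rP$. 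Without the theorem you only know that \emph{some} optimizer is concentrated on a monotone pair, not that $\rP$ is. Moreover the ``midpoint destroys monotonicity'' step is unsubstantiated for general marginals: it requires knowing the precise kernel structure (Monge-type or binomial), which is only available for atomless $\mu$, and even then one must rule out that $(P+Q)/2$ is carried by some \emph{other} monotone pair.

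The paper's argument is entirely different and does not pass through optimality at all. Fixing $x$, one sets $\sigma:=\nu_x^{\rP}-\nu_x^{P}$ (images of $\mu|_{(-\infty,x]}$) and, assuming $\sigma\neq0$, picks $a\in\supp(\sigma^{+})$ and $b>a$ with $\int(b-y)^{+}\1_{[a,\infty)}\,d\sigma>0$ via Lemma~\ref{le:signedSupp}. Since $a$ charges $\nu-\nu_x^{P}$, there are points $(x_n,a_n)\in\Gamma$ with $x_n>x$, $a_n\to a$, and the two monotonicity properties plus nondegeneracy then force concentration statements such as ``$\nu_x^{P}$ is concentrated on $(-\infty,a]$'' (case $x\in M$, $x\le a$) or the splitting at level $a$ (case $a<x$), which combined with the minimality and additivity of the shadow (Proposition~\ref{pr:shadowAdditive}), Lemma~\ref{le:supportsOrdered} and Remark~\ref{rk:leftClosed} yield $\int(b-y)^{+}\1_{[a,\infty)}\,d\sigma\le0$, a contradiction; the case $x\notin M$ identifies $\nu_x^{P}$ directly as the leftmost measure. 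You correctly identify several ingredients (Remark~\ref{rk:leftClosed}, the reduction to $\nu_x^{P}=\shadow{\mu|_{(-\infty,x]}}{\nu}$, the role of Proposition~\ref{pr:shadowAdditive}), but the mechanism that actually closes the argument --- the signed measure, the test function $(b-y)^{+}\1_{[a,\infty)}$, and the case analysis --- is absent, and you explicitly label it ``the delicate part'' without carrying it out.
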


Before stating the proof, we record two auxiliary lemmas. The first one follows directly from the fact that $\cS(\mu,\nu)\neq\emptyset$ by Proposition~\ref{pr:convexOrder}.

\begin{lemma}\label{le:supportsOrdered}
  Let $a\in\R$ and $\mu\leq_{cd}\nu$. If $\nu$ is concentrated on $[a,\infty)$, then so is $\mu$, and moreover $\nu(\{a\})\geq\mu(\{a\})$. If $\mu\leq_{c}\nu$, the same holds for $(-\infty,a]$.
\end{lemma}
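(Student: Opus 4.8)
The plan is to argue entirely with the put function $p_\mu(t)=\int(t-s)^+\,\mu(ds)$, using that $(t-\cdot)^+$ is convex and nonincreasing for every fixed $t$, so that $\mu\leq_{cd}\nu$ already yields $p_\mu\leq p_\nu$ pointwise (this is also Proposition~\ref{pr:convexOrder}).

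First I would treat the half-line $[a,\infty)$. If $\nu$ is concentrated on $[a,\infty)$ then $(a-s)^+=0$ for $\nu$-a.e.\ $s$, so $p_\nu(a)=0$; hence $0\leq p_\mu(a)\leq p_\nu(a)=0$, and since $p_\mu(a)=\int_{(-\infty,a)}(a-s)\,\mu(ds)$ has a strictly positive integrand on $(-\infty,a)$, this forces $\mu((-\infty,a))=0$, i.e.\ $\mu$ is concentrated on $[a,\infty)$ too. It follows that $p_\mu$ and $p_\nu$ both vanish on $(-\infty,a]$, so $\partial^{-}p_\mu(a)=\partial^{-}p_\nu(a)=0$; combining $p_\mu(a)=p_\nu(a)=0$ with $p_\mu\leq p_\nu$ on $(a,\infty)$ gives $\partial^{+}p_\mu(a)=\lim_{t\downarrow a}p_\mu(t)/(t-a)\leq\lim_{t\downarrow a}p_\nu(t)/(t-a)=\partial^{+}p_\nu(a)$. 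By property~(ii) of the put function recalled before Proposition~\ref{pr:convexOrder}, $\mu(\{a\})=\partial^{+}p_\mu(a)-\partial^{-}p_\mu(a)=\partial^{+}p_\mu(a)$ and likewise $\nu(\{a\})=\partial^{+}p_\nu(a)$, whence $\mu(\{a\})\leq\nu(\{a\})$.

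For the statement about $(-\infty,a]$ under the stronger hypothesis $\mu\leq_{c}\nu$, I would reduce to the case just proved by reflecting the line. Writing $r(s)=-s$, $\tilde\mu=\mu\circ r^{-1}$ and $\tilde\nu=\nu\circ r^{-1}$, the map $\phi\mapsto\phi\circ r$ permutes the convex functions, so $\mu\leq_{c}\nu$ implies $\tilde\mu\leq_{c}\tilde\nu$, and in particular $\tilde\mu\leq_{cd}\tilde\nu$ since convex nonincreasing functions are convex. Moreover $\nu$ is concentrated on $(-\infty,a]$ if and only if $\tilde\nu$ is concentrated on $[-a,\infty)$, with $\nu(\{a\})=\tilde\nu(\{-a\})$, and the same for $\mu$. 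Applying the first part to $(\tilde\mu,\tilde\nu)$ at the point $-a$ and undoing the reflection yields the claim. There is no genuinely hard step; the only point that needs care is the atom comparison, where one uses that both put functions vanish to the left of $a$, so that the jump $\mu(\{a\})$ is captured entirely by the right derivative at $a$, and that the inequality $p_\mu\leq p_\nu$ together with the common value $0$ at $a$ transfers to those right derivatives.
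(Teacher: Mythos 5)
Your proof is correct. The paper disposes of this lemma in one line by invoking the \emph{coupling} characterization in Proposition~\ref{pr:convexOrder}: taking $P\in\cS(\mu,\nu)$ with disintegration $\mu\otimes\kappa$, $\bary(\kappa(x))\leq x$ together with $\kappa(x)$ concentrated on $[a,\infty)$ forces $x\geq a$ for $\mu$-a.e.\ $x$, and on $\{X=a\}$ the kernel must be $\delta_a$, which gives $\nu(\{a\})\geq P(X=a,Y=a)=\mu(\{a\})$; the convex-order case follows by reflecting, since a martingale coupling survives the reflection. You instead use the \emph{put-function} characterization from the same proposition: $p_\nu(a)=0$ forces $p_\mu(a)=0$ and hence $\mu((-\infty,a))=0$, and the atom comparison comes from comparing right derivatives of $p_\mu\leq p_\nu$ at $a$, where both left derivatives vanish. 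Both arguments are one-step consequences of Proposition~\ref{pr:convexOrder}, just through different legs of the equivalence; yours is purely analytic and spells out the atom comparison explicitly (via property~(ii) of the put function), which is arguably the only point requiring care, whereas the paper's probabilistic version makes the atom statement transparent at the cost of choosing a kernel. Your reflection argument for the second part is the same symmetry the paper alludes to, and you correctly note that the full convex order is needed there because convex nonincreasing functions reflect to convex nondecreasing ones, which are not covered by $\leq_{cd}$.
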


\begin{lemma}[{\cite[Lemma~5.4]{BeiglbockJuillet.12}}]\label{le:signedSupp}
  Let $\sigma$ be a nontrivial signed measure on $\R$ with $\sigma(\R)=0$ and let $\sigma=\sigma^{+}-\sigma^{-}$ be its Hahn decomposition. There exist $a\in\supp(\sigma^{+})$ and $b>a$ such that $\int (b-y)^{+}\1_{[a,\infty)}(y)\,d\sigma(y)>0$.
\end{lemma}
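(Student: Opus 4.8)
The plan is to prove Lemma~\ref{le:signedSupp} directly from the structure of the Hahn decomposition, using the fact that $\sigma^{+}$ is nontrivial and $\sigma(\R)=0$ forces $\sigma^{-}$ to have mass ``spread out'' on both sides of any point where $\sigma^{+}$ lives. Write $P^{+}=\supp(\sigma^{+})$ and $P^{-}=\supp(\sigma^{-})$. Since $\sigma$ is nontrivial, $\sigma^{+}(\R)=\sigma^{-}(\R)=:m>0$, and the two supports are disjoint up to $\sigma$-null sets.

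First I would pick $a$ to be the \emph{infimum} of $\supp(\sigma^{+})$ if that infimum is $>-\infty$; the cleaner choice is to take $a := \inf\supp(\sigma^{+})$ when finite, and otherwise (if $\sigma^{+}$ is unbounded below) to argue by a limiting/truncation argument. Let me describe the generic case $a=\inf\supp(\sigma^{+})\in\R$. Because $a\in\supp(\sigma^{+})$, for every $\eps>0$ we have $\sigma^{+}([a,a+\eps))>0$, hence $\sigma([a,\infty))\geq\sigma^{+}([a,a+\eps))>0$ — wait, more carefully: $\sigma([a,\infty))=\sigma^{+}([a,\infty))-\sigma^{-}([a,\infty))=m-\sigma^{-}([a,\infty))$, and by choice of $a$, $\sigma^{+}((-\infty,a))=0$, so $\sigma((-\infty,a))=-\sigma^{-}((-\infty,a))\le 0$; combined with $\sigma(\R)=0$ this gives $\sigma([a,\infty))=\sigma^{-}((-\infty,a))\ge 0$. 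So I need the \emph{strict} inequality, i.e.\ that $\sigma^{-}$ puts positive mass strictly below $a$, OR that $\sigma^{-}$ is not concentrated in a way that kills the put-integral. The key point: define $g_{b}(y):=(b-y)^{+}\1_{[a,\infty)}(y)$ and consider $\int g_{b}\,d\sigma = \int g_{b}\,d\sigma^{+} - \int g_{b}\,d\sigma^{-}$. As $b\to\infty$, $\int g_{b}\,d\sigma^{+}\to \int (\text{linear})\,d\sigma^{+}$ which grows, while the subtracted term only sees $\sigma^{-}|_{[a,\infty)}$. The plan is: show that for $b$ large enough, $\int_{[a,\infty)}(b-y)^{+}\,d\sigma^{+}(y) > \int_{[a,\infty)}(b-y)^{+}\,d\sigma^{-}(y)$. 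Since $\sigma^{+}$ is concentrated on $[a,\infty)$ entirely, the left side equals $\int(b-y)^{+}\,d\sigma^{+}(y) = p_{\sigma^{+}}(b)$ in put-function notation, which for large $b$ behaves like $m(b-\bary(\sigma^{+}))$. The right side is at most $p_{\sigma^{-}}(b) \le$ (for large $b$) roughly $\sigma^{-}([a,\infty))\cdot b$, but $\sigma^{-}([a,\infty)) = m - \sigma^{-}((-\infty,a))$. If $\sigma^{-}((-\infty,a))>0$ we are done by comparing leading coefficients: $m b$ vs.\ $(m-\delta)b$. If $\sigma^{-}((-\infty,a))=0$, then $\sigma^{-}$ is concentrated on $[a,\infty)$ too, hence so is $\sigma$, and now $\sigma(\R)=0$ with $\sigma$ concentrated on $[a,\infty)$ and $a$ in the support of $\sigma^{+}$: compare barycenters — we'd need $\bary(\sigma^{+})$ vs.\ $\bary(\sigma^{-})$. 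Actually in this subcase compare the two put functions at finite $b$ near $a$: since $\sigma^{+}$ has mass arbitrarily close to $a$ from the right but... hmm, this subcase needs the observation that $a=\inf\supp(\sigma^+)$ may not equal $\inf\supp(\sigma^-)$, and whichever support reaches further left controls the put-function near that point.

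Here is the cleaner route I would actually write. Set $a := \inf \supp(\sigma^{+})$ (finite case; handle $-\infty$ by noting then $\sigma^{-}$ also unbounded below and a shift/truncation reduces to the finite case). Consider the function $b\mapsto \Phi(b):=\int_{[a,\infty)}(b-y)^{+}\,d\sigma(y)$ for $b>a$. We have $\Phi(a)=0$ and I claim $\Phi(b)>0$ for $b$ large. Indeed $\Phi(b) = \int_{[a,\infty)}(b-y)^{+}\,d\sigma^{+} - \int_{[a,\infty)}(b-y)^{+}\,d\sigma^{-}$. The first integral, since $\sigma^{+}$ is carried by $[a,\infty)$, equals $\int(b-y)^{+}d\sigma^+ \sim m\,b$ as $b\to\infty$ with $m=\sigma^+(\R)=\sigma^-(\R)$. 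The second is $\int_{[a,\infty)}(b-y)^{+}d\sigma^{-}(y) = \int(b-y)^{+}d\sigma^- - \int_{(-\infty,a)}(b-y)^+ d\sigma^- \sim m\,b - \int_{(-\infty,a)}(b-y)\,d\sigma^-(y)$, where the last integrand is eventually just $b-y$ for $b\ge a$. So $\Phi(b) \sim \int_{(-\infty,a)}(b-y)\,d\sigma^{-}(y) - C$ for constants coming from the barycenters; the leading term is $\sigma^{-}((-\infty,a))\cdot b$. If $\sigma^{-}((-\infty,a))>0$ this $\to+\infty$, giving $\Phi(b)>0$ for large $b$, hence the desired $b$. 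If $\sigma^{-}((-\infty,a))=0$: then both $\sigma^{\pm}$ are carried by $[a,\infty)$, so $\sigma$ is too; since $\sigma(\R)=0$ and $\sigma\neq 0$, the first-moment balance forces $\bary(\sigma^+)\neq\bary(\sigma^-)$, and because $a=\inf\supp(\sigma^+)\le\inf\supp(\sigma^-)$... if in fact $\inf\supp(\sigma^+)<\inf\supp(\sigma^-)$ then near $a$ the put function $p_{\sigma^+}$ dominates $p_{\sigma^-}$, done for small $b$; if $\inf\supp(\sigma^-)=a$ as well, one shows $\sigma^+$ must have ``earlier'' mass in a comparison sense using $\sigma(\R)=0$, which again makes $\Phi$ positive somewhere. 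The \textbf{main obstacle} is this degenerate subcase $\sigma^{-}((-\infty,a))=0$ together with coincident left endpoints — I expect to resolve it by applying the already-finished nondegenerate argument to the measure $\sigma$ shifted so as to expose that $\sigma^{+}$ has strictly more mass than $\sigma^{-}$ on some initial interval $[a,a+\delta)$, which holds because $a\in\supp(\sigma^{+})$ forces $\sigma^{+}([a,a+\delta))>0$ while $\sigma^{-}([a,a+\delta))$ can be made smaller by choosing $\delta$ small unless $a\in\supp(\sigma^-)$ with comparable density — and even then a careful limiting argument at $b\downarrow a$ using right-continuity works. Since the paper cites this as \cite[Lemma~5.4]{BeiglbockJuillet.12}, I would in the actual writeup simply invoke that reference and only sketch the above for the reader's convenience.
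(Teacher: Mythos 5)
The paper never proves this statement; it is imported verbatim as \cite[Lemma~5.4]{BeiglbockJuillet.12}, so your sketch has to stand on its own --- and it does not. The choice $a=\inf\supp(\sigma^{+})$ is simply the wrong one, and the ``degenerate subcase'' you flag is not a technicality to be patched but the place where the argument genuinely breaks. Take $\sigma=\lambda|_{[0,1]}-\delta_{0}$ with $\lambda$ Lebesgue measure: then $\sigma(\R)=0$, $\sigma^{+}=\lambda|_{[0,1]}$, $\sigma^{-}=\delta_{0}$, and $a:=\inf\supp(\sigma^{+})=0$ with $\sigma^{-}((-\infty,0))=0$, i.e.\ exactly your degenerate branch with coincident left endpoints. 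Here
$$
  \int (b-y)^{+}\1_{[0,\infty)}(y)\,d\sigma(y)\;=\;\int_{0}^{b\wedge1}(b-y)\,dy\;-\;b
  \;=\;\begin{cases} \tfrac{b^{2}}{2}-b<0, & 0<b\leq 1,\\[0.2em] -\tfrac12<0, & b\geq 1,\end{cases}
$$
so the quantity is negative for \emph{every} $b>0$, contradicting your claim that the balance $\sigma(\R)=0$ ``again makes $\Phi$ positive somewhere.'' Your fallback --- that $\sigma^{+}([a,a+\delta))>\sigma^{-}([a,a+\delta))$ for small $\delta$ because $a\in\supp(\sigma^{+})$ --- also fails here: $\sigma^{+}([0,\delta))=\delta<1=\sigma^{-}([0,\delta))$, and the limiting behaviour as $b\downarrow a$ gives $\Phi(b)/b\to-1$. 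The lemma is of course true for this $\sigma$, but only with a different $a$: any $a\in(0,1)$ works, since the indicator $\1_{[a,\infty)}$ then excludes the atom of $\sigma^{-}$.

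The counterexample points at the real mechanism, which your first-order asymptotics in $b$ cannot see. By Fubini, $\int (b-y)^{+}\1_{[a,\infty)}(y)\,d\sigma(y)=\int_{a}^{b}\sigma([a,t))\,dt$, so what is needed is a point $a\in\supp(\sigma^{+})$ from which $\sigma$ accumulates nonnegatively to the right, with strict positivity for $t$ in a set of positive Lebesgue measure. The natural candidate is an (essentially attained) minimizer of the distribution function $t\mapsto\sigma((-\infty,t))$, not the left endpoint of $\supp(\sigma^{+})$; the genuinely delicate step, entirely absent from your sketch, is showing that such a point can be chosen inside $\supp(\sigma^{+})$ (and dealing with a non-attained infimum). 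Your nondegenerate branch ($\sigma^{-}((-\infty,a))>0$, leading slope $\sigma^{-}((-\infty,a))\cdot b$) is correct modulo a finite-first-moment caveat, and the $\inf\supp(\sigma^{+})=-\infty$ issue is comparatively minor; but as written the proof does not close, and since the result is in any case quoted from Beiglb\"ock--Juillet, citing it without this sketch is the right call.
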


\begin{proof}[Proof of Theorem~\ref{th:geomCharactRP}\,(i).]
  Given $x\in\R$, we set $\mu_{x}:=\mu|_{(-\infty,x]}$ and denote by $\nu^{P}_{x}$ the second marginal of 
  $P|_{(-\infty,x]\times \R}$; that is, the image of $\mu_{x}$ under the transport $P$. Since $P$ is concentrated on $\Gamma$ and has the same mass as~$\rP$, it suffices to show that 
  \begin{equation}\label{eq:partialMarginalEquality}
  \nu^{P}_{x}=\nu^{\rP}_{x}
  \end{equation}
  for all $x\in\Gamma^{1}$.
  In a first step we will show that \eqref{eq:partialMarginalEquality} holds for all $x \in \Gamma^1 \cap M$. In view of Remark \ref{rk:leftClosed} it then follows that 
  \begin{equation}\label{eq:equalOnM}
    P|_{M\times\R} = \rP|_{M\times\R}.
  \end{equation}
  After that we will show that \eqref{eq:partialMarginalEquality} holds for all $x \in \Gamma^1$ if $M = \emptyset$, and then the latter assumption will be
  removed in a final step.

  Let us first establish an auxiliary result that will be used in Steps~1 and~2. If \eqref{eq:partialMarginalEquality} is violated for some $x \in \Gamma^1$, then the signed measure 
  $$
    \sigma:=\nu^{\rP}_{x}-\nu^{P}_{x}
  $$
  is nontrivial and we can find $a\in\supp(\sigma^{+})$ and $b>a$ as in Lemma~\ref{le:signedSupp}.   
  Note that $\sigma^{+}\leq \nu-\nu^{P}_{x}$ and that $\nu-\nu^{P}_{x}$ is the image of $\mu|_{(x,\infty)}$ under $P$. Hence, $a\in\supp(\nu-\nu^{P}_{x})$ and as $P(\Gamma)=1$, there exists a sequence of points
  \begin{equation}\label{eq:xnApprox}
    (x_{n},a_{n})\in\Gamma \quad \mbox{with $x<x_{n}$ and $a_{n}\to a$}.
  \end{equation}
  
  \vspace{.5em}
    
  \emph{Step 1: Equality of the martingale parts.} We argue by contradiction and assume that there exists $x \in \Gamma^1 \cap M$ violating \eqref{eq:partialMarginalEquality}. We first establish that 
  \begin{equation}\label{eq:meansEqual}
    \nu^{\rP}_{x}\leq_{c} \nu^{P}_{x} \quad\mbox{and in particular}\quad \bary(\nu^{\rP}_{x})=\bary(\nu^{P}_{x}).
  \end{equation}
  Indeed, in view of $x\in M$, Remark~\ref{rk:leftClosed} shows that $(-\infty,x]\cap\Gamma^{1}\subseteq M$ and thus $P|_{(-\infty,x]\times\R}$ is a martingale. Therefore, $\bary(\nu^{P}_{x}) = \bary(\mu_{x})$, and moreover $\bary(\mu_{x})\geq \bary(\nu^{\rP}_{x})$ since $\rP$ is a supermartingale. Thus, $\bary(\nu^{P}_{x})\geq\bary(\nu^{\rP}_{x})$. On the other hand, $P\in\cS(\mu,\nu)$ implies $\nu^{P}_{x}\in\casts{\mu_{x}}{\nu}$ and hence $\nu^{\rP}_{x}\leq_{cd} \nu^{P}_{x}$ by the minimality property defining $\rP$; cf.\ Theorem~\ref{th:canonicalCouplings}. In view of Proposition~\ref{pr:convexOrder}, these two facts imply~\eqref{eq:meansEqual}.
  Next, we show that
  \begin{equation}\label{eq:leftConcentrated}
    \Gamma_{t}\cap (a,\infty) = \emptyset,\quad t\leq a\wedge x,\quad\mbox{where}\quad \Gamma_{t}:=\{y\in\R:\, (t,y)\in\Gamma\}.
  \end{equation}  
  Indeed, let $t\leq a\wedge x$ and suppose that $\Gamma_{t}\cap (a,\infty) \neq \emptyset$. Then in particular $\Gamma_{t}\cap (t,\infty) \neq \emptyset$ and thus nondegeneracy, more precisely Definition~\ref{de:nondegenerate}\,(i), yields that $\Gamma_{t}\cap (-\infty,t) \neq \emptyset$ and hence $\Gamma_{t}\cap (-\infty,a) \neq \emptyset$. But now we obtain a contradiction to the second-order left-monotonicity of $\Gamma$ by using $(x_{n},a_{n})$ from~\eqref{eq:xnApprox} for $(x',y')$ and $t$ for $x$ in Definition~\ref{de:secondOrderMonotone}, for some large enough $n$.
  
  \vspace{.5em}
  
    \emph{Case (a): $x\in M$ and $x\leq a$.} As $x\leq a$, \eqref{eq:leftConcentrated} applies to all $t\leq x$ and hence $P(\Gamma)=1$ implies that $\nu^{P}_{x}$ is concentrated on $(-\infty,a]$. In view of~\eqref{eq:meansEqual} and  Lemma~\ref{le:supportsOrdered}, it follows that $\nu^{\rP}_{x}$ is concentrated on $(-\infty,a]$ as well, and $\nu^{P}_{x}(\{a\})\geq \nu^{\rP}_{x}(\{a\})$. These three facts imply
  $\int (b-y)^{+}\1_{[a,\infty)}(y)\,\nu^{\rP}_{x}(dy) 
       =(b-a) \nu^{\rP}_{x}(\{a\})$ is dominated by $(b-a) \nu^{P}_{x}(\{a\}) 
       = \int (b-y)^{+}\1_{[a,\infty)}(y)\,\nu^{P}_{x}(dy)$;  
  that is, $\int (b-y)^{+}\1_{[a,\infty)}(y)\,\sigma(dy) \leq 0$. This contradicts the choice of~$a$ and~$b$; cf.\ Lemma~\ref{le:signedSupp}.
  
  \vspace{.5em}
  
  \emph{Case (b): $x\in M$ and $a<x$.} Since $a<x$, we can argue exactly as below~\eqref{eq:meansEqual} to obtain that
    \begin{equation}\label{eq:meansEqual2}
    \nu^{\rP}_{a}\leq_{c} \nu^{P}_{a} \quad\mbox{and in particular}\quad \bary(\nu^{\rP}_{a})=\bary(\nu^{P}_{a}).
  \end{equation}
  Moreover, \eqref{eq:leftConcentrated} and $P(\Gamma)=1$ now imply that $\nu^{P}_{a}$ is concentrated on $(-\infty,a]$, and then Lemma~\ref{le:supportsOrdered} shows that 
  \begin{equation}\label{eq:nuPConcR}
    \nu^{P}_{a}, \; \nu^{\rP}_{a} \mbox{ are concentrated on }(-\infty,a]\quad\mbox{and}\quad \nu^{\rP}_{a}(\{a\})\leq \nu^{P}_{a}(\{a\}).
  \end{equation}  
    
  Next, we establish that $\nu^{P}_{x}-\nu^{P}_{a}$ is concentrated on $[a,\infty)$. Let $a<t\leq x$ be such that $\Gamma_{t}\neq\emptyset$. Since $x\in M$, Remark~\ref{rk:leftClosed} yields that $t\in M$ and now nondegeneracy, cf.\ Definition~\ref{de:nondegenerate}\,(ii'), shows that $\Gamma_{t} \cap [t,\infty)\neq \emptyset$. Then, using~\eqref{eq:xnApprox} and the second-order left-monotonicity of $\Gamma$ yield that $\Gamma_{t}\cap (-\infty,a)=\emptyset$, and therefore, $\nu^{P}_{x}-\nu^{P}_{a}$ is indeed concentrated on $[a,\infty)$. We shall prove below that 
  \begin{equation}\label{eq:auxConvexOrder}
    \nu^{\rP}_{x}-\nu^{\rP}_{a} \leq_{cd} \nu^{P}_{x}-\nu^{P}_{a}
  \end{equation}
  and thus Lemma~\ref{le:supportsOrdered} shows that $\nu^{\rP}_{x}-\nu^{\rP}_{a}$ is concentrated on $[a,\infty)$ as well. Using these facts, \eqref{eq:nuPConcR} and that $y\mapsto (b-y)^{+}\1_{[a,\infty)}(y)$ is convex decreasing on $[a,\infty)$, yields that
  $
    \int (b-y)^{+}\1_{[a,\infty)}(y)\,\nu^{\rP}_{x}(dy) 
       = \int (b-y)^{+}\1_{[a,\infty)}(y)\,(\nu^{\rP}_{x}-\nu^{\rP}_{a})(dy) + (b-a) \nu^{\rP}_{a}(\{a\})  
       $ is dominated by $\int (b-y)^{+}\1_{[a,\infty)}(y)\,(\nu^{P}_{x}-\nu^{P}_{a})(dy) + (b-a) \nu^{P}_{a}(\{a\})  
       = \int (b-y)^{+}\1_{[a,\infty)}(y)\,\nu^{P}_{x}(dy).
  $
  This again contradicts the choice of~$a$ and~$b$; cf.\ Lemma~\ref{le:signedSupp}.

  It remains to show~\eqref{eq:auxConvexOrder}. Indeed, using again that $\nu^{P}_{x}-\nu^{P}_{a}$ is concentrated on $[a,\infty)$ as well as~\eqref{eq:nuPConcR}, we have
  $$
    \nu_x^P - \nu_a^P = (\nu_x^P - \nu_a^P)|_{[a,\infty)} \leq (\nu - \nu_a^P)|_{[a,\infty)} \leq (\nu - \nu_a^\rP)|_{[a,\infty)} \leq \nu - \nu_a^\rP.
  $$
  On the other hand, we have $\mu|_{(a,x]} \leq_{cd} \nu_x^P - \nu_a^P$ by the supermartingale property, and thus
  $
    \nu_x^P - \nu_a^P \in \smallcasts{\mu|_{(a,x]}}{\nu - \nu_a^\rP}.
  $
  Since $\nu_{x}^{\rP} - \nu_{a}^{\rP} = \shadow{\mu|_{(a,x]}}{\nu - \nu_{a}^{\rP}}$ is the minimal element of the above set by the definition of $\rP$ and the additivity of the shadow (Proposition~\ref{pr:shadowAdditive}), we conclude that~\eqref{eq:auxConvexOrder} holds, and that completes the proof of Step~1.

   \vspace{.5em}
    
  \emph{Step 2: $M = \emptyset$.} Again, suppose there exists $x \in \Gamma^1$ such that
  \eqref{eq:partialMarginalEquality} is violated. Define
  $
    y_{x}:=\inf \Gamma_{x}.
  $
  If $(x',y)\in\Gamma$ and $x'<x$, first-order right-monotonicity implies that $y\geq y_{x}$ (since $M=\emptyset$), and the latter holds trivially for $x'=x$. Conversely, if $(x',y)\in\Gamma$ and $x< x'$, first-order right-monotonicity implies that $y\leq y_{x}$. As a result, $P$ is concentrated on the set
  $
    (-\infty,x]\times[y_{x},\infty) \;\cup\;  (x,\infty) \times (-\infty,y_{x}]
  $
  and as $P\in\cS(\mu,\nu)$, this implies that
  $
    \nu^{P}_{x} = \nu|_{(y_{x},\infty)} + k\delta_{y_{x}}$ where $k:=\mu((-\infty,x])- \nu((y_{x},\infty)).
  $ 
  This is the minimal element of $\smallcasts{\mu|_{(-\infty,x]}}{\nu}$ by Lemma~\ref{le:orderWhenOutOfInterval}, and thus $\nu^{P}_{x}=\nu^{\rP}_{x}$.
  
  \vspace{.5em}
    
  \emph{Step 3: $M \neq \emptyset$.}
  In the general case, let $\mu_{M}=\mu|_{M}$ and let $\nu^{P}_{M}$ denote the second marginal of $P|_{M\times\R}$. We note that $x\notin M$ yields $M\subseteq (-\infty,x]$ by Remark~\ref{rk:leftClosed} and hence $\mu_{M}\leq \mu_{x}$.
  
  We may apply the result proved in Step~2 to $\Gamma'=\Gamma\cap (M^{c}\times\R)$, $M'=\emptyset$ and the marginals $\mu'=\mu-\mu_{M}$, $\nu'=\nu-\nu_{M}$ to deduce that $P|_{M^{c}\times\R}$ is the Increasing Supermartingale Transport from $\mu'$ to $\nu'$. In particular,
  \begin{equation}\label{eq:auxReductionM}
    \shadow{\mu_x - \mu_M}{\nu - \nu_M^P} = \nu^{P}_{x} - \nu_M^P.
  \end{equation}
  Observing that~\eqref{eq:equalOnM} implies $\nu_M^P = \nu_M^{\rP} = \shadow{\mu_M}{\nu}$, the additivity of the shadow (Proposition~\ref{pr:shadowAdditive}) shows that
  $\nu_x^\rP = \shadow{\mu_x}{\nu} 
    = \nu_M^P + \shadow{\mu_x - \mu_M}{\nu - \nu_M^P}$
  which equals $\nu^{P}_{x}$ by~\eqref{eq:auxReductionM}. As $x \notin M$ was arbitrary, the proof is complete. %
\end{proof}

\begin{proof}[Proof of Theorem~\ref{th:geomCharactRP}\,(ii).]
  It will be convenient to reverse the notation with respect to the preceding proof: given $x\in\R$, we set $\mu_{x}:=\mu|_{[x,\infty)}$ and let~$\nu^{P}_{x}$ be the second marginal of $P|_{[x,\infty)\times \R}$.
  Again, we assume for contradiction that there exists $x\in\Gamma^{1}$ such that $\nu^{P}_{x}\neq\nu^{\lP}_{x}$, so that the signed measure 
  $
    \sigma:=\nu^{\lP}_{x}-\nu^{P}_{x}
  $
  is nontrivial and we can find $a\in\supp(\sigma^{+})$ and $a<b$ as in Lemma~\ref{le:signedSupp}. Similarly as in~\eqref{eq:xnApprox}, there exist
  \begin{equation}\label{eq:xnApproxL}
    (x_{n},a_{n})\in\Gamma \quad \mbox{with $x_{n}<x$ and $a_{n}\to a$}.
  \end{equation}
  Moreover, $P\in\cS(\mu,\nu)$ implies that $\nu^{P}_{x}\in\casts{\mu_{x}}{\nu}$ and hence, by minimality,
  \begin{equation}\label{eq:auxOrderL}
    \nu^{\lP}_{x}\leq_{cd} \nu^{P}_{x}.
  \end{equation}
     
  \emph{Case 1a: $x\in M$ and $a\leq x$.} We first show that 
  \begin{equation}\label{eq:nuPConcL1}
    \nu^{P}_{x}\quad\mbox{is concentrated on }[a,\infty).
  \end{equation}
  Indeed, let $t\in\Gamma^{1}$ be such that $t>x$. Suppose that $\Gamma_{t}\cap (-\infty,a)\neq\emptyset$, where $\Gamma_{t}:=\{y\in\R:\, (t,y)\in\Gamma\}$. If $t\in M$, nondegeneracy yields that $\Gamma_{t}\cap [t,\infty)\neq\emptyset$ and since $a\leq x<t$, \eqref{eq:xnApproxL} contradicts the second-order right-monotonicity of $\Gamma$. Hence, $t\notin M$. Since $x\in M$, nondegeneracy also yields that $\Gamma_{x}\cap [x,\infty)\neq\emptyset$. But now $\Gamma_{t}\cap (-\infty,a)\neq\emptyset$ and $a\leq x$ contradict first-order left-monotonicity as $t\notin M$. As a result, $\Gamma_{t}\cap (-\infty,a)=\emptyset$. To extend this to $t=x$, note that in this case we have $t\in M$. Thus, if $\Gamma_{t}\cap (-\infty,a)\neq\emptyset$, the nondegeneracy of Definition~\ref{de:nondegenerate}\,(ii) and~\eqref{eq:xnApproxL} contradict second-order right-monotonicity. We have shown that $\Gamma_{t}\cap (-\infty,a)=\emptyset$ for all $t\geq x$, and~\eqref{eq:nuPConcL1} follows. In view of~\eqref{eq:auxOrderL} and Lemma~\ref{le:supportsOrdered}, we conclude that
  \begin{equation}\label{eq:nuPConcL2}
    \nu^{\lP}_{x} \mbox{ is concentrated on }[a,\infty)\quad\mbox{and}\quad \nu^{\lP}_{x}(\{a\})\leq \nu^{P}_{x}(\{a\}).
  \end{equation}
  Since $(b-y)^{+}$ is convex and decreasing, \eqref{eq:auxOrderL}, \eqref{eq:nuPConcL1} and~\eqref{eq:nuPConcL2} then yield
  $\int (b-y)^{+}\1_{[a,\infty)}(y)\,\nu^{\lP}_{x}(dy) \leq \int (b-y)^{+}\1_{[a,\infty)}(y)\,\nu^{P}_{x}(dy)$
  which contradicts the choice of~$a$ and~$b$; cf.\ Lemma~\ref{le:signedSupp}.
 
 \vspace{.5em}
 
 \emph{Case 1b: $x\in M$ and $x<a$.} Let $t\geq a$ and suppose that $\Gamma_{t}\cap (-\infty,a)\neq\emptyset$. 
  If $t\in M$, nondegeneracy yields that $\Gamma_{t}\cap (t,\infty)\neq\emptyset$ and since $x<a\leq t$, \eqref{eq:xnApproxL} contradicts the second-order right-monotonicity of $\Gamma$. Hence, $t\notin M$, but then  $\Gamma_{t}\cap (-\infty,a)\neq\emptyset$ and \eqref{eq:xnApproxL} contradict first-order left-monotonicity. As a result, $\Gamma_{t}\cap (-\infty,a)=\emptyset$ for all $t\geq a$ and hence $\nu^{P}_{a}$ is concentrated on $[a,\infty)$. Since 
  \begin{equation}\label{eq:auxOrderL2}
    \nu^{\lP}_{a} \leq_{cd}\nu^{P}_{a}
  \end{equation}  
   can be argued as in~\eqref{eq:auxOrderL}, Lemma~\ref{le:supportsOrdered} then yields that
  \begin{equation}\label{eq:nuPConcL3}
    \nu^{P}_{a}, \; \nu^{\lP}_{a} \mbox{ are concentrated on }[a,\infty)\quad\mbox{and}\quad \nu^{\lP}_{a}(\{a\})\leq \nu^{P}_{a}(\{a\}).
  \end{equation}  
  Next, we show that symmetrically,
  \begin{equation}\label{eq:nuPConcL4}
    \nu^{P}_{x}-\nu^{P}_{a}, \; \nu^{\lP}_{x}-\nu^{\lP}_{a} \mbox{ are concentrated on }(-\infty,a]
  \end{equation}  
  \begin{equation}\label{eq:nuPConcL5}
    \mbox{and}\quad (\nu^{\lP}_{x}-\nu^{\lP}_{a})(\{a\})\leq (\nu^{P}_{x}-\nu^{P}_{x})(\{a\}).
  \end{equation}      
Indeed, let $t\in\Gamma^{1}$ be such that $x\leq t<a$ and suppose that $\Gamma_{t}\cap (a,\infty)\neq\emptyset$. Since $\Gamma_{t}\cap (-\infty,t]\neq\emptyset$ by nondegeneracy, \eqref{eq:xnApproxL} contradicts second-order right-monotonicity. Thus, $\Gamma_{t}\cap (a,\infty)=\emptyset$ and $\nu^{P}_{x}-\nu^{P}_{a}$ is concentrated on $(-\infty,a]$. In order to conclude~\eqref{eq:nuPConcL4} and~\eqref{eq:nuPConcL5} via Lemma~\ref{le:supportsOrdered}, it remains to show that $\nu^{\lP}_{x}-\nu^{\lP}_{a}\leq_{c}\nu^{P}_{x}-\nu^{P}_{a}$. Indeed, let again $t\in\Gamma^{1}$ be such that $x\leq t<a$. If $t\notin M$, then $\Gamma_{t}\cap (-\infty,t]\neq\emptyset$ and \eqref{eq:xnApproxL} contradict first-order left-monotonicity; thus $t\in M$. As a result, $P|_{[x,a)\times \R}$ is a martingale and $\bary(\nu^{P}_{x}-\nu^{P}_{a})=\bary(\mu_{x}-\mu_{a})$. Hence, we only have to show that
  \begin{equation}\label{eq:auxDiffOrder}
    \nu^{\lP}_{x}-\nu^{\lP}_{a}\leq_{cd}\nu^{P}_{x}-\nu^{P}_{a}.
  \end{equation}
  Using that $\nu^{P}_{x}-\nu^{P}_{a}$ is concentrated on $(-\infty,a]$ as well as~\eqref{eq:nuPConcL3}, we have
  $
    \nu_x^P - \nu_a^P = (\nu_x^P - \nu_a^P)|_{(-\infty,a]} \leq (\nu - \nu_a^P)|_{(-\infty,a]} \leq (\nu - \nu_a^\lP)|_{[a,\infty)} \leq \nu - \nu_a^\lP.
  $
  On the other hand, $\mu|_{[x,a)} \leq_{cd} \nu_x^P - \nu_a^P$ by the supermartingale property of~$P$, and thus~\eqref{eq:auxDiffOrder} follows from the minimality of $\lP$ and Proposition~\ref{pr:shadowAdditive}. This completes the proof of~\eqref{eq:nuPConcL4} and~\eqref{eq:nuPConcL5}. 
  
  Finally, we apply~\eqref{eq:auxOrderL2}--\eqref{eq:nuPConcL5} to find that
  $\int (b-y)^{+}  \1_{[a,\infty)}(y)\, \nu^{\lP}_{x}(dy) 
     =  \int (b-y)^{+}\1_{[a,\infty)}(y)\, (\nu^{\lP}_{x}-\nu^{\lP}_{a})(dy) +  \int (b-y)^{+}\1_{[a,\infty)}(y)\, \nu^{\lP}_{a}(dy)$ is equal to 
     $(b-a) (\nu^{\lP}_{x}-\nu^{\lP}_{a})(\{a\}) +  \int (b-y)^{+}\, \nu^{\lP}_{a}(dy) 
    \leq (b-a) (\nu^{P}_{x}-\nu^{P}_{a})(\{a\}) +  \int (b-y)^{+}\, \nu^{P}_{a}(dy) 
     = \int (b-y)^{+}\1_{[a,\infty)}(y)\, \nu^{P}_{x}(dy)
  $
  which again contradicts the choice of~$a$ and~$b$.
  
  \vspace{.5em}
    
  \emph{Case 2: $x\notin M$.} Define again $y_{x}=\inf\Gamma_{x}$; note that $y_{x}\leq x$ by nondegeneracy. Let $t\in\Gamma^{1}$ be such that $t<x$. If $\Gamma_{t}\cap (y_{x},\infty)\neq \emptyset$, then as $x\notin M$, the definition of $y_{x}$ yields a contradiction to first-order left-monotonicity. On the other hand, let $x< t$ and assume that $\Gamma_{t}\cap (-\infty,y_{x})\neq \emptyset$. If $t\notin M$, the construction of $y_{x}$ again contradicts first-order left-monotonicity; thus $t\in M$. But then nondegeneracy shows that $\Gamma_{t}\cap[t,\infty)\neq\emptyset$ and the definition of $y_{x}$ yields a contradiction to second-order right-monotonicity. Clearly, $\Gamma_{x}\subseteq [y_{x},\infty)$, and we have established that $P$ must be concentrated on the union of 
  $
    (-\infty,x)\times (-\infty,y_{x}]$ and $[x,\infty)\times [y_{x},\infty).
  $
  Since $P\in\cS(\mu,\nu)$, this implies that
  $
    \nu^{P}_{x} = \nu|_{(y_{x},\infty)} + k\delta_{y_{x}}$, where $k:=\mu([x,\infty))- \nu((y_{x},\infty)).
  $
  This is the minimal element of $\llbracket \mu|_{[x,\infty)},\nu\rrbracket$, and thus $\nu^{P}_{x}=\nu^{\lP}_{x}$, completing the proof. We remark that the proof is shorter than in~(i) due to the asymmetry of the supermartingale constraint.
\end{proof}

\section{Regularity of Spence--Mirrlees Functions}\label{se:regularityOfSpenceMirrlees}

A supermartingale Spence--Mirrlees function $f$ need not be (semi)continu\-ous. For instance, if $f(x,y)=\varphi(x)\psi(y)$ for a strictly increasing function $\varphi$ and a strictly convex and decreasing function $\psi$, then $f$ is supermartingale Spence--Mirrlees but clearly $\varphi$ need not be upper or lower semicontinuous. In general, $f$ may have a continuum of various types of discontinuities.

However, we show in Proposition~\ref{pr:spenceMirrlessCont} below that a measurable second-order Spence--Mirrlees function is automatically continuous for a finer topology on $\R^{2}$, and this topology will be coarse enough to preserve the weak compactness of $\cS(\mu,\nu)$. Thus, we can still deduce the existence of optimal transports (Lemma~\ref{le:primalExistence}) for upper semicontinuous reward functions $f$, and in particular for supermartingale Spence--Mirrlees functions. That will allow us to apply the monotonicity principle of Theorem~\ref{th:monotonicityPrinciple}.

Before stating these results, we introduce a relaxed version of the Spence--Mirrlees conditions of Definition~\ref{de:spenceMirrlees}, where increase and convexity are required in the non-strict sense---we have reserved the shorter name for the object that appears more frequently.

\begin{definition}\label{de:relaxedSpenceMirrlees}
  We call $f : \R^2 \to \R$ \emph{relaxed first-order Spence--Mirrlees} if
  $$
    f(x_{2},\cdot) - f(x_{1},\cdot)\quad\mbox{is increasing for all } x_{1}<x_{2},
  $$
  \emph{relaxed second-order Spence--Mirrlees} if
  $$
    f(x_{2},\cdot) - f(x_{1},\cdot)\quad\mbox{is convex for all } x_{1}<x_{2},
  $$
  and \emph{relaxed supermartingale Spence--Mirrlees} if $f$ is relaxed second-order Spence--Mirrlees and $-f$ is relaxed first-order Spence--Mirrlees.
\end{definition}

\begin{proposition}\label{pr:spenceMirrlessCont}
  Let $f:\R^{2}\to\R$ be Borel and relaxed second-order Spence--Mirrlees. There is a Polish topology $\tau$ on $\R$ such that $f$ is $\tau\otimes\tau$-continuous. Moreover, $\tau$ refines the Euclidean topology and induces the same Borel sets.
\end{proposition}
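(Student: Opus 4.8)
The plan is to exploit the structure that the relaxed second-order Spence--Mirrlees condition imposes on ``slices'' of $f$. For each pair $x_1<x_2$, the function $y\mapsto f(x_2,y)-f(x_1,y)$ is convex, hence continuous off a countable set and, more importantly, determined (up to an additive constant) by its second derivative measure, which is a nonnegative Radon measure. Fixing a countable dense set $D\subseteq\R$ and running over pairs from $D$, I would first build a countable family of convex functions $g_{x_1,x_2}(\cdot)$, $x_1,x_2\in D$, together with the single ``base slice'' $f(x_0,\cdot)$ for one fixed $x_0\in D$. Every such convex function of one variable is continuous for the order topology on $\R$ after we throw in, as isolated-from-one-side or split points, the at most countably many points of nondifferentiability. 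The key realization is that \emph{all} of these exceptional points, over the whole countable family, form a countable set $C\subseteq\R$; refining the Euclidean topology on $\R$ by declaring every point of $C$ to be (say) clopen — more precisely, taking the topology generated by the Euclidean open sets together with $\{c\}$ for $c\in C$ — yields a Polish topology $\tau$ (it is the topology of a countable disjoint union of the discrete space $C$ and the subspace $\R\setminus C$, or one can simply note that adding countably many clopen points to a Polish space keeps it Polish and second countable) that refines the Euclidean topology, has the same Borel $\sigma$-algebra (since $C$ is Borel), and makes each $g_{x_1,x_2}$ and $f(x_0,\cdot)$ continuous.

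Next I would recover $\tau\otimes\tau$-continuity of $f$ itself on $D\times\R$. For $x\in D$ write $f(x,\cdot)=f(x_0,\cdot)+\big(f(x,\cdot)-f(x_0,\cdot)\big)$; when $x>x_0$ the bracket is one of our $g$'s, and when $x<x_0$ it is $-g_{x,x_0}$, which is concave but still $\tau$-continuous since $\tau$-continuity is insensitive to sign and to the split at points of $C$; when $x=x_0$ the bracket vanishes. So each slice $f(x,\cdot)$ with $x\in D$ is $\tau$-continuous. To get joint continuity I would show the slices depend continuously on $x$ in a uniform-on-compacts sense: for $x<x'$ in $D$ and any $y$, the increment $f(x',y)-f(x,y)$ is squeezed between $f(x',z)-f(x,z)$ values for $z$ near $y$ using convexity/monotonicity of the difference in $y$, and crucially one controls the $x$-modulus by the $\tau$-continuous base slice together with a fixed far-apart pair $f(x_2^*,\cdot)-f(x_1^*,\cdot)$ that dominates the relevant second-derivative masses on compacts. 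This is exactly the type of estimate used to prove the Spence--Mirrlees regularity lemmas in the martingale case, so I expect it to go through; the conclusion is that $(x_n,y_n)\to(x,y)$ in $\tau\otimes\tau$ with $x_n,x\in D$ forces $f(x_n,y_n)\to f(x,y)$, and then an ordinary density argument extends continuity to all of $\R\times\R$, possibly after enlarging $C$ (hence refining $\tau$) by the countably many further points needed so that the candidate extension is single-valued and continuous at every $x\notin D$.

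The main obstacle, and the place I would spend the most care, is precisely this last extension step: a priori $f(x,\cdot)$ for $x\notin D$ need only be a pointwise limit of $\tau$-continuous slices, and such a limit can fail to be $\tau$-continuous at countably many new points. The resolution is that the set of ``bad'' $x$'s is itself handled by the same trick applied in the first variable — one symmetrizes the argument, running convex differences in $x$ for fixed $y\in D$ as well, collecting a countable bad set $C'\subseteq\R$ in the $x$-direction, and then takes $\tau$ to be the Euclidean topology refined by all singletons in $C\cup C'$. One must then check consistency: that the $y$-refinement and $x$-refinement can be combined into one topology on $\R$ making $f$ jointly continuous, which works because $C\cup C'$ is still countable and the diagonal interplay only ever involves these countably many exceptional coordinates. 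A secondary technical point to verify carefully is that $\tau$ so defined is genuinely Polish — it is, since $\R$ with countably many clopen points adjoined is homeomorphic to a clopen partition into a countable discrete piece and a (still Polish, being $G_\delta$ in $\R$) remainder, hence completely metrizable and separable — and that it induces the Euclidean Borel $\sigma$-algebra, which is immediate because the generating sets added are Borel. Once these bookkeeping items are settled, $\tau\otimes\tau$-continuity of $f$ and the compatibility claims in the statement follow.
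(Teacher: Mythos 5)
Your construction of the topology $\tau$ cannot work, because you have misidentified where the discontinuities of $f$ live. Adding countably many clopen singletons $\{c\}$, $c\in C$, to the Euclidean topology does not change the neighbourhood filter at any point outside $C$ (a basic $\tau$-open set containing such a point is a finite intersection of Euclidean opens), so for instance $\1_{\Q}$ remains discontinuous at every irrational even after isolating all rationals. Now observe that the relaxed second-order Spence--Mirrlees condition imposes \emph{no} constraint on $x\mapsto f(x,y)$ for fixed $y$: taking $f(x,y)=\varphi(x)$ with $\varphi$ an arbitrary Borel function, every difference $f(x_2,\cdot)-f(x_1,\cdot)$ is constant, hence convex. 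So the functions you must regularize in the $x$-direction (and the single base slice $f(x_0,\cdot)$, which is likewise an arbitrary Borel function of $y$) are completely general Borel functions, and no topology of the form ``Euclidean plus countably many isolated points'' can render them continuous. The tool that is actually needed here is the change-of-topology theorem of descriptive set theory (Kechris, Theorem~13.11 and Lemma~13.3): for any countable family of Borel functions on a Polish space there is a \emph{finer Polish topology with the same Borel sets} making them all continuous --- but that topology is in general far from being the Euclidean one with countably many points split off. Conversely, your worry about the $y$-direction is unfounded: a finite convex function on $\R$ is automatically continuous, so the kinks of the $g_{x_1,x_2}$ never create discontinuities and require no refinement at all.

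The same misconception sinks the extension step. Since $f(\cdot,y)$ is an arbitrary Borel function of $x$, its values off the dense set $D$ are in no way determined by, or approximable from, its values on $D$; and the ``symmetrized'' argument running convex differences in the first variable is unavailable because the hypothesis gives no convexity in $x$. The paper's proof avoids any density argument in $x$ altogether: after normalizing so that $f(0,\cdot)=0$ and $\partial_y f(\cdot,0)=0$, each slice $f(x,\cdot)$ becomes convex (for $x\ge 0$) or concave (for $x\le 0$), with $\sup_{x\in[-m,m]}\Lip(f(x,\cdot)|_{[-m,m]})$ finite because the boundary slopes are monotone in $|x|$ by the Spence--Mirrlees condition. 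Piecewise-affine interpolation in $y$ along dyadic grids then converges to $f$ uniformly on compacts, and the only Borel functions that need to be made continuous by the Kechris refinement are the countably many $x$-sections $f(\cdot,y^n_k)$ at the grid points (plus the two normalizing functions). If you want to salvage your approach, you must replace the ``isolate countably many points'' device by the genuine change-of-topology theorem and drop the density/extension step in favour of a uniform approximation of the above kind.
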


\begin{proof}
  We begin by constructing the functions $f_{n}$; the topology will be defined in the last step.
  
  \emph{Step 1: Regularity in $y$.} 
  We first suppose that $f$ vanishes along the $y$-axis,
  \begin{equation}\label{eq:contApproxZero}
    f(0,y)=0,\quad y\in\R.
  \end{equation}
  Under this hypothesis, the second-order Spence--Mirrlees condition implies
  \begin{equation}\label{eq:contApproxGeom}
    \begin{cases}
      f(x,\cdot)\mbox{ is convex},&  x\geq0,\\
      f(x,\cdot)\mbox{ is concave},& x\leq0.
    \end{cases}
  \end{equation}
  Therefore,  $y\mapsto f(x,y)$ admits a finite left derivative $\partial_{y}f(x,0)$ at $y=0$. We impose the further hypothesis that
  \begin{equation}\label{eq:contApproxZeroDeriv}
    \partial_{y}f(x,0)=0,\quad x\in\R.
  \end{equation}  
  Since $y\mapsto f(x,y)$ is convex or concave, its restriction to a compact interval $K_{m}=[-m,m]$ is Lipschitz continuous with some optimal Lipschitz constant $\Lip (f(x,\cdot)|_{K_{m}})<\infty$. More precisely, ~\eqref{eq:contApproxGeom} and \eqref{eq:contApproxZeroDeriv} imply that the optimal constant is the supremum of the absolute slopes of the tangents at the endpoints $y=\pm m$. The second-order Spence--Mirrlees condition implies that the absolute slopes are increasing in $|x|$; in particular, 
  \begin{equation}\label{eq:uniformLipschitzConst}
    \sup_{x\in K_{m}} \Lip (f(x,\cdot)|_{K_{m}})  = \sup_{x =\pm m} \Lip (f(x,\cdot)|_{K_{m}}) <\infty.
  \end{equation}  
    
   \emph{ Step 2: Approximation.}
   Fix $n\in\N$, let $y^{n}_{k}=2^{-n}k$ for $k\in\Z$ and let $f_{n}(x,\cdot)$ be the continuous, piecewise affine approximation to $f(x,\cdot)$ along this grid; that is, for $y^{n}_{k}\leq y< y^{n}_{k+1}$ we define
  \begin{equation}\label{eq:contApproxFn}
    f_{n}(x,y) = \lambda f(x,y^{n}_{k}) + (1-\lambda)f(x,y^{n}_{k+1}), \quad \lambda := 2^{n}(y^{n}_{k+1}-y).
  \end{equation}
  We then have $|f_{n}(x,y)-f(x,y)|\leq 2^{-n}L$ for all $y\in K_{m}$ if $L$ is a Lipschitz constant for $f(x,\cdot)$ on $K_{m}$. In view of~\eqref{eq:uniformLipschitzConst}, this shows that 
  \begin{equation}\label{eq:contApproxUnifConv}
   f_{n}\to f\quad \mbox{uniformly on $K_{m}\times K_{m}$},\quad \mbox{for all $m\in\N$}.
  \end{equation}
  
  \emph{Step 3: Refining the Topology.} Next, we introduce the topology $\tau$. The basic idea here is that if $\varphi$ is a real function with a single discontinuity at $y_{0}\in\R$, we can change the topology on $\R$ by declaring $y_{0}$ an isolated point and then $\varphi$ becomes continuous. More generally, \cite[Theorem~13.11, Lemma~13.3]{Kechris.95} show that given a countable family of Borel functions on $\R$, there exists a Polish topology $\tau\subseteq \cB(\R)$ which renders these functions continuous and refines the Euclidean topology. In particular, we can find $\tau$ such that $f(\cdot,y^{n}_{k})$ is $\tau$-continuous for all $n,k$. As $\tau$ refines the Euclidean topology, it readily follows that the functions $f_{n}$ defined in~\eqref{eq:contApproxFn} are $\tau\otimes\tau$-continuous. But now~\eqref{eq:contApproxUnifConv} yields that $f$ is continuous as well (note that since $\tau$ refines the Euclidean topology, any $\tau\otimes\tau$-neighborhood contains a bounded one).
  
  It remains to remove the hypotheses~\eqref{eq:contApproxZero} and~\eqref{eq:contApproxZeroDeriv}. The above shows that the claim holds for 
  $
    \tilde{f}(x,y):=f(x,y)-f(0,y) - (\partial_{y} [f(x,y)-f(0,y)]_{y=0}) y;
  $
  note that $\tilde{f}$ is again relaxed second-order Spence--Mirrlees. We can further refine~$\tau$ such that the two Borel functions subtracted on the right-hand side are $\tau$-continuous, and then the result for $f$ follows.
\end{proof}

The preceding result leads to the existence of optimal transports.

\begin{lemma}\label{le:primalExistence}
  Let $\mu\leq_{cd}\nu$ and let $\tau$ be a Polish topology on $\R$ which refines the Euclidean topology and induces the same Borel sets. Moreover, let $f: \R^{2}\to \overline{\R}$ be upper semicontinuous for the product topology $\tau\otimes\tau$ and suppose that $f^{+}$ is $\cS(\mu,\nu)$-uniformly integrable; i.e.,
  \begin{equation}\label{eq:fUI}
    \lim_{N\to\infty}\sup_{P\in\cS(\mu,\nu)} P(f^{+}\1_{f^{+}>N})=0.
  \end{equation}
  Then, $\bS_{\mu,\nu}(f)<\infty$ and there exists an optimal $P\in\cS(\mu,\nu)$ for $\bS_{\mu,\nu}(f)$.

  Condition~\eqref{eq:fUI} is satisfied in particular if $f(x,y)\leq a(x)+b(y)$ for some functions $a\in L^{1}(\mu)$ and $b\in L^{1}(\nu)$.
\end{lemma}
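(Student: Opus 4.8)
The plan is to follow the standard direct method of the calculus of variations, adapted to the topology $\tau$. First I would fix a maximizing sequence $(P_n)\subseteq\cS(\mu,\nu)$ for $\bS_{\mu,\nu}(f)$. Because all $P_n$ have the same marginals $\mu,\nu$ with finite first moment, the family $\cS(\mu,\nu)$ is tight for the Euclidean product topology (marginal tightness implies joint tightness), hence relatively compact for the weak topology induced by bounded Euclidean-continuous functions. By Prokhorov's theorem, after passing to a subsequence we obtain $P_n\to P$ weakly in that sense. The first thing to check is that the limit $P$ still lies in $\cS(\mu,\nu)$: the marginal constraints pass to the limit since $X$ and $Y$ have linear growth and $\mu,\nu$ integrate $|x|$, and the supermartingale inequality $\int Y\1_A(X)\,dP\leq \int X\1_A(X)\,dP$ for $A\in\cB(\R)$ passes to the limit by a routine approximation of $\1_A$ by bounded continuous functions together with the uniform integrability of $X,Y$ under $\cS(\mu,\nu)$; alternatively one invokes that $\cS(\mu,\nu)$ is weakly closed, which is standard.

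Next comes the role of $\tau$. The key point is that enlarging the topology on $\R$ from Euclidean to $\tau$ does not change the Borel $\sigma$-algebra (by hypothesis), hence does not change the set of probability measures on $\R$ or on $\R^2$, and in particular does not change $\cS(\mu,\nu)$ as a set. Moreover, since $\tau$ refines the Euclidean topology, a $\tau\otimes\tau$-open set is Euclidean-Borel, and one checks that weak convergence $P_n\to P$ against bounded Euclidean-continuous functions upgrades to the statement $\limsup_n P_n(g)\le P(g)$ for every bounded $\tau\otimes\tau$-upper semicontinuous $g$. This is the portmanteau-type lemma one needs; it can be proved by writing a bounded $\tau\otimes\tau$-u.s.c.\ function as a decreasing limit of functions that are $\tau\otimes\tau$-continuous and then approximating those in turn — here one uses that a Polish-space weak limit is still determined by the refined-topology continuous functions via a standard argument (cf.\ the construction in Proposition~\ref{pr:spenceMirrlessCont} and \cite{Kechris.95}). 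This step, reconciling the weak convergence obtained in the coarse topology with the upper semicontinuity available only in the fine topology, is the main obstacle and the only genuinely non-routine point.

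With that in hand, I would first dispose of finiteness: the uniform integrability~\eqref{eq:fUI} gives $\sup_{P\in\cS(\mu,\nu)}P(f^+)<\infty$ (take $N$ with the supremum of the tail below $1$, then $P(f^+)\le N+1$), hence $\bS_{\mu,\nu}(f)\le\sup_P P(f^+)<\infty$. For upper semicontinuity of $P\mapsto P(f)$ along the sequence, decompose $f=f^+-f^-$. Truncate $f^+$ at level $N$: $f^+\wedge N$ is bounded and $\tau\otimes\tau$-u.s.c., so $\limsup_n P_n(f^+\wedge N)\le P(f^+\wedge N)\le P(f^+)$; letting $N\to\infty$ and using~\eqref{eq:fUI} to control the error $\sup_n P_n(f^+\1_{f^+>N})$ uniformly gives $\limsup_n P_n(f^+)\le P(f^+)$. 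For $f^-$, which is $\tau\otimes\tau$-lower semicontinuous and nonnegative, Fatou's lemma for weak convergence (approximate $f^-$ from below by bounded $\tau\otimes\tau$-continuous functions, or $f^-\wedge N$) yields $\liminf_n P_n(f^-)\ge P(f^-)$. Combining, $\limsup_n P_n(f)\le P(f)$, so $P(f)\ge\limsup_n P_n(f)=\bS_{\mu,\nu}(f)$, and since $P\in\cS(\mu,\nu)$ the reverse inequality is trivial; hence $P$ is optimal.

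Finally, for the last sentence: if $f(x,y)\le a(x)+b(y)$ with $a\in L^1(\mu)$, $b\in L^1(\nu)$, then for every $P\in\cS(\mu,\nu)$ the function $(x,y)\mapsto a(x)+b(y)$ is $P$-integrable with $P$-integral $\mu(a)+\nu(b)$ independent of $P$, so $f^+\le (a(x)+b(y))^+\le |a(x)|+|b(y)|$ and the single integrable dominating function $g(x,y):=|a(x)|+|b(y)|$ satisfies $\sup_{P}P(g\1_{g>N})=\sup_P\bigl[\mu(|a|\1_{\cdot})+\nu(|b|\1_{\cdot})\text{ on }\{g>N\}\bigr]\to0$ by dominated convergence uniformly in $P$ (the bound is $P$-free because it only involves the fixed marginals $\mu,\nu$); since $f^+\1_{f^+>N}\le g\1_{g>N}$, \eqref{eq:fUI} follows. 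This completes the argument.
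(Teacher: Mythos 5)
Your overall skeleton is the same as the paper's: compactness of $\cS(\mu,\nu)$ in the Euclidean weak topology, upgraded semicontinuity of $P\mapsto P(f)$ with respect to the finer topology $\tau\otimes\tau$, and a truncation argument using~\eqref{eq:fUI}. You have also correctly located the one non-routine step. The problem is that this step, as you justify it, is false.

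You claim that Euclidean weak convergence $P_{n}\to P$ ``upgrades'' to $\limsup_{n}P_{n}(g)\leq P(g)$ for every bounded $\tau\otimes\tau$-upper semicontinuous $g$, and that this follows because ``a Polish-space weak limit is still determined by the refined-topology continuous functions via a standard argument.'' That general statement is wrong. Take $\tau$ to be a refinement of the Euclidean topology in which some point $y_{0}$ is isolated (exactly the kind of topology produced in the proof of Proposition~\ref{pr:spenceMirrlessCont}). Then $(x,y)\mapsto \1_{\{y_{0}\}}(y)$ is bounded and $\tau\otimes\tau$-continuous, yet $\delta_{(0,\,y_{0}+1/n)}\to\delta_{(0,y_{0})}$ in the Euclidean weak sense while the integrals of this function are $0$ and do not converge to $1$. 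So the refined-topology weak topology is genuinely strictly finer than the Euclidean one on the space of all measures, and no portmanteau-type manipulation can bridge the two in general. What rescues the argument is the fact that all your $P_{n}$ and $P$ have the \emph{same fixed marginals} $\mu,\nu$: on $\Pi(\mu,\nu)$ (hence on $\cS(\mu,\nu)$) the weak topology induced by $\tau\otimes\tau$ coincides with the Euclidean one whenever $\sigma(\tau)=\cB(\R)$. This is precisely \cite[Lemma~2.3]{BeiglbockPratelli.12}, which the paper invokes at this point; it is a genuine lemma about couplings, not a consequence of portmanteau, and your sketch never uses the fixed marginals. In my counterexample the marginals of the approximating measures move, which is exactly why it escapes the lemma. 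To repair your proof you must either cite that result or reprove it (the proof goes through Lusin-type approximation of $\tau$-continuous functions by Euclidean-continuous ones off sets whose $\mu$- and $\nu$-measures, hence whose measures under \emph{every} coupling, are small).

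Two minor remarks. First, once the above is fixed, your reduction of a bounded $\tau\otimes\tau$-u.s.c.\ function to a decreasing limit of $\tau\otimes\tau$-continuous ones, and the subsequent truncation of $f^{+}$ and Fatou argument for $f^{-}$, are fine and essentially reproduce the paper's appeal to \cite[Lemma~4.3]{Villani.09} applied to $f\wedge N$. Second, in the last paragraph the set $\{g>N\}$ with $g(x,y)=|a(x)|+|b(y)|$ is not a product set, so the bound on $\sup_{P}P(g\1_{g>N})$ is not literally ``$P$-free'' as written; one needs the standard splitting $\{g>N\}\subseteq\{|a(X)|>N/2\}\cup\{|b(Y)|>N/2\}$ together with a two-parameter cutoff to get a bound depending only on $\mu$ and $\nu$. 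This is routine, but the one-line justification you give skips it.
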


\begin{proof}
  Standard arguments show that $\cS(\mu,\nu)$ is compact in the usual topology of weak convergence as induced by the Euclidean metric. However, the weak topology on $\cS(\mu,\nu)$ induced by $\tau\otimes\tau$ does not depend on the choice of the Polish topology  $\tau$ as long as $\sigma(\tau)=\cB(\R)$; this follows from \cite[Lemma~2.3]{BeiglbockPratelli.12}. Thus, $\cS(\mu,\nu)$ is still weakly compact relative to $\tau\otimes\tau$.
  
  Under the additional condition that $f$ is bounded from above, the mapping $P\mapsto P(f)$ is upper semicontinuous by \cite[Lemma~4.3]{Villani.09}. Applying this result to $f\wedge N$ and using~\eqref{eq:fUI}, the same extends to $f$ as in the lemma, and the claim follows.
\end{proof}

We remark that compactness of $\cS(\mu,\nu)$ may fail if non-product topologies are considered on $\R^{2}$, so that the use of $\tau\otimes\tau$ is crucial. The following corollary also improves the existing results in the martingale transport case that occurs when $\mu\leq_{c}\nu$, so we state that case separately.

\begin{corollary}\label{co:optTranspCharact}
  Let $\mu\leq_{cd}\nu$ be probability measures and let $f : \R^2 \to \R$ be Borel and relaxed supermartingale Spence--Mirrlees. Suppose that there exist $a\in L^{1}(\mu)$, $b\in L^{1}(\nu)$ such that 
  $
    f(x,y)\geq a(x) + b(y)$ for all $x,y\in\R
  $
  and that $f^{+}$ is $\cS(\mu,\nu)$-uniformly integrable; cf.~\eqref{eq:fUI}. Then, $\bS_{\mu,\nu}(f)<\infty$ and $\rP\in\cS(\mu,\nu)$ is an optimizer. If $f$ is supermartingale Spence--Mirrlees, the optimizer is unique.
  The analogue holds for $\lP$ if instead $-f$ is (relaxed) supermartingale Spence--Mirrlees.
  
  If $\mu\leq_{c}\nu$, the same result holds with supermartingale Spence--Mirrlees  replaced by second-order Spence--Mirrlees, and then $\rP$ (resp.\ $\lP$) coincides with the Left-Curtain (Right-Curtain) coupling of~\cite{BeiglbockJuillet.12}.
\end{corollary}

\begin{proof}
  Let $f$ be supermartingale Spence--Mirrlees (in the strict sense). Under the stated integrability condition, Proposition~\ref{pr:spenceMirrlessCont} and Lemma~\ref{le:primalExistence} show that $\bS_{\mu,\nu}(f)<\infty$ and that an optimizer $P\in\cS(\mu,\nu)$ exists. Now, the monotonicity principle of Theorem~\ref{th:monotonicityPrinciple} and Remark~\ref{rk:lowerBoundDualityMonPrinciple} provide sets $(\Gamma,M)\in\cB(\R^{2})\times\cB(\R)$ such that $P$ is concentrated on $\Gamma$, $P|_{M\times\R}$ is a martingale and the assertion of Theorem~\ref{th:monotonicityPrinciple}\,(iii) holds. In view of Remark~\ref{rk:nondegWlog}, we may assume that $\Gamma$ is nondegenerate by passing to a subset of full $P$-measure. Proposition~\ref{pr:spenceImpliesGammaMonotone} implies that $(\Gamma,M)$ is first-order right monotone and second-order left-monotone, and then Theorem~\ref{th:geomCharactRP} yields that $P=\rP$.
  
  If $f$ is relaxed supermartingale Spence--Mirrlees, let $g$ be as in Remark~\ref{rk:smoothSMfunctions} and note that for each $n\in\N$, the function $f_{n}=f+(1/n) g$ is supermartingale Spence--Mirrlees in the strict sense. Since $f_{n}$ satisfies the stated integrability conditions, the above shows that $\rP$ is the unique optimizer for $f_{n}$. Suppose that there exists $P_{*}\in\cS(\mu,\nu)$ such that $P_{*}(f)>\rP(f)$. Then, as monotone convergence yields $P_{*}(f_{n})\to P_{*}(f)$ and $\rP(f_{n})\to \rP(f)$, it follows that $P_{*}(f_{n})>\rP(f_{n})$ for $n$ large enough, contradicting the optimality of $\rP$.
  
  The argument for $\lP$ is similar. The proofs for the martingale case are the same: when $M=\R$, the first-order monotonicity condition is vacuous.
\end{proof}

Finally, we also have the converse of Theorem~\ref{th:geomCharactRP} which completes the proofs for the main results as stated in the Introduction.

\begin{corollary}\label{co:geomCharactRPConverse}
  Let $\mu\leq_{cd}\nu$ be probability measures and let $\rP$ be the associated Increasing Supermartingale Transport. There exists a nondegenerate pair $(\Gamma,M)\in\cB(\R^{2})\times\cB(\R)$ which is first-order right-monotone and second-order left-monotone such that $\rP$ is concentrated on $\Gamma$ and $\rP|_{M\times\R}$ is a martingale.
  The analogue, exchanging left and right,  holds for~$\lP$.
\end{corollary}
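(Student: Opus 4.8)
The plan is to obtain the pair $(\Gamma,M)$ by applying the monotonicity principle to one suitably chosen reward function for which $\rP$ is already known to be the optimal supermartingale transport, and then to read off the required monotonicity from the Spence--Mirrlees structure of that function. Concretely, let $g$ be a smooth, linearly growing supermartingale Spence--Mirrlees function as produced in Remark~\ref{rk:smoothSMfunctions}, so that $0\le g(x,y)\le C(1+|y|)$. The growth bound shows both that $g\ge 0$ (so the lower-bound hypothesis holds trivially with $a=b=0$) and that $g^{+}=g$ is $\cS(\mu,\nu)$-uniformly integrable (take $b(y)=C(1+|y|)\in L^{1}(\nu)$, using that $\nu$ has a finite first moment). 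Hence Corollary~\ref{co:optTranspCharact} applies with $f=g$ and yields $\bS_{\mu,\nu}(g)<\infty$ together with the fact that $\rP$ is an optimizer, in fact the unique one since $g$ is supermartingale Spence--Mirrlees in the strict sense.

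With optimality and finiteness in hand, I would feed $g$ into the monotonicity principle. Since $g\ge 0$, Theorem~\ref{th:monotonicityPrinciple} furnishes a Borel set $\Gamma\subseteq\Sigma$ and disjoint Borel sets $M_{0}\subseteq I_{0}$, $M_{1}=\cup_{k\neq0}I_{k}$ such that, writing $M:=M_{0}\cup M_{1}$, part~(i) applied to the optimizer $\rP$ gives that $\rP$ is concentrated on $\Gamma$ and $\rP|_{M\times\R}$ is a martingale, while part~(iii) gives the competitor inequality $\pi(g)\ge\pi'(g)$ for every finitely supported probability $\pi$ concentrated on $\Gamma$ and every $(M_{0},M_{1})$-competitor $\pi'$ of $\pi$ concentrated on $\Sigma$. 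Next, Remark~\ref{rk:nondegWlog} lets me shrink $\Gamma$ to a Borel subset $\Gamma'\subseteq\Gamma$ with $\rP(\Gamma')=1$ for which $(\Gamma',M)$ is nondegenerate; since $\Gamma'\subseteq\Gamma\subseteq\Sigma$ and every $\pi$ concentrated on $\Gamma'$ is a fortiori concentrated on $\Gamma$, the conclusion of Theorem~\ref{th:monotonicityPrinciple}\,(iii) continues to hold verbatim for $(\Gamma',M)$.

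The geometric characterization then drops out of Proposition~\ref{pr:spenceImpliesGammaMonotone}, applied to $(\Gamma',M)$: all of its standing assumptions ($(\Gamma',M)$ nondegenerate, $\Gamma'\subseteq\Sigma$, $M_{0}\subseteq I_{0}$, $M_{1}=\cup_{k\neq0}I_{k}$, and the competitor inequality) are in place. Because $g$ is second-order Spence--Mirrlees, part~(iii) of that proposition shows $\Gamma'$ is second-order left-monotone; because $-g$ is first-order Spence--Mirrlees (this is precisely the first-order half of $g$ being supermartingale Spence--Mirrlees, cf.\ Definition~\ref{de:spenceMirrlees}), part~(ii) shows $(\Gamma',M)$ is first-order right-monotone. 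Taking $\Gamma:=\Gamma'$ gives the asserted pair and finishes the case of $\rP$. For $\lP$ I would rerun the argument with the reward function $-g$: then $-(-g)=g$ is supermartingale Spence--Mirrlees, so the ``analogous'' clause of Corollary~\ref{co:optTranspCharact} makes $\lP$ an optimizer for $\bS_{\mu,\nu}(-g)$, and since $-g$ is merely bounded below ($-g\ge -C(1+|y|)$) I would invoke Remark~\ref{rk:lowerBoundDualityMonPrinciple} so that Theorem~\ref{th:monotonicityPrinciple}\,(i),(iii) still produce suitable $(\Gamma,M)$; Proposition~\ref{pr:spenceImpliesGammaMonotone}\,(i),(iv), applied with $f=-g$ (first-order Spence--Mirrlees) and $-f=g$ (second-order Spence--Mirrlees), then yields first-order left-monotonicity and second-order right-monotonicity.

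I do not expect a genuine obstacle here: the corollary is an assembly of Corollary~\ref{co:optTranspCharact}, Theorem~\ref{th:monotonicityPrinciple} (together with Remark~\ref{rk:lowerBoundDualityMonPrinciple}), Remark~\ref{rk:nondegWlog}, and Proposition~\ref{pr:spenceImpliesGammaMonotone}, all already established. The only two points deserving a line of justification are (a) that the explicit function $g$ of Remark~\ref{rk:smoothSMfunctions} satisfies the integrability hypotheses of Corollary~\ref{co:optTranspCharact}, which is immediate from its linear growth, and (b) that passing from $\Gamma$ to the nondegenerate subset $\Gamma'$ does not invalidate the competitor hypothesis of Proposition~\ref{pr:spenceImpliesGammaMonotone}, which holds automatically since that hypothesis quantifies over all $\pi$ concentrated on $\Gamma$ and $\Gamma'\subseteq\Gamma$.
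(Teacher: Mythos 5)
Your proposal is correct and follows essentially the same route as the paper: the paper's proof also takes the explicit function $g$ from Remark~\ref{rk:smoothSMfunctions}, invokes Corollary~\ref{co:optTranspCharact} to identify $\rP$ as the unique optimizer, and then extracts $(\Gamma,M)$ exactly as in the proof of that corollary via Theorem~\ref{th:monotonicityPrinciple}, Remark~\ref{rk:nondegWlog} and Proposition~\ref{pr:spenceImpliesGammaMonotone}. The two points you flag for justification (integrability of $g$, and stability of the competitor hypothesis under shrinking $\Gamma$) are handled correctly.
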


\begin{proof}
  Let $g$ be a supermartingale Spence--Mirrlees function as in Remark~\ref{rk:smoothSMfunctions}. We know from Corollary~\ref{co:optTranspCharact} that $\rP$ is the unique optimal transport for $g$, and the existence of $(\Gamma,M)$ follows as in the proof of Corollary~\ref{co:optTranspCharact}.
\end{proof}

\begin{remark}\label{rk:propertiesFromIntro}
  Corollary~\ref{co:geomCharactRPConverse} shows, in particular, that the no-crossing properties of $\rP$ and $\lP$ as stated in the Introduction are true for general marginals. The preservation of order mentioned in Figure~\ref{fi:simulationDecr} follows from the two monotonicity properties and nondegeneracy, and together with Remark~\ref{rk:leftClosed}, the corollary also yields that $\rP$ has at most one transition from martingale kernels to proper supermartingale ones.
  
  A martingale transport with second-order left-monotone support is the Left-Curtain coupling of its marginals and if the first marginal has no atoms, each kernel of this transport is concentrated on two points~\cite{BeiglbockJuillet.12}. Moreover, an arbitrary transport with first-order right-monotone support is the antitone coupling and if the first marginal has no atoms, its kernels are deterministic \cite[Section~3.1]{RachevRuschendorf.98a}. As a result, if $\mu$ is diffuse,
  \begin{enumerate}
    \item $\rP|_{M^{c}\times\R}$ is of Monge-type,
    \item $\rP|_{M\times\R}$ is concentrated on the union of two graphs.
  \end{enumerate}
  The analogue holds for $\lP$, with the Right-Curtain and Hoeffding--Fr\'echet couplings.
\end{remark}

\section{Counterexamples}\label{se:counterex}

\subsection{Duality Theory}\label{se:counterexDuality}

In the Introduction and the body of the text, we have claimed that certain relaxations cannot be avoided.
In \cite{BeiglbockNutzTouzi.15}, we have already stated several examples related to the duality theory for the case of martingale transport. Bearing in mind that this is a special case of the supermartingale transport problem at hand, these examples still apply:
If the inequality defining the dual elements is stated in the classical sense as
$
  \varphi(x)+\psi(y)+h(x)(y-x)\geq f(x,y)$ for all $(x,y)\in\R^{2}
$
rather than the quasi-sure sense, a duality gap may occur; cf.\ \cite[Example~8.1]{BeiglbockNutzTouzi.15}.
A duality gap may also occur if integrability of dual elements is required in the usual sense; i.e., $\varphi\in L^{1}(\mu)$, or if $f$ has no lower bound, see \cite[Examples~8.4,\,8.5]{BeiglbockNutzTouzi.15}.

Next, let us substantiate two claims made in the body of the text. Recall that the set $\cD^{ci,pw}_{\mu,\nu}(f)$ was defined with nonnegative functions $h$, whereas for $\cD_{\mu,\nu}(f)$ nonnegativity is required only on the proper portion of the state space (Definitions~\ref{de:dualDomainPointw} and~\ref{de:globalIntegrability}). We shall show below that this is necessary.

\begin{enumerate}
  \item  The requirement that the dual elements $(\varphi,\psi,h)$ satisfy $h\geq 0$ would preclude existence of dual optimizers.
\end{enumerate}

Second, we have claimed that the restriction to proper pairs $\mu\leq_{cd}\nu$ in Section~\ref{se:closednessOnIrred} is necessary. While we have already seen that the proof of Proposition~\ref{pr:closednessIrred} crucially uses a nontrivial difference of the barycenters of $\mu$ and $\nu$ in order to control the slope of $\chi$, we still owe an argument that this is indeed unavoidable.
\begin{enumerate}
  \item[(ii)] The closedness property of $\cD^{ci,pw}_{\mu,\nu}(f)$ asserted in Proposition~\ref{pr:closednessIrred} fails if the (irreducible) pair $\mu\leq_{cd}\nu$ is not proper,
\end{enumerate}
and this remains true even if, in view of~(i), we were to alleviate the requirement that $h\geq0$.

  Indeed, let $c_{i}=i^{-3}C$, $i\geq1$, where $C>0$ is such that $\sum c_{i}=1$, and define 
  $
    \mu := \sum_{i\geq1} c_{i} \delta_{i}$ and $\nu := \frac13 \sum_{i\geq1} c_{i} (\delta_{i-1} + \delta_{i} + \delta_{i+1}).
  $
  Moreover, let $f(x,y)= \1_{x\neq y}$.
  Following \cite[Examples~8.4,\,8.5]{BeiglbockNutzTouzi.15} we find that $\mu\leq_{cd}\nu$ is irreducible and
  $$
     P :=  \sum_{i\geq1} c_{i}\, \delta_{i} \otimes \frac13(\delta_{i-1} + \delta_{i} + \delta_{i+1}) \,\in \cS(\mu,\nu)
  $$  
  is a primal optimizer. Clearly, $\bary(\mu)=\bary(\nu)$; i.e., the pair is not proper. Let $(\varphi,\psi,h)$ be a dual optimizer. Even if we are flexible about the precise definition of the dual domain, a minimal requirement to avoid a duality gap is that
  $\varphi(x) + \psi(y) + h(x)(y-x) = f(x,y)$ $P\as$ and hence
  $$
    \varphi(x) + \psi(y) + h(x)(y-x) = f(x,y), \quad (x,y)\in \N\times\N_{0},\; y\in \{x-1,x,x+1\}.
  $$  
  It follows that
  $\varphi(x) + \psi(x-1) - h(x) =1$ and $\varphi(x) + \psi(x+1) + h(x) =1$ and  
    $\varphi(x) + \psi(x) = 0$ for $x\in\N$, and all solutions of this system satisfy 
  $$
    \varphi(x) =-x^{2}+bx+c, \quad
    \psi(x) = x^{2} - bx - c, \quad
    h(x) = -2x + b
  $$
  for $x\in\N$, where $b,c\in\R$ are arbitrary constants. While any such triplet defines a dual optimizer in the sense of the body of the paper, we see that there is no solution satisfying $h\geq0$, which was our claim in~(i).
  
  To argue (ii), suppose for contradiction that the closedness property of $\cD^{ci,pw}_{\mu,\nu}(f)$ asserted in Proposition~\ref{pr:closednessIrred} were true even though $\mu\leq_{cd}\nu$ is not proper. Then, following the proofs in the body of the paper shows that the analogue of Proposition~\ref{pr:dualityIrred} would hold as well; i.e.,  there is no duality gap and there exists a dual optimizer in $\cD^{ci,pw}_{\mu,\nu}(f)$. We have seen that this is not the case with the requirement that $h\geq0$, but it fails even if this is dropped. Indeed, consider again a triplet $(\varphi,\psi,h)$ satisfying the above system of equations. If $(\varphi,\psi,h)\in \cD^{ci,pw}_{\mu,\nu}(f)$, then in particular there exists a concave and increasing moderator $\chi$ such that $\varphi - \chi \in L^{1}(\mu)$. Noting that $\mu$ has an infinite second moment and that $\varphi^{-}(x)$ has quadratic growth as $x\to\infty$ along the integers, it follows that $\chi^{-}(x)$ must have superlinear growth as $x\to\infty$. But then $\chi$ can certainly not be increasing, and we have reached the desired contradiction.

\subsection{Two Couplings that are not Canonical}\label{se:noncanonicalCouplings}

As mentioned in the Introduction, it is natural to ask if reward functions~$f$ that are first- and second-order Spence-Mirrlees are also maximized by a common supermartingale transport---i.e., $f_{xy}>0,f_{xyy}>0$ if $f$ is smooth, rather than the mixed signs that were considered in the preceding sections (see also Example~\ref{ex:decompositionNoncanonicalCase}). However, it turns out that two functions $f^{1},f^{2}$ satisfying these Spence--Mirrlees conditions may have different optimizers, even if the optimizer is unique for each $f^{i}$. This is shown in Example~\ref{ex:noncan1}. The same is true when $-f^{i}$ are first- and second-order Spence-Mirrlees, as shown by Example~\ref{ex:noncan2}; we confine ourselves to numerical counterexamples.

\begin{example}\label{ex:noncan1}
	Let $\mu$ and $\nu$ be uniformly distributed on $\{-1,0,1\}$ and $\{-4,-2.5,2\}$, respectively; then $\mu \leq_{cd} \nu$. We consider the reward functions $f^1(x,y) = e^xe^y$ and $f^2(x,y) = e^xe^y + 4xy$  which satisfy $f^i_{xy} > 0$ and $f^i_{xyy} > 0$. The corresponding optimal transports can be obtained with an LP-solver; they are unique and given by
	\begin{align*}
	  \pi^{1} &= \tfrac{5}{18}\delta_{(-1,-4)} + \tfrac{1}{18}\delta_{(-1,-2.5)} + \tfrac{5}{18}\delta_{(0,-2.5)} + \tfrac{1}{18}\delta_{(0,2)} + \tfrac{1}{18}\delta_{(1,-4)} + \tfrac{5}{18}\delta_{(1,2)},\\
	 \pi^{2} &= \tfrac{1}{3}\delta_{(-1,-4)} + \tfrac{7}{27}\delta_{(0,-2.5)} + \tfrac{2}{27}\delta_{(0,2)} + \tfrac{2}{27}\delta_{(1,-2.5)} + \tfrac{7}{27}\delta_{(1,2)}.
	\end{align*}
	Their supports are shown in Figure~\ref{fi:noncan1}. The transports are first- and second-order left-monotone with $M=\{1\}$, but the supports are different.

	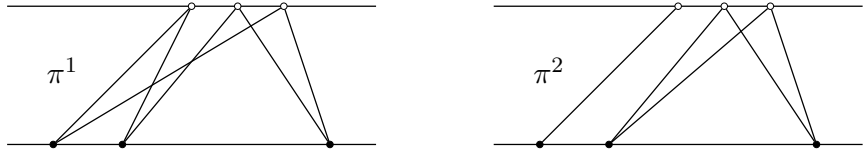
\begin{figure}[ht]
	\begin{center}
	\begin{tabular}{ccc}
	\resizebox{5cm}{!}{
	\begin{tikzpicture}[every node/.style={draw,circle,inner sep=0pt,minimum size=4pt}]
	\draw[thick] (-5,0) -- (3,0);\draw[thick] (-5,-3) -- (3,-3);
	\node[fill=white] (x0) at (-1,0) {};
	\node[fill=white] (x1) at (0.0,0) {};
	\node[fill=white] (x2) at (1,0) {};
	\node[fill=black] (y0) at (-4,-3) {};
	\node[fill=black] (y1) at (-2.5,-3) {};
	\node[fill=black] (y2) at (2,-3) {};
	\draw[thick] (x0) -- (y0);
	\draw[thick] (x0) -- (y1);
	\draw[thick] (x1) -- (y1);
	\draw[thick] (x1) -- (y2);
	\draw[thick] (x2) -- (y0);
	\draw[thick] (x2) -- (y2);
  \node[draw=none,fill=none] at (-3.8,-1.4) {\scalebox{1.7}{$\pi^{1}$}};
	\end{tikzpicture}}
	& $\qquad$ &
	\resizebox{5cm}{!}{
	\begin{tikzpicture}[every node/.style={draw,circle,inner sep=0pt,minimum size=4pt}]
	\draw[thick] (-5,0) -- (3,0);\draw[thick] (-5,-3) -- (3,-3);
	\node[fill=white] (x0) at (-1,0) {};
	\node[fill=white] (x1) at (0.0,0) {};
	\node[fill=white] (x2) at (1,0) {};
	\node[fill=black] (y0) at (-4,-3) {};
	\node[fill=black] (y1) at (-2.5,-3) {};
	\node[fill=black] (y2) at (2,-3) {};
	\draw[thick] (x0) -- (y0);
	\draw[thick] (x1) -- (y1);
	\draw[thick] (x1) -- (y2);
	\draw[thick] (x2) -- (y1);
	\draw[thick] (x2) -- (y2);
		\node[draw=none,fill=none] at (-3.8,-1.4) {\scalebox{1.7}{$\pi^{2}$}};
	\end{tikzpicture}}
	\end{tabular}
	\end{center}
	\vspace{-1em}\caption{The optimal transports from	Example \ref{ex:noncan1}}
	\label{fi:noncan1}
	\end{figure}
	
\end{example}

\begin{example}\label{ex:noncan2}
Let $\mu$ and $\nu$ be uniformly distributed on $\{-1,0,1\}$ and $\{-4,-2.5,0.5\}$, respectively; then again $\mu \leq_{cd} \nu$. We consider the reward functions $f^1(x,y) = -e^xe^y$ and $f^2(x,y) = -e^xe^y - 4xy$ which are the negatives of the functions in Example~\ref{ex:noncan1}; they satisfy $f^i_{xy} < 0$ and $f^i_{xyy} < 0$. The corresponding (unique) optimal transports are given by
	\begin{align*}
	  \pi^{1} &= \tfrac{1}{9}\delta_{(-1,-4)} + \tfrac{2}{9}\delta_{(-1,0.5)} + \tfrac{2}{9}\delta_{(0,-2.5)} + \tfrac{1}{9}\delta_{(0,0.5)} + \tfrac{2}{9}\delta_{(1,-4)} + \tfrac{1}{9}\delta_{(1,-2.5)},\\
	 \pi^{2} &= \tfrac{1}{6}\delta_{(-1,-2.5)} + \tfrac{1}{6}\delta_{(-1,0.5)} + \tfrac{1}{6}\delta_{(0,-2.5)} + \tfrac{1}{6}\delta_{(0,0.5)} + \tfrac{1}{3}\delta_{(1,-4)}.
	\end{align*}
	Their supports are shown in Figure~\ref{fi:noncan2}.  The transports are first- and second-order right-monotone with $M=\{-1\}$, but the supports are different.
	
	\begin{figure}[ht]
	\begin{center}
	\begin{tabular}{ccc}
	\resizebox{5cm}{!}{
	\begin{tikzpicture}[every node/.style={draw,circle,inner sep=0pt,minimum size=4pt}]
	\draw[thick] (-5,0) -- (2,0);\draw[thick] (-5,-3) -- (2,-3);
	\node[fill=white] (x0) at (-1,0) {};
	\node[fill=white] (x1) at (0.0,0) {};
	\node[fill=white] (x2) at (1,0) {};
	\node[fill=black] (y0) at (-4,-3) {};
	\node[fill=black] (y1) at (-2.5,-3) {};
	\node[fill=black] (y2) at (0.5,-3) {};
	\draw[thick] (x0) -- (y0);
	\draw[thick] (x0) -- (y2);
	\draw[thick] (x1) -- (y1);
	\draw[thick] (x1) -- (y2);
	\draw[thick] (x2) -- (y0);
	\draw[thick] (x2) -- (y1);
	\node[draw=none,fill=none] at (-3.8,-1.4) {\scalebox{1.5}{$\pi^{1}$}};
	\end{tikzpicture}}
	& $\qquad$ &
	\resizebox{5cm}{!}{
	\begin{tikzpicture}[every node/.style={draw,circle,inner sep=0pt,minimum size=4pt}]
	\draw[thick] (-5,0) -- (2,0);\draw[thick] (-5,-3) -- (2,-3);
	\node[fill=white] (x0) at (-1,0) {};
	\node[fill=white] (x1) at (0.0,0) {};
	\node[fill=white] (x2) at (1,0) {};
	\node[fill=black] (y0) at (-4,-3) {};
	\node[fill=black] (y1) at (-2.5,-3) {};
	\node[fill=black] (y2) at (0.5,-3) {};
	\draw[thick] (x0) -- (y1);
	\draw[thick] (x0) -- (y2);
	\draw[thick] (x1) -- (y1);
	\draw[thick] (x1) -- (y2);
	\draw[thick] (x2) -- (y0);
		\node[draw=none,fill=none] at (-3.8,-1.4) {\scalebox{1.5}{$\pi^{2}$}};
	\end{tikzpicture}}
	\end{tabular}
	\end{center}
	\vspace{-1em}\caption{The optimal transports from Example \ref{ex:noncan2}}
	\label{fi:noncan2}
	\end{figure}
\end{example}

\newcommand{\dummy}[1]{}

\end{document}